\newcommand\NN{{\mathbb N}}
\newcommand\RR{{{\mathbb R}}}
\def\SS {\mathbb{S}}
\newcommand\cA{{\mathcal A}}
\newcommand\cB{{\mathcal B}}
\newcommand\cD{{\mathcal D}}
\newcommand\cE{{\mathcal E}}
\newcommand\cH{{\mathcal H}}
\newcommand\cF{{\mathcal F}}
\newcommand\cL{{\mathcal L}}
\newcommand\cS{{\mathcal S}}
\newcommand\cT{{\mathcal T}}
\newcommand\cN{{\mathcal N}}
\newcommand\cX{{\mathcal X}}
\newcommand\pP{{\bf P}}
\newcommand\iI{{\bf I}}
\newcommand\p{{\partial}}
\newcommand\pa{{\partial}}
\newcommand\ra{{\rangle}}
\newcommand\la{{\langle}}
\newtheorem{theo}{Theorem}[section]
\newtheorem{lemm}[theo]{Lemma}
\newtheorem{coro}[theo]{Corollary}
\newtheorem{prop}[theo]{Proposition}
\newtheorem{rema}[theo]{Remark}
\begin{document}

\title[Global existence for soft potential]
{ Boltzmann equation without angular cutoff in the whole space: I, Global existence for soft potential}
\author{R. Alexandre}
\address{R. Alexandre,
Department of Mathematics, Shanghai Jiao Tong University
\newline\indent
Shanghai, 200240, P. R. China
\newline\indent
and \newline\indent
IRENAV Research Institute, French Naval Academy
Brest-Lanv\'eoc 29290, France
}
\email{radjesvarane.alexandre@ecole-navale.fr}
\author{Y. Morimoto }
\address{Y. Morimoto, Graduate School of Human and Environmental Studies,
Kyoto University
\newline\indent
Kyoto, 606-8501, Japan} \email{morimoto@math.h.kyoto-u.ac.jp}
\author{S. Ukai}
\address{S. Ukai, 17-26 Iwasaki-cho, Hodogaya-ku, Yokohama 240-0015, Japan}
\email{ukai@kurims.kyoto-u.ac.jp}
\author{C.-J. Xu}
\address{C.-J. Xu, School of Mathematics, Wuhan University 430072,
Wuhan, P. R. China
\newline\indent
and \newline\indent
Universit\'e de Rouen, UMR 6085-CNRS,
Math\'ematiques
\newline\indent
Avenue de l'Universit\'e,\,\, BP.12, 76801 Saint
Etienne du Rouvray, France } \email{Chao-Jiang.Xu@univ-rouen.fr}
\author{T. Yang}
\address{T. Yang, Department of mathematics, City University of Hong Kong,
Hong Kong, P. R. China
\newline\indent
and \newline\indent
School of Mathematics, Wuhan University 430072,
Wuhan, P. R. China} \email{matyang@cityu.edu.hk}

\subjclass[2000]{35A05, 35B65, 35D10, 35H20, 76P05, 84C40}

%\date{24 December 2009}

\keywords{Boltzmann equation, coercivity estimate, non-cutoff cross sections,
global existence, non-isotropic norm, soft potential.}

\begin{abstract}
It is known that the singularity in the non-cutoff cross-section of the
Boltzmann equation leads to the gain of regularity and gain of weight in the
velocity variable. By defining and analyzing a non-isotropy norm
which precisely captures the dissipation in the linearized collision
operator, we first give a new and precise coercivity estimate for the non-cutoff
Boltzmann equation for general physical cross sections. Then the Cauchy problem for the
Boltzmann equation is considered in the framework of small perturbation of
an equilibrium state. In this part, for the soft potential case
in the sense that there is no positive power gain of weight in the
coercivity estimate on the linearized operator, we derive some new functional estimates
on the nonlinear collision operator. Together with the coercivity estimates, we prove the global existence of classical solutions for the Boltzmann equation
in weighted Sobolev spaces.
\end{abstract}

\maketitle

\tableofcontents

%%%%%%%%%%%%%%%%%%%%%%%%%%%%%%%%%%%%%%%%%%%%%%%%%%%%%%%%%%%%%%%%%%%%%%%%%%%%%%%%%%%
\section{Introduction}\label{part2-section1}

This is the first part of a series of papers related to the inhomogeneous Boltzmann equation without angular cut-off, in the whole space and for general physical cross-sections. This global project is a natural continuation of our previous study \cite{amuxy3} which was specialized to Maxwellian type cross sections.

In this part, we first establish an essential coercivity estimate of the linearized collision
operator, in the framework of general cross sections. As shown in \cite{amuxy3,amuxy3b} for the special Maxwellian case, this estimate will play an important role for the related Cauchy problem.

Based on this estimation, together with Part II \cite{amuxy4-3},
we will prove the global existence of classical non-negative solutions
to the Boltzmann equation without angular cutoff, for the soft and hard potentials respectively, so that we are able to cover a general physical setting. Finally, in the paper \cite{amuxy4-4}, we will study the qualitative properties of solutions, that include full regularity, non-negativity, uniqueness and convergence rates  to the equilibrium. This series of works establish a  satisfactory theory on the well-posedness
and full regularity of classical solutions.

In our presentation, we consider the problem in the physical case with dimension $3$. However, our results hold true for any dimension bigger than $2$.

Consider
\begin{equation}\label{part2-4-2-1.1}
f_t+v\cdot\nabla_x f=Q(f, f)\,,\,\,\,\,\,\,f|_{t=0}=f_0.
\end{equation}
Here, $f= f(t,x,v)$ is the density distribution function
of particles, having position
$x\in \RR^3$ and velocity $v\in \RR^3$ at time $t$.
The right hand side of (\ref{part2-4-2-1.1}) is the
Boltzmann bilinear collision operator, which is given in the classical $\sigma -$representation by
\[
Q(g, f)=\int_{\RR^3}\int_{\mathbb S^{2}}B\left({v-v_*},\sigma
\right)
 \left\{g'_* f'-g_*f\right\}d\sigma dv_*\,,
\]
where $f'_*=f(t,x,v'_*), f'=f(t,x,v'), f_*=f(t,x,v_*), f=f(t,x,v)$, and for
$\sigma\in \mathbb S^{2}$,
$$
v'=\frac{v+v_*}{2}+\frac{|v-v_*|}{2}\sigma,\,\,\, v'_*
=\frac{v+v_*}{2}-\frac{|v-v_*|}{2}\sigma,\,
$$
which gives the relation between the post and pre collisional
velocities that follow from the conservation of momentum and kinetic energy.

For monatomic gas,  the non-negative cross section
 $B(z, \sigma)$ depends only on $|z|$ and the scalar product
$\frac{z}{|z|}\,\cdot\, \sigma$. As in our previous works, we  assume
that it takes  the form
\begin{equation}\label{part2-hyp-2}
B(v-v_*, \cos \theta)=\Phi (|v-v_*|) b(\cos \theta),\,\,\,\,\,
\cos \theta=\frac{v-v_*}{|v-v_*|} \, \cdot\,\sigma\, , \,\,\,
0\leq\theta\leq\frac{\pi}{2},
\end{equation}
in which it contains a kinetic factor given by
\begin{equation*}
%\label{part2-hyp-phys}
\Phi (|v-v_*|) =\Phi_\gamma(|v-v_*|)= |v-v_*|^{\gamma},
\end{equation*}
and a factor related to the collision angle containing a singularity,
\begin{equation*}
\begin{array}{l}
 b(\cos \theta)\approx K\theta^{-2-2s} \
\
 \mbox{when} \ \ \theta\rightarrow 0+,
\end{array}
\end{equation*}
for some constant $K>0$.

An important example of this singular cross section is the inverse power law potential $\rho^{-r}$ with $r>1$, $\rho$
being the distance between two interacting particles, in which $s=\frac{1}{r}\in ]0,1[$ and $\gamma=1-4s\in ]-3,1[$, cf. \cite{Ce88}.

In the theory on the non-cutoff Boltzmann equation,  the sign of $\gamma +2s$ plays a crucial role. Hence, from
now on,
the case when $\gamma +2 s\leq 0$ is referred to the {\it non-cutoff soft potential}, while the case
 $\gamma +2 s> 0$ to  the {\it non-cutoff hard potential}.  Note that this is different from the
 traditional classification on the index for the inverse power law.

In our present series of works, the well-posedness theory established applies to the general cross-sections
with $\gamma>-3$ and $0<s<1$, that includes the inverse power law as a special example.
Note  that  $\gamma >-3$ is needed for
the Boltzmann operator to be well-posed, cf. \cite{villani2}.

%For later use, we will need to compare the original cross-session with the
%situation when its kinetic part is mollified. That is,
 %for the function $\Phi(z)$ appearing in the cross-section, we denote by $\tilde\Phi(z)$ its smoothed version defined by
%\begin{equation}
% \tilde\Phi (z) = (1+| z|^2)^{\gamma\over 2} \equiv <z>^\gamma .\label{part1-singularity2}
%\end{equation}
%To show the dependence of the estimates on the mollified or non-mollified
%kinetic factor in the cross-session, we will use the notations $Q^{\tilde\Phi}$ and
%$Q^{\Phi}$ to denote the Boltzmann collisional operator when the kinetic part is  $\tilde\Phi$
%and $\Phi$ respectively. In particular, $Q=Q^\Phi$.
%This upper-script will be also used for other operators as well.

Being concerned with a close to equilibrium framework, as in \cite{amuxy3}, the setting of the problem can be formulated as follows.
First of all, without loss of generality, consider the perturbation around a
normalized Maxwellian distribution
\[
\mu (v)=(2\pi )^{-\frac{3}{2}}e^{-\frac{|v|^2}{2\,\,}},
\]
by setting $f=\mu+\sqrt{\mu}  g$. Since  $Q(\mu,
\mu)=0$, we have
\[
Q(\mu+\sqrt{\mu}  g,\, \mu+\sqrt{\mu}\,  g)=Q(\mu,\, \sqrt{\mu}\,g)+
Q(\sqrt{\mu}\,g,\, \mu)+Q(\sqrt{\mu}\,g,\, \sqrt{\mu}\,g).
\]
Denote
\begin{equation*}
%\label{part2-1.2}
\Gamma(g,\, h)=\mu^{-1/2}Q(\sqrt{\mu}\,g,\, \sqrt{\mu}\,h).
\end{equation*}
Then the linearized Boltzmann operator takes the form
\begin{equation*}
%\label{part2-1.3}
\cL g=\cL_1\, g+\cL_2\, g=-\Gamma(\sqrt{\mu}\,,\, g)- \Gamma(g,\,
\sqrt{\mu}\,).
\end{equation*}

Now the original problem (\ref{part2-4-2-1.1}) is reduced to the Cauchy problem
for the perturbation
$g$
\begin{equation}\label{part2-4-2-1.5}
\left\{\begin{array}{l} g_t + v\cdot\nabla_x g + \cL g=\Gamma(g,\,
g), \,\,\, t>0\,,\\
g|_{t=0}=g_0.\end{array}\right.
\end{equation}

This close to equilibrium framework is classical for the Boltzmann equation with angular cutoff, but much less is known for
the Boltzmann equation without angular cutoff, though the spectrum
of the linearized operator without angular cut-off
 was analyzed a long time ago by Pao in \cite{Pao}.

However, since the
late 1990s, the regularizing effect on the solution, produced by the singularity of the cross-section, has become  reachable by rigorous analysis. Let us mention the systematic work on the entropy dissipation method initiated
by Alexandre \cite{alex-two-maxwellian} and developed  firstly by Lions \cite{Lions98}, and then by many others,
cf \cite{al-1, Vi99, villani2} and references therein. Since then, various works have been done
on deriving the coercivity estimates in different settings and in different norms
for different purposes. In particular, this kind of coercivity estimates has displayed some non-isotropic property in the very loose sense that, on one hand one gets a gain of the regularity in Sobolev norm of fractional order; and on the other hand, one also get a gain the moment to some fractional power in the velocity variable,   cf. \cite{alex-review,al-1,
amuxy-nonlinear-b,amuxy-nonlinear-3,amuxy3,desv-wen1,gr-st,gr-st-1,HMUY,mouhot, mouhot-strain,ukai-2,Vi99,villani2} and references therein.
Furthermore, these coercivity estimates
have been proven to be very useful in getting the global existence and gain of full regularity
in all variables for the Boltzmann equation without angular cutoff, as shown in our previous work \cite{amuxy3}. For details
about the recent progress in some of the directions mentionned previously, readers are referred to the survey paper
by Alexandre, \cite{alex-review}.

Since the coercivity estimate plays an important role in the study on
the angular non-cutoff Boltzmann equation, such estimate in
terms of the indices $\gamma$ and $s$,
 has been
pursued by many people. One of the purposes of this paper is to present a precise
estimate that gives the essential properties of this singular behavior, that will be stated in the next theorem. Let us note that this result is
proved in a general setting and it
  improves on previous results, such as those obtained in \cite{amuxy-nonlinear-b,
amuxy-nonlinear-3,amuxy3,mouhot,mouhot-strain}.
And this estimate will be used herein and in our  papers \cite{amuxy4-3, amuxy4-4} on the global existence
in the hard potential case, and qualitative study of solutions.

To derive the desired coercivity estimate, we generalize the non-isotropic norm introduced in \cite{amuxy3} as
\begin{align*}%\label{part1-2.2.1}
||| g|||^2&= \iiint \Phi (|v-v_*|)b(\cos\theta) \mu_*\, \big(g'-g\,\big)^2\,\\
 &\,\,\,\,\,\,\,\,+\iiint \Phi (|v-v_*|)b(\cos\theta)g^2_* \big(\sqrt{\mu'}\,\, - \sqrt{\mu}\,\,
\big)^2\nonumber\,,
\end{align*}
where the integration  is over
$\RR^3_v\times\RR^3_{v_\ast}\times\SS^2_\sigma$. Note that it is a
norm with respect to the velocity variable $v\in\RR^3$ only. We can compare this non-isotropic norm with classical weighted Sobolev norms, see precisely Proposition \ref{part1-prop2.1}.

The introduction of this norm was motivated by the study on
the Landau equation which can be viewed as the grazing limit of the Boltzmann equation.
It is known that for the Landau equation, see for example \cite{guo}, that the essential norm in order to capture the dissipation
of the linearized Landau operator can be defined just as the Dirichlet form of the
linearized operator. By doing so, a norm can be well defined without loss of any
dissipative information in the operator and this can be done directly for the Landau
equation mainly because the corresponding Landau operator is a differential operator. However, for the
Boltzmann equation without angular cutoff, the collision operator is a singular
integral operator so that a direct analog is not obvious or feasible.
Therefore,
in the first part of this paper, we analyze the properties of the non-isotropic norm and obtain the precise coercivity
estimate of the linearized collision operator. At this point, let us mention the different approach undertaken by Gressman-Strain \cite{gr-st,gr-st-1}.

We shall use the following standard weighted Sobolev space defined, for $k, \ell \in \RR$, as
$$
H^k_\ell = H^k_\ell (\RR^3_v ) = \lbrace f\in \cS ' (\RR^3_v ) ; \ W_\ell f \in H^k (\RR^3_v ) \rbrace
$$
and
$$
H^k_\ell (\RR^6_{x, v} ) = \lbrace f\in \cS ' (\RR^6_{x, v} ) ; \ W_\ell f \in H^k (\RR^6_{x, v} ) \rbrace
$$
where $W_\ell (v) = \langle v\rangle^\ell = (1+ | v|^2 )^{\ell/2}$ is always the weight for $v$ variables. Herein, $(\cdot, \cdot)_{L^2} = (\cdot,\cdot)_{L^2 (\RR^3_v)}$ denotes the usual scalar product in $L^2=L^2 (\RR^3)$ for $v$ variables. Recall that $L^2_\ell=H^0_\ell$.

We shall use also in the following two different Sobolev spaces, one  with $x$-derivatives only,  another one with $x,v$
derivatives and weight in the  velocity variable $v$.
For $k\in\NN,\,\,\ell\in\RR$, let
\begin{align*}
%\label{part2-solutionspace}
{\cH}^k_\ell(\RR^6)&=\Big\{f\in\cS'(\RR^6_{x, v})\, ;\,\,
\|f\|^2_{{ \cH}^k_\ell(\RR^6)}=\sum_{|\alpha|+|\beta|\leq N}\|W_{\ell-|\beta|}
\partial^\alpha_\beta f\|^2_{L^2(\RR^6)}<+\infty\,\Big\}\, ,
\\
{\tilde \cH}^k_\ell(\RR^6)&=\Big\{f\in\cS'(\RR^6_{x, v})\, ;\,\,
\|f\|^2_{{\tilde \cH}^k_\ell(\RR^6)}=\sum_{|\alpha|+|\beta|\leq N}\|\tilde W_{\ell-|\beta|}
\partial^\alpha_\beta f\|^2_{L^2(\RR^6)}<+\infty\,\Big\}\, ,
\end{align*}
where $\tilde W
_\ell=(1+|v|^2)^{| s+\gamma/2|\,\ell/2}$.

We recall that the linearized  operator $\cL$ has the following null space,
which is spanned by the set of collision invariants:
\begin{equation*}%\label{part1-1.4}
\mathcal{N}=\mbox{Span}\left\{\sqrt \mu\,,\,
v_1\sqrt\mu\,,\,v_2\sqrt\mu\,,\,v_3\sqrt\mu\,,\,
|v|^2\sqrt\mu\,\,\right\},
\end{equation*}
that is, $\Big(\cL g,\, g\Big)_{L^2(\RR^3_v)}=0$
if and only if $g\in {\mathcal N}$.

\begin{theo}\label{part1-theo2}
Assume that the cross-section satisfies \eqref{part2-hyp-2} with  $0<s<1$ and $\gamma>-3$. Then there exist two generic constants $C_1, C_2>0$ such that for any suitable function $g$
\[
C_1 |||(\iI-\pP)
g |||^2
\leq \Big(\cL g,\, g\Big)_{L^2}\leq  C_2 |||
g |||^2     \, ,
\]
where  $\pP$ is the $L^2$-orthogonal
projection onto the null space $\mathcal{N}$.
\end{theo}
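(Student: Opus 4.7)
The plan is to handle both bounds by exploiting a single symmetric identity for $(\cL g, g)_{L^2}$ obtained from the pre/post-collisional change of variables together with the detailed balance $\mu\mu_* = \mu'\mu'_*$. With $\Psi = g/\sqrt\mu$ this identity reads
\[
(\cL g, g)_{L^2} = \tfrac14 \iiint B(v-v_*,\sigma)\, \mu\mu_*\, [\Psi' + \Psi'_* - \Psi - \Psi_*]^2 \, d\sigma\, dv\, dv_*.
\]
Using the elementary splitting
\[
\sqrt{\mu\mu_*}\,(\Psi' - \Psi) = \sqrt{\mu_*}\,(g'-g) + g'\sqrt{\mu_*/\mu'}\,(\sqrt\mu - \sqrt{\mu'}),
\]
and its $v \leftrightarrow v_*$ analogue, I would expand the square so that the two summands of $|||g|||^2$ appear as the dominant quadratic forms, with the remaining contributions collected into a remainder $R(g)$.

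For the upper bound, Cauchy--Schwarz on $R(g)$ together with the Taylor estimate $(\sqrt{\mu'} - \sqrt\mu)^2 = O(\theta^2 \langle v\rangle^2 \langle v_*\rangle^2 \mu^c\mu_*^c)$, which tames the angular singularity $b \sim \theta^{-2-2s}$, yields $|R(g)| \leq C |||g|||^2$ and hence $(\cL g, g)_{L^2} \leq C_2 |||g|||^2$. The same expansion, read in the opposite direction, gives an \emph{approximate} coercivity of the form
\[
(\cL g, g)_{L^2} \geq c_1 |||g|||^2 - c_0 \|g\|_{L^2_\nu}^2
\]
for some weight $\nu$ consistent with the comparison to weighted fractional Sobolev norms in Proposition \ref{part1-prop2.1}.

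For the sharp lower bound, self-adjointness of $\cL$ together with $\cL\pP = 0$ reduces the problem to $g \perp \cN$, since $(\cL g, g)_{L^2} = (\cL(\iI-\pP)g, (\iI-\pP)g)_{L^2}$. On $\cN^\perp$ I would absorb the residual $L^2_\nu$-term by contradiction: a normalized sequence $g_n \perp \cN$ with $|||g_n||| = 1$ and $(\cL g_n, g_n)_{L^2} \to 0$ is bounded in a weighted fractional Sobolev space by Proposition \ref{part1-prop2.1}, hence (after a cutoff in $|v|$, using the decay of $\mu_*$ and the negativity of $\gamma$ in the kinetic factor $\Phi_\gamma$ to control the tail in $|||\cdot|||$) relatively compact in $L^2_{\mathrm{loc}}$. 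Any weak limit $g_\infty$ lies in $\cN \cap \cN^\perp = \{0\}$ by the approximate coercivity, while lower semicontinuity of $|||\cdot|||$ together with the tail control forbids $|||g_n||| \to 0$, giving a contradiction.

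The main obstacle is the algebraic decomposition of the first step: because $b(\cos\theta) \sim \theta^{-2-2s}$ makes the individual pieces of any naive expansion divergent, only particular combinations remain integrable, and the detailed balance $\mu\mu_* = \mu'\mu'_*$ must be tracked carefully through the angular singularity so that the cross-terms in $R(g)$ are controlled by $|||g|||^2$ itself, rather than by any weaker norm. The soft-potential regime $\gamma + 2s \leq 0$ adds the further difficulty that $\Phi_\gamma$ provides no positive weight gain, so all weight bookkeeping---both in $R(g)$ and in the tail control for the compactness argument---must be done with care.
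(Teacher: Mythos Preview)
Your upper-bound strategy is essentially the paper's: the symmetric identity you write is equivalent to the computation in Lemma~\ref{part1-lemm2.1.1}, and expanding $\sqrt{\mu'_*}g'-\sqrt{\mu_*}g$ reproduces $J_1+J_2$ plus a cross term bounded by $2\sqrt{J_1J_2}$, giving $(\cL g,g)\le 2(\cL_1 g,g)\le |||g|||^2$. The approximate coercivity you sketch also works: from $(a+b)^2\ge\tfrac12 a^2-b^2$ and the Taylor bound one gets $2(\cL_1 g,g)\ge\tfrac12 J_1-J_2\ge\tfrac12|||g|||^2-\tfrac32 J_2$, and $J_2\lesssim\|g\|_{L^2_{s+\gamma/2}}^2$ by the Taylor estimate (note: the factor is $\mu_*^c$ \emph{or} $\mu^c$, not both).

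The genuine gap is in the compactness step. Your approximate coercivity comes with the residual $\|g\|_{L^2_{s+\gamma/2}}^2$, and you need the $|||\cdot|||$-unit ball to be precompact in $L^2_{s+\gamma/2}$. But Proposition~\ref{part1-prop2} only gives $|||g|||^2\gtrsim\|g\|_{H^s_{\gamma/2}}^2+\|g\|_{L^2_{s+\gamma/2}}^2$: the $H^s$ regularity provides local compactness, but the weight control is \emph{exactly} $L^2_{s+\gamma/2}$, so there is no uniform tail decay in that same space --- the embedding is not compact and your contradiction collapses. (Your appeal to ``negativity of $\gamma$'' does not help and the theorem covers $\gamma\ge 0$ as well; the line ``forbids $|||g_n|||\to 0$'' is also confused, since $|||g_n|||=1$ by normalization.)

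The paper closes this gap differently. In Proposition~\ref{part1-radja-mouhot-strain} it proves the sharper estimate $(\cL_1 g,g)\ge\tfrac{1}{10}|||g|||^2-C\|g\|_{L^2_{\gamma/2}}^2$, with the residual in the \emph{strictly weaker} weight $\gamma/2$. This is not ``reading the expansion backwards'': it requires the additional identity $2(\beta-\alpha)\alpha=\beta^2-\alpha^2-(\beta-\alpha)^2$ applied to the cross term, together with the cancellation lemma, to show $2(\cL_1 g,g)\ge J_2-C\|g\|_{L^2_{\gamma/2}}^2$ and thereby transfer $J_2$ to the left side. With the residual now in $L^2_{\gamma/2}$, the paper then invokes Mouhot's spectral-gap result $(\cL g,g)\ge C\|(\iI-\pP)g\|_{L^2_{\gamma/2}}^2$ rather than a compactness argument. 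Your compactness route \emph{would} succeed once the residual is pushed down to $L^2_{\gamma/2}$ (tail tightness then follows from the $L^2_{s+\gamma/2}$ bound since $s>0$), but you must first carry out the refined step of Proposition~\ref{part1-radja-mouhot-strain}; without it the argument does not close.
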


This coercivity estimate of the linearized collisional operator will prove to be crucial for the global existence of classical solutions to the Boltzmann equation. For this purpose, the analysis on the
nonlinear operator is necessary, and we prove the following upper bound estimate.

\begin{theo}\label{theorem-1.1-b} For all $0< s<1$,
assume that $\gamma >\max\{-3,-\frac 32 -2s\}$. Then, one has,
\begin{align*}%\label{Est-theorem-1.1-b}
\left| \big(\Gamma (f , g),\, h\big)_{L^2} \right| &\lesssim \Big\{ \|f\|_{L^2_{s+\gamma/2}}|||g|||
+ \|g\|_{L^2_{s+\gamma/2}}|||f||| \\
&+ \min\big\{  ||f||_{L^2} || g||_{L^2_{s+\gamma /2}} ,||f||_{L^2_{s+\gamma /2}} || g||_{L^2} \big\} \Big\}|||h||| \, ,\notag
\end{align*}
 for suitable functions $f, g, h$ .
\end{theo}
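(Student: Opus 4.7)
The plan is to begin from
\[
\bigl(\Gamma(f,g),h\bigr)_{L^2}=\iiint B(v-v_*,\sigma)\sqrt{\mu_*}\bigl(f'_*g'-f_*g\bigr)h\,d\sigma\,dv_*\,dv,
\]
insert $\pm\sqrt{\mu'_*}f'_*g'h$ in the gain term and apply the standard pre-post collisional change of variables $(v,v_*)\leftrightarrow(v',v'_*)$ together with the microscopic identity $\sqrt{\mu_*\mu}=\sqrt{\mu'_*\mu'}$. This delivers the workhorse decomposition
\[
\bigl(\Gamma(f,g),h\bigr)_{L^2}=\underbrace{\iiint B\sqrt{\mu_*}\,f_*g(h'-h)}_{\mathcal{I}_1}+\underbrace{\iiint B\bigl(\sqrt{\mu_*}-\sqrt{\mu'_*}\bigr)f'_*g'h}_{\mathcal{I}_2},
\]
in which the two cancellations that define the non-isotropic norm $|||\cdot|||$ appear explicitly. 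The alternative decomposition $f'_*g'-f_*g=g'(f'_*-f_*)+f_*(g'-g)$ gives, by the same procedure, the symmetric expression with the roles of $f$ and $g$ interchanged, which I will run in parallel.

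For $\mathcal{I}_1$ I would apply Cauchy--Schwarz in the measure $\Phi b\,d\sigma\,dv_*\,dv$, pairing $\sqrt{\mu_*}(h'-h)$ with $f_*g$. The $h$-factor integrates to precisely the first piece of $|||h|||^2$, whereas the conjugate factor $\iiint\Phi b f_*^2 g^2$ is angular-divergent and must be tamed by first splitting $h'-h$ into a $\theta$-large smooth piece and a $\theta$-small piece in which the first-order Taylor polynomial $(v'-v)\cdot\nabla h$ is subtracted. The quadratic angular cancellation then makes $\int b(\cos\theta)\theta^2\,d\sigma\sim\int_0^{\pi/2}\theta^{1-2s}\,d\theta$ convergent for $s<1$, and Proposition \ref{part1-prop2.1} identifies the remaining velocity-side quantity with $\|f\|_{L^2_{s+\gamma/2}}|||g|||$; the $f\leftrightarrow g$ symmetric decomposition delivers $\|g\|_{L^2_{s+\gamma/2}}|||f|||$. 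The weight exponent $s+\gamma/2$ is fixed by the interplay of the kinetic factor $|v-v_*|^\gamma$ with the angular moment that survives the Taylor subtraction.

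For $\mathcal{I}_2$ the elementary inequality $\bigl|\sqrt{\mu_*}-\sqrt{\mu'_*}\bigr|\lesssim\theta|v-v_*|\bigl(\sqrt{\mu_*}+\sqrt{\mu'_*}\bigr)$ already absorbs one power of $\theta$, and a Cauchy--Schwarz which pairs $(\sqrt{\mu_*}-\sqrt{\mu'_*})h$ (after an auxiliary $v\leftrightarrow v_*$ swap, which is permitted by the symmetry of $B$) with the second piece $\iiint\Phi b\,h_*^2(\sqrt{\mu'}-\sqrt{\mu})^2$ of $|||h|||^2$ converts $\mathcal{I}_2$ into a bound again controlled by $\|f\|_{L^2_{s+\gamma/2}}|||g|||$ and its symmetric twin. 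The hypothesis $\gamma>-3/2-2s$ enters here and in the Taylor remainder for $\mathcal{I}_1$ as the sharp threshold ensuring that the effective weight $|v-v_*|^{\gamma+2s}\mu_*^{1/2}$ is square-integrable in $v_*$ against the Gaussian tails of $f$ and $g$ once all angular moments have been computed.

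Finally, the \emph{minimum} term in the statement arises from the regular part of the cross section (say $\theta\ge\theta_0$), where $\int b\,d\sigma<\infty$ and $f'_*g'-f_*g$ may be treated by direct Cauchy--Schwarz without ever invoking $|||f|||$ or $|||g|||$: one factor is placed in plain $L^2$ and the other in $L^2_{s+\gamma/2}$, and the symmetry between $f$ and $g$ yields the two alternatives whose minimum appears in the bound. The principal technical obstacle I anticipate is the Taylor/cancellation step for $\mathcal{I}_1$: balancing the quadratic angular gain against the polynomial factor $|v-v_*|^\gamma$ and the $v$-derivatives of $h$, while preserving the sharp exponent $s+\gamma/2$, is what forces the admissible range $\gamma>\max\{-3,-3/2-2s\}$ and is the step in which the full strength of the norm comparison in Proposition \ref{part1-prop2.1} is used.
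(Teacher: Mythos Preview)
Your decomposition $\mathcal{I}_1+\mathcal{I}_2$ is legitimate, but the treatment you sketch for $\mathcal{I}_1$ has a real gap: Taylor-expanding $h$ around $v$ introduces $\nabla h$ (and, in the remainder, $\nabla^2 h$), neither of which is controlled by $|||h|||$, since the non-isotropic norm carries at most $H^s_{\gamma/2}$ regularity with $s<1$. You cannot trade the angular symmetry of $(v'-v)\cdot\nabla h$ for convergence of $\int b\,\theta^2\,d\sigma$ while keeping only $|||h|||$ on the right-hand side. The paper avoids this entirely: it first symmetrises to the form \eqref{part1-2.2.5}, namely $(\Gamma(f,g),h)=\tfrac12\iiint B(f'_*g'-f_*g)(\sqrt{\mu_*}\,h-\sqrt{\mu'_*}\,h')$, and applies Cauchy--Schwarz so that the $h$-dependence is captured at once by $|||h|||$. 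All the difficulty is then pushed into $A=\iiint b\,\Phi\,\mu_*^{1/2}(f'_*g'-f_*g)^2$, where the differences $(g'-g)$, $(f'_*-f_*)$, $(\mu_*^{1/8}-{\mu'_*}^{1/8})$ supply the needed $\theta$-regularisation without ever differentiating $h$.

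More seriously, even this corrected Cauchy--Schwarz route reaches only $\gamma>-\tfrac32$; this is exactly Proposition~\ref{part1-prop5}, and the barrier is the kinetic singularity $|v-v_*|^\gamma$ near the diagonal, which forces the use of $\Phi_{2\gamma}/\tilde\Phi_\gamma$ and hence $2\gamma>-3$. To descend to $\gamma>-\tfrac32-2s$ the paper brings in two ingredients you do not mention. First, an algebraic identity (Lemma~\ref{part2-weight-radja-lemma-2.1}) writes $\sqrt{\mu_*}=(\mu^a-\mu_*^a)^k\sum_i\alpha_i\mu_*^{a_i}\mu^{b_i}+\sum_i\alpha'_i\mu_*^{c_i}\mu^{d_i}$ with $d_i>c_i>0$; this splits $\Gamma$ into a piece $\Gamma_\mu$ carrying the factor $(\mu^a-\mu_*^a)^k$, which smooths $|v-v_*|^\gamma$ and is handled by the previous method, and remainder terms of the form $(Q(\mu^{c}f,\mu^{c}g),\mu^{d}h)$ in which all three entries have Gaussian decay. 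Second, for these remainders one further splits $\Phi_\gamma=\Phi_c+\Phi_{\bar c}$ and treats $Q_c$ on the Fourier side via the Bobylev formula (Lemma~\ref{part2-weight-radja-proposition-4.2}); it is precisely in those estimates that the integrability $\langle\xi_*\rangle^{-(3+\gamma+2s)}\in L^2(\RR^3)$, i.e.\ $\gamma>-\tfrac32-2s$, is used. Your sketch contains no analogue of either step, and the threshold you quote does not arise from the argument you give. Finally, the $\min$ term does not come from an angular cutoff $\theta\ge\theta_0$: it emerges from the term $A_3\sim\iint\langle v-v_*\rangle^{\gamma+2s}f_*^2 g^2$ (see the proof of Proposition~\ref{part2-prop5.2}), where in the soft regime $\gamma+2s\le0$ one may place the weight on either factor.
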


We will then concentrate on the global existence of
solutions, both weak and strong, for the {\it non-cutoff soft potential case}
in the framework of small perturbation of an equilibrium state. Even though some
estimates hold for the general case and will be used in the forthcoming papers,
the condition $\gamma +2s \le 0$ will be imposed  in the main
existence results. In the Part II \cite{amuxy4-3}, we will then present
the global existence theory for the hard potential case, that is, the condition $\gamma +2s > 0$  imposed. Furthermore,
the qualitative behavior of the solutions, such as
the uniqueness, non-negativity, regularity and convergence rate to the equilibrium will be investigated  in \cite{amuxy4-4}. Note that both the global existence and the qualitative study on the solution behavior
were firstly investigated in \cite{amuxy3} for the Maxwellian molecule case where a generalized uncertainty
principle obtained in \cite{amuxy-nonlinear-b} was used.

We begin with a local existence of classical solutions that  holds true in general case.
\begin{theo}\label{part2-theorem-anygamma}
Assume that the cross-section satisfies \eqref{part2-hyp-2} with $\gamma+ 2s\le 0$, $0<s <1$ and $\gamma >-3$.
Let $N \ge 6$ and $\ell\ge N$.
For a small $\varepsilon>0$, if $||g_0||_{\cH^N_\ell (\RR^6 )} \le \varepsilon$,
then there exists $T>0$ such that the Cauchy problem \eqref{part2-4-2-1.5} admits a solution
$$
g\in L^\infty ([0, T];\, \cH^N_\ell (\RR^6 )).
$$
\end{theo}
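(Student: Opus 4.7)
The plan is to construct the local solution by a linearized iteration scheme, uniformly bounded in $\cH^N_\ell(\RR^6)$ via energy estimates that combine the coercivity of $\cL$ (Theorem~\ref{part1-theo2}) with the upper bound on $\Gamma$ (Theorem~\ref{theorem-1.1-b}). Set $g^0 \equiv 0$ and, for $n\ge 0$, define $g^{n+1}$ as the solution of the linear Cauchy problem
\begin{equation*}
\partial_t g^{n+1} + v\cdot\nabla_x g^{n+1} + \cL g^{n+1} = \Gamma(g^n, g^n),\qquad g^{n+1}|_{t=0}=g_0.
\end{equation*}
Existence for each linearized step can be handled by a standard mollification/Galerkin procedure (or a fixed-point argument in a weighted $L^2$ space), so the substantive content is the uniform a~priori estimate.

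For the a~priori estimate, apply $\partial^\alpha_\beta$ with $|\alpha|+|\beta|\le N$ to the equation, multiply by $W_{\ell-|\beta|}^2 \partial^\alpha_\beta g^{n+1}$ and integrate over $\RR^6_{x,v}$. The transport term $v\cdot\nabla_x$ contributes zero from integration in $x$, while the commutator $[\partial_\beta, v\cdot\nabla_x]$ transfers one $v$-derivative into an $x$-derivative, which is absorbed by the hierarchy $W_{\ell-|\beta|}\le W_{\ell-|\beta|+1}$ built into the norm $\|\cdot\|_{\cH^N_\ell}$. The $\cL$ term, after commuting $\partial^\alpha_\beta$ past $\cL$ (a computation analogous to the Maxwellian case in \cite{amuxy3}, with the extra $|v-v_*|^\gamma$ handled by the velocity weights), yields, via Theorem~\ref{part1-theo2} and integration in $x$, the dissipation
\begin{equation*}
C_1 \sum_{|\alpha|+|\beta|\le N} \int |||W_{\ell-|\beta|}(\iI-\pP)\partial^\alpha_\beta g^{n+1}|||^2\, dx
\end{equation*}
up to lower-order terms absorbable into this principal part. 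The trilinear term becomes, after Leibniz:
\begin{equation*}
\sum_{\substack{\alpha_1+\alpha_2=\alpha\\ \beta_1+\beta_2=\beta}} \int\!\! \bigl(\Gamma(\partial^{\alpha_1}_{\beta_1} g^n,\, \partial^{\alpha_2}_{\beta_2} g^n),\; W_{\ell-|\beta|}^2\partial^\alpha_\beta g^{n+1}\bigr)_{L^2_v}\, dx,
\end{equation*}
to which we apply Theorem~\ref{theorem-1.1-b} in $v$ pointwise in $x$. Distributing the weight $W_{\ell-|\beta|}$ symmetrically onto the two factors of $\Gamma$, using the Sobolev embedding $H^3_{x}\hookrightarrow L^\infty_x$ (allowed since $N\ge 6$) to put the lower-order factor in $L^\infty_x$, and then Cauchy--Schwarz in $x$, every term is bounded by
\begin{equation*}
\|g^n\|_{\cH^N_\ell} \cdot \|g^n\|_{|||\cdot|||,\cH^N_\ell} \cdot \|g^{n+1}\|_{|||\cdot|||,\cH^N_\ell},
\end{equation*}
where $\|\cdot\|_{|||\cdot|||,\cH^N_\ell}$ denotes the norm induced by integrating $|||W_{\ell-|\beta|}\partial^\alpha_\beta\cdot|||^2$. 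Combining with the coercivity and Young's inequality produces
\begin{equation*}
\tfrac{d}{dt}\|g^{n+1}\|_{\cH^N_\ell}^2 + c\,\|g^{n+1}\|_{|||\cdot|||,\cH^N_\ell}^2
\lesssim \|\pP g^{n+1}\|_{\cH^N_\ell}^2 + \|g^n\|_{\cH^N_\ell}^2\,\|g^n\|_{|||\cdot|||,\cH^N_\ell}^2,
\end{equation*}
and the $\pP g^{n+1}$ term is controlled by $\|g^{n+1}\|_{\cH^N_\ell}$ since $\pP$ maps into a finite-dimensional weighted space. A Gronwall argument on $[0,T]$ with $T$ small then yields $\sup_{[0,T]}\|g^{n+1}\|_{\cH^N_\ell}\le 2\varepsilon$ whenever the same holds for $g^n$, closing the uniform bound.

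To extract a limit, I would show that $\{g^n\}$ is Cauchy in $L^\infty([0,T];\cH^{N-1}_{\ell-1})$ (or any slightly weaker norm) by subtracting consecutive equations, writing
\begin{equation*}
\Gamma(g^n, g^n)-\Gamma(g^{n-1},g^{n-1}) = \Gamma(g^n-g^{n-1}, g^n)+\Gamma(g^{n-1}, g^n-g^{n-1}),
\end{equation*}
and running the same energy estimate on the difference. Passing to the limit in the weak formulation gives a solution in $L^\infty([0,T];\cH^N_\ell)$ by weak-$*$ compactness and weak lower-semicontinuity. The \emph{main obstacle} is the algebra of the derivative-weight distribution in the nonlinear term: in the soft-potential regime $\gamma+2s\le 0$ one has $s+\gamma/2\le 0$, so the weighted norm $\|\cdot\|_{L^2_{s+\gamma/2}}$ appearing in Theorem~\ref{theorem-1.1-b} is \emph{weaker} than $\|\cdot\|_{L^2}$, making the $\min$ in Theorem~\ref{theorem-1.1-b} decisive for ensuring that at least one factor can be absorbed into the non-isotropic dissipation while the other is controlled by the plain $\cH^N_\ell$ norm; organizing the Leibniz split so that the top-order factor always lands on the dissipative side (using Sobolev embedding $H^3_{x,v}\hookrightarrow L^\infty_{x,v}$ on the other factor, permissible since $N\ge 6$) is the delicate bookkeeping step, and it is precisely here that the assumption $\ell\ge N$ is needed, so that each $W_{\ell-|\beta|}$-weighted energy remains positive and one has room to split $W_{\ell-|\beta|}=W_{\ell-|\beta_1|-|\beta_2|/2}\cdot W_{\cdots}$ between the two factors of $\Gamma$.
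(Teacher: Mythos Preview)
Your overall architecture (iteration plus weighted energy estimate plus compactness) is correct, but the core trilinear estimate you invoke does not cover the stated hypotheses. Theorem~\ref{theorem-1.1-b} requires $\gamma>\max\{-3,\,-\tfrac32-2s\}$, whereas Theorem~\ref{part2-theorem-anygamma} assumes only $\gamma>-3$ (together with $\gamma+2s\le 0$). For small $s$ these differ: e.g.\ $s=\tfrac12$, $\gamma=-2.9$ satisfies the hypotheses of the theorem you are trying to prove but not those of Theorem~\ref{theorem-1.1-b}, so your argument would fail there. The paper closes this gap by using, in place of Theorem~\ref{theorem-1.1-b}, the pair of estimates in Propositions~\ref{part2-prop5.1} and~\ref{part2-prop5.2}, both valid for all $\gamma>-3$; these are then packaged (with the commutator estimate of Proposition~\ref{part2-weight-radja-lemma-6.1}) into the weighted trilinear bound of Proposition~\ref{part2-prop5.5}, which is what actually feeds the energy inequality~\eqref{part2-4-2-5.6}. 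The price is that Proposition~\ref{part2-prop5.2} carries extra terms like $\|\mu^{1/40}f\|_{L^2}\|\mu^{1/60}g\|_{H^{\max(-\gamma/2,1)}}$ with a positive Sobolev index on $g$; these are harmless for \emph{local} existence (the Gaussian factor absorbs any polynomial) but are precisely why the paper separates the local and global theorems.

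Two further points of divergence from the paper, less serious but worth noting. First, your iteration $\partial_t g^{n+1}+v\cdot\nabla_x g^{n+1}+\cL g^{n+1}=\Gamma(g^n,g^n)$ differs from the paper's linearization~\eqref{part2-4-2-2.16}, which keeps $\cL_1$ on the left, moves $\cL_2$ to the right, and writes the source as $\Gamma(f,g)-\cL_2 f$ with $g$ still the unknown in the second slot of $\Gamma$. The paper's form is the one that corresponds to $\partial_t G+v\cdot\nabla_x G=Q(F,G)$ at the level of the original distribution, and it lets the $\cB^N_\ell$-norm of the unknown be absorbed directly by the $\cL_1$-coercivity (Proposition~\ref{part1-radja-mouhot-strain}) without first extracting $\pP$. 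Your scheme can also be made to close, but the bookkeeping is different. Second, the weight is not ``distributed symmetrically'' onto the two arguments of $\Gamma$; rather, one writes $W_{\ell-|\beta|}\cT(F,G)=\cT(F,W_{\ell-|\beta|}G)+[W_{\ell-|\beta|},\cT(F,\cdot)]G$ and estimates the commutator via Proposition~\ref{part2-weight-radja-lemma-6.1} (this is \eqref{part2-4-2-5.4+1}). Your ``room to split $W_{\ell-|\beta|}$'' heuristic is not how the argument actually runs.
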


Since we are interested in getting global existence results,
the next statement deals
with this issue  asking only for control of $x$ derivatives.

\begin{theo}\label{part2-theo1.1}
Assume that the cross-section satisfies \eqref{part2-hyp-2} with $\gamma+ 2s\le 0$, $0<s <1$ and $\gamma >
\max\{-3, -\frac32 -2s\}$. Let $N\geq 3$. For a
small $\varepsilon>0$, if
$
\|g_0\|_{H^N(\RR^3_x; L^2(\RR^3_v))}\leq \varepsilon,
$
then the Cauchy problem \eqref{part2-4-2-1.5} admits a global solution
$$
g \in L^\infty([0, +\infty[;\, H^N(\RR^3_x; L^2(\RR^3_v))).
$$
\end{theo}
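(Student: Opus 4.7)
The approach is a standard continuation argument combining a local existence result in $H^N(\RR^3_x; L^2(\RR^3_v))$ with a global a priori inequality of the form
\begin{equation*}
\frac{d}{dt}\cE_N(t) + c\,\cD_N(t) \le C\sqrt{\cE_N(t)}\,\cD_N(t),
\end{equation*}
where $\cE_N(t)\simeq \|g(t)\|_{H^N_x L^2_v}^2$ and $\cD_N(t)$ is a suitable dissipation functional. As soon as $\|g_0\|_{H^N_x L^2_v}$ is small enough that $\sqrt{\cE_N(t)}<c/(2C)$ persists, the inequality forces $\cE_N$ to be non-increasing and extends any local solution to $[0,\infty)$. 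Local existence in this unweighted space can be obtained either by approximating $g_0$ by data in $\cH^N_\ell$ and applying Theorem \ref{part2-theorem-anygamma} together with the uniform $H^N_x L^2_v$ bound below, or directly via an iteration scheme $\partial_t g^{n+1}+v\cdot\nabla_x g^{n+1}+\cL g^{n+1}=\Gamma(g^n,g^n)$ based only on Theorems \ref{part1-theo2} and \ref{theorem-1.1-b}. The hypothesis $\gamma+2s\le 0$ is used crucially here to give $\|\cdot\|_{L^2_{s+\gamma/2}}\le \|\cdot\|_{L^2}$, so that no $v$-derivatives or polynomial $v$-weights enter the scheme.

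To derive the a priori inequality, I apply $\partial^\alpha_x$ with $|\alpha|\le N$ to \eqref{part2-4-2-1.5}, pair with $\partial^\alpha_x g$ in $L^2(\RR^6_{x,v})$, and sum. The free-transport term $v\cdot\nabla_x$ is skew-symmetric and drops out. The lower bound in Theorem \ref{part1-theo2}, applied pointwise in $x$ and integrated, produces the microscopic dissipation $\cD^{\text{mic}}_N(t)=\sum_{|\alpha|\le N}\int_{\RR^3_x}|||(\iI-\pP)\partial^\alpha_x g|||^2\,dx$. For the right-hand side, the collision-invariance identity $(\Gamma(g,g),\psi)_{L^2_v}=0$ for all $\psi\in\cN$ lets me replace the test function $\partial^\alpha_x g$ by $(\iI-\pP)\partial^\alpha_x g$, so Leibniz reduces the nonlinear contribution to a sum of terms $(\Gamma(\partial^{\alpha_1}_x g,\partial^{\alpha_2}_x g),(\iI-\pP)\partial^\alpha_x g)_{L^2_{x,v}}$ with $\alpha_1+\alpha_2=\alpha$. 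Each such term is controlled by Theorem \ref{theorem-1.1-b} (with the soft-potential simplification above) combined with the Sobolev embedding $H^N(\RR^3_x)\hookrightarrow L^\infty(\RR^3_x)$, valid for $N\ge 3$, which places the lower-order factor in $L^\infty_x L^2_v$ and produces a bound of the type $C\sqrt{\cE_N}\cdot \cD_N^{1/2}\cdot (\cD^{\text{mic}}_N)^{1/2}$ once $\cD_N$ is defined appropriately.

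The main obstacle is that the coercivity supplies dissipation only for the microscopic part $(\iI-\pP)g$, whereas $|||\partial^{\alpha_i}_x g|||$ in the nonlinear estimate still contains a macroscopic component $\|\pP\partial^{\alpha_i}_x g\|_{L^2_v}$ not dominated by $\cD^{\text{mic}}_N$. I would resolve this by the standard macro--micro decomposition: writing $\pP g=\{a+b\cdot v+c\,(|v|^2-3)/2\}\sqrt{\mu}$ and projecting \eqref{part2-4-2-1.5} onto $\cN$ and $\cN^\perp$ yields fluid-type evolution equations for $(a,b,c)(t,x)$, from which one builds an auxiliary interaction functional $\mathcal{G}_N(t)$ satisfying $|\mathcal{G}_N(t)|\lesssim\cE_N(t)$ and
\begin{equation*}
\frac{d}{dt}\mathcal{G}_N(t)+c\,\|\nabla_x(a,b,c)\|_{H^{N-1}_x}^2 \lesssim \cD^{\text{mic}}_N(t)+\sqrt{\cE_N(t)}\,\cD_N(t).
\end{equation*}
Setting $\cE_N^\sharp=\cE_N+\lambda\,\mathcal{G}_N$ with $\lambda>0$ small produces the full dissipation $\cD_N=\cD^{\text{mic}}_N+\|\nabla_x(a,b,c)\|_{H^{N-1}_x}^2$ and closes the bootstrap. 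The absence of a Poincar\'e inequality on $\RR^3$ is not an obstruction because only $\nabla_x\pP g$, not $\pP g$ itself, is needed in $\cD_N$: the zero mode of the macroscopic projection appears on the nonlinear side only through the $L^\infty_x L^2_v$ smallness provided automatically by the continuity argument.
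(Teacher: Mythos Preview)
Your strategy is the paper's: an $H^N_xL^2_v$ energy estimate using the coercivity of $\cL$ on $(\iI-\pP)g$, Theorem~\ref{theorem-1.1-b} for the nonlinearity, and recovery of the macroscopic dissipation $\|\nabla_x(a,b,c)\|_{H^{N-1}_x}^2$ through an interaction functional built from the fluid equations for $(a,b,c)$ (Lemmas~\ref{part2-pcA} and~\ref{part2-microenergy}, combined into the Global Energy Estimate without Weight).

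There is, however, one concrete gap in your nonlinear estimate. You claim that placing the lower-order factor in $L^\infty_x$ suffices, so that the undifferentiated macroscopic mode enters only through the smallness coefficient $\cE_N^{1/2}$. This fails for the purely macroscopic quadratic contribution at $|\alpha|=0$: the term $(\Gamma(\pP g,\pP g),(\iI-\pP)g)_{L^2_{x,v}}$ is controlled by $\int_{\RR^3_x}|\cA(x)|^2\,|||g_2(x,\cdot)|||\,dx$, and after Cauchy--Schwarz this is $\|\cA^2\|_{L^2_x}\,\|g_2\|_{\cX^0}$. Putting one copy of $|\cA|$ in $L^\infty_x$ leaves the other in $L^2_x$, which is not part of $\cD_N$; you obtain $\cE_N\,\cD_N^{1/2}$ rather than $\cE_N^{1/2}\cD_N$, and the continuation argument does not close. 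The remedy, carried out in the paper (Lemma~\ref{part2-abc2} and the treatment of $J^{11}$ in the proof of~\eqref{part2-nonlinear5}), is the Sobolev product inequality
\[
\|\cA^2\|_{L^2(\RR^3_x)}\le \|\cA\|_{L^6(\RR^3_x)}\|\cA\|_{L^3(\RR^3_x)}\lesssim \|\nabla_x\cA\|_{L^2(\RR^3_x)}\|\cA\|_{H^1(\RR^3_x)}\lesssim \cD_N^{1/2}\cE_N^{1/2},
\]
which converts one undifferentiated $|\cA|$ into a gradient via $\dot H^1(\RR^3)\hookrightarrow L^6(\RR^3)$. The paper in fact decomposes $g=g_1+g_2$ \emph{before} applying the trilinear bound and uses sharper estimates when one or both inputs lie in $\cN$ (the quantities $\Psi_1,\Psi_2,\Psi_3$ in the proof of Lemma~\ref{part2-microenergy}); this is not strictly necessary, but your direct application of Theorem~\ref{theorem-1.1-b} will force the $L^6\times L^3$ trick in more places than you anticipate.
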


The above global
existence result is in a non-weighted function space  without $v$ derivatives in the
framework of weak solutions. On the other hand, we will prove the following global existence result
on classical solutions for which the proof is more involved. Note that
for the qualitative study on the solution behavior, such as the  regularity
as will be shown in \cite{amuxy4-4}, solutions
in a function space with $x$ and $v$ derivatives together with weight in $v$ are needed.
Hence, the next theorem also serves for this purpose.

\begin{theo}\label{part2-theo1.2}
Assume that the cross-section satisfies \eqref{part2-hyp-2} with $\gamma+ 2s\le 0$, $0<s <1$ and $\gamma >
\max\{-3, -\frac32 -2s\}$. Let $N\geq 6,\, \ell\geq N$. For a small   $\varepsilon>0$,
 if
$ \|g_0\|_{{\tilde \cH}^{N}_\ell(\RR^6)}\leq \varepsilon,
$
then the Cauchy problem \eqref{part2-4-2-1.5} admits a global solution
$$
g\in L^\infty([0, +\infty[\,;\,\,{\tilde \cH}^{N}_\ell(\RR^6)).
$$
\end{theo}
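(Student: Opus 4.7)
The plan is a standard continuity argument based on the local existence result in Theorem \ref{part2-theorem-anygamma}. I introduce an energy functional $\mathcal{E}_{N,\ell}(t)$ equivalent to $\|g(t)\|_{\tilde{\cH}^N_\ell}^2$ and a dissipation functional $\mathcal{D}_{N,\ell}(t)$ built out of $\sum_{|\alpha|+|\beta|\le N}\int_{\RR^3_x}|||\tilde W_{\ell-|\beta|}\,\pa^\alpha_\beta g(t,x,\cdot)|||^2\,dx$ (suitably augmented to control the hydrodynamic part), and aim to establish the a priori estimate
\begin{equation*}
\frac{d}{dt}\mathcal{E}_{N,\ell}(t) + c_0\,\mathcal{D}_{N,\ell}(t) \;\le\; C\,\sqrt{\mathcal{E}_{N,\ell}(t)}\,\mathcal{D}_{N,\ell}(t).
\end{equation*}
If $\mathcal{E}_{N,\ell}(0)\le\varepsilon^2$ with $\varepsilon$ small enough that $C\varepsilon<c_0/2$, a standard bootstrap keeps $\mathcal{E}_{N,\ell}(t)\le\varepsilon^2$ on the maximal interval of existence, so the local solution extends to $[0,\infty)$.

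To derive this inequality I would apply $\pa^\alpha_\beta$ to \eqref{part2-4-2-1.5}, take the $L^2(\RR^6_{x,v})$ inner product with $\tilde W_{\ell-|\beta|}^2\,\pa^\alpha_\beta g$, and sum over $|\alpha|+|\beta|\le N$ with small positive weights $K^{|\beta|}$. The transport commutator $[\pa^\alpha_\beta,v\cdot\nabla_x]=\sum_j\beta_j\,\pa^{\alpha+e_j}_{\beta-e_j}$ trades one $v$-derivative for one $x$-derivative, so with $K$ small these cross-terms are absorbed by the dissipation at level $|\beta|-1$, and the whole $(\alpha,\beta)$-hierarchy is closed by induction on $|\beta|$. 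The linear collision term is controlled by Theorem \ref{part1-theo2} combined with a weighted commutator estimate for $\cL$ against $\tilde W_{\ell-|\beta|}$, which produces the microscopic part $c_0\,|||(\iI-\pP)\tilde W_{\ell-|\beta|}\pa^\alpha_\beta g|||^2$ (integrated in $x$) of the dissipation.

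The nonlinear term $\bigl(\pa^\alpha_\beta\Gamma(g,g),\,\tilde W_{\ell-|\beta|}^2\,\pa^\alpha_\beta g\bigr)_{L^2(\RR^6)}$ is expanded by Leibniz and bounded pointwise in $x$ by Theorem \ref{theorem-1.1-b}; since $N\ge 6$, the Sobolev embedding $H^2(\RR^3_x)\hookrightarrow L^\infty(\RR^3_x)$ places the low-regularity factors into $L^\infty_x$. Because $s+\gamma/2\le 0$ in the soft potential regime, the $L^2_{s+\gamma/2}$ factors in Theorem \ref{theorem-1.1-b} are dominated by plain $L^2$ factors absorbed into $\sqrt{\mathcal{E}_{N,\ell}}$, while the $|||\cdot|||$ factor feeds $\mathcal{D}_{N,\ell}$. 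Finally, since Theorem \ref{part1-theo2} gives dissipation only on $(\iI-\pP)g$, I recover control of the hydrodynamic part $\pP g=\{a(t,x)+b(t,x)\cdot v+c(t,x)|v|^2\}\sqrt\mu$ by a macroscopic estimate: testing \eqref{part2-4-2-1.5} against well-chosen velocity moments of $\sqrt\mu$ produces local conservation laws for $(a,b,c)$, from which $\sum_{|\alpha|\le N}\|\pa^\alpha\pP g\|_{L^2}^2$ is dominated by the microscopic dissipation plus the nonlinearity.

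The main obstacle is the tension between the $v$-derivative hierarchy and the very weak weight $\tilde W_{\ell-|\beta|}=(1+|v|^2)^{|s+\gamma/2|(\ell-|\beta|)/2}$ that the soft potential assumption $\gamma+2s\le 0$ forces on us. Because the coercivity produces no positive power of $\langle v\rangle$, every weight commutator — of $\tilde W_{\ell-|\beta|}$ with $\cL$, with $v\cdot\nabla_x$, and with $\pa_v^\beta$ — must be paid for exactly by the non-isotropic dissipation together with the one-step weight drop $|\beta|\mapsto|\beta|-1$. Arranging the bookkeeping so that all terms indexed by $(\alpha,\beta,\ell,K)$ close simultaneously, and in particular establishing the weighted commutator estimate for $\cL$ against $\tilde W_{\ell-|\beta|}$ in the $|||\cdot|||$ norm uniformly in $|\beta|\le N$, is where the bulk of the technical work lies.
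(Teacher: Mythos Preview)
Your plan is essentially the paper's own: an energy--dissipation inequality $\frac{d}{dt}\cE_{N,\ell}+\cD_{N,\ell}\lesssim \cE_{N,\ell}^{1/2}\cD_{N,\ell}$ closed by a continuity argument, with the weighted coercivity of $\cL_1$ (Proposition~\ref{part2-weight-radja-proposition-8.1}), the trilinear bound in $\tilde W$-weighted form (Proposition~\ref{part2-upperGG}), the $K^{|\beta|}$ induction to absorb the transport commutator, and a macroscopic estimate for $(a,b,c)$. One organizational difference: the paper runs the weighted hierarchy on the equation for $g_2=(\iI-\pP)g$ rather than $g$, which trades your $\pP$-commutators for an explicit forcing term $[\pP,\,v\!\cdot\!\nabla_x]g$ (the paper's $K_3$); either bookkeeping closes.

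There is one point where your sketch is genuinely off and would not close as written. You claim the macroscopic estimate yields that $\sum_{|\alpha|\le N}\|\pa^\alpha\pP g\|_{L^2}^2$ is dominated by microscopic dissipation plus nonlinearity. In the whole space this is false at $|\alpha|=0$: there is no Poincar\'e inequality, and the zero-frequency part of $(a,b,c)$ does not dissipate. What the moment system \eqref{part2-macroeq} actually gives (Lemma~\ref{part2-pcA}) is
\[
\|\nabla_x\pa^\alpha\cA\|_{L^2_x}^2 \;\le\; -\frac{d}{dt}\Big\{(\pa^\alpha r,\nabla_x\pa^\alpha(a,-b,c))_{L^2_x}+(\pa^\alpha b,\nabla_x\pa^\alpha a)_{L^2_x}\Big\}+\|g_2\|_{\cX^N}^2+\cE_N^{1/2}\cD_N,
\]
for $|\alpha|\le N-1$. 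Two consequences: (i) only $\|\nabla_x\cA\|_{H^{N-1}_x}^2$ enters $\cD_{N,\ell}$, while the undifferentiated $\|\cA\|_{L^2_x}^2$ lives only in $\cE_{N,\ell}$; (ii) the estimate produces a total time derivative of a Kawashima-type cross term, which must be absorbed into the \emph{energy} functional (so $\cE_{N,\ell}$ is only equivalent to $\|g\|_{\tilde\cH^N_\ell}^2$, not equal to it). With these corrections the bootstrap closes exactly as you describe.
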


Let us now review some related works on this topic. First of all,
 the well-posedness theory for the Boltzmann equation has now been well established
under the Grad's angular cutoff assumption. Under this assumption, there exist basically
three frameworks of existence of global solutions. The first one was initiated by Grad \cite{Grad} and firstly
completed by Ukai \cite{ukai-1a,ukai-1b,ukai-2} in the framework of weighted $L^\infty_v$ function space for small
perturbation of an equilibrium, where the
spectrum analysis was used through a bootstrap argument. An important progress on the
existence theory is the introduction of the renormalized solutions for large perturbation
in the framework
of  $L^1_v$ function space by DiPerna-Lions \cite{D-L,Lions94},
where the velocity averaging lemma plays a key role. Recently, solutions in $L^2_v$ framework
were established by macro-micro decompositions and energy method for small perturbation of an equilibrium,
cf \cite{guo,guo-1,liu-1,liu-2}.

However, without Grad's angular cutoff assumption, the established mathematical theories are far less.
In this direction, the spectral analysis of the linearized collisional operator was studied by
Pao \cite{Pao}. In 1990's, some simplified models, such as Kac's model and the Boltzmann equation in lower
dimensions with symmetry, were successfully studied, \cite{D95,desv-ajout1,desv-ajout2}. In 2000's, the mathematical theory
for the spatially homogeneous Boltzmann equation was satisfactorily solved, \cite{al-1,al-saf,desv-wen1,HMUY,MU,MUXY-DCDS}. For the original Boltzmann
equation in physical space, in the framework of renormalized solutions, the only existing result
can be found in \cite{al-3} where the basic existence result is still lacking. There are some local existence results, \cite{alex-two-maxwellian,ukai}, see also the reviews \cite{alex-review,villani2}.

Since 2006, we have been working on the original Boltzmann equation without angular cutoff, cf. \cite{amuxy-nonlinear-b,amuxy-nonlinear-3,amuxy3}. Based on a new
generalized uncertainty principle proved in \cite{amuxy-nonlinear-b}, we developed a new approach for
the regularity study. In the framework of small perturbation of an
equilibrium in the whole space, the first complete global well-posedness theory and regularity were established for the Maxwellian molecule case \cite{amuxy3}. As a continuation of these works, we successfully solve, in this series of papers, the fundamental problems, that is, existence, uniqueness, regularity, non-negativity and convergence
rates of solutions, so that a complete and satisfactory mathematical theory is now established under some minimal regularity requirement on the initial data. Through this analysis,
mathematical tools and techniques from harmonic analysis are used and
some new ones are introduced, such as the generalized uncertainty principle and the above non-isotropic norm. Here we would like to mention that recently by using a different method,  an existence result on  solutions in the torus case was obtained in  \cite{gr-st,gr-st-1,gr-st-2}.

Finally, we present the main strategy of analysis in this paper. Based on the essential
coercivity estimate on the linearized collisional operator and the non-isotropic norm
proven in a first step, what is needed in this paper is then the detailed analysis on the collisional
operator in both unweighted and weighted spaces, where its upper bounds, commutators with
differentiation in $v$ and commutators with weights in $v$ are given. Through this analysis,
we can see the role played by the parameters $\gamma$ and $s$ in the cross section. With
these estimates, the energy method can be applied through the macro-micro decomposition
analysis introduced in \cite{guo,guo-1}. Basically, the microscopic component of the
solution
is controlled by the essential coercivity estimate on the linearized collisional operator
in the non-isotropic norm, while the dissipation on the macroscopic component
is recovered by the system on the fluid functions through the macro-micro decomposition.
Then the nonlinear terms are essentially of higher order in the non-isotropic norm
so that the energy estimate can be closed in the framework of small perturbation.

The rest of the paper is arranged as follows. In the next section, we extend the definition of the
non-isotropic norm introduced in \cite{amuxy3} and then state the main estimates in this
paper. The proof of the upper and lower bound estimates of the non-isotropic norm will
be given in Section 3. In Section 4, we will prove the equivalence of
the Dirichlet form of the linearized collison operator and the square of the non-isotropic
norm. The equivalence of the non-isotropic norms with different kinetic factors and different
weights will be shown in Section 5. An upper bound estimate on the nonlinear collision operator which is useful for the well-posedness
theory for the Boltzmann equation will be given next. However, because of unnecessary restrictions on the values of the parameter $\gamma$, we shall amplify such estimations and obtain some new functional estimates in
$v$ variable only in another Section. The functional estimates in both $v$ and $x $ variables
are given in Section \ref{part2-section3}. With these estimates, the local  and global
existence of both weak and classical solutions are given in the last two sections respectively.

{\bf Notations:} Herein, letters $f$, $g, \cdots$ stand for various suitable  functions, while $C$, $c, \cdots$ stand for various numerical constants, independent from functions $f$, $g, \cdots$ and which may vary from line to line. Notation $A\lesssim B$ means that there exists a constant $C$ such that $A \le C B$, and similarly for $A\gtrsim B$. While $A \sim B$ means that there exist two generic constants $C_1, C_2 >0$ such that
$$C_1A \leq B \leq C_2 A.$$

%%%%%%%%%%%%%%%%%%%%%%%%%%%%%%%%%%%%%%%%%%%%%%%%%%%%%%%%%%%%%%%%%%%%%%
%%%%%%%%%%%%%%%%%%%%%%%%%%%%%%%%%%%%%%%%%%%%%%%%%%%%%%%%%%%%%%%%%%%%%%
\section{non-isotropic norm and estimates of linearized collision operators}\label{part1-section2}
\setcounter{equation}{0}

For later use, we will need to compare the original cross-section with the
situation when its kinetic part is mollified. That is,
 for the function $\Phi(z)$ appearing in the cross-section, we denote by $\tilde\Phi(z)=(1+| z|^2)^{\gamma\over 2}$ its smoothed version.
To show the dependence of the estimates on the mollified or non-mollified
kinetic factor in the cross-session, we will use the notations $Q^{\tilde\Phi}$ and
$Q^{\Phi}$ to denote the Boltzmann collisional operator when the kinetic part is  $\tilde\Phi$
and $\Phi$ respectively. In particular, $Q=Q^\Phi$.
This upper-script will be also used for other operators as well.
\smallskip

First of all, let us recall that
\begin{align*}
\Big(\cL g,\, g\Big)_{L^2} =-\Big(\Gamma(\sqrt\mu\,,
g)+\Gamma(g,\,\sqrt\mu\,),\, g\Big)_{L^2}\geq 0\, ,
\end{align*}
and the definition of the non-isotropic norm
\begin{align}\label{part1-2.2.1}
||| g|||^2&= \iiint \Phi (|v-v_*|)b(\cos\theta) \mu_*\, \big(g'-g\,\big)^2\,\\
 &\,\,\,\,\,\,\,\,+\iiint \Phi (|v-v_*|)b(\cos\theta)g^2_* \big(\sqrt{\mu'}\,\, - \sqrt{\mu}\,\,
\big)^2\nonumber\\
&=J_1+J_2\nonumber\,,
\end{align}
where the integration  is over
$\RR^3_v\times\RR^3_{v_\ast}\times\SS^2_\sigma$.

The following proposition gives a precise version of Theorem \ref{part1-theo2}.
\begin{prop}\label{part1-prop2.1} Assume that the cross-section satisfies (\ref{part2-hyp-2}) with
$0<s<1$ and $\gamma>-3$. Then
there exist two generic constants $C_1, C_2>0$ such that
\begin{equation*}%\label{part1-2.2.2+11}
C_1 ||| (\iI - \pP )g|||^2\leq \Big(\cL g,\, g\Big)_{L^2}
\leq  2\Big(\cL_1 g,\, g\Big)_{L^2}\leq C_2 ||| g|||^2
\end{equation*}
for any suitable function $g$.
\end{prop}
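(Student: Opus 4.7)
The proof splits into three parts corresponding to the three inequalities, and the starting point for all of them is the weak form of the collision operator, $(Q(F,G),\phi)_{L^2_v} = \iiint B F_* G (\phi' - \phi)\, d\sigma\, dv_*\, dv$, combined with the pre/post-collisional involution $(v,v_*) \leftrightarrow (v',v_*')$ and the conservation identity $\mu \mu_* = \mu' \mu'_*$. The plan is to first derive workable symmetrized formulas for $(\cL_1 g, g)_{L^2}$ and $(\cL_2 g, g)_{L^2}$, use these to obtain the upper and middle inequalities by Cauchy--Schwarz, and finally upgrade a direct coercive estimate on $\cL_1$ to the desired lower bound on $\cL$ via a spectral-gap / compactness argument.

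For the upper bound $(\cL_1 g, g)_{L^2} \leq C_2 |||g|||^2$, I would expand $-(\Gamma(\sqrt{\mu}, g),g)_{L^2}$ by writing $g = \sqrt{\mu}\cdot(g/\sqrt{\mu})$ inside the definition of $\Gamma$ and symmetrizing the resulting triple integral with respect to the pre/post-collisional change of variables. Using $\mu\mu_*=\mu'\mu'_*$ one may reorganize the integrand so that the squared differences $(g'-g)^2$ (weighted by $\mu_*$) and $g_*^2(\sqrt{\mu'}-\sqrt{\mu})^2$ appear as the dominant contributions, with residual cross terms that are absorbed into $J_1 + J_2$ by Young's inequality; this yields $(\cL_1 g,g) \lesssim |||g|||^2$. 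For the middle inequality $(\cL g, g) \leq 2(\cL_1 g, g)$, it suffices to prove $(\cL_2 g, g) \leq (\cL_1 g, g)$. After rewriting $-(\Gamma(g,\sqrt{\mu}),g)_{L^2}$ by the same symmetrization procedure, the roles of $g$ and $\sqrt{\mu}$ are swapped in the two building blocks, and Cauchy--Schwarz in the $v_*$ integration combined with the Gaussian decay of $\sqrt{\mu}$ dominates each resulting piece by the corresponding piece of $(\cL_1 g,g)$.

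The lower bound $(\cL g, g)_{L^2} \geq C_1 |||(\iI - \pP)g|||^2$ is the main obstacle. I would proceed in two steps. First, reusing the symmetrization above but now keeping the non-negative squares with their correct signs, I would derive a direct coercive estimate of the form
\[
(\cL_1 g,g)_{L^2} \geq c_0\,|||g|||^2 - C_0\,\|g\|_{L^2_{s+\gamma/2}}^2,
\]
the negative term coming from the cross terms that cannot be absorbed on the coercive side and that are controlled only by a weighted $L^2$ norm strictly weaker than $|||\cdot|||$. Second, I would treat $\cL_2 = -\Gamma(\,\cdot\,,\sqrt{\mu})$ as a relatively compact perturbation of $\cL_1$: because the second argument of $\Gamma$ is rapidly decaying, one can show (following Grad's and Pao's approach, adapted to the non-cutoff setting) that $\cL_2$ is compact on $L^2$ relative to the quadratic form of $\cL_1$. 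Combining this compactness with the non-negativity of $\cL$ and the identification of its kernel as $\cN$, a standard contradiction/compactness argument yields a spectral gap $(\cL g,g) \geq c\|(\iI-\pP)g\|_{L^2}^2$, which then absorbs the $C_0\|g\|_{L^2_{s+\gamma/2}}^2$ remainder from the first step and produces the full non-isotropic coercivity. The hardest technical point will be ensuring that all error terms are controlled uniformly in the parameters $\gamma$ and $s$, particularly in the soft-potential regime $\gamma + 2s \leq 0$ where no positive power weight is gained and the weighted $L^2$ remainder sits on the borderline of what $|||\cdot|||$ can absorb.
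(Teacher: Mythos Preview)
Your overall plan is sound and parallels the paper's structure, but the execution of the lower bound differs from the paper in a way that matters. For the middle inequality the paper does exactly what you suggest, but via a clean algebraic identity: one writes $(\cL_1 g,g)=\tfrac14\iiint B\big[(\sqrt{\mu'_*}g'-\sqrt{\mu_*}g)^2+(\sqrt{\mu'}g'_*-\sqrt{\mu}g_*)^2\big]$ and $(\cL g,g)=\tfrac14\iiint B\big[(\sqrt{\mu'_*}g'-\sqrt{\mu_*}g)+(\sqrt{\mu'}g'_*-\sqrt{\mu}g_*)\big]^2$, and the inequality $(a+b)^2\le 2(a^2+b^2)$ finishes it. For the upper bound, the paper actually gets the sharper two-sided estimate $|||g|||^2\ge(\cL_1 g,g)\ge\tfrac{1}{10}|||g|||^2-C\|g\|_{L^2_{\gamma/2}}^2$ directly from the same symmetrized formula.

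The real divergence is in the coercive lower bound. Two points. First, your remainder is $\|g\|_{L^2_{s+\gamma/2}}^2$, which (as you correctly note) sits on the borderline of what $|||\cdot|||$ controls; the paper works harder algebraically---expanding $2(g'-g)g\sqrt{\mu'_*}(\sqrt{\mu'_*}-\sqrt{\mu_*})$ into four pieces, two of which cancel by the pre/post-collisional involution---to force the remainder into the strictly weaker space $L^2_{\gamma/2}$. Second, the paper does \emph{not} run a compactness/contradiction argument for the spectral gap. Instead it proves a direct, quantitative bound $|(\cL_2 g,h)|\lesssim\|\mu^{1/10^3}g\|_{L^2}\|\mu^{1/10^3}h\|_{L^2}$ (via Taylor expansion of $\sqrt\mu$ and the regular change $v\to v'$), and then simply \emph{cites} Mouhot's explicit coercivity $(\cL g,g)\gtrsim\|(\iI-\pP)g\|_{L^2_{\gamma/2}}^2$ from \cite{mouhot} to absorb the remainder. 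The match between the remainder weight $\gamma/2$ and Mouhot's weighted gap is the point of the whole maneuver.

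Your compactness route would at best produce an unweighted $L^2$ gap, and combining that with an $L^2_{s+\gamma/2}$ remainder does not close for general $\gamma,s$ (in particular whenever $s+\gamma/2>0$). So either sharpen your $\cL_1$ coercivity to get the remainder in $L^2_{\gamma/2}$, or replace the abstract compactness argument by an appeal to the known weighted spectral gap; the paper does both.
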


\smallbreak
Concerning the lower and upper bounds of the non-isotropic norm we have
\begin{prop}\label{part1-prop2}
Assume that the cross-section satisfies (\ref{part2-hyp-2}) with
$0<s<1$ and $\gamma>-3$.
Then there exist two generic constants $C_1, C_2>0$ such that
\[
C_1 \left\{\left\|
g\right\|^2_{H^s_{\gamma/2}}+\left\|
g\right\|^2_{L^2_{s+\gamma/2}}\right\}
\leq ||| g |||^2 \leq  C_2 \left\|
g\right\|^2_{H^s_{s+\gamma/2}}
\]
for any suitable function $g$.
\end{prop}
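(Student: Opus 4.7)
\medskip

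The plan is to establish the two inequalities by decomposing $|||g|||^2 = J_1 + J_2$ and analyzing each piece separately, following the line of argument used for the Maxwellian case in \cite{amuxy3} but now tracking carefully the powers of $\langle v\rangle$ generated by the kinetic factor $\Phi(|v-v_*|) = |v-v_*|^\gamma$.

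For the upper bound $|||g|||^2 \lesssim \|g\|^2_{H^s_{s+\gamma/2}}$, I would treat $J_2$ first. After exchanging the roles of the variables of integration, write $J_2 = \int g_*^2 K(v_*)\, dv_*$ with
\[
K(v_*) = \iint \Phi(|v-v_*|) b(\cos\theta) (\sqrt{\mu'} - \sqrt{\mu})^2 \, d\sigma\, dv.
\]
Since $\sqrt{\mu}$ is smooth and rapidly decaying, Taylor expansion gives $|\sqrt{\mu'}-\sqrt{\mu}| \lesssim \sin(\theta/2)\,|v-v_*|\, M(v,v_*)$ for a Gaussian-type majorant $M$, while the crude bound $(\sqrt{\mu'}-\sqrt{\mu})^2 \lesssim \mu' + \mu$ is used for $\theta$ not small (after applying a cancellation lemma to absorb the non-integrable part of $b$). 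Balancing the two regimes in $\theta$ against the singularity $b \sim \theta^{-2-2s}$ yields $K(v_*) \lesssim \langle v_*\rangle^{2s+\gamma}$, producing the $L^2_{s+\gamma/2}$ contribution. For $J_1$, I would use the Bobylev--Plancherel approach: freezing $v_*$ and applying Fourier analysis in $v$, one shows
\[
\int b(\cos\theta)\, (g'-g)^2 \, d\sigma \approx c\int\!\!\int b(\cos\theta)\,\bigl(|\xi^+|^{2s} + |\xi|^{2s}\bigr)|\hat g(\xi)|^2 d\xi\, d\sigma \quad \text{(modulo lower order)},
\]
which upon integration against $\mu_* \Phi(|v-v_*|)\, dv_*$ and using $\int \mu_*\,|v-v_*|^\gamma\, dv_* \lesssim \langle v\rangle^\gamma$ gives an $H^s_{\gamma/2}$-type bound. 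The extra factor $\langle v\rangle^{2s}$ in the stated upper bound accounts for the commutator terms generated when one converts the $v'$-weighted bound back to a $v$-weighted one.

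For the lower bound $\|g\|^2_{H^s_{\gamma/2}} + \|g\|^2_{L^2_{s+\gamma/2}} \lesssim |||g|||^2$, I would again split. The $L^2_{s+\gamma/2}$ part comes from $J_2$: one needs the matching lower bound $K(v_*) \gtrsim \langle v_*\rangle^{2s+\gamma}$. This can be obtained by restricting the $v$-integral to a shell $|v| \leq R$ where $\sqrt{\mu}$ has a definite lower bound, then showing that on a nondegenerate cone of $\sigma$ directions $|v'-v| \sim |v-v_*|\theta$ is large enough that $(\sqrt{\mu'}-\sqrt{\mu})^2$ is uniformly bounded below; together with $\int_{1/|v-v_*|}^{\pi/2} b(\cos\theta)\sin\theta\, d\theta \sim |v-v_*|^{2s}$ and $\Phi(|v-v_*|) \sim |v_*|^\gamma$ for $|v_*|\gg R$, this gives the required weight. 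The $H^s_{\gamma/2}$ part comes from $J_1$ via the reverse Bobylev computation combined with a coercivity estimate for the pseudo-differential operator associated to $\int b(\cos\theta)(g'-g)^2 d\sigma$; the weight $\mu_*$ contributes the $\langle v\rangle^\gamma$ factor after integration in $v_*$, yielding the fractional Sobolev regularity of order $s$ with weight $\gamma/2$.

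The hardest step will be the lower bound on $J_1$ in the soft-potential regime $\gamma < 0$, because the singular factor $|v-v_*|^\gamma$ near the diagonal interferes with the standard Bobylev--Fourier manipulation that works cleanly for Maxwellian molecules. I would circumvent this by introducing the mollified kinetic factor $\tilde\Phi(z) = (1+|z|^2)^{\gamma/2}$ from the start, proving the inequality first for $Q^{\tilde\Phi}$ and then absorbing the error $\Phi - \tilde\Phi$ into a lower-order term (which is legitimate since $|\Phi - \tilde\Phi|$ is supported and integrable near the origin against the Gaussian $\mu_*$). A secondary technical issue is that the lower bound on $J_1$ only directly yields $H^s$ regularity of $g\sqrt{\mu_*}$ in a joint sense; to extract the pure $H^s_{\gamma/2}$ seminorm for $g$ one must integrate out $v_*$ and deal with the commutator between multiplication by the weight and the fractional derivative, which requires an interpolation argument against the $L^2_{s+\gamma/2}$ term already obtained from $J_2$.
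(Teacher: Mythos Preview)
Your overall strategy---split $|||g|||^2 = J_1 + J_2$, handle $J_2$ directly, and reduce $J_1^\Phi$ to $J_1^{\tilde\Phi}$ before applying the Bobylev--Fourier argument---matches the paper's. But your justification for the reduction step contains a genuine gap. You write that the error $\Phi - \tilde\Phi$ can be absorbed ``since $|\Phi - \tilde\Phi|$ is supported and integrable near the origin against the Gaussian $\mu_*$.'' This is not enough: the quantity to control is
\[
\iiint (\Phi - \tilde\Phi)(|v-v_*|)\, b(\cos\theta)\, \mu_*\, (g'-g)^2\, d\sigma\, dv\, dv_*,
\]
and the angular singularity $b\sim\theta^{-2-2s}$ is still present. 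Mere local integrability of $|v-v_*|^\gamma \mu_*$ in $(v,v_*)$ does nothing to tame $\int b(\cos\theta)(g'-g)^2\, d\sigma$, which is an $H^s$-type seminorm, not an $L^2$ quantity. The paper establishes the needed bound $|J_1^\Phi - J_1^{\tilde\Phi}| \lesssim \|g\|^2_{L^2_{s+\gamma/2}}$ as a separate proposition (Proposition~\ref{part1-radja-equivalence-modified}), and the proof is not elementary: it passes through the \emph{Carleman representation} to rewrite $J_1^\Phi$ as an integral over $(v,h,y)$ with $y\in E_h$ (the hyperplane through the origin orthogonal to $h$), then shows that the contribution from the region $|y|\le |h|$ is lower order for both $\Phi$ and $\tilde\Phi$, while on $|y|\ge |h|$ the two kinetic factors produce equivalent weights after integration in $y$ against the Gaussian.

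The Carleman representation is in fact the key technical device you are missing throughout the lower bounds. The paper also uses it for the lower bound on $J_2$ (Lemma~\ref{part1-radja-moment-estimate}): rather than your shell-plus-cone argument, one rewrites $J_2$ in Carleman variables, decomposes $v = v_1 + v_2$ with $v_2 \in E_h$, restricts to $\vartheta$ near $\pi/2$ (so $|v_1|\lesssim 1$ and $|v_2|\sim |v|$), and uses $\int_{E_h} |y|^{1+2s+\gamma}\mu(v_2+y)\,dy \sim \langle v_2\rangle^{1+2s+\gamma}$ to extract the weight $\langle v\rangle^{2s+\gamma}$. Your alternative for $J_2$ is plausible and may be made to work, but for the $J_1$ comparison there is no obvious substitute for Carleman; the point is precisely that in Carleman coordinates the roles of the singular angular variable ($h$) and the kinetic-factor variable ($y$) are decoupled, which is what lets you compare $|y|^\gamma$ with $\langle y\rangle^\gamma$ after integrating out $y$.
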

>From this estimate and Theorem \ref{part1-theo2}, we can get the following estimate in classical weighted Sobolev spaces
\begin{align}\label{equivalent}
C_1 \left\{\left\|
(\iI - \pP ) g\right\|^2_{H^s_{\gamma/2}}+\left\|
(\iI - \pP ) g\right\|^2_{L^2_{s+\gamma/2}}\right\}
\leq \Big(\cL g,\, g\Big)_{L^2} \leq  C_2 \left\|
g\right\|^2_{H^s_{s+\gamma/2}}     \, .%\notag
\end{align}

In the following, we will use the lower script $\Phi$ on the non-isotropic norm, and so use the notation $|||g|||_{\Phi}$
if we need to
specify its dependence on the kinetic factor $\Phi$. Notations $J_1^{\Phi}, J_2^{\Phi}$ will be also used for the same purpose.

Part of the proof on the lower bound of the non-isotropic norm given
in
Proposition \ref{part1-prop2} is essentially due to the following equivalence
relations.

\begin{prop}\label{part1-prop3}
Assume that the cross-section satisfies (\ref{part2-hyp-2}) with
$0<s<1$ and $\gamma>-3$. Then we have
\begin{equation*}%\label{part1-equi-modify-non-iso}
|||g|||_{\Phi} \sim |||g|||_{\tilde \Phi}\,.
\end{equation*}
\end{prop}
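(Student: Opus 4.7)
The key identity $\tilde\Phi(z)/\Phi(z) = (1+|z|^{-2})^{\gamma/2}$ drives the argument. On $|v-v_*|\ge 1$ this ratio lies in a fixed compact interval $[\min(1,2^{\gamma/2}),\,\max(1,2^{\gamma/2})]$, so $\Phi\sim\tilde\Phi$ pointwise there and the restrictions of $|||g|||_\Phi^2$ and $|||g|||_{\tilde\Phi}^2$ to this region are immediately bilaterally equivalent. The same ratio gives $\tilde\Phi\le\Phi$ on $\RR^3\setminus\{0\}$ for $\gamma\le 0$ (and $\Phi\le\tilde\Phi$ for $\gamma\ge 0$); since $J_1$ and $J_2$ are nonnegative and linear in the kinetic factor, this pointwise bound furnishes one of the two inequalities in $|||g|||_\Phi\sim|||g|||_{\tilde\Phi}$ at no cost. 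The real content is the opposite inequality.

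Assume $\gamma<0$ (the case $\gamma\ge 0$ is easier since both kinetic factors are bounded on the diagonal). After the reduction above it remains to bound, by a constant times $|||g|||_{\tilde\Phi}^2$, the near-diagonal pieces
\begin{align*}
A_1 &= \iiint \chi(v-v_*)\,|v-v_*|^\gamma\, b(\cos\theta)\, \mu_*\, (g'-g)^2\, d\sigma\, dv_*\, dv,\\
A_2 &= \iiint \chi(v-v_*)\,|v-v_*|^\gamma\, b(\cos\theta)\, g_*^2\,(\sqrt{\mu'}-\sqrt\mu)^2\, d\sigma\, dv_*\, dv,
\end{align*}
where $\chi$ is a smooth cutoff supported in $|v-v_*|\le 1$. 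The crucial structural fact is the diagonal cancellation $|v'-v|=|v-v_*|\sin(\theta/2)$, which makes both $(g'-g)^2$ and $(\sqrt{\mu'}-\sqrt\mu)^2$ vanish quadratically on $\{v=v_*\}$. Combined with the angular integrability of $b(\cos\theta)\sin^2(\theta/2)$ (valid for $s<1$), this converts the singular radial weight $|v-v_*|^\gamma$ into an effective weight $|v-v_*|^{\gamma+2}$, which is integrable near the origin precisely because $\gamma>-3$.

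The main obstacle is implementing this absorption at the level of norms, i.e. bounding $A_1$ and $A_2$ by $|||g|||_{\tilde\Phi}^2$ rather than by some strictly higher Sobolev norm of $g$ that we have not assumed under control. For $A_2$ the implementation is direct: the explicit smoothness of $\sqrt\mu$ yields the pointwise bound $(\sqrt{\mu'}-\sqrt\mu)^2\lesssim |v-v_*|^2\sin^2(\theta/2)\,e^{-c(|v|^2+|v_*|^2)}$ on the support of $\chi$, so the $(v,\sigma)$-integration produces a bounded multiple of $e^{-c|v_*|^2}$, and one matches the bound against the corresponding near-diagonal piece of $J_2^{\tilde\Phi}$, which has the same $g_*^2$ structure up to absolute constants. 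For $A_1$ the natural tool is a pre-post collisional symmetrization of $(g'-g)^2$ in $\sigma$ (equivalently a Taylor-expansion argument around $v=v_*$), which isolates an oscillation of $g$ on scales $|v'-v|=|v-v_*|\sin(\theta/2)$ and reduces the integral to the same form as the near-diagonal piece of $J_1^{\tilde\Phi}$, after absorbing the integrable weight $|v-v_*|^{\gamma+2}\mu_*$ into a bounded function of $v$. The hard bit is precisely this last step, as one must avoid losing a power in $|v-v_*|$ beyond what the angular cancellation affords; the assumption $\gamma>-3$ is exactly the threshold needed.
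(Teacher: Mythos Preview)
Your reduction to the near-diagonal pieces is correct, and your handling of $A_2$ is fine since $\sqrt\mu$ is smooth. But your argument for $A_1$ has a genuine gap. You write that $(g'-g)^2$ ``vanishes quadratically on $\{v=v_*\}$'' because $|v'-v|=|v-v_*|\sin(\theta/2)$. This conflates two different things: the geometric identity tells you that $v'\to v$ as $v_*\to v$, but for a general $g$ (merely in the non-isotropic norm, with no Lipschitz or $H^1$ control) there is \emph{no} pointwise bound $(g(v')-g(v))^2\lesssim |v'-v|^2$. The Taylor expansion you invoke is legitimate for $\sqrt\mu$ in $A_2$ but not for $g$ in $A_1$. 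Consequently your sketch supplies no mechanism to extract a factor $|v-v_*|^2$ from $(g'-g)^2$, and without it the singular weight $|v-v_*|^\gamma$ on $\{|v-v_*|\le 1\}$ cannot be traded against the near-diagonal piece of $J_1^{\tilde\Phi}$ (where $\tilde\Phi\sim 1$). The phrase ``pre-post collisional symmetrization'' does not name an identity that produces this factor either.

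The paper's route is substantially different and bypasses this obstruction entirely. It passes to the Carleman representation
\[
J_1^\Phi \,\sim\, \int_{\RR^3_v}\int_{\RR^3_h}\int_{y\in E_h,\,|y|\ge|h|}\frac{|y|^{1+2s+\gamma}}{|h|^{3+2s}}\,\mu(v+y)\,\big(g(v)-g(v+h)\big)^2\,dy\,dh\,dv,
\]
with $E_h$ the hyperplane through $0$ orthogonal to $h$, and the analogous formula for $J_1^{\tilde\Phi}$ with $|y|^{1+2s}\la y\ra^\gamma$ in place of $|y|^{1+2s+\gamma}$. The crucial gain is that the kinetic weight now sits entirely in the $y$-integral, \emph{decoupled} from the difference $(g(v)-g(v+h))^2$. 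One then shows (i) $\int_{E_h}|y|^{1+2s+\gamma}\mu(v+y)\,dy\sim\int_{E_h}|y|^{1+2s}\la y\ra^\gamma\mu(v+y)\,dy$ via the elementary weighted-Gaussian Lemma~\ref{part1-tro-funda} (this is where $\gamma>-3$ enters, as $1+2s+\gamma>-2$), and (ii) the remainder pieces with $|y|\le|h|$ are each bounded by $\|g\|^2_{L^2_{s+\gamma/2}}$. Since $J_2^\Phi\sim\|g\|^2_{L^2_{s+\gamma/2}}\sim J_2^{\tilde\Phi}$ is already in hand, this closes the equivalence. No smoothness of $g$ is invoked anywhere: the Carleman change of variables performs the decoupling that your diagonal-cancellation heuristic cannot.
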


Concerning the dependence on the index $\gamma$ in $\Phi_\gamma = |v-v_*|^\gamma$,  we have
\begin{prop}\label{part1-prop4}
Assume that the cross-section satisfies (\ref{part2-hyp-2}) with
$0<s<1$ and $\gamma>-3$. Then for any $\beta >-3$, we have
\begin{equation*}%\label{part1-shift-index}
|||g|||_{\Phi_{\gamma}} \sim |||\la v \ra^{(\gamma-\beta)/2}g|||_{\Phi_\beta}\,.
\end{equation*}
\end{prop}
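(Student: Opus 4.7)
The plan is to reduce to the smoothed kinetic factor and then compare term-by-term under a weight substitution. By Proposition~\ref{part1-prop3} applied to both cross-sections, it suffices to establish the equivalence with $\tilde\Phi_\gamma(z)=\la z\ra^\gamma$ and $\tilde\Phi_\beta(z)=\la z\ra^\beta$ in place of $\Phi_\gamma,\Phi_\beta$. Setting $\alpha=(\gamma-\beta)/2$ and $h=\la v\ra^\alpha g$, the goal reduces to showing $J_i^{\tilde\Phi_\gamma}(g)\sim J_i^{\tilde\Phi_\beta}(h)$ for $i=1,2$. I carry out each by direct substitution, using Peetre's inequality $\la v-v_*\ra^{\kappa}\lesssim \la v\ra^\kappa\la v_*\ra^{|\kappa|}$ and its reverse $\la v-v_*\ra^\kappa \gtrsim \la v\ra^\kappa\la v_*\ra^{-|\kappa|}$, together with Gaussian absorption of polynomial factors by $\mu_*$, $\mu$, and $\mu'$.

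For $J_2$, substituting $g_*=\la v_*\ra^{-\alpha}h_*$ yields
\[
J_2^{\tilde\Phi_\gamma}(g)=\iiint \la v-v_*\ra^{\gamma}\la v_*\ra^{-2\alpha}\,b(\cos\theta)\,h_*^2\,(\sqrt{\mu'}-\sqrt{\mu})^2,
\]
so that the ratio of kinetic factors with respect to $J_2^{\tilde\Phi_\beta}(h)$ is $\la v-v_*\ra^{\gamma-\beta}\la v_*\ra^{-(\gamma-\beta)}$, which Peetre controls pointwise (in both directions) by polynomials in $\la v\ra$ and $\la v_*\ra$. Since $(\sqrt{\mu'}-\sqrt{\mu})^2\le\mu+\mu'$ carries Gaussian decay in $v$ (using energy conservation $|v'|^2\le|v|^2+|v_*|^2$), these polynomial weights are fully absorbed, and the equivalence $J_2^{\tilde\Phi_\gamma}(g)\sim J_2^{\tilde\Phi_\beta}(h)$ follows.

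For $J_1$, I decompose
\[
g'-g=\la v\ra^{-\alpha}(h'-h)+\bigl(\la v'\ra^{-\alpha}-\la v\ra^{-\alpha}\bigr)h'.
\]
The principal term produces $\iiint \la v-v_*\ra^{\gamma}\la v\ra^{-2\alpha}\,b\,\mu_*(h'-h)^2$; Peetre gives $\la v-v_*\ra^{\gamma-\beta}\la v\ra^{-(\gamma-\beta)}\lesssim \la v_*\ra^{|\gamma-\beta|}$, so this is dominated by $\iiint \la v-v_*\ra^{\beta}\la v_*\ra^{|\gamma-\beta|}\mu_*\,b\,(h'-h)^2$, and $\la v_*\ra^{|\gamma-\beta|}\mu_*$ remains a Gaussian-type weight, so one obtains $\lesssim J_1^{\tilde\Phi_\beta}(h)$ provided the non-isotropic norm is stable under such a replacement—a point verified by revisiting the proof of Proposition~\ref{part1-prop2}. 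The cross term
\[
\iiint \la v-v_*\ra^{\gamma}\,b\,\mu_*\bigl(\la v'\ra^{-\alpha}-\la v\ra^{-\alpha}\bigr)^2(h')^2
\]
is tamed by the Taylor-type estimate $|\la v'\ra^{-\alpha}-\la v\ra^{-\alpha}|\lesssim |v-v'|\bigl(\la v\ra+\la v'\ra\bigr)^{-\alpha-1}$ together with $|v-v'|=\tfrac{|v-v_*|}{\sqrt 2}\sqrt{1-\cos\theta}$: the emerging $\sin^2(\theta/2)$ cancels the angular singularity of $b$, and splitting $(h')^2\le 2(h'-h)^2+2h^2$ reduces the estimate to $J_1^{\tilde\Phi_\beta}(h)$ plus a weighted $L^2$-norm of $h$ controlled by $|||h|||^2_{\tilde\Phi_\beta}$ via Proposition~\ref{part1-prop2}.

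The main obstacle lies in the $J_1$ analysis: first, the absorption of $\la v_*\ra^{|\gamma-\beta|}$ by $\mu_*$ must be upgraded from a pointwise bound to a norm-level statement, verifying that the non-isotropic norm is insensitive to replacing $\mu_*$ by a comparable Gaussian-type weight $\la v_*\ra^{M}\mu_*$; second, the cross term requires careful angular bookkeeping with the $\sin^2(\theta/2)$ cancellation, and the region $v\approx v_*$ must exploit $|v-v'|\le|v-v_*|\sin(\theta/2)$ to offset the degeneracy of Peetre's estimate there. Once these two technical points are secured, the reverse inequality in both $J_1$ and $J_2$ follows by symmetry between $\gamma$ and $\beta$, concluding the argument.
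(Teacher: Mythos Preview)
Your overall strategy is aligned with the paper's: both arguments reduce the comparison of $J_1^{\Phi_\gamma}(g)$ with $J_1^{\Phi_\beta}(\la v\ra^{(\gamma-\beta)/2}g)$ to a principal term (a weighted $\iiint b\,\mu_*^{\rho}(h'-h)^2$ integral with $\rho\ne 1$) plus a cross term coming from the Taylor expansion of the weight, and both then face the same key obstacle---showing that the $J_1$-part of the non-isotropic norm is insensitive to replacing $\mu_*$ by $\la v_*\ra^M\mu_*$ (equivalently, $\mu_*^\rho$ for some $\rho\ne 1$). You have correctly identified this as the crux. However, your proposed resolution---``revisiting the proof of Proposition~\ref{part1-prop2}''---does not close the gap. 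The bounds available from that proof (Lemmas~\ref{part1-coer} and~\ref{part1-J-1-upper}, valid for any $\mu^\rho$) give
\[
\|h\|_{H^s_{\beta/2}}^2 - C\|h\|_{L^2_{s+\beta/2}}^2 \lesssim J_{1,\rho}^{\tilde\Phi_\beta}(h) \lesssim \|h\|_{H^s_{s+\beta/2}}^2,
\]
and the mismatch of weights ($\beta/2$ versus $s+\beta/2$) in the lower and upper Sobolev bounds prevents you from concluding $J_{1,\rho}^{\tilde\Phi_\beta}(h)\lesssim J_{1,1}^{\tilde\Phi_\beta}(h)+\|h\|_{L^2_{s+\beta/2}}^2$ from these alone. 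The paper supplies exactly this missing step as Lemma~\ref{part1-imp-upper-maxwel}: using Bobylev's Fourier-space identity for $\iiint b\,f_*^2(g'-g)^2$, one shows directly that this quantity is controlled by $\|f\|_{L^2_s}^2\bigl(J_{1,\rho}^{\Phi_0}(g)+\|g\|_{L^2_s}^2\bigr)$, which, applied with $f=\mu_{\rho'}^{1/2}$, gives the $\rho\leftrightarrow\rho'$ equivalence \eqref{part1-equivalence-mu}. This Fourier-analytic step is the genuine content of the proposition and cannot be bypassed by the Sobolev-norm sandwich.

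Two smaller remarks. First, for $J_2$ you are working harder than necessary: the paper has already established $J_2^{\Phi_\gamma}(g)\sim\|g\|_{L^2_{s+\gamma/2}}^2$ (Remark~\ref{part1-remark-of-lem3-1}), and since $\|g\|_{L^2_{s+\gamma/2}}=\|\la v\ra^{(\gamma-\beta)/2}g\|_{L^2_{s+\beta/2}}$, the $J_2$-equivalence is immediate without any integrand-level Peetre argument. Second, in your cross term it is cleaner to decompose as $g'-g=\la v'\ra^{-\alpha}(h'-h)+(\la v'\ra^{-\alpha}-\la v\ra^{-\alpha})h$ so that the remainder carries $h$ (unprimed); this avoids the extra splitting $(h')^2\le 2(h'-h)^2+2h^2$ and matches the paper's handling of the term $A_2$ in Lemma~\ref{part1-coer}, which is bounded directly by $\|g\|_{L^2_{s+\gamma/2}}^2$.
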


\subsection{Bounds on the non-isotropic norm}\label{part1-Section3}

\smallskip

This section is devoted to the proof of Proposition
\ref{part1-prop2}. We will
often use the following elementary estimate stated in velocity dimension $n$, since it will be needed for both cases $n=2$ and $n=3$.

\begin{lemm}\label{part1-tro-funda} Let the velocity dimension be $n$, $n\in \NN$, $\rho >0, \delta \in \RR$ and
let $\mu_{\rho,\delta}(u) = \la u \ra^\delta e^{-\rho|u|^2}$ for $u \in \RR^n$. If $\alpha > -n$
and $\beta \in \RR$, then we have
\begin{equation}\label{part1-ele}
I_{\alpha, \beta}(u) =\int_{\RR^n} |w|^\alpha \la w \ra^\beta \mu_{\rho, \delta}(w+u) dw
\sim \la u \ra^{\alpha + \beta}\,.
\end{equation}
\end{lemm}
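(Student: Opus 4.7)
The plan is to reduce the estimate to a careful splitting of the domain of integration, separating the ``bulk'' region where $w$ is close to $-u$ (so that the Gaussian $e^{-\rho|w+u|^2}$ is not yet small) from its complement (where the Gaussian provides exponential decay). After the change of variables $z = w+u$, the integral becomes
\[
I_{\alpha,\beta}(u) \;=\; \int_{\RR^n} |z-u|^\alpha \la z-u\ra^\beta \la z\ra^\delta e^{-\rho |z|^2}\,dz,
\]
so everything is governed by how $|z-u|$ compares with $|u|$ as $z$ ranges over the Gaussian's effective support.

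First I would dispose of the bounded range $|u|\le 1$ by a qualitative argument: the hypothesis $\alpha>-n$ gives local integrability of $|z-u|^\alpha$ near $z=u$, while the Gaussian gives decay at infinity and handles the $\la z\ra^\delta$ factor for any $\delta\in\RR$. Hence $I_{\alpha,\beta}(u)$ is continuous and strictly positive on $\{|u|\le 1\}$, so $I_{\alpha,\beta}(u)\sim 1 \sim \la u\ra^{\alpha+\beta}$ on this compact set.

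Next, for $|u|\ge 2$ I would split $\RR^n_z = A \cup A^c$ with $A=\{|z|\le |u|/2\}$. On $A$ one has $|u|/2 \le |z-u|\le 3|u|/2$, hence $|z-u|\sim \la z-u\ra \sim \la u\ra$, which allows pulling out the factor $\la u\ra^{\alpha+\beta}$ and bounding the remaining $\int_A \la z\ra^\delta e^{-\rho|z|^2}\,dz$ by a finite constant, yielding the upper bound on this region. On $A^c$ we have $|z|\ge |u|/2$, so $e^{-\rho|z|^2}\le e^{-\rho|u|^2/8}\,e^{-\rho|z|^2/2}$, and the remaining integral $\int_{A^c}|z-u|^\alpha \la z-u\ra^\beta \la z\ra^\delta e^{-\rho|z|^2/2}dz$ grows at most polynomially in $\la u\ra$ (bound $|z-u|^\alpha \la z-u\ra^\beta \lesssim \la u\ra^{|\alpha|+|\beta|}\la z\ra^{|\alpha|+|\beta|}$), so the whole contribution is $O(e^{-\rho|u|^2/16})$ and negligible against $\la u\ra^{\alpha+\beta}$. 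For the lower bound, restrict the integral to the unit ball $\{|z|\le 1\}\subset A$: on this ball $|z-u|\sim \la z-u\ra\sim \la u\ra$ and $\la z\ra^\delta e^{-\rho|z|^2}$ is bounded below by a positive constant, giving $I_{\alpha,\beta}(u)\gtrsim \la u\ra^{\alpha+\beta}$.

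The only place demanding some care is the integrability near $z=u$ (i.e.\ $w=0$) in the bulk estimate when $\alpha<0$: the hypothesis $\alpha>-n$ is used exactly here, and once integrability is secured the remaining analysis is the standard Gaussian truncation argument outlined above. I expect no real obstacle beyond bookkeeping the constants and the two cases $|u|\le 1$ versus $|u|\ge 2$ (the intermediate range $1\le |u|\le 2$ is covered by continuity of $I_{\alpha,\beta}$).
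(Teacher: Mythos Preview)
Your approach is correct in outline and close in spirit to the paper's, though the decomposition differs slightly: the paper first reduces to $\beta=0$ by the Peetre-type inequality $\la u\ra^\beta \la u+w\ra^{-|\beta|}\le \la w\ra^\beta \le \la u\ra^\beta \la u+w\ra^{|\beta|}$ (absorbing $\la u+w\ra^{\pm|\beta|}$ into $\mu_{\rho,\delta}$), then splits according to $|w|\le 1$ versus $|w|\ge 1$, whereas you change variables and split by $|z|\le |u|/2$ versus its complement. Both routes work and the lower-bound arguments are essentially identical.

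There is one genuine slip in your $A^c$ estimate. For $|u|\ge 2$ the point $z=u$ lies in $A^c$ (since $|u|>|u|/2$), so when $\alpha<0$ the singularity of $|z-u|^\alpha$ sits inside $A^c$, not in the ``bulk'' $A$ as your last paragraph suggests. Consequently the pointwise bound $|z-u|^\alpha\la z-u\ra^\beta \lesssim \la u\ra^{|\alpha|+|\beta|}\la z\ra^{|\alpha|+|\beta|}$ that you invoke on $A^c$ is false near $z=u$. The fix is easy: further split $A^c$ into $\{|z-u|\le 1\}$, where $\la z\ra^\delta e^{-\rho|z|^2/2}$ is bounded by a constant times $e^{-\rho|u|^2/8}$ and $\int_{|z-u|\le 1}|z-u|^\alpha\,dz$ is finite by $\alpha>-n$, and $\{|z-u|>1\}$, where your pointwise bound is valid. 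With this extra cut the $A^c$ contribution is indeed $O(e^{-c|u|^2})$ as you claim, and the rest of your argument goes through unchanged.
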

\begin{proof}
Since we have
\[
\la u \ra^\beta \la u + w \ra^{-|\beta|}
\leq \la w \ra^\beta \leq \la u \ra^\beta \la u + w \ra^{|\beta|},
\]
it suffices to show \eqref{part1-ele} with $\beta=0$,
by taking $\mu_{\rho,\delta\pm |\beta|}$ instead of $\mu_{\rho, \delta}$.
Taking into account the fact that $\alpha >-n$,
this estimate is obvious when $|u| \leq 1$. If $|u| \geq 1$, then we have
\[
I_{\alpha,0}(u) \geq 4^{-|\alpha|}\la u \ra^\alpha
 \int_{\{|u+w| \leq 1/2\}}\mu_{\rho,\delta}(u+w) dw \gtrsim \la u \ra^\alpha,
\]
because $|u+w| \leq 1/2$ implies that $4^{-1}\la u \ra \leq |w| \leq 4 \la u \ra$.
Noticing that $2|w| \geq \la w \ra$ if $|w| \geq 1$, we have
\begin{align*}
I_{\alpha,0}(u) & \leq \Big(\max_{|w|\leq 1} \mu_{\rho,\delta}(u+w)\Big)
\int_{ \{|w|\leq 1\}} |w|^\alpha dw +
2^{|\alpha|}\int_{ \{|w|\geq 1\}} \la w \ra^\alpha \mu_{\rho,\delta}(u+w)dw \\
& \lesssim \Big( \la u \ra^{|\delta|} e^{-\rho|u|^2/2} + \la u \ra^\alpha
\int_{\RR^n} \la u+w \ra^\alpha \mu_{\rho,\delta}(u+w)dw
\Big)\lesssim \la u \ra^\alpha \,.
\end{align*}

And this completes the proof of the lemma.

\end{proof}

Recall from \eqref{part1-2.2.1} that the non-isotropic norm contains two parts, denoted by $J_1$
and $J_2$ respectively. The estimation on each part will be given in
the following subsections. We start with the estimation on $J_2$ because
the analysis is easier.

%\noindent
%{\bf $J_2$-estimate}%\label{part1-J-2-lower}
 Let us start with the following upper bound on
 $J_2$.

\begin{lemm}\label{part1-J-2-upper}
Under the same assumptions as in Theorem \ref{part1-theo2}, we have
\[
J_2 := \iiint b(\cos\theta)\Phi(|v-v_*|) g^2_* \big(\sqrt{\mu'}\,\, - \sqrt{\mu}\,\,
\big)^2 dv dv_* d \sigma
\lesssim \|g\|_{L^2_{s+\gamma/2}}^2\,.
\]
\end{lemm}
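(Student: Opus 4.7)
The plan is to control $(\sqrt{\mu'}-\sqrt{\mu})^2$ pointwise by splitting the angular integration into a grazing and a non-grazing regime at a threshold $\theta_0=\theta_0(v,v_*)$ defined by $\sin(\theta_0/2)\sim(1+|v-v_*|)^{-1}$. In each regime, the remaining $v$-integration is eventually brought into the form of Lemma \ref{part1-tro-funda}, producing a weight $\la v_*\ra^{\gamma+2s}$ that matches $\|g\|_{L^2_{s+\gamma/2}}^2$. The threshold is motivated by the fact that the singular kernel $b(\cos\theta)\sim\theta^{-2-2s}$ needs to be paired either with a factor $\sim\sin^{2}(\theta/2)$ (recovered from Taylor cancellation when $\theta$ is small) or with an indicator $\mathbf{1}_{\theta>\theta_0}$ (integrable at the cost of a polynomial $(1+|v-v_*|)^{2s}$).

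In the grazing regime $\theta\le\theta_0$, one has $|v'-v|=|v-v_*|\sin(\theta/2)\le 1$, so every intermediate point $v_t=v+t(v'-v)$ stays in the unit ball around $v$. A first order Taylor expansion combined with Cauchy--Schwarz in $t$ and the Gaussian estimate $|\nabla\sqrt{\mu}(w)|^2\lesssim e^{-|w|^2/3}$ yields
$$
(\sqrt{\mu'}-\sqrt{\mu})^2\lesssim |v-v_*|^2\sin^2(\theta/2)\, e^{-c|v|^2}.
$$
The angular factor integrates as $\int_0^{\theta_0}\theta^{-2-2s}\theta^2\sin\theta\, d\theta\sim(1+|v-v_*|)^{2s-2}$, which is finite since $s<1$. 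Then Lemma \ref{part1-tro-funda} applied with $\alpha=\gamma+2, \beta=2s-2$ (admissible because $\gamma>-3$) bounds this contribution by $\|g\|_{L^2_{s+\gamma/2}}^2$.

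In the non-grazing regime $\theta_0<\theta\le\pi/2$ I use the crude estimate $(\sqrt{\mu'}-\sqrt{\mu})^2\le 2\mu+2\mu'$. The $\mu$-piece is immediate: $\int_{\theta>\theta_0}b(\cos\theta)\,d\sigma\lesssim\theta_0^{-2s}\sim(1+|v-v_*|)^{2s}$, and Lemma \ref{part1-tro-funda} with $\alpha=\gamma,\beta=2s$ delivers $\|g\|_{L^2_{s+\gamma/2}}^2$. For the $\mu'$-piece I change variables $v\mapsto v'$ keeping $v_*,\sigma$ fixed: since $\theta\le\pi/2$ the Jacobian $\cos^2(\theta/2)/4$ stays in a fixed compact subset of $(0,1)$, the factor $|v-v_*|^\gamma=|v'-v_*|^\gamma\cos^{-\gamma}(\theta/2)$ is comparable to $|v'-v_*|^\gamma$, and the defining inequality $\sin(\theta/2)(1+|v-v_*|)\ge 1$ transforms to $\sin(\theta/2)\gtrsim(1+|v'-v_*|)^{-1}$; the identical angular estimate and Lemma \ref{part1-tro-funda} then close this term too.

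The main technical point is this last $\mu'$-piece: I need to verify that the change of variables $v\mapsto v'$ together with the transformation of the region $\{\theta>\theta_0\}$ behaves uniformly, and in particular that the constraint in the new variable is still of the form $\sin(\theta/2)\gtrsim(1+|v'-v_*|)^{-1}$. The restriction $\theta\le\pi/2$ enters here precisely, ensuring $\cos(\theta/2)\ge 1/\sqrt{2}$ so both the Jacobian and the comparison $|v-v_*|\sim|v'-v_*|$ are uniform. All other steps are bookkeeping based on $s<1$ (for integrability of the angular kernel against $\theta^2$ near $0$) and $\gamma>-3$ (for the hypotheses of Lemma \ref{part1-tro-funda}).
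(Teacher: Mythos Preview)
Your proof is correct and uses the same core ingredients as the paper: the bound $\int b(\cos\theta)\min(|v-v_*|^2\theta^2,1)\,d\sigma\lesssim|v-v_*|^{2s}$, the regular change of variables $v\mapsto v'$, and Lemma~\ref{part1-tro-funda}. The organization differs slightly. You split the $\sigma$-integral at a threshold $\theta_0$ and extract Gaussian decay in the grazing regime via a Taylor expansion of $\sqrt{\mu}$ along the segment $[v,v']$; in the non-grazing regime you bound $(\sqrt{\mu'}-\sqrt{\mu})^2\le 2\mu+2\mu'$ and handle the $\mu'$ piece by changing variables. The paper instead uses the algebraic factorization
\[
(\sqrt{\mu'}-\sqrt{\mu})^2\le 2(\mu'^{1/4}-\mu^{1/4})^2(\mu'^{1/2}+\mu^{1/2}),
\]
which splits the integral into two pieces $F_1,F_2$ each already carrying a Gaussian ($\mu'^{1/2}$ or $\mu^{1/2}$); the change of variables is performed first on $F_1$, and then the $\min$-estimate is applied in one stroke without an explicit threshold or tracking of intermediate points. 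The paper's version is a bit more streamlined, but your route is equally valid and your verification that the constraint $\sin(\theta/2)(1+|v-v_*|)\ge1$ transforms to $\sin(\theta/2)\gtrsim(1+|v'-v_*|)^{-1}$ after $v\mapsto v'$ is exactly the point one needs to check.
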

\begin{proof}
Note that
\begin{align*}
J_2 &\leq 2 \iiint b\Phi(|v-v_*|) g_*^2 \Big(\mu'^{1/4}
- \mu^{1/4}\Big)^2\Big(\mu'^{1/2}
+ \mu^{1/2}\Big)  dv dv_* d\sigma\\
\lesssim & \iiint
b \,|v' -v_*|^\gamma g_*^2 \Big(\mu'^{1/4}
- \mu^{1/4}\Big)^2 \mu'^{1/2}  dv dv_* d\sigma\\
& +
\iiint
b \,|v -v_*|^\gamma g_*^2 \Big(\mu'^{1/4}
- \mu^{1/4}\Big)^2 \mu^{1/2}  dv dv_* d\sigma \\
=& F_1 + F_2\,.
\end{align*}
By the regular change of variables $v \rightarrow v'$, we have
\begin{align*}
F_1 &\lesssim \iint  |v'-v_*|^\gamma \Big(
\int b(\cos \theta) \min \Big ( |v'-v_*|^2  \theta^2, 1 \Big) d \sigma\Big)
 g_*^2 \mu'^{1/2}  dv' dv_* \\
&\lesssim \int \Big(\int |v'-v_*|^{\gamma+2s} \mu' dv' \Big) g_*^2 dv_*
\lesssim \|g\|^2_{L^2_{s+\gamma/2}}\,,
 \end{align*}
 where we have used Lemma \ref{part1-tro-funda} in the case $n=3$ to get the last inequality.
A direct estimation show thats the same bound holds true for $F_2$.
And this completes the proof of the lemma.
\end{proof}

\begin{rema}
Note that the above lemma holds even if $\Phi$ is replaced by $\tilde \Phi$
by using  Lemma \ref{part1-tro-funda}.
\end{rema}

We now turn to the lower bound for $J_2$.

\begin{lemm}\label{part1-radja-moment-estimate}
Under the assumptions \eqref{part2-hyp-2} with {
$\gamma> -3$}, there exists a constant $ C>0$ such that
\[
J_2 := \iiint b(\cos\theta)\Phi(|v-v_*|) g^2_* \big(\sqrt{\mu'}\,\, - \sqrt{\mu}\,\,
\big)^2 dv dv_* d \sigma
\geq C \|g\|_{L^2_{s+\gamma/2}}^2.
\]
\end{lemm}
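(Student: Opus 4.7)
The plan is to reduce the statement to a pointwise lower bound on
\[
K(v_*) := \iint b(\cos\theta)\,\Phi(|v-v_*|)\,(\sqrt{\mu'}-\sqrt{\mu})^2 \,dv\,d\sigma,
\]
namely $K(v_*) \gtrsim \la v_*\ra^{2s+\gamma}$, from which the bound $J_2 = \int g_*^2\, K(v_*)\,dv_* \gtrsim \|g\|_{L^2_{s+\gamma/2}}^2$ follows at once. A naive expansion of $(\sqrt{\mu'}-\sqrt{\mu})^2$ into $\mu+\mu'-2\sqrt{\mu\mu'}$ is useless because the angular factor $b$ is not integrable on $S^2$, so I must keep the cancellation by restricting to a region of $(v,\sigma)$ where $(\sqrt{\mu'}-\sqrt{\mu})^2$ already has a favorable pointwise lower bound.

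The elementary observation to exploit is that $\mu' \le \mu/4$ implies $\sqrt{\mu'} \le \sqrt{\mu}/2$, and hence $(\sqrt{\mu'}-\sqrt{\mu})^2 \ge \mu/4$. I therefore fix $|v|\le 1$ so that $\mu(v)$ is uniformly bounded below, and identify the set of $\sigma$ on which $\mu'$ is correspondingly small. To do so I use the parametrization $\sigma = \cos\theta\,\hat u + \sin\theta\, e$ with $\hat u=(v-v_*)/|v-v_*|$ and $e\perp\hat u$, $|e|=1$, which yields
\[
v' = \cos^2(\theta/2)\,v + \sin^2(\theta/2)\,v_* + \tfrac{1}{2}|v-v_*|\sin\theta\, e.
\]
Using $e\cdot(v-v_*)=0$ (so that $e\cdot v = e\cdot v_*$) to simplify the cross terms in $|v'|^2$, a short calculation gives the clean identity
\[
|v'|^2 = |v|^2\cos^2(\theta/2) + |v_*|^2\sin^2(\theta/2) + |v-v_*|\sin\theta\,(e\cdot v).
\]
For $|v|\le 1$ and $|v_*|\ge 1$ the last term is at most $4|v_*|\sin(\theta/2)$ in absolute value, whence $|v'|^2 \ge |v_*|\sin(\theta/2)\bigl(|v_*|\sin(\theta/2)-4\bigr)$. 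Choosing a threshold $\sin(\theta/2)\ge C_0/|v_*|$ with $C_0$ sufficiently large then forces $|v'|^2$ as large as one wishes, so that $\mu'\le\mu/4$ throughout this region.

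With this restriction in hand,
\[
K(v_*) \ge \frac{1}{4}\int_{|v|\le 1}|v-v_*|^\gamma \mu(v)\,dv\,\int_{\sin(\theta/2)\ge C_0/|v_*|}b(\cos\theta)\,d\sigma.
\]
For $|v_*|\ge 2$ one has $|v-v_*|\sim |v_*|$ on $|v|\le 1$, so the first factor is $\sim\la v_*\ra^\gamma$. For the angular factor, switching to polar coordinates with axis $\hat u$ and using $b(\cos\theta)\sim K\theta^{-2-2s}$ near $0$ gives
\[
2\pi\!\int_{\theta_0}^{\pi/2}b(\cos\theta)\sin\theta\,d\theta \;\sim\; \int_{\theta_0}^{\pi/2}\theta^{-1-2s}\,d\theta \;\sim\; \theta_0^{-2s} \;\sim\; |v_*|^{2s},
\]
where $\theta_0 = 2\arcsin(C_0/|v_*|)$. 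Combining yields $K(v_*)\gtrsim \la v_*\ra^{2s+\gamma}$ for large $|v_*|$, and the remaining range of bounded $v_*$ is handled by continuity and strict positivity of $K$ on the compact set $\{|v_*|\le 2\}$ (positivity being obvious from $(\sqrt{\mu'}-\sqrt{\mu})^2>0$ whenever $\theta>0$, and finiteness from the companion upper-bound lemma).

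The main obstacle is the two-scale competition in the geometric step between the leading term $|v_*|^2\sin^2(\theta/2)$ and the lower-order error $|v_*|\sin(\theta/2)$ in the formula for $|v'|^2$: this balance dictates the threshold $\sin(\theta/2)\sim 1/|v_*|$, and it is precisely this threshold that produces the $|v_*|^{2s}$ factor from the angular integral needed to match the target exponent $2s+\gamma$.
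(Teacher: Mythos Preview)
Your proof is correct and takes a genuinely more elementary route than the paper. The paper first switches to the $\omega$-representation, then to the Carleman representation $(v,v',v'_*)\mapsto(v,v+h,v+y)$ with $y\in E_h$, decomposes $v=v_1+v_2$ along and across $E_h$, restricts to a thin polar band $\vartheta\in[\pi/2-1/\la v\ra,\,\pi/2]$, and extracts the growth $\la v\ra^{2s+\gamma}$ from the two-dimensional integral $\int_{E_h}|y|^{1+2s+\gamma}\mu(v_2+y)\,dy$ via Lemma~\ref{part1-tro-funda}. You instead stay in the original $(v,v_*,\sigma)$ variables, localize $v$ to the unit ball, and read the angular threshold $\sin(\theta/2)\gtrsim 1/|v_*|$ directly off the explicit identity for $|v'|^2$; the factor $|v_*|^{2s}$ then drops out of $\int_{\theta_0}^{\pi/2}\theta^{-1-2s}\,d\theta$ with no further machinery. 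Both arguments isolate the same critical angular scale $\theta\sim\la v_*\ra^{-1}$, but yours bypasses the Carleman representation entirely, at the price of the small compactness step for bounded $v_*$. That step is sound: strict positivity of $K$ together with lower semicontinuity (Fatou's lemma, the integrand being nonnegative) on the compact set $\{|v_*|\le M\}$ already forces a uniform positive lower bound there, so full continuity of $K$ is not actually needed.
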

\begin{proof}
We will apply the argument used in \cite{Vi99}. By shifting to the $\omega$-representation,
\begin{align*}
v' =v- \big((v -v_*)\cdot \omega \big )\omega \enskip \,
v_*' =v +\big((v -v_*)\cdot \omega \big )\omega \enskip , \omega \in \SS^2,
\end{align*}
in view of  the change of variables $(v,v_*) \rightarrow (v_*,v)$, we get,
\begin{align*}
J_2 = 4 \iiint b(\cos\theta) \sin(\theta/2)\Phi(|v-v_*|) g^2 \big( \sqrt{\mu'_*}\,\,
- \sqrt{\mu_*}\,\,
\big)^2 dv dv_* d \omega\,,
\end{align*}
because $d\sigma = 4 \sin (\theta/2) d \omega$.
Then, we use the Carleman representation.
The idea of this representation
  is to replace the set of variables $(v,v_*,\omega)$ by the set $(v,v',v'_*)$. Here,
$v, v' \in \RR^3$ and $v'_* \in E_{vv'}$, where
$E_{vv'}$ is  the hyperplane passing through $v$ and orthogonal
to $v-v'$. By using the formula
\[
dv_*d\omega = \frac{dv'_* dv'}{|v-v'|^2},
\]
cf.  page 347 of \cite{Vi99}, and by taking the change of variables
 \[
(v,v',v'_*) \rightarrow (v, v+h, v+y),
\]
with $h \in \RR^3$ and $y \in E_h$,  where $ E_h$ is
the hyperplane orthogonal to $h$ passing through the origin in $\RR^3$, we have
\begin{align*}
J_2 \sim &
\int_{\RR_{v}^3} \int_{\RR_h^3}
\int_{y \in E_h \cap {\{|y|\geq |h|\}}} \frac{|y|^{1+2s+\gamma}}{|h|^{1+2s}}
 g(v)^2 \\
& \qquad \times \big(\sqrt{\mu(v+ y )}\,\, - \sqrt{\mu(v+y+h)}\,\,
\big)^2  dv\frac{dh d y}{|h|^2}\,,
\end{align*}
because
\begin{align*}
&|h| = |v'-v| = |v'_* -v| \tan \frac{\theta}{2} = |y| \tan \frac{\theta}{2}, \quad
\theta \in [0, \pi/2] \,, \\
&b(\cos\theta) \sin(\theta/2)\Phi(|v-v_*|)
\sim
\frac{|v_* - v'|^{1+2s+\gamma}}{|v- v'|^{1+2s}} {\bf 1}_{\{|v_*'-v| \geq |v'-v|\}}
\,.
\end{align*}

We decompose $v = v_{1} + v_{2}$, where $v_{2}$ is the orthogonal projection  of
 $v$ on  $E_h$. Since $\mu$ is invariant by rotation, we may assume
 $v = (0,0, |v|)$ without loss of generality. By introducing  the polar coordinates
 \[h = (\rho \sin \vartheta \cos \phi,
 \rho \sin \vartheta \sin \phi, \rho \cos \vartheta)\,, \vartheta \in [0,\pi],
 \, \phi \in [0, 2\pi], \rho >0,
\]
we obtain $|v_{1}|= |v||\cos \vartheta|$,
$|v_{1} +h|= | \, |v|\cos \vartheta + \rho|$
and $|v_{2}|= |v|\sin \vartheta$.
Note that if $0 < \vartheta \leq \pi/2$, then
\begin{align*}
\big(\sqrt{\mu(v+ y)}\,\, - \sqrt{\mu(v+y+h)}\,\,
\big)^2
&= \mu(v_{2}+ y) \big(\sqrt{\mu(v_{1})}\,\, - \sqrt{\mu(v_{1}+h)}\,\,
\big)^2\\
&\geq \mu(v_{2}+ y)\mu(v_{1})\big(1 -e^{-\rho^2/4}\big)^2/(2\pi)^{3/2}.
\end{align*}
Therefore, we have for any $\delta >0$
\begin{align*}
J_2 &\geq C \int_{\RR_{v}^3}g(v)^2
\Big\{\int_{\RR_h^3}\frac{\big(\sqrt{\mu(v_{1})}\,\, - \sqrt{\mu(v_{1} +h)}\,\,
\big)^2}{|h|^{3+2s}}\\
&\qquad \qquad \times
\Big(\int_{y \in E_h \cap { \{|y|\geq |h|\}}}  |y|^{1+2s+\gamma} \mu(v_{2}+ y)dy \Big) dh \Big \}
dv\\
&\geq C \int_{\RR_{v}^3}g(v)^2
\Big\{ \int^{\pi/2}_{\pi/2 -1/\langle v \rangle}
\mu(v_{1})\left(\int_0^\delta \frac{\big(1 - e^{-\rho^2/4}\big)^2}
{\rho^{1+2s}} \right. \\
&\qquad \qquad \times \Big(\int_{y \in E_h }  |y|^{1+2s+\gamma} \mu(v_{2}+ y)dy \\
& \qquad \qquad \left. - \int_{y \in E_h \cap { \{|y|\leq \rho \}}}  |y|^{1+2s+\gamma} \mu(v_{2}+ y)dy\Big)
d \rho \right) \sin \vartheta d \vartheta  \Big\}dv\,.
\end{align*}
Since we have
\[
\int_{y \in E_h \cap { \{|y|\leq \rho \}}}  |y|^{1+2s+\gamma} \mu(v_{2}+ y)dy
\leq \delta^{2s} \int_{y \in E_h}  |y|^{1+\gamma} \mu(v_{2}+ y)dy,
\enskip \mbox{ if $\rho \leq \delta$ },
\]
and it follows from Lemma \ref{part1-tro-funda} in the case $n=2$, that
\[  \int_{y \in E_h}  |y|^{\beta} \mu(v_{2}+ y)dy \sim \la v_{2} \ra^\beta
\enskip \mbox{if $\beta >-2$}\,\,,
\]
there exist two constants $C_1, C_2 >0$ such that if $\rho \leq \delta$, we have
\begin{align*}
\int_{y \in E_h }  |y|^{1+2s+\gamma} \mu(v_{2}+ y)dy &- \int_{y \in E_h \cap {\{|y|\leq \rho \}}}  |y|^{1+2s+\gamma} \mu(v_{2}+ y)dy\\
&\geq C_1 \la v_{2} \ra^{1+2s+\gamma}
-C_2 \delta^{2s} \la v_{2} \ra^{1+\gamma}\,.
\end{align*}
Taking a sufficiently small $\delta>0$ gives
\begin{align*}
J_2
&\geq C \int_{\RR_{v}^3}g(v)^2
\Big\{ \int^{\pi/2}_{\pi/2 -1/\langle v\rangle}
\mu(v_{1})\\
& \qquad \times \left(\int_0^\delta \frac{\big(1 - e^{-\rho^2/4}\big)^2}
{\rho^{1+2s}} d \rho \right) \la v_{2} \ra^{1+2s+\gamma}
\sin \vartheta d \vartheta  \Big\}dv\\
&\geq C_{\delta} \int_{\RR_{v}^3}  \langle v \rangle^{2s+\gamma} g(v)^2
\Big\{ \int^{\pi/2}_{\pi/2 -1/\langle v \rangle}
e^{-|v|^2 \cos^2 \vartheta} \langle v \rangle d \vartheta  \Big \}dv\\
& \geq C_{\delta} \|g\|^2_{s+\gamma/2}\,.
\end{align*}
The proof of the lemma is now completed.
\end{proof}

\begin{rema}\label{part1-remark-of-lem3-1}
In the above proof, the factor $|y|^\gamma$ can be replaced by $\la y \ra^\gamma$,
so that Lemma \ref{part1-radja-moment-estimate} is valid even if $\Phi$
is replaced by $\tilde \Phi = \la v-v_* \ra^\gamma$.
By the above lemma together with Lemma \ref{part1-J-2-upper} and the Remark
after it,  we can conclude
\begin{equation}\label{part1-J-2-equivalence}
J_2^{\Phi} \sim \|g\|^2_{L^2_{s+\gamma/2}} \sim  J_2^{\tilde \Phi}\,.
\end{equation}
\end{rema}

\smallskip
%{\bf $J_1$ estimate}
%\subsection*{$J_1$-estimate}\label{part1-J-1-lower}
We now turn to the estimation of the first term of the non-isotropic norm, that is, $J_1$. We will firstly show that the singular
 behavior of the cross-section when $v=v_*$ can be smoothed out.
This point is given by the following proposition.

\begin{prop}\label{part1-radja-equivalence-modified}
Under the same assumption as in  Theorem \ref{part1-theo2}, we have
\[
J_1^{\Phi} + \|g\|^2_{L^2_{s+\gamma/2}}
 \sim J_1^{\widetilde \Phi} + \|g\|^2_{L^2_{s+\gamma/2}}\,.
\]
\end{prop}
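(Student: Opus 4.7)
The plan is to reduce the equivalence to bounding the difference
\[
J_1^\Phi - J_1^{\tilde\Phi} = \iiint (\Phi - \tilde\Phi)(|v-v_*|)\, b(\cos\theta)\, \mu_*\, (g'-g)^2\, dv\, dv_*\, d\sigma
\]
by $C(J_1^{\tilde\Phi} + \|g\|^2_{L^2_{s+\gamma/2}})$, and symmetrically with the roles of $\Phi$ and $\tilde\Phi$ interchanged; the two-sided bound yields the claimed equivalence. Using the explicit forms $\Phi(z)=|z|^\gamma$ and $\tilde\Phi(z)=(1+|z|^2)^{\gamma/2}$, I would first derive the pointwise splitting
\[
|\Phi(z) - \tilde\Phi(z)| \lesssim |z|^\gamma\, \mathbf{1}_{\{|z|\leq 1\}} + (1+|z|^2)^{(\gamma-2)/2},
\]
where the first term isolates the possible singularity at $z=0$ (relevant only when $\gamma<0$) and the second captures the $O(|z|^{\gamma-2})$ decay at infinity coming from a Taylor expansion. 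Since $(1+|z|^2)^{(\gamma-2)/2} \leq \tilde\Phi(z)$, the contribution of the second term to the difference is dominated by $J_1^{\tilde\Phi}$ and therefore absorbed on the right-hand side.

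The nontrivial task is then to show that
\[
I := \iiint_{\{|v-v_*|\leq 1\}} |v-v_*|^\gamma\, b(\cos\theta)\, \mu_*\, (g'-g)^2\, dv\, dv_*\, d\sigma \lesssim \|g\|^2_{L^2_{s+\gamma/2}}
\]
up to terms absorbable in $J_1^{\tilde\Phi}$. To control $I$, I would split the $\sigma$-integral at a fixed angle $\theta_0 \in (0, \pi/2)$. On the non-singular angular regime $\{\theta \geq \theta_0\}$, where $\int_{\theta \geq \theta_0} b(\cos\theta)\, d\sigma \lesssim \theta_0^{-2s}$, one uses $(g'-g)^2 \leq 2(g^2 + (g')^2)$, a regular change of variables $v \to v'$ in the $(g')^2$ term, and Lemma \ref{part1-tro-funda} (integrating $\mu_*$ against $|v-v_*|^\gamma$ over the ball $\{|v-v_*|\leq 1\}$) to reduce everything to a bound of the form $\|g\|^2_{L^2_{\gamma/2}}$, which embeds into $\|g\|^2_{L^2_{s+\gamma/2}}$ since $s>0$. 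On the truly singular regime $\{\theta < \theta_0\}$, the angular singularity $b(\cos\theta) \sim \theta^{-2-2s}$ must be offset by the vanishing of $(g'-g)^2$ as $\theta \to 0$; I would proceed either via a cancellation-lemma type argument of Alexandre-Desvillettes-Villani-Wennberg flavor, converting $(g'-g)b$ against a smooth function into a regular kernel, or via a Carleman-type change of variables as used in the proof of Lemma \ref{part1-radja-moment-estimate}, in both cases relying on the Gaussian decay of $\mu_*$ to localize $v_*$ near $v$.

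The principal obstacle is precisely this simultaneous small-$\theta$ and small-$|v-v_*|$ regime, where the angular and kinetic singularities reinforce one another. It is in this regime that the distinction between the true kernel $\Phi$ and its mollification $\tilde\Phi$ is genuinely felt, and where the velocity weight $s+\gamma/2$ on the target $L^2$ norm (as opposed to a plain $L^2$ control) becomes essential: after integrating out the Maxwellian in $v_*$, the effective operator acting on $g$ naturally carries a $\langle v \rangle^{2s+\gamma}$ weight, which matches the $L^2_{s+\gamma/2}$ norm appearing on the right-hand side.
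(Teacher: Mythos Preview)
Your reduction to bounding $I = \iiint_{\{|v-v_*|\leq 1\}} |v-v_*|^\gamma\, b\,\mu_*\,(g'-g)^2$ is correct in spirit, and the large-$\theta$ portion is handled as you say, but the small-$\theta$ portion is a genuine gap and neither of your two suggested methods closes it. The cancellation lemma of \cite{al-1} treats expressions \emph{linear} in the unknown, $\int_{\SS^2} b(f'-f)\,d\sigma$; it does not apply to the quadratic $(g'-g)^2$. The related identity $2a(b-a)=(b^2-a^2)-(b-a)^2$, as used in the proof of Lemma~\ref{part1-J-1-upper}, converts $\iiint b\,F_*(G'-G)^2$ into $\big(Q(F,G),G\big)$ plus a cancellation-lemma term, but this yields only an $H^s$ bound, not the $L^2_{s+\gamma/2}$ control you need. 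And indeed the small-$\theta$ part of $I$ genuinely carries $H^s$-type information: for $g$ compactly supported near the origin it is comparable to a local $\dot H^s$-seminorm, so it cannot be dominated by $\|g\|^2_{L^2_{s+\gamma/2}}$ alone; meanwhile on $\{|v-v_*|\leq 1\}$ the ratio $|v-v_*|^\gamma/\tilde\Phi(|v-v_*|)$ is unbounded when $\gamma<0$, so there is no pointwise absorption into $J_1^{\tilde\Phi}$ either.

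The paper takes a different route. It passes to the Carleman representation for both $J_1^\Phi$ and $J_1^{\tilde\Phi}$ from the outset, so that the two differ only through the $y$-weight, $|y|^{1+2s+\gamma}$ versus $|y|^{1+2s}\langle y\rangle^\gamma$, integrated over $\{y\in E_h:\,|y|\geq|h|\}$. The key idea is then to \emph{enlarge} the $y$-domain to all of the hyperplane $E_h$: on the full plane, Lemma~\ref{part1-tro-funda} (in dimension $n=2$) gives
\[
\int_{E_h}|y|^{1+2s+\gamma}\mu(v_2+y)\,dy \ \sim\ \langle v_2\rangle^{1+2s+\gamma}\ \sim\ \int_{E_h}|y|^{1+2s}\langle y\rangle^\gamma\mu(v_2+y)\,dy,
\]
making the two expressions equivalent pointwise in $(v,h)$. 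What remains is to show that the added region $\{|y|\leq|h|\}$ contributes only $O(\|g\|^2_{L^2_{s+\gamma/2}})$ for each kernel; this is the content of estimates \eqref{part1-yoshi-1}--\eqref{part1-yoshi-2} and requires a nontrivial case analysis in polar coordinates. Your gesture toward a Carleman change of variables is thus on the right track, but the substantive work---the enlargement trick and the control of the complementary region---is precisely what is missing from your sketch.
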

\begin{rema}
This proposition  is nothing but Proposition \ref{part1-prop3}
by  Remark \ref{part1-remark-of-lem3-1}.
\end{rema}

\begin{proof}
By using similar arguments as in the proof of Lemma \ref{part1-radja-moment-estimate},
it follows from the
Carleman representation that
\begin{align*}
J_1^{\Phi} &\sim
\int_{\RR_{v}^3} \int_{\RR_h^3}
\int_{y \in E_h \cap \{|y| \geq |h|\}} \frac{|y |^{1+2s+\gamma}}{|h|^{1+2s}}
 \mu(v)\big(g(v+ y)\,\, - g(v+y+h)\,\,
\big)^2  dv \frac{dh d y}{|h|^2}\\
&= \int_{\RR_{v}^3} \int_{\RR_h^3}
\int_{y \in E_h \cap \{|y| \geq |h|\}} \frac{|y|^{1+2s+\gamma}}{|h|^{1+2s}}
 \mu(v+y)\big(g(v)\,\, - g(v+h)\,\,
\big)^2  dv \frac{dh d y}{|h|^2}\,,
\end{align*}
where
%$E_h$ denotes the hyperplane orthognal to $h$ through the origin in $\RR^3$,
%and
 the last equality is a direct consequence of the change of variables
$(v +y, y) \rightarrow (v, -y)$.

Similarly, we have
\[
J_1^{\widetilde \Phi}  \sim
\int_{\RR_{v}^3} \int_{\RR_h^3}
\int_{y \in E_h\cap \{|y| \geq |h|\} } \frac{|y|^{1+2s} \langle y \rangle^\gamma}{|h|^{1+2s}}
 \mu(v+y)\big(g(v + h)\,\, - g(v)\,\,
\big)^2  dv \frac{dh d y}{|h|^2} \,.
\]
We claim that
\begin{align}\label{part1-yoshi-1}
&\int_{\RR_{v}^3} \int_{\RR_h^3}
\int_{y \in E_h \cap \{|y| \leq |h|\}} \frac{|y|^{1+2s+\gamma}}{|h|^{1+2s}}
 \mu(v+y)\big(g(v + h)\,\, - g(v)\,\,
\big)^2  dv \frac{dh d y}{|h|^2}  \\
& \qquad \lesssim \|g\|^2_{L^2_{s+\gamma/2}}\,, \notag \\
&\int_{\RR_{v}^3} \int_{\RR_h^3}
\int_{y \in E_h\cap \{|y| \leq |h|\} } \frac{|y|^{1+2s} \langle y \rangle^\gamma}{|h|^{1+2s}}
 \mu(v+y)\big(g(v + h)\,\, - g(v)\,\,
\big)^2  dv \frac{dh d y}{|h|^2}   \label{part1-yoshi-2}\\
& \qquad \lesssim \|g\|^2_{L^2_{s+\gamma/2}}\,.  \notag
\end{align}
Note carefully that the integration in these estimates is performed for "large" values of $h$.

Once we admit those estimates,
to conclude the proof of the lemma, it suffices to show that
\begin{align*}\label{part1-radja-sim}
G(v,h)= \int_{y \in E_h} |y|^{1+2s+ \gamma} \mu(v+y) dy &\sim
\int_{y \in E_h} |y|^{1+2s} \langle y \rangle^\gamma \mu(v+y)dy = \tilde G(v,h).
\end{align*}
We decompose $v= v_{1} + v_{2}$, where $v_{2}$ is the orthogonal projection  of
 $v$ on  $E_h$. Then we have $\mu(v+y) = \mu(v_1) \mu(v_2+y)$, whence
it follows from
Lemma \ref{part1-tro-funda} together with $1+2s+\gamma > -2$ that
\[
G(v,h) \sim \mu(v_1) \la v_2 \ra^{1+2s+ \gamma} \sim \tilde G(v,h)\,.
\]

It remains to show
\eqref{part1-yoshi-1} and \eqref{part1-yoshi-2}. We write
\begin{align*}%\label{part1-yoshi-1}
&\int_{\RR_{v}^3} \int_{\RR_h^3}
\int_{y \in E_h \cap \{|y| \leq |h|\}} \frac{|y|^{1+2s+\gamma}}{|h|^{1+2s}}
 \mu(v+y)\big(g(v + h)\,\, - g(v)\,\,
\big)^2  dv \frac{dh d y}{|h|^2}\\
&=
\int_{\RR_{v}^3} \int_{\{|h| \leq 1\}}
\int_{y \in E_h \cap \{|y| \leq |h|\}}
\enskip + \int_{\RR_{v}^3} \int_{\{|h| \geq 1\}}
\int_{y \in E_h \cap \{|y| \leq |h|\}} \enskip = A_1 + A_2\,.
\end{align*}
Take a small $\delta >0$ such that $\gamma - \delta >-3$. Then,
in view of $1+\gamma -\delta >-2$, we have
\begin{align*}
A_1 &\leq
\int_{\RR_{v}^3} \int_{\{|h| \leq 1\}}
\int_{y \in E_h}
\frac{|y|^{1+\gamma-\delta}}{|h|^{1-\delta}}
 \mu(v+y)\big(g(v + h)\,\, - g(v)\,\,
\big)^2  dv \frac{dh d y}{|h|^2}\\
&= \int_{\RR_{v}^3}\mu(v_1) \int_{\{|h| \leq 1\}}
\Big(\int_{y \in E_h}
 |y|^{1+\gamma-\delta}
 \mu(v_2+y) dy \Big)\big(g(v + h)\,\, - g(v)\,\,
\big)^2  \frac{dh}{|h|^{3-\delta}}dv \\
&\lesssim \int_{\RR_{v}^3}\mu(v_1)\la v_2\ra^{1+\gamma-\delta} \int_{\{|h| \leq 1\}}
\big(g(v + h)\,\, - g(v)\,\,
\big)^2  \frac{dh}{|h|^{3-\delta}}dv\\
&\lesssim \int_{\RR_{v}^3}\int_{\{|h| \leq 1\}}\Big(\mu(v_1-h) +  \mu(v_1) \Big)\la v_2\ra^{1+\gamma-\delta}
\big|g(v)\big |^2
 \frac{dh}{|h|^{3-\delta}}dv\,,
\end{align*}
where we have used the change of variables $v+h \rightarrow v$ for the factor
$g(v+h)$.
As in the proof of Lemma \ref{part1-radja-moment-estimate},
by assuming
 $v = (0,0, |v|)$, we introduce the polar coordinates
 \[h = (\rho \sin \vartheta \cos \phi,
 \rho \sin \vartheta \sin \phi, \rho \cos \vartheta)\,, \vartheta \in [0,\pi],
 \, \phi \in [0, 2\pi], \rho >0\,.
\]
Since $|v_{1}|= |v||\cos \vartheta|$,
$|v_{1} -h|= | \, |v|\cos \vartheta - \rho|$
and $|v_{2}|= |v|\sin \vartheta$, by using the change of varible $|v|\cos \vartheta
=r$, we obtain
\begin{align*}
A_1 &\lesssim \int_{\RR_{v}^3}\big|g(v)\big |^2 \int_0^1
\frac{1}{\rho^{1-\delta}}\\
&\times \left(\int_{-|v|}^{|v|} \frac{(1+|v|^2 -r^2)^{(1+\gamma -\delta)/2}}{|v|}
\Big(e^{-|r-\rho|^2/2} + e^{-r^2/2}\Big)
dr\right) d\rho dv\,.
\end{align*}
Similarly, if $1+2s-\delta >1$, then we have
\begin{align*}
A_2&\leq
\int_{\RR_{v}^3} \int_{\{|h| \geq 1\}}
\int_{y \in E_h}
\frac{|y|^{1+\gamma+2s-\delta}}{|h|^{1+2s-\delta}}
 \mu(v+y)\big(g(v + h)\,\, - g(v)\,\,
\big)^2  dv \frac{dh d y}{|h|^2}\\
&\lesssim \int_{\RR_{v}^3}\big|g(v)\big |^2 \int_1^\infty
\frac{1}{\rho^{1+2s-\delta}}\\
&\times \left(\int_{-|v|}^{|v|} \frac{(1+|v|^2 -r^2)^{(1+\gamma +2s-\delta)/2}}{|v|}
\Big(e^{-|r-\rho|^2/2} + e^{-r^2/2}\Big)
dr\right) d\rho dv\,.
\end{align*}
If $1+\gamma+2s-\delta \geq 0$, then
\begin{align*}
K(v,\rho )&=\int_{-|v|}^{|v|} \frac{(1+|v|^2 -r^2)^{(1+\gamma +2s-\delta)/2}}{|v|}
\Big(e^{-|r-\rho|^2/2} + e^{-r^2/2}\Big) dr \\
&\leq
\la v \ra^{(\gamma +2s-\delta)/2}
\int_{-|v|}^{|v|}
\Big(e^{-|r-\rho|^2/2} + e^{-r^2/2}\Big) dr
\lesssim \la v \ra^{\gamma+2s},
\end{align*}
which shows
\begin{align}\label{part1-A2es}
A_2 \lesssim
\int_{\RR_{v}^3}\big|g(v)\big |^2 \int_1^\infty
\frac{K(v,\rho)}{\rho^{1+2s-\delta}}d\rho dv
\lesssim  \int \la v \ra^{\gamma+2s}\big|g(v)\big |^2  dv\,.
\end{align}
On the other hand, if $1+\gamma+2s-\delta <0$ and $|v| \geq 3$, then
\begin{align*}
K(v,\rho )&\lesssim \int_{0}^{|v|} \frac{(|v|^2 -r^2)^{(1+\gamma +2s-\delta)/2}}{|v|}
\Big(e^{-|r-\rho|^2/2} + 3 e^{-r^2/2}\Big) dr \\
&\lesssim
|v |^{(-1+\gamma +2s-\delta)/2}
\int_{0}^{|v|} \Big(|v| - r \Big)^{(1+\gamma +2s-\delta)/2}
\Big(e^{-|r-\rho|^2/2} + 3 e^{-r^2/2}\Big) dr\\
&\lesssim
 \la v \ra^{\gamma+2s} +
|v |^{(-1+\gamma +2s-\delta)/2}
\int_{|v|/2}^{|v|} \Big(|v| - r \Big)^{(1+\gamma +2s-\delta)/2}
3 e^{-|r-\rho|^2/2} dr,
\end{align*}
because
\begin{align*}
\int_0^{|v|} \Big(|v| - r \Big)^{(1+\gamma +2s-\delta)/2}
e^{-|r|^2/2} dr &\lesssim |v|^{(1+\gamma +2s-\delta)/2}
\int_0^{|v|/2}e^{-|r|^2/2} dr \\
&+ e^{-|v|^2/8}\int_{|v|/2}^{|v|}
\Big(|v| - r \Big)^{(1+\gamma +2s-\delta)/2} dr,
\end{align*}
where we have used that
 $(1+\gamma +2s-\delta)/2 >-1$ for small $\delta>0$
 that follows from the assumption $\gamma >-3$.
We now consider
\begin{align*}
&\int_1^\infty
\frac{d \rho}{\rho^{1+2s-\delta}}
\int_{|v|/2}^{|v|} \Big(|v| - r \Big)^{(1+\gamma +2s-\delta)/2}
e^{-|r-\rho|^2/2} dr\\
&\leq
\int_{|v|/2}^{|v|}\Big(|v| - r \Big)^{(1+\gamma +2s-\delta)/2}
\Big (\int_{\{|r-\rho|\leq \sqrt{2\log|v|}\}} (|v|/3)^{-(1+2s-\delta)} d\rho \Big)
dr\\
&+ \int_{|v|/2}^{|v|}\Big(|v| - r \Big)^{(1+\gamma +2s-\delta)/2}
\Big (\int_{\{|r-\rho|\geq \sqrt{2\log|v|}\}}\frac{ |v|^{-1}d \rho}{\rho^{1+2s-\delta}}
\Big) dr\\
&\lesssim \Big(|v|^{(1+\gamma +2s-\delta)/2}
\sqrt{2\log|v|} +|v|^{(1+\gamma +2s-\delta)/2}\Big) \lesssim \la v
\ra^{(1+\gamma +2s)/2}\,.
\end{align*}
Therefore, in the case when $1+\gamma+2s-\delta <0$, we
also have \eqref{part1-A2es}.
Similarly, we have

\begin{align*}
A_1\lesssim
\int_{\RR_{v}^3}\big|g(v)\big |^2 \int_0^1
\frac{K(v,\rho)}{\rho^{1-\delta}}d\rho dv
\lesssim  \int \la v \ra^{\gamma+2s}\big|g(v)\big |^2  dv\,,
\end{align*}
which shows \eqref{part1-yoshi-1}. The proof of \eqref{part1-yoshi-2}
is similar, and thus the proof of the proposition is completed.

\end{proof}

\begin{lemm}\label{part1-coer}
There exist constants $C_1, C_2> 0$ such that
\begin{align}\label{part1-radja-Sobolev-coercive}
J_1 &\geq C_1 \|\la v \ra^{\gamma/2} g\|^2_{H^s}
- C_2 \|g\|^2_{L^2_{s+\gamma/2}}\,.
\end{align}
The same conclusion holds even with $\mu$ replaced by
$\mu^\rho$ for any fixed $\rho>0$.
\end{lemm}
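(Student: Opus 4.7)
The plan is to reduce to the mollified kinetic factor $\tilde\Phi=\langle v-v_*\rangle^\gamma$, apply the Carleman representation already derived in the proof of Proposition \ref{part1-radja-equivalence-modified}, lower-bound the resulting inner integral via Lemma \ref{part1-tro-funda}, and reconstruct the weighted Gagliardo seminorm through an angular-slicing argument. Specifically, Proposition \ref{part1-radja-equivalence-modified} gives $J_1^\Phi + \|g\|^2_{L^2_{s+\gamma/2}}\sim J_1^{\tilde\Phi}+\|g\|^2_{L^2_{s+\gamma/2}}$, and since the $L^2$-part $\|g\|^2_{L^2_{\gamma/2}}$ of $\|\langle v\rangle^{\gamma/2}g\|^2_{H^s}$ is trivially absorbed into the lower-order term, the task reduces to establishing
\[
J_1^{\tilde\Phi}\ \gtrsim\ \|\langle v\rangle^{\gamma/2}g\|^2_{\dot H^s}-C\|g\|^2_{L^2_{s+\gamma/2}}.
\]

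For this I would use the Carleman-form identity derived in the proof of Proposition \ref{part1-radja-equivalence-modified},
\[
J_1^{\tilde\Phi}\sim\int_v\int_h\frac{\tilde G(v,h)}{|h|^{3+2s}}(g(v+h)-g(v))^2\,dh\,dv,\qquad \tilde G(v,h):=\int_{y\in E_h,\,|y|\geq|h|}|y|^{1+2s}\langle y\rangle^\gamma\mu(v+y)\,dy,
\]
and restrict to $|h|\leq 1$; the contribution of $|h|\geq 1$ is absorbed into $\|g\|^2_{L^2_{s+\gamma/2}}$ via the crude split $(g(v+h)-g(v))^2\leq 2g(v+h)^2+2g(v)^2$ and an estimate in the style of \eqref{part1-A2es}. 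For $|h|\leq 1$ I decompose $v=v_1+v_2$ with $v_1$ parallel to $h$ and $v_2\in E_h$, so that $\mu(v+y)=\mu(v_1)\mu(v_2+y)$ on $E_h$. Lemma \ref{part1-tro-funda} applied in dimension two with $\alpha=1+2s$ and $\beta=\gamma$, together with a routine control of the tail $\{|y|\geq|h|\}$ valid for $|h|\leq 1$, yields the pointwise lower bound $\tilde G(v,h)\gtrsim\mu(v_1)\langle v_2\rangle^{1+2s+\gamma}$. Parametrizing $h=\rho\omega$ and splitting $v=t\omega+v_\perp$ with $v_\perp\in\omega^\perp$, one has $v_1=t\omega$, $v_2=v_\perp$, so that the Gaussian factor $\mu(v_1)=(2\pi)^{-3/2}e^{-t^2/2}$ localizes $t$ to $|t|\lesssim 1$ where $\langle v_\perp\rangle\sim\langle v\rangle$; together with the polar representation
\[
\|f\|^2_{\dot H^s(\RR^3)}\sim\int_{\SS^2}d\omega\int_{\omega^\perp}\!dv_\perp\int_\RR\int_0^\infty\frac{(f(v_\perp+(t+\rho)\omega)-f(v_\perp+t\omega))^2}{\rho^{1+2s}}\,d\rho\,dt,
\]
and rotational invariance, this allows me to recover $\|\langle v\rangle^{\gamma/2}g\|^2_{\dot H^s}$ up to the weight-translation commutator $\iint g(v)^2(\langle v+h\rangle^{\gamma/2}-\langle v\rangle^{\gamma/2})^2/|h|^{3+2s}\,dh\,dv\lesssim\|g\|^2_{L^2_{s+\gamma/2}}$, the bound on which follows from the elementary inequality $|\langle v+h\rangle^{\gamma/2}-\langle v\rangle^{\gamma/2}|\lesssim\min(|h|,1)\langle v\rangle^{\gamma/2-1}+|h|^{|\gamma|/2}{\bf 1}_{\{|h|\geq 1\}}$.

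The extension to $\mu^\rho$ in the final sentence of the statement requires no modification, since the proof uses only the Gaussian decay of $\mu$ and the orthogonal factorization $\mu(v_1+v_2)=c\,\mu(v_1)\mu(v_2)$ for $v_1\perp v_2$, both preserved under $\mu\mapsto\mu^\rho$ with adjusted constants. The principal obstacle lies in the angular-reconstruction step: the pointwise lower bound on $\tilde G$ effectively captures only directions transverse to $v$, since the Gaussian cutoff in $t$ restricts the useful cone of directions $\omega$ to a band of solid angle $\sim\langle v\rangle^{-1}$ about the equator perpendicular to $v$, and one must argue---via a careful Fubini-slicing argument exploiting rotational invariance in both $v$ and $\omega$---that this band still suffices to reconstruct the full three-dimensional weighted Gagliardo seminorm $\|\langle v\rangle^{\gamma/2}g\|^2_{\dot H^s}$ modulo the controllable commutator and lower-order terms.
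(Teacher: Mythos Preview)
Your approach and the paper's diverge at the key step. Both start by reducing to $J_1^{\tilde\Phi}$ via Proposition~\ref{part1-radja-equivalence-modified}, but the paper does \emph{not} pursue the Carleman representation further. Instead it uses Peetre's inequality $\langle v'-v_*\rangle^\gamma \gtrsim \langle v'\rangle^\gamma/\langle v_*\rangle^{|\gamma|}$ and the algebraic split $2(a+b)^2\geq a^2-2b^2$ to write
\[
2J_1^{\tilde\Phi}\ \geq\ \iiint b\,\frac{\mu_*}{\langle v_*\rangle^{|\gamma|}}\bigl(\tilde g'-\tilde g\bigr)^2\ -\ 2\iiint b\,\frac{\mu_*}{\langle v_*\rangle^{|\gamma|}}\bigl(\langle v\rangle^{\gamma/2}-\langle v'\rangle^{\gamma/2}\bigr)^2 g^2\ =\ A_1+A_2,
\]
with $\tilde g=\langle v\rangle^{\gamma/2}g$. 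The term $A_2$ is bounded by $\|g\|^2_{L^2_{s+\gamma/2}}$ via Taylor expansion. The term $A_1$ is now a \emph{Maxwellian}-type integral (no kinetic factor), and since $\mu_*/\langle v_*\rangle^{|\gamma|}\gtrsim\mu(2v_*)$, the paper applies the Bobylev Fourier-side computation (Proposition~1 of \cite{al-1}) to obtain directly
\[
A_1\ \gtrsim\ \int_{|\xi|\geq 1}|\xi|^{2s}|\hat{\tilde g}(\xi)|^2\,d\xi\ \gtrsim\ \|\tilde g\|^2_{H^s}-C\|\tilde g\|^2_{L^2}.
\]
The isotropy of the symbol $|\xi|^{2s}$ is what makes this painless.

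The angular-reconstruction obstacle you flag is real, and I do not believe the slicing argument you sketch resolves it. After your pointwise bound $\tilde G(v,h)\gtrsim\mu(v_1)\langle v_2\rangle^{1+2s+\gamma}$ and the polar decomposition, the question becomes whether, for fixed $\omega$ and $v_\perp\in\omega^\perp$,
\[
\langle v_\perp\rangle^{1+2s+\gamma}\int_{\RR}e^{-t^2/2}\,D(t)\,dt\ \gtrsim\ \int_{\RR}\langle v_\perp+t\omega\rangle^{\gamma}\,D(t)\,dt\ -\ (\text{lower order}),
\]
where $D(t)=\int_0^1\rho^{-1-2s}|g((t+\rho)\omega+v_\perp)-g(t\omega+v_\perp)|^2\,d\rho$. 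This fails whenever $D$ is supported at large $|t|$: the Gaussian annihilates the left side while the right survives, and the lower-order term cannot be invoked pointwise in $(\omega,v_\perp)$. Equivalently, for each $v$ the equatorial band $\{|v\cdot\hat h|\lesssim 1\}$ captures only tangential increments; for a radial $g$ these are suppressed by a factor $\langle v\rangle^{-1}$ relative to radial increments, and the extra weight $\langle v\rangle^{1+2s}$ together with the band's angular measure $\sim\langle v\rangle^{-1}$ yields a net deficit of $\langle v\rangle^{2s-2}$, which is negative for $s<1$. The global inequality is of course true---it is the lemma---but the mechanism that makes the $\omega$-integration restore full coercivity is precisely the Fourier isotropy exploited by the paper, not a direct Fubini-slicing. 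Your route would need an independent replacement for the Bobylev step, and none is supplied.
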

\begin{proof}
It follows from Proposition \ref{part1-radja-equivalence-modified} that
\begin{align}\label{part1-radja-triple-lower}
C \Big( J_1^{\Phi} + \|g\|^2_{L^2_{s+\gamma/2}} \Big)&\geq  2 \, J_1^{\widetilde \Phi}
 \notag \\
&
\geq \iiint b(\cos\theta)
\frac{\mu_*}{\la v_*\ra^{|\gamma|}}  \, \Big(\la v' \ra^{\gamma/2}g'-
\la v \ra^{\gamma/2} g\,\Big)^2
d\sigma dvdv_* \\
&\quad  - 2 \iiint b(\cos\theta)
\frac{\mu_*}{\la v_*\ra^{|\gamma|}} \Big(
\la v \ra^{\gamma/2} -\la v' \ra^{\gamma/2}\Big)^2 |g|^2
d\sigma dvdv_*  \notag \\
&= A_1+A_2, \notag
\end{align}
because $\widetilde \Phi(|v-v_*|) \sim \displaystyle \la v'-v_* \ra^\gamma \geq
\frac{\la v' \ra^{\gamma}}{\la v_*\ra^{|\gamma|}}$ and $2(a+b)^2 \geq a^2 - 2b^2$.
Setting $\tilde g = \la v \ra^{\gamma/2} g$ and noting $ C_\gamma \mu(v)\la v \ra^{-|\gamma|}
\geq \mu(2v)$ for a $C_\gamma >0 $, as in Proposition 1 of \cite{al-1},    we have
\begin{align*}
C_\gamma \, A_1 &\geq \int_{\RR^{6}}\int_{\mathbb S^{2}}b\Big (\frac{v-v_*}{|v-v_*|} \cdot \,\sigma \Big)
\mu(2v_*)\Big(\tilde g(v)- \tilde g(v')\Big)^2
d\sigma d v_* d v\\
&=(4\pi)^{-3}\int_{\RR^{3}}\int_{\mathbb
S^{2}}b\Big(\frac{\xi}{|\xi|} \,\cdot\,\sigma\Big) \Big\{
\hat{\mu}(0)|\hat{\tilde g}(\xi)|^2+\hat{\mu}(0)|\hat{ \tilde g}(\xi^+)|^2\\
&\qquad \qquad  \qquad -2Re\,\hat{\mu}(\xi^-/2) \hat{ \tilde g}(\xi^+)\bar{\hat{ \tilde g}}(\xi)\Big\} d\sigma d \xi\\
&\geq \frac{1}{2(4\pi)^{3}} \int_{\RR^{3}} |\hat{ \tilde g }(\xi)|^2
{ \left\{\int_{\mathbb S^{2}}b\Big(\frac{\xi}{|\xi|}
\,\cdot\,\sigma\Big) (\hat{\mu}(0)- |\hat{\mu}(\xi^-/2)|) d\sigma\right\}}
d \xi\,.
\end{align*}
Since we have
 $\hat{\mu}(0)- |\hat{\mu}(\xi^-/2)|= c(1- e^{-|\xi^-|^2/8})\geq c' { |\xi^-|^2}$ if $|\xi^-| \leq 1$,
in view of
 $|\xi^-|^2 = |\xi|^2 \sin^2 \theta/2 \geq |\xi|^2 (\theta/\pi)^2$,  we obtain for $|\xi| \geq 1$
\begin{align*}
\int_{\mathbb S^{2}}b\Big(\frac{\xi}{|\xi|}
\,\cdot\,\sigma\Big) { (\hat{\mu}(0)- |\hat{\mu}(\xi^-)|) }d\sigma
%\geq C \xi|^{2s} \enskip \mbox{if} \enskip |\xi| \geq 1\,.
&\geq \int_{|\xi| (\theta/\pi)\leq 1}\sin \theta b(\cos \theta)
{ |\xi|^2 (\theta/\pi)^2 }d \theta\\
&\geq c'' K { |\xi|^2 }\int_0^{1 /|\xi|} \theta^{-1-2s}{ \theta^2} d \theta\\
&= c'' K{ |\xi|^2 }|\xi|^{2s-2}/(2-2s).
\end{align*}
Therefore, we have
\begin{align}\label{part1-coerv}
A_1
&\geq C_1 \int_{|\xi|\geq 1}{ |\xi|^{2s} }|\hat {\tilde g}(\xi)|^2d \xi
\geq C_1 2^{-2s} \int_{|\xi|\geq 1}(1+|\xi|^2)^{s} |\hat{ \tilde g}(\xi)|^2 d \xi\\
&\geq C_1 2^{-2s}\|\la v \ra^{\gamma/2} g\|^2_{H^s(\RR^3_v)}-C_1\| \la v \ra^{\gamma/2} g\|^2_{L^2(\RR^3_v)}\,.\notag
\end{align}

As for $A_2$, we note that
if $v_\tau = v' + \tau(v-v')$ for $\tau \in [0,1]$, then
\[
\la v \ra \leq \la v -v_*\ra + \la v_* \ra \leq \sqrt 2\la v_\tau -v_*\ra + \la v_*\ra
\leq (1+\sqrt 2) \la v_\tau \ra \la v_* \ra,
\]
and $\la v_\tau \ra \leq (1+\sqrt 2) \la v \ra \la v_* \ra$, which show
$\la v_\tau \ra^\beta \leq C_\beta \la v \ra^\beta \la v_* \ra^{|\beta|}$ for any $\beta
\in \RR$. It follows that
\begin{align*}
\Big|\la v \ra^{\gamma/2} -\la v' \ra^{\gamma/2}\Big|
&\leq C_\gamma \int_0^1 \la
v' + \tau(v-v') \ra^{(\gamma/2 -1)} d \tau |v-v_*|\theta\,\\
&\leq C'_\gamma \Big (\la v \ra^{(\gamma/2 -1)}
\la v_* \ra^{|\gamma/2 -1|} \Big) \, \la v-v_* \ra \theta,
\end{align*}
and thus we have
\begin{align*}
A_2 &\leq C \iint \frac{\mu_*}{\la v_*\ra^{|\gamma|}}|g|^2
\Big\{ \Big (\la v \ra^{(\gamma -2)}
\la v_* \ra^{|\gamma-2|} \Big)^2 \Big(
\int_0^{\la v -v_* \ra^{-1}} \theta^{-1-2s}  \Big(
\la v-v_* \ra \theta \Big)^2  d \theta \Big)\\
& \qquad +
\int_{\la v -v_* \ra^{-1}} ^{\pi/2}
\Big (\la v \ra^{\gamma}
+ \la v \ra^\gamma \la v_* \ra^{|\gamma|} \Big)\,\theta^{-1-2s} d\theta \Big\}
 dvdv_* \\
&\leq C \iint   \Big (\la v \ra^{2s + \gamma}
\la v_* \ra^{2s+ \max(|\gamma-2|-|\gamma|, 0)} \Big) \mu_*|g|^2dvdv_*
\leq C \|g\|^2_{L^2_{s+\gamma/2}}\,,
\end{align*}
which together with  \eqref{part1-coerv} yields
the desired estimate \eqref{part1-radja-Sobolev-coercive}.
The last estimate of the lemma is obvious by replacing $\mu$ by $\mu^\rho$ in each step of the above arguments, so that the proof of the lemma is completed.
\end{proof}

Lemma \ref{part1-radja-moment-estimate} together with Lemma \ref{part1-coer}
implies that we have the following lower bound on the non-isotropic norm,
\begin{equation*}%\label{part1-lower-trri}
|||g|||^2 \gtrsim \Big( \|g\|^2_{H^s_{\gamma/2}} + \|g\|^2_{L^2_{s+\gamma/2}}
\Big)\,.
\end{equation*}
Therefore, to complete the proof of Proposition \ref{part1-prop2}, it remains to
show

\begin{lemm}\label{part1-J-1-upper}
Let $\gamma >-3$. Then we have
\[
J_1 \lesssim \|g\|^2_{H^s_{s+\gamma/2}} + \|g\|^2_{L^2_{s+\gamma/2}}\,.
\]
The same conclusion holds even if $\mu$ in $J_1$ is replaced by $\mu^\rho$ for any fixed
$\rho >0$.
\end{lemm}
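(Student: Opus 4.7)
The plan is to first reduce to the smoothed kinetic factor $\tilde\Phi$ via Proposition~\ref{part1-radja-equivalence-modified}, then perform a weight-shift on $g$ to reduce the weighted estimate to a Maxwellian-type upper bound. By that proposition, $J_1^{\Phi}\lesssim J_1^{\tilde\Phi}+\|g\|^2_{L^2_{s+\gamma/2}}$ with $\tilde\Phi(|v-v_*|)=\la v-v_*\ra^\gamma$, so it suffices to bound $J_1^{\tilde\Phi}$. Setting $\alpha:=s+\gamma/2$ and $G(v):=\la v\ra^\alpha g(v)$, so that $\|G\|_{H^s}\sim \|g\|_{H^s_\alpha}$, the identity
\[g'-g=\la v\ra^{-\alpha}(G'-G)+\bigl(\la v'\ra^{-\alpha}-\la v\ra^{-\alpha}\bigr)G'\]
combined with $(x+y)^2\leq 2x^2+2y^2$ yields $J_1^{\tilde\Phi}\leq 2\mathcal{I}_1+2\mathcal{I}_2$, with $\mathcal{I}_1,\mathcal{I}_2$ the triple integrals of the two squared summands against $b\tilde\Phi\mu_*$.

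For $\mathcal{I}_1$, Peetre's inequality $\la v-v_*\ra^\gamma\lesssim \la v\ra^\gamma \la v_*\ra^{|\gamma|}$ gives $\tilde\Phi\la v\ra^{-2\alpha}\lesssim \la v\ra^{-2s}\la v_*\ra^{|\gamma|}\leq \la v_*\ra^{|\gamma|}$, whence
\[\mathcal{I}_1\lesssim \iiint b(\cos\theta)\la v_*\ra^{|\gamma|}\mu_*(G'-G)^2\,d\sigma\,dv\,dv_*.\]
This is a Maxwellian-type ($\gamma=0$) upper bound which, via Bobylev's identity and Fourier analysis on $\SS^2$ as in the coercivity argument of Lemma~\ref{part1-coer}, is controlled by $\|G\|^2_{H^s}\sim\|g\|^2_{H^s_\alpha}$; concretely one verifies
\[\int_{\SS^2}b(\cos\theta)\bigl|\hat{M}(0)-\hat{M}(\xi^-/2)\bigr|\,d\sigma\lesssim \min(|\xi|^2,|\xi|^{2s})\]
with $M(v_*):=\la v_*\ra^{|\gamma|}\mu(v_*)$, which after multiplication by $|\hat G(\xi)|^2$ and integration in $\xi$ delivers the $H^s$ bound. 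For $\mathcal{I}_2$, the mean-value bound $|\la v'\ra^{-\alpha}-\la v\ra^{-\alpha}|\lesssim |v-v'|\bigl(\la v\ra^{-\alpha-1}+\la v'\ra^{-\alpha-1}\bigr)$, combined with $|v-v'|=|v-v_*|\sin(\theta/2)$ and the angular integrability $\int_{\SS^2}b(\cos\theta)\min\{|v-v_*|^2\theta^2,1\}d\sigma\lesssim \la v-v_*\ra^{2s}$ (the same tool used in Lemma~\ref{part1-J-2-upper}), followed by the regular change of variables $v\to v'$ to rewrite $G'^2=\la v'\ra^{2\alpha}(g')^2$ as a function of $v'$, yields $\mathcal{I}_2\lesssim \|g\|^2_{L^2_\alpha}$.

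The main obstacle is the Maxwellian-type upper bound entering $\mathcal{I}_1$: one must carry out the Bobylev-Fourier computation and verify the $\SS^2$ angular-integral bound $\min(|\xi|^2,|\xi|^{2s})$, which delivers the full $H^s$ upper bound on $G$; the low-frequency regime $|\xi|\leq 1$ is absorbed into $\|G\|^2_{L^2}$ and the high-frequency regime $|\xi|\geq 1$ supplies the $H^s$ gain. The last assertion of the lemma, with $\mu$ replaced by $\mu^\rho$ for any $\rho>0$, follows by repeating the argument with the modified Gaussian weight, since only the rapid decay of the weight in $v_*$ is used throughout.
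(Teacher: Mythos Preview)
Your overall strategy --- pass to $\tilde\Phi$ via Proposition~\ref{part1-radja-equivalence-modified}, perform a weight shift on $g$, and split into a difference-of-$G$ term and a difference-of-weights term --- is exactly the paper's approach, and your handling of $\mathcal I_2$ is essentially the paper's argument for its $B_2$. The problem is your treatment of $\mathcal I_1$.

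You claim that $\iiint b\,M_*(G'-G)^2$, with $M=\la v_*\ra^{|\gamma|}\mu_*$, is bounded by $\|G\|_{H^s}^2$ ``via Bobylev's identity and Fourier analysis as in the coercivity argument of Lemma~\ref{part1-coer}''. That argument yields a \emph{lower} bound, not an upper one: the algebraic inequality used there goes the wrong way for what you need. The Bobylev representation (see \eqref{part1-radja-intermid}) reads
\[
\iiint b\,M_*(G'-G)^2 \;=\; \frac{\hat M(0)}{(2\pi)^3}\iint b\,|\hat G(\xi)-\hat G(\xi^+)|^2\,d\sigma\,d\xi \;+\; \text{(cross term)},
\]
and while the cross term is indeed controlled by your angular estimate, the first term is \emph{not} captured by $\int|\hat G(\xi)|^2\int b\,|\hat M(0)-\hat M(\xi^-)|\,d\sigma\,d\xi$. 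Worse, the bound $\iiint b\,M_*(G'-G)^2\lesssim \|G\|_{H^s}^2$ (with no weight) is actually \emph{false}: for $G$ a unit-$L^2$ bump centered at $|v|\sim R$, one has $\|G\|_{H^s}\sim 1$ but $J_1^{\Phi_0}(G)\sim R^{2s}$. The sharp bound carries an extra weight, $\lesssim\|G\|_{H^s_s}^2$, and you lost precisely the compensating factor $\la v\ra^{-2s}$ when you discarded it after Peetre.

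The paper avoids this by shifting by $\gamma/2$ rather than $s+\gamma/2$ (so $G=\la v\ra^{\gamma/2}g$) and then invoking an external upper bound for $(Q^{\Phi_0}(F,G),G)$ from \cite{amuxy-nonlinear-3}, combined with the cancellation lemma, to obtain $\iiint b\,F_*(G'-G)^2\lesssim \|F\|_{L^1_{3s}}\|G\|_{H^s_s}^2$; with $F=\mu\la v\ra^{|\gamma|}$ this gives $B_1\lesssim \|g\|_{H^s_{s+\gamma/2}}^2$. If you want to keep your weight shift, you must retain the $\la v\ra^{-2s}$ factor in $\mathcal I_1$ and argue more carefully --- which, after another add-and-subtract, leads you back to the paper's $B_1$ anyway.
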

\begin{proof}
As for Lemma \ref{part1-coer}, it follows from
Proposition \ref{part1-radja-equivalence-modified} that,
for suitable constants $C_1, C_2 >0$, we have
\begin{align}\label{part1-radja-triple-lower-2}
C_1J_1^{\Phi} -  C_2 \|g\|^2_{L^2_{s+\gamma/2}} &\leq J_1^{\widetilde \Phi} \notag \\
&
\leq 2 \iiint b(\cos\theta)
{\mu_*}{\la v_*\ra^{|\gamma|}}  \, \Big(\la v' \ra^{\gamma/2}g'-
\la v \ra^{\gamma/2} g\,\Big)^2
d\sigma dvdv_* \\
&\quad  + 2 \iiint b(\cos\theta)
{\mu_*}{\la v_*\ra^{|\gamma|}} \Big(
\la v \ra^{\gamma/2} -\la v' \ra^{\gamma/2}\Big)^2 |g|^2
d\sigma dvdv_* \notag\\
&= B_1+B_2, \notag
\end{align}
because $\widetilde \Phi(|v-v_*|) \sim \displaystyle \la v'-v_* \ra^\gamma \leq
{\la v' \ra^{\gamma}}{\la v_*\ra^{|\gamma|}}$ and $(a+b)^2 \leq 2(a^2 + b^2)$.

By the same argument for $A_2$ in the proof of Lemma \ref{part1-coer}, we get
$
B_2 \lesssim
\|g\|_{L^2_{s+\gamma/2}}\,.
$

To estimate $B_1$, we apply Theorem 2.1 of \cite{amuxy-nonlinear-3} about
the upper bound on the collision operator in the Maxwellian molecule case.
It follows from (2.1.9) of \cite{amuxy-nonlinear-3} with $(m, \alpha)=(-s, -s) $ that
\[
\Big| \Big(Q^{\Phi_0 }(F,G),G\Big)
 \Big | \lesssim \|F\|_{L^1_{s+2s} }\|G\|^2_{H^s_s}\,.
\]
Since $2 a(b-a)= - (b-a)^2 + (a^2 -b^2)$, we get
\begin{align*}
\Big( Q^{\Phi_0 }(F,G),G\Big) &=\iiint b F_* G( G' -G)\\
&= - \frac{1}{2}\iiint b F_*(G'-G)^2 + \frac{1}{2}\iint F_*\big ({G'}^2 -G^2\big )\,,
\end{align*}
and therefore
\begin{align*}
\Big |\iiint b F_*(G'-G)^2\Big| &\leq 2\Big| \Big(Q^{\Phi_0 }(F,G),G\Big)
 \Big | + \Big|\iint F_*\big ({G'}^2 -G^2\big )\Big|\\
&\lesssim \|F\|_{L^1_{3s} }\|G\|^2_{H^s_s} + \|F\|_{L^1} \|G\|_{L^2}^2 \,,
\end{align*}
where we have used the cancellation lemma from \cite{al-1} for the second term.
Choosing $F= \mu \la v \ra^{|\gamma|}$  and $G= \la v \ra^{\gamma/2} g$,
it follows that $B_1 \lesssim \|g \|^2_{H^s_{s+ \gamma/2}}$, completing the proof of the lemma.
\end{proof}

\subsection{Equivalence to the linearized operator}\label{part1-Section4}

We will now show that the Dirichlet form of the
linearized collision operator is equivalent to the square of the
non-isotropic norm, and therefore, the proof of Proposition \ref{part1-prop2.1}
will be given. Let us note that for the bilinear operator $\Gamma(\,\cdot,\,\cdot\,)$, for suitable functions $f, g$, one has
\begin{align*}
\Big(\Gamma(f,\, g),\,  h\Big)_{L^2} &= \iiint
b(\cos\theta) \Phi (|v-v_*|)
\sqrt{\mu_\ast}\, \big( f'_\ast g' - f_\ast g\big) h\\
&= \iiint b(\cos\theta) \Phi (|v-v_*|) \sqrt{\mu'_\ast}\, \big(f_\ast g - f'_\ast
g'\big) h'\,,
\end{align*}
and by adding these two lines, it follows that
\begin{equation}\label{part1-2.2.5}
\Big(\Gamma (f,\, g),\, h\Big)_{L^2} = \frac{1}{2} \iiint
b(\cos\theta) \Phi (|v-v_*|) \Big(f'_\ast g' - f_\ast g \Big) \Big(
\sqrt{\mu_\ast}\, h - \sqrt{\mu'_\ast}\, h' \Big)\,.
\end{equation}

The following lemma shows that $\cL_1$ dominates  $\cL$.
\begin{lemm}\label{part1-lemm2.1.1}
Under the conditions \eqref{part2-hyp-2} on the cross-section with $0<s<1$ and $\gamma\in\RR$, we have
\begin{equation}\label{part1-2.1.5}
\Big(\cL_1 g,\, g\Big)_{L^2}\geq \frac{1}{2} \Big(\cL g,\,
g\Big)_{L^2}\, .
\end{equation}
\end{lemm}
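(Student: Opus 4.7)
The plan is to reduce \eqref{part1-2.1.5} to the equivalent inequality $(\cL_1 g, g)_{L^2}\ge (\cL_2 g, g)_{L^2}$, which is equivalent to \eqref{part1-2.1.5} since $\cL=\cL_1+\cL_2$. To prove the latter, I will rewrite both quantities as quadratic expressions in two natural differences $A,B$, exploit a $(v,v_\ast,\sigma)\mapsto(v_\ast,v,-\sigma)$ symmetry of the collision measure, and conclude by Cauchy--Schwarz.

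First I would apply the symmetrization formula \eqref{part1-2.2.5} to both components of $\cL$. Setting
\[
A=\sqrt{\mu'_\ast}\,g'-\sqrt{\mu_\ast}\,g,\qquad B=\sqrt{\mu'}\,g'_\ast-\sqrt{\mu}\,g_\ast,
\]
and noting that $\sqrt{\mu_\ast}g-\sqrt{\mu'_\ast}g'=-A$, the formula immediately gives
\[
(\cL_1 g,g)_{L^2}=\tfrac{1}{2}\iiint b(\cos\theta)\Phi(|v-v_\ast|)\,A^{2}\,d\sigma\,dv_\ast\,dv,
\]
\[
(\cL_2 g,g)_{L^2}=\tfrac{1}{2}\iiint b(\cos\theta)\Phi(|v-v_\ast|)\,A\,B\,d\sigma\,dv_\ast\,dv.
\]
Thus \eqref{part1-2.1.5} is equivalent to $\iiint b\,\Phi\,AB\le \iiint b\,\Phi\,A^{2}$.

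The key observation is that the involution $(v,v_\ast,\sigma)\mapsto(v_\ast,v,-\sigma)$ preserves $b(\cos\theta)\Phi(|v-v_\ast|)\,dv\,dv_\ast\,d\sigma$ (it preserves $|v-v_\ast|$ and $\cos\theta$) while swapping $v'\leftrightarrow v'_\ast$; it therefore exchanges $A$ and $B$. In particular,
\[
\iiint b\,\Phi\,A^{2}=\iiint b\,\Phi\,B^{2}.
\]
Then Cauchy--Schwarz applied to the nonnegative measure $b(\cos\theta)\Phi(|v-v_\ast|)\,dv\,dv_\ast\,d\sigma$ yields
\[
\Big|\iiint b\,\Phi\,AB\Big|\le \Big(\iiint b\,\Phi\,A^{2}\Big)^{1/2}\Big(\iiint b\,\Phi\,B^{2}\Big)^{1/2}=\iiint b\,\Phi\,A^{2},
\]
which is the required inequality and hence gives \eqref{part1-2.1.5}.

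The only delicate point is to ensure that the individual triple integrals of $A^{2}$, $B^{2}$ and $AB$ are finite, since $b(\cos\theta)\sim\theta^{-2-2s}$ is not integrable. This is handled by the standard cancellation: the smoothness of $\sqrt\mu$ and of $g$ forces $A$ and $B$ to be of order $\theta$ for grazing collisions, so that $A^{2}$, $B^{2}$ and $AB$ furnish the $\theta^{2}$ factor that tames the angular singularity (combined with a rapidly decaying factor in $v_\ast$ coming from $\sqrt\mu_\ast$ or $\sqrt{\mu'_\ast}$ to control the kinetic weight $\Phi$). Thus the Cauchy--Schwarz step is legitimate for suitable $g$, and the main obstacle is really bookkeeping rather than analysis, since once the symmetrization and the $(v,v_\ast,\sigma)$-involution are identified the argument is three lines.
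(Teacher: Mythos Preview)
Your proof is correct and follows essentially the same route as the paper. The paper introduces the same two quantities $A$ and $B$, uses the same $(v,v_\ast,\sigma)\mapsto(v_\ast,v,-\sigma)$ symmetry to get $\iiint b\Phi A^2=\iiint b\Phi B^2$, writes $(\cL_1 g,g)=\tfrac14\iiint b\Phi(A^2+B^2)$ and $(\cL g,g)=\tfrac14\iiint b\Phi(A+B)^2$, and concludes with the pointwise inequality $(\alpha+\beta)^2\le 2(\alpha^2+\beta^2)$; your Cauchy--Schwarz step is the integrated version of the same inequality once the symmetry is in place.
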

\begin{proof} {}By standard changes of variables, the following computations hold true
\begin{align*}
&\Big(\cL_1 g,\, g\Big)_{L^2} =-\Big(\Gamma(\sqrt\mu\,,
g),\, g\Big)_{L^2}\\
&= \frac{1}{2} \iiint B(|v-v_*|, \cos \theta) \Big((\mu'_*)^{1/2} g'-
(\mu_\ast)^{1/2} g \Big)^2dv_*d\sigma dv\\
&=\frac{1}{2} \iiint  B(|v-v_*|, \cos \theta)\Big((\mu')^{1/2} g'_*-
(\mu)^{1/2} g_* \Big)^2dv_*d\sigma dv\\
&= \frac{1}{4} \iiint B(|v-v_*|, \cos \theta) \\
&\quad \times \left\{ \Big((\mu'_*)^{1/2} g'-
(\mu_\ast)^{1/2} g \Big)^2+ \Big((\mu')^{1/2} g'_*- (\mu)^{1/2} g_*
\Big)^2\right\} \, ,
\end{align*}
and
\begin{align*}
&\Big(\cL g,\, g\Big)_{L^2} =-\Big(\Gamma(\sqrt\mu\,,
g)+\Gamma(g,\,\sqrt\mu\,),\, g\Big)_{L^2(\RR^3_v)}\\
&= \iiint B \,\Big((\mu_*)^{1/2} g- (\mu'_\ast)^{1/2}
g'+g_*(\mu)^{1/2}-
g'_*(\mu')^{1/2} \Big)(\mu_*)^{1/2}\, g
\\
&= \iiint B \,\Big((\mu'_*)^{1/2} g'- (\mu_\ast)^{1/2}
g+g'_*(\mu')^{1/2}- g_*(\mu)^{1/2} \Big)(\mu'_*)^{1/2}\, g'
\\
&= \iiint B\, \Big((\mu)^{1/2} g_*- (\mu')^{1/2}
g'_*+g(\mu_*)^{1/2}- g'(\mu'_*)^{1/2} \Big)(\mu)^{1/2}\, g_*
\\
&= \iiint B\, \Big((\mu')^{1/2} g'_*- (\mu)^{1/2}
g_*+g'(\mu'_*)^{1/2}- g(\mu_*)^{1/2} \Big)(\mu')^{1/2}\, g'_*\\
&=\frac{1}{4} \iiint B \left\{ \Big((\mu_\ast)^{1/2} g
-(\mu'_*)^{1/2} g'\Big)+ \Big((\mu)^{1/2} g_* -(\mu')^{1/2}
g'_*\Big)\right\}^2 .
\end{align*}
Therefore, (\ref{part1-2.1.5}) follows {}from $ (\alpha+\beta)^2 \leq
 2 (\alpha^2+\beta^2)$  and the proof is completed.
\end{proof}

Now for the term $\cL_2$, we have

\begin{lemm}\label{part1-radja-upper-estimate-L-2} One has
\begin{align*}%\label{part1-radja-L-2-upper}
\left|\Big(\cL_2 g,\, h\Big)_{L^2}\right|
&\lesssim \| \mu^{1/10^3} g\|_{L^2}  \| \mu^{1/10^3} h\|_{L^2}   \, .
\end{align*}
\end{lemm}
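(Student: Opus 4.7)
The plan is to start from the symmetric trilinear formula (2.2.5), applied with $f = g$, second slot $\sqrt\mu$, and test function $h$, giving
\begin{equation*}
(\cL_2 g, h)_{L^2} = -\frac{1}{2}\iiint b(\cos\theta)\Phi(|v-v_*|)\bigl(g'_*\sqrt{\mu'} - g_*\sqrt{\mu}\bigr)\bigl(\sqrt{\mu_*}h - \sqrt{\mu'_*}h'\bigr)d\sigma\, dv\, dv_*.
\end{equation*}
The decisive structural feature is that each of the two factors in the integrand carries a Maxwellian $\sqrt\mu$ (at $v$ or $v_*$). This plays two roles: Taylor expansion of $\sqrt\mu$ converts every $\sqrt\mu$-difference into a gain of $\sin(\theta/2)$ that tames the angular singularity of $b$, while the pointwise Gaussian decay of $\sqrt\mu$ is the source of the $\mu^{1/10^3}$ weights in the final bound.

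Concretely, I would decompose each factor,
\begin{equation*}
g'_*\sqrt{\mu'} - g_*\sqrt{\mu} = \sqrt{\mu'}\,(g'_* - g_*) + g_*(\sqrt{\mu'} - \sqrt{\mu}),
\end{equation*}
\begin{equation*}
\sqrt{\mu_*}h - \sqrt{\mu'_*}h' = \sqrt{\mu_*}\,(h - h') + h'(\sqrt{\mu_*} - \sqrt{\mu'_*}),
\end{equation*}
and expand the product into four pieces. For any piece carrying at least one Maxwellian-difference, the mean value theorem yields $|\sqrt{\mu'} - \sqrt{\mu}| \lesssim |v-v_*|\sin(\theta/2)\,\mu^{1/4}(v_\tau)$ for some intermediate $v_\tau$ (and similarly for $\sqrt{\mu_*} - \sqrt{\mu'_*}$); the resulting gain of one or two powers of $\theta$ makes the $\sigma$-integral absolutely convergent, after which a straightforward Cauchy--Schwarz using the remaining $\sqrt\mu$'s delivers the weighted $L^2$ bound. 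For the lone remaining piece $\iiint b\Phi\sqrt{\mu_*}\sqrt{\mu'}(g'_* - g_*)(h - h')\,d\sigma\,dvdv_*$, I would apply Cauchy--Schwarz in $(v,v_*,\sigma)$ and then the swap $(v,v_*)\mapsto (v_*, v)$, which reduces each factor to the $J_1$-type quantity from Section 3, but with the outer weight $\mu_*$ replaced by the stronger $\sqrt\mu\sqrt{\mu'_*}$; adapting the upper-bound argument of Lemma 3.6 (or the $B_1$ estimate in its proof) to this Gaussian weight produces the desired $L^2$-bound with Maxwellian decay.

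The main obstacle is the appearance of intermediate points $v_\tau$ in the mean-value remainders, which need not be close to either $v$ or $v_*$ when $|v-v_*|$ is large. To convert $\mu^{1/4}(v_\tau)$ into a product $\mu^{\epsilon}(v)\mu^{\epsilon}(v_*)$ times a harmless power of $\langle v-v_*\rangle$, I would use the elementary inequality $\mu^{\delta_1}(v_\tau) \lesssim \mu^{\delta_2}(v)\mu^{\delta_2}(v_*)\langle v-v_*\rangle^{N}$ (valid for any $\delta_1 > \delta_2 > 0$ and suitable $N$), together with the conservation identity $\mu\mu_* = \mu'\mu'_*$ to move freely between primed and unprimed Maxwellian weights. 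Because the target exponent $1/10^3$ is extremely small compared to the Gaussian budget available from the two $\sqrt\mu$'s already present, the polynomial factor $\langle v-v_*\rangle^{N}$ is absorbed without any sharpness being required. Thus once the decomposition above is carried out, the entire argument is essentially bookkeeping of exponents, the only substantive input being the Taylor cancellation of $\sqrt\mu$ that matches the singularity of $b$.
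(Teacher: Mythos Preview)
Your decomposition has a genuine gap in the ``lone remaining piece'' (and in fact in every piece containing a factor $(g'_*-g_*)$ or $(h-h')$). After Cauchy--Schwarz, each such piece produces a factor of the form
\[
\iiint b\,\Phi\, W(v,v_*)\,(f'-f)^2\,d\sigma\,dv\,dv_*
\]
with $W$ some Gaussian weight. This is precisely a $J_1$-type quantity, and its upper bound (Lemma~\ref{part1-J-1-upper}) is $\|f\|_{H^s_{s+\gamma/2}}^2$, not a weighted $L^2$ norm. Your claim that replacing $\mu_*$ by a ``stronger'' weight such as $\sqrt{\mu}\sqrt{\mu'_*}$ converts this into a Maxwellian-weighted $L^2$ bound is incorrect: the $H^s$ arises from the interaction between the angular singularity $b\sim\theta^{-2-2s}$ and the squared difference $(f'-f)^2$, and is completely insensitive to the weight in $v_*$. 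No amount of Gaussian budget on the outside can buy you the missing fractional derivative on~$f$.

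The paper avoids this trap by never isolating a bilinear piece in $(g'_*-g_*)$ and $(h-h')$. Instead it writes $-\cL_2 g=\mu^{-1/2}Q(\sqrt{\mu}g,\mu)$ and splits $Q$ so that one part, $I_2=\sqrt{\mu}\iint b\Phi(G'_*-G_*)\,d\sigma\,dv_*$, is handled by the cancellation lemma of \cite{al-1}: the $\sigma$-integral of $G'_*-G_*$ against a function independent of $\sigma$ collapses to a convolution $\sqrt{\mu}\,(S*G)$ with $S\sim|v|^\gamma$, and no difference of $g$ or $h$ survives. The remaining part $I_1$ is then treated by a second-order Taylor expansion of the Maxwellian together with the symmetry on $\SS^2$ that kills the first-order term, yielding $\theta^2$ from a \emph{single} Maxwellian difference. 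Both mechanisms --- the cancellation lemma and the symmetry-assisted second-order expansion --- are essential, and neither is available once you expand into your four pieces.
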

\begin{proof}
It follows from \eqref{part1-2.2.5} that
\begin{align*}%\label{part1-2.2.5}
\Big(\cL_2(g),\, h\Big)_{L^2} &=-  \frac{1}{2} \iiint
B \Big(g'_\ast \sqrt{\mu'} - g_\ast \sqrt \mu \Big) \Big(
\sqrt{\mu_\ast}\, h - \sqrt{\mu'_\ast}\, h' \Big)\,\\
&= \Big(g \,, \cL_2(h)\Big)_{L^2} \,,
\end{align*}
that is, $\cL_2$ is symmetric. Hence it suffices to show the lemma in the case
when $g=h$.
Putting $G = \sqrt \mu g$, we have
\begin{align*}%\label{part1-radja-radja-234}
- \cL_2 g =& \mu^{-1/2} Q (G,\, {\mu} ) \\
= &\mu^{-1/2} \iint
b(\cos\theta)\Phi(|v-v_*|) G'_* \Big(\mu'\, - \mu\, \Big)dv_* d\sigma \\
&\qquad + \sqrt \mu  \iint b(\cos\theta)\Phi(|v-v_*|)
\Big( G'_* - G_* \Big) dv_* d\sigma \\
= &I_1(v) + I_2(v).
\end{align*}
Thanks to the cancellation lemma, we have
$
I_2(v) =
\sqrt {\mu(v)} \big( S * G)(v)$ with $S(v) \sim |v|^\gamma$, whence we have
%in the case $\gamma \geq 0$
\begin{align}\label{part1-radja-I-222}
\Big|(I_2,g)_{L^2}\Big|
&\lesssim \int_v\int_{v_*}|v-v_*|^\gamma\sqrt \mu\sqrt \mu_*|g||g_*|dvdv_*   \\
&\lesssim \int_v \int_{v_*} |v-v_*|^\gamma\Big \{  (\mu_*^{1/4}\mu^{1/4} g)^2 + (\mu^{1/4}\mu_*^{1/4}g_*)^2 \Big\} dv dv_*
\notag \\
&\lesssim \|\la v \ra^\gamma \mu ^{1/4} g\|^2_{L^2} \lesssim \|\mu^{1/8} g\|^2_{L^2} \,,\notag
\end{align}
by means of Lemma \ref{part1-tro-funda}.

Writing
\[
\mu'\, - \mu\, = \, \sqrt{\mu'}\big( \sqrt{\mu'} -\sqrt \mu\big)
+ \sqrt{\mu} \big( \sqrt{\mu'} -\sqrt \mu\big)
\]
and using  $\sqrt{\mu' \mu'_*}= \sqrt{\mu \mu_*}$\,, we have
\[
I_1(v)
=\iint
 b(\cos\theta)\Phi(|v-v_*|)g'_\ast \Big(\sqrt{\mu_\ast}\, + \sqrt{\mu'_\ast}\,
\Big)\, \,\Big(\sqrt{\mu'}- \sqrt{\mu}\Big)
dv_*d\sigma.
\]
Hence
\begin{align*}%\label{part1-radja-radja-2}
\Big( I_1 ,\, g\Big)_{L^2} =&
 \iiint b(\cos\theta)\Phi(|v-v_*|)g'_\ast \Big(\sqrt{\mu_\ast}\, - \sqrt{\mu'_\ast}\,
\Big)\, \,\Big(\sqrt{\mu'}- \sqrt{\mu}\Big)
g dv dv_* d\sigma \,\\
&+2 \iiint b(\cos\theta)\Phi(|v-v_*|) G_*\Big(\sqrt{\mu}\, - \sqrt{\mu'}\,
\Big)\,  g' dv dv_* d\sigma \\
=& A_1 + A_2 \,,
\end{align*}
where we have used the change of variables $(v,v_*) \rightarrow (v',v'_*)$
for the second term.
We can write
\begin{align*}
A_1 &=
\iiint b(\cos\theta)\Phi(|v-v_*|)\Big({\mu_\ast}^{1/4}\, - {\mu'_\ast}^{1/4}\,
\Big)\,  \,\Big({\mu'}^{1/4}- {\mu}^{1/4}\Big)g'_\ast
g\, \\
&\qquad \qquad \qquad \times \Big({\mu_\ast}^{1/4}\, + {\mu'_\ast}^{1/4}\,
\Big)\,  \,\Big({\mu'}^{1/4}+{\mu}^{1/4}\Big) d \sigma dv dv_* \,.
\end{align*}
Since we have
\begin{align*}
|v'_*|^2 &\leq (|v'_* - v'| + |v'|)^2 \leq (\sqrt 2 |v_* -v'| + |v'|)^2\\
&\leq (\sqrt 2 |v_*| + (\sqrt 2 +1 )|v'|)^2 \leq 4|v_*|^2 + 2(\sqrt 2 +1 )^2|v'|^2,
\end{align*}
and in the same way,
$|v|^2 \leq 4|v'|^2 + 2(\sqrt 2 +1 )^2|v_*|^2$,
we get, by adding the two corresponding inequalities, that
$\mu_* \mu' \leq (\mu'_* \mu)^{1/(10+ 4 \sqrt 2)}$.
Moreover, we have $\mu'_*\mu' = \mu_* \mu \leq (\mu'_* \mu)^{1/5}$
because $|v'_*|^2 \leq (|v'_*-v| +|v|)^2 \leq (|v_*-v| +|v|)^2\leq 2|v_*|^2 + 8 |v|^2$.
Noticing that
\[
\Big|\Big({\mu_\ast}^{1/4}\, - {\mu'_\ast}^{1/4}\,
\Big)\,  \,\Big({\mu'}^{1/4}- {\mu}^{1/4}\Big)\Big|
\lesssim |v-v'_*|^2 \theta^2,
\]
we get %for a sufficiently small $\delta >0$
\begin{align}\label{part1-radja-A-111}
|A_1| & \lesssim \iint |v-v'_*|^{\gamma +2}
\left\{ \int_0^{\pi/2} \theta^{1-2s} d \theta \right\} (\mu'_* \mu)^{1/80}
g'_\ast
g dv dv'_* \\
&\lesssim \iint |v-v'_*|^{\gamma}
(\mu'_* \mu)^{1/160}
g'_\ast
g dv dv'_* \lesssim \|\mu^{1/10^{3}} g\|^2_{L^2}\, , \notag
\end{align}
by an argument similar to the analysis of $I_1$.

For $A_2$, we use the regular change of variable $v \rightarrow v'$, and denote
its inverse transformation by $v'\rightarrow \psi_{\sigma}(v')=v$.
Then
\begin{align*}%\label{part1-radja-A-2}
A_2 &= 2 \iint \sqrt \mu_* g_* \left \{ \int_{\SS^2}
b\Big( \frac{\psi_{\sigma}(v')-v_*}{|\psi_{\sigma}(v')-v_*|}\cdot \sigma\Big)
\Phi(|\psi_{\sigma}(v')- v_*|)  \right. \\
& \qquad \qquad \left.\enskip \enskip \times
\Big(\sqrt{\mu(\psi_{\sigma}(v'))}\, - \sqrt{\mu(v'})\,
\Big)
\Big| \frac{\partial (\psi_{\sigma}(v'))}{
\partial (v')}\Big| d\sigma \right \} g(v') dv_* dv' \,. \notag %\\
%&= \iint \sqrt \mu g \left \{ \int_{\SS^2\cap \Omega} \cdots
%d\sigma \right \} g(v'_*) dv dv'_*
%+\iint \sqrt \mu g \left \{ \int_{\SS^2\cap \Omega^c} \cdots
%d\sigma \right \} g(v'_*) dv dv'_* \\
%&= A_{2,1} + A_{2,2}\,,
\end{align*}
%where
%\[
%\Omega = \left\{ \sigma \enskip ; \enskip 1 - \frac{v- \psi_{\sigma}(v'_*)}
%{|v- \psi_{\sigma}(v'_*)|}\cdot \sigma \geq \langle v - v'_* \rangle^{-2}
%\right\}
%\]
It follows from
%For $A_{2,2}$ we use a more careful estimation by means of
the Taylor expansion that %, instead of \\eqref{part1-radja-easy}.
\begin{align*}
\sqrt{\mu(\psi_{\sigma}(v'))}\, - \sqrt{\mu(v')})
&= \Big(\nabla \sqrt{\mu}\Big )(v')\cdot \Big(\psi_{\sigma}(v')
- v' \Big)  \\
+ \int_0^1(1-\tau)& \Big(\nabla^2 \sqrt{\mu}\Big )(v'+ \tau
(\psi_{\sigma}(v') - v'))\enskip \Big(\psi_{\sigma}(v')
- v' \Big)^2 d \tau.
\end{align*}
Note that the integral with respect to $\sigma$ corresponding to the first order term
vanishes, by means of the symmetry on $\SS^2$. Putting $v'_{\tau} =
v' + \tau
(\psi_{\sigma}(v') - v')$, we have
$|v'|^2 \leq (|v'-v_*| + |v_*|)^2 \leq ((|v'_{\tau}-v_*| + |v_*|)^2
\leq 2|v'_{\tau}|^2 + 8|v_*|^2$, so that
\[
\left |    \sqrt {\mu(v_*)}\Big(\nabla^2 \sqrt{\mu}\Big )(v' + \tau
(\psi_{\sigma}(v') - v')) \right |
\lesssim (\mu(v_*) \, \mu(v'))^{1/12}\,.
\]
Since $|\psi_{\sigma}(v') - v'| \lesssim |v'-v_*|\theta$, we have
\begin{align*}
|A_{2}| &\lesssim \iint \left\{ \int^{\pi/2}_{0}
\theta^{1-2s} d\theta\right\} |v' - v_* |^{\gamma+ 2}
(\mu_* \, \mu')^{1/12}
 |g_*| \, |g'|
dv_* dv'\\
&\lesssim \iint |v' - v_* |^{\gamma} (\mu_* \, \mu')^{1/24}
 |g_*| \, |g'|dv_* dv'
\lesssim \| \mu^{1/10^3} g \|_{L^2}^2.
\end{align*}
Together with \eqref{part1-radja-I-222} and \eqref{part1-radja-A-111}, this yields the desired estimate
and completes the proof of the lemma.
\end{proof}

Let us note the following inequality between $\Big({ \cL}_1 g,\, g\Big)_{L^2}$, corresponding to the first term of the linear operator,
and the non-isotropic norm.

\begin{prop}\label{part1-radja-mouhot-strain} { Let $\gamma >-3$}.
There exists a constant $C>0$ such that
\begin{align}\label{part1-radja-L-1-lower}
|||g |||^2\geq \Big(\cL_1 g,\, g\Big)_{L^2} \geq  \frac{1}{10} |||g |||^2 - C
\|g\|_{L^2_{\gamma/2}}^2\,.
\end{align}
\end{prop}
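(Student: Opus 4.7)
The proof rests on the symmetric identities derived in Lemma~\ref{part1-lemm2.1.1},
$$
2(\cL_1 g,\,g)_{L^2} = \iiint B\bigl(\sqrt{\mu'_*}\,g' - \sqrt{\mu_*}\,g\bigr)^2 dv\,dv_*\,d\sigma = \iiint B\bigl(\sqrt{\mu'}\,g'_* - \sqrt{\mu}\,g_*\bigr)^2 dv\,dv_*\,d\sigma.
$$
For the upper bound $(\cL_1 g,g) \leq |||g|||^2$, I would use the decomposition $\sqrt{\mu'_*}g' - \sqrt{\mu_*}g = \sqrt{\mu_*}(g'-g) + g'(\sqrt{\mu'_*}-\sqrt{\mu_*})$ and $(a+b)^2 \leq 2(a^2+b^2)$, producing $J_1$ plus the auxiliary term $K_1 := \iiint B\,g'^2(\sqrt{\mu'_*}-\sqrt{\mu_*})^2$. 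The swap symmetry $(v,v_*,\sigma) \to (v_*,v,-\sigma)$ rewrites $K_1$ as $\iiint B\,g'^2_*(\sqrt{\mu'}-\sqrt{\mu})^2$, which after a regular change of variable $v_* \to v'_*$ (Jacobian bounded below on $\theta \in [0,\pi/2]$) is comparable to $J_2$.

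For the lower bound, I would apply the reverse inequality $(a+b)^2 \geq \tfrac{1}{2}a^2 - b^2$ to each decomposition in parallel, obtaining
$$
2(\cL_1 g,g) \geq \tfrac{1}{2}J_1 - K_1, \qquad 2(\cL_1 g,g) \geq \tfrac{1}{2}J_2 - K_2,
$$
where $K_2 := \iiint B\,\mu'(g'_*-g_*)^2$ arises from the second identity via the splitting $\sqrt{\mu'}g'_* - \sqrt{\mu}g_* = g_*(\sqrt{\mu'}-\sqrt{\mu}) + \sqrt{\mu'}(g'_*-g_*)$. Averaging these two estimates gives $(\cL_1 g,g) \geq \tfrac{1}{8}|||g|||^2 - \tfrac{1}{4}(K_1+K_2)$, so the remaining task is to rewrite the error terms $K_1, K_2$ as a small, absorbable multiple of $|||g|||^2$ plus $C\|g\|^2_{L^2_{\gamma/2}}$, with enough slack that the final coefficient lands at $1/10$.

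The estimate on $K_1$ follows the pattern of Lemma~\ref{part1-J-2-upper}: for small $\theta$ the Taylor bound $|\sqrt{\mu'_*}-\sqrt{\mu_*}| \lesssim |v-v_*|\,\theta\,|\nabla \sqrt{\mu}(\xi)|$ furnishes the factor $\theta^2$ that kills the angular singularity $b(\cos\theta) \sim \theta^{-2-2s}$, while for $\theta \gtrsim 1$ the crude bound $(\sqrt{\mu'_*}-\sqrt{\mu_*})^2 \lesssim \mu_*+\mu'_*$ suffices; a regular change $v \to v'$ moves $g'^2$ onto $g^2$ and Lemma~\ref{part1-tro-funda} yields $K_1 \lesssim \|g\|^2_{L^2_{\gamma/2}}$. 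For $K_2$ the swap symmetry rewrites it as $\iiint B\,\mu'_*(g'-g)^2$, and the algebraic expansion $\mu'_* = \mu_* + 2\sqrt{\mu_*}(\sqrt{\mu'_*}-\sqrt{\mu_*}) + (\sqrt{\mu'_*}-\sqrt{\mu_*})^2$ isolates a copy of $J_1$, a cross term controlled by Cauchy--Schwarz against $J_1$ and $K_1$, and a quadratic remainder of $K_1$-type. The copy of $J_1$ is then absorbed into the coercive main term, which is what forces the coefficient to shrink from $1/8$ down to $1/10$. The principal obstacle is precisely this latent $J_1$ content inside $K_2$: unlike $K_1$, it is not automatically lower order, and the absorption demands careful bookkeeping between the constants from Cauchy--Schwarz and the slack left in the coercive term.
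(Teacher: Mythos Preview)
Your averaging strategy cannot succeed, because the ``error terms'' $K_1$ and $K_2$ are not lower order at all: they are exactly the main terms. Applying the pre-post collisional change $(v,v_*)\to(v',v'_*)$ followed by the swap $(v,v_*,\sigma)\to(v_*,v,-\sigma)$ gives $K_1=\iiint B\,g'^2(\sqrt{\mu'_*}-\sqrt{\mu_*})^2=J_2$ exactly, and likewise $K_2=\iiint B\,\mu'(g'_*-g_*)^2=J_1$ exactly. Plugging these identities into your averaged inequality yields $(\cL_1 g,g)\geq\tfrac18|||g|||^2-\tfrac14(J_2+J_1)=-\tfrac18|||g|||^2$, which is vacuous. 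The specific claim $K_1\lesssim\|g\|^2_{L^2_{\gamma/2}}$ is false: the Taylor/angular argument you sketch is precisely Lemma~\ref{part1-J-2-upper}, and it produces the weight $s+\gamma/2$, not $\gamma/2$ --- the factor $|v-v_*|^{2s}$ is unavoidable when you integrate $b(\cos\theta)\min(|v-v_*|^2\theta^2,1)$ in $\sigma$. By Lemma~\ref{part1-radja-moment-estimate} this bound is sharp, so $K_1\sim\|g\|^2_{L^2_{s+\gamma/2}}$ is a genuine piece of $|||g|||^2$, not a remainder.

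The paper avoids this trap by not using the Cauchy--Schwarz inequality $(a+b)^2\geq\tfrac12 a^2-b^2$ twice. Instead, after recording the crude bound $(\cL_1 g,g)\geq\tfrac14|||g|||^2-\tfrac34 J_2$, it expands $(a+b)^2=a^2+b^2+2ab$ \emph{exactly} and analyzes the cross term $2\iiint B\,(g'-g)g\,\sqrt{\mu'_*}(\sqrt{\mu'_*}-\sqrt{\mu_*})$ via the algebraic identity $2(\beta-\alpha)\alpha=\beta^2-\alpha^2-(\beta-\alpha)^2$. Two of the four resulting pieces vanish by the pre-post symmetry, one is bounded below by $-J_1$, and the remaining piece $\tfrac12\iiint B\,(g^2-g'^2)(\mu_*-\mu'_*)$ is controlled by the cancellation lemma of \cite{al-1}, which is exactly what produces the weight $\gamma/2$ (with no $s$). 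This yields $2(\cL_1 g,g)\geq J_2-C\|g\|^2_{L^2_{\gamma/2}}$; substituting back into the first inequality and solving for $(\cL_1 g,g)$ gives the constant $\tfrac{1}{10}$. The cancellation lemma is the missing ingredient in your argument.
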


\begin{proof}
The equalities
\begin{align*}%\label{part1-radja-l-1-estimate}
&2 \Big(\cL_1 g,\, g\Big)_{L^2}= - 2\Big(\Gamma(\sqrt{\mu},\,
g),\, g\Big)_{L^2} \notag \\
&= \iiint B\,\left(
(\mu'_*)^{1/2} g' - (\mu_*)^{1/2}\, g \right)^2 dv dv_* d \sigma\\
&= \iiint B\, \left( (\mu'_*)^{1/2} (g' -g)+
g((\mu'_*)^{1/2}\,-(\mu_*)^{1/2} \right)^2 dv dv_* d \sigma\notag ,
\end{align*}
together with the inequality
$$
2(a^2+b^2)\geq(a+b)^2 \geq \frac 1 2 a^2 -b^2$$
yields
\begin{align}\label{part1-first-step}
|||g |||^2\geq \Big(\cL_1 g,\, g\Big)_{L^2} \geq \frac 1 4 J_1 - \frac 1 2 J_2
 \geq \frac 1 4 |||g|||^2 - \frac{3}{4} J_2 \,.
\end{align}
%which, together with \eqref{part1-2.1.5}, implies \eqref{part1-radja-L-1-lower}.
It follows from the equality $(a+b)^2 = a^2 + b^2 +2ab$ that
\begin{align}\label{part1-radja-fundamental-formula}
2\Big(\cL_1 g,\, g\Big)_{L^2}
\geq J_2 - C \| g\|^2_{L^2_{\gamma/2}}\,,
\end{align}
which yields the desired estimate \eqref{part1-radja-L-1-lower} together with \eqref{part1-first-step}.

Indeed, note that
\begin{align*}%\label{part1-radja-l-1-estimate}
&2 \Big(\cL_1 g,\, g\Big)_{L^2}\\
&= \iiint B\, \left( (\mu'_*)^{1/2} (g' -g)+
g((\mu'_*)^{1/2}\,-(\mu_*)^{1/2} )\right)^2 dv dv_* d \sigma\\
&= J_1 + J_2 + 2 \iiint B\, (g'-g)g  (\mu '_\ast)^{1/2}
\Big((\mu'_*)^{1/2} -(\mu_*)^{1/2} \Big) dv dv_* d \sigma\,. \notag
\end{align*}
%here we recall that
%\begin{align}\label{part1-radja-2.2.1}
%||| g|||^2=& \iiint b(\cos\theta) \Phi(|v-v_*|)\mu_*\, \big(g'-g\,\big)^2\, \notag \\
%&+
%\iiint b(\cos\theta)\Phi(|v-v_*|) g^2_* \big(\sqrt{\mu'}\,\, - \sqrt{\mu}\,\,
%\big)^2\, \, \\
%=& J_1 + J_2\,. \notag
%\end{align}
Using the identity $2 (\beta -\alpha) \alpha = \beta^2 -\alpha^2 -(\beta-\alpha)^2$,
we have
\begin{align*}
2(g'-g)g  &(\mu '_\ast)^{1/2}
\Big((\mu'_*)^{1/2} -(\mu_*)^{1/2} \Big)\\
= &\frac{1}{2}\Big (g'^2 -g^2 -(g'-g)^2 \Big)
\Big(\mu'_* - \mu_* +\big ((\mu'_*)^{1/2} -(\mu_*)^{1/2} \big)^2 \Big)\\
&=-\frac{1}{2}\Big(g' - g\Big)^2 \Big((\mu'_*)^{1/2} -(\mu_*)^{1/2} \Big)^2
+ \frac{1}{2} \big({g}^2-{g'}^2\,\big) \big(\mu_* - \mu'_* \big)\\
&+\frac{1}{2}\Big(g' - g\Big)^2\big(\mu_* - \mu'_* \big)
+ \frac{1}{2}\big({g'}^2-{g}^2\,\big)\Big((\mu'_*)^{1/2} -(\mu_*)^{1/2} \Big)^2\\
=&I_1 +I_2 +I_3 +I_4\,.
\end{align*}
Using the change of variables $(v', v'_\ast ) \rightarrow (v,v_\ast) $,
we see that
\[
\Big|\iiint B \, I_2 dvdv_*d\sigma\Big | = \Big|\iiint
B\,\mu_* \big(g^2 - g'^2\big) dv dv_* d\sigma \Big| \leq C \|g\|_{L^2_{\gamma/2}}^2\,,
\]
by means of the cancellation lemma.
Furthermore,
\begin{align*}
\iiint B \, I_1 dvdv_*d\sigma =&
-\frac{1}{2}\iiint B (\mu_* + \mu'_*)\Big(g' - g\Big)^2 dvdv_* d\sigma\\
&+\iiint B (\mu_*)^{1/2}(\mu'_*)^{1/2}\Big(g' - g\Big)^2 dvdv_* d\sigma
\geq -J_1 \,,
\end{align*}
where we have used the change of variables $(v', v'_\ast ) \rightarrow (v,v_\ast)$.
Thus, we obtain \eqref{part1-radja-fundamental-formula}
because the integrals corresponding to the last two terms $I_3$ and $I_4$
vanish, ending the proof of the proposition.
\end{proof}

{\bf End of the proof of Proposition \ref{part1-prop2.1}:} It follows from
\eqref{part1-radja-L-1-lower} and \eqref{part1-2.1.5} that
\[
|||g |||^2\geq \Big(\cL_1 g,\, g\Big)_{L^2} \geq \frac 12\Big(\cL g,\, g\Big)_{L^2} \,.
\]

On the other hand, note that $\Big(\cL g,\, g\Big)_{L^2} = \Big(\cL (\iI - \pP )g,\, (\iI -\pP )g\Big)_{L^2}$, from the very definition of the projection operator $\pP$.

Thus, from Proposition \ref{part1-radja-mouhot-strain} and Lemma \ref{part1-radja-upper-estimate-L-2}, we get
\begin{align*}
\Big(\cL g,\, g\Big)_{L^2}=\Big(\cL_1 (\iI - \pP )g,\, (\iI - \pP )g\Big)_{L^2}+\Big(\cL_2 (\iI - \pP )g,\, (\iI - \pP )g\Big)_{L^2}
\end{align*}
\begin{align*}
\geq  \frac {1}{ 10 }|||(\iI - \pP )g|||^2-C\|(\iI - \pP )g\|^2_{L^2_{\gamma/2}} \,.
\end{align*}
Since it is known from \cite{mouhot} that we have
\begin{align*}
\Big(\cL g,\, g\Big)_{L^2}
\geq  C\|(\iI - \pP )g\|^2_{L^2_{\gamma/2}} \,,
\end{align*}
we get on the whole
\[
|||(\iI - \pP ) g|||^2\leq C\Big(\cL g,\, g\Big)_{L^2} \,
\enskip \,.
\]

%%%%%%%%%%%%%%%%%%%%%%%%%%%%%%%%%%%%%%%%%%%%%%%%%%%%%%%%%%%%%%%%%%%%%%

\subsection{Non-isotropic norms with different kinetic factors}\label{part1-Section5}

\smallskip
This subsection is devoted to the proof of Proposition \ref{part1-prop4}. That is, we will show some equivalence relations between the non-isotropic
norms with different kinetic factors and different weights.

For the proof, we introduce some further notations. Let $\rho >0$, $\mu_\rho(v) = \mu(v)^\rho$, and set
\[
J^{\Phi_\gamma}_{1, \rho}(g)=
\iiint \Phi_\gamma  (|v-v_*|)b(\cos\theta) \mu_{\rho, *}\, \big(g'-g\,\big)^2 dvdv_*d\sigma\,.
\]
We simply write $J^{\Phi_\gamma}_{1}(g)$ if $\rho=1$, and also  introduce the notation $J^{\Phi_\gamma}_{2,\rho}(g)$ similarly
with $\mu$ replaced by $\mu_\rho$.

Then
it follows from  \eqref{part1-J-2-equivalence} and the change of variables
$v \rightarrow v/ \sqrt \rho $ that
\[
J_{2,\rho}^{\Phi_\gamma} (g) \sim \|g\|^2_{L^2_{s+\gamma/2}} =
\|\la v \ra^{(\gamma-\beta)/2} g\|^2_{L^2_{s+ \beta/2}}
\sim J_{2,\rho}^{\Phi_{\beta}}(\la v \ra^{(\gamma-\beta)/2} g)\,.
\]
By the last assertions of Lemmas \ref{part1-coer}
and \ref{part1-J-1-upper}, there exist constants $C_1, C_2 >0$ such that
\begin{equation}\label{part1-rho-triple}
C_1 \|g\|^2_{H^s_{\gamma/2}}
\leq J_{1,\rho}^{\Phi_\gamma}(g) +  \|g\|^2_{L^2_{s+\gamma/2}}
\leq C_2 \|g\|^2_{H^s_{s+\gamma/2}}\,.
\end{equation}
%On the other hand
%\[
%J_1^\gamma (g) + J_2^\gamma(g)
%\sim \tilde J_1^\gamma(g) + J_2^\gamma(g)
%\]
Furthermore,
it follows from \eqref{part1-radja-triple-lower}, \eqref{part1-radja-triple-lower-2}
and the proofs of Lemmas \ref{part1-coer}
and \ref{part1-J-1-upper} that
\begin{align*}
J_{1,2}^{\Phi_0}(\la v \ra^{\gamma/2}g) \lesssim J_1^{\Phi_\gamma}(g)
\lesssim J_{1,1/2}^{\Phi_0}(\la v \ra^{\gamma/2}g), \qquad \mbox{modulo} \enskip
\|g|^2_{L^2_{s+\gamma/2}}\,,
\end{align*}
because we have $C_1 \mu_2 \leq \mu \la v \ra^{\pm |\gamma|} \leq C_2 \mu_{1/2}$.
%modulo $\|g\|^2_{L^2_{s+\gamma/2}} = \|\la v \ra^{\gamma/2} g\|^2_{L^2_{s}}$.

Therefore, to complete the proof of Proposition \ref{part1-prop3}, it suffices to show
that for any $\rho, \rho' >0$
\begin{equation}\label{part1-equivalence-mu}
J_{1,\rho}^{\Phi_0}(g)
\sim
J_{1,\rho'}^{\Phi_0}(g), \qquad  \mbox{modulo} \enskip \|g\|^2_{L^2_{s}} \,.
\end{equation}
In fact, note that
\[
J_1^{\Phi_\gamma}(g) \sim J_{1,\rho}^{\Phi_0}(\la v \ra^{\gamma/2}g) \sim
J_{1}^{\Phi_\beta}(\la v \ra^{(\gamma-\beta)/2} g), \enskip \mbox{modulo}
\enskip \|g\|^2_{L^2_{s+\gamma/2}}.
\]
This equivalence looks  quite obvious, however, for completeness, we shall give a proof. In fact, \eqref{part1-equivalence-mu} is a direct consequence of the following lemma,
by taking $f= \mu_{\rho'}$.

\begin{lemm}\label{part1-imp-upper-maxwel}
Assume that \eqref{part2-hyp-2} holds with $0<s<1$. Then there
exists a constant $C>0$ such that
\[
\iiint b \, f_*^2 (g'-g)^2  d\sigma dvdv_*
\leq C
\|f\|^2_{L^2_s}
\Big( J_{1,\rho}^{\Phi_0}(g) + \|g\|^2_{L^2_s} \Big)\,.
\]
\end{lemm}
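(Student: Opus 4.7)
The plan is to apply a Bobylev (Fourier) representation of the trilinear integral---available here because the kinetic factor is Maxwellian, $\Phi_0 = 1$---and then to exploit the order of integration via Fubini.

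As in the Fourier computation inside the proof of Lemma~\ref{part1-coer}, I would start from the identity
\begin{align*}
\iiint b\,f_*^2\,(g'-g)^2\,d\sigma\,dv\,dv_*
&= \frac{1}{(2\pi)^3} \int d\xi \int_{\SS^2} d\sigma\,b\, \Bigl\{\|f\|^2_{L^2}\,|\hat g(\xi^+) - \hat g(\xi)|^2 \\
&\qquad + 2\operatorname{Re}\bigl[\|f\|^2_{L^2} - \widehat{f^2}(\xi^-)\bigr]\,\hat g(\xi^+)\,\overline{\hat g(\xi)} \Bigr\},
\end{align*}
obtained from Bobylev's identity applied to the quadratic form in $g$ with weight $F_* = f_*^2$, using $\widehat{f^2}(0) = \|f\|^2_{L^2}$. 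Completing the square in $\hat g(\xi^+)$, or equivalently applying a Young-type inequality, together with the Cauchy--Schwarz bound
\[
\bigl|\widehat{f^2}(0) - \widehat{f^2}(\xi^-)\bigr|^2 \;\leq\; 2\,\|f\|^2_{L^2}\int f^2\bigl(1 - \cos(v\cdot\xi^-)\bigr)\,dv,
\]
the $\sigma$-integrand is then pointwise dominated by
\[
2\|f\|^2_{L^2}\,|\hat g(\xi^+) - \hat g(\xi)|^2 \;+\; C\,|\hat g(\xi)|^2 \int f(v)^2\bigl(1 - \cos(v\cdot\xi^-)\bigr)\,dv.
\]

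The first piece is controlled by $C\|f\|^2_{L^2}\,\|g\|^2_{H^s}$ via the Fourier coercivity argument already developed in the proof of Lemma~\ref{part1-coer} (specialized to $\gamma=0$, which amounts to the Bobylev computation applied with $F = \delta$). Using $\|f\|^2_{L^2}\leq \|f\|^2_{L^2_s}$ together with the lower bound $\|g\|^2_{H^s}\lesssim J^{\Phi_0}_{1,\rho}(g) + \|g\|^2_{L^2_s}$ from \eqref{part1-rho-triple} (with $\gamma = 0$), this already fits within the target right-hand side.

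For the second piece, the crucial step is to apply Fubini and perform the angular $\sigma$-integration \emph{before} the $v$-integration. The pointwise estimate
\[
\int_{\SS^2} b(\cos\theta)\bigl(1 - \cos(v\cdot\xi^-)\bigr)\,d\sigma \;\lesssim\; (|v||\xi|)^{2s}
\]
would be obtained by splitting the $\theta$-integral at $\theta \sim (|v||\xi|)^{-1}$, using $(1-\cos t) \leq t^2/2$ for small $t$ and $(1-\cos t) \leq 2$ otherwise; both regimes contribute $(|v||\xi|)^{2s}$ in view of $s\in(0,1)$. Consequently
\[
\int d\xi\,|\hat g(\xi)|^2 \int dv\,f(v)^2 \int_{\SS^2} b\bigl(1-\cos(v\cdot\xi^-)\bigr)\,d\sigma \;\lesssim\; \|f\|^2_{L^2_s}\,\|g\|^2_{H^s},
\]
and this contribution is again absorbed into $\|f\|^2_{L^2_s}\bigl(J^{\Phi_0}_{1,\rho}(g) + \|g\|^2_{L^2_s}\bigr)$ by the same equivalence.

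The main subtlety is precisely this order of integration: the naive approach of estimating $\int f^2(1-\cos(v\cdot\xi^-))\,dv \leq C\|f\|^2_{L^2_s}|\xi^-|^{2s}$ first and then integrating $\int b\,|\xi^-|^{2s}\,d\sigma$ would produce the borderline logarithmically divergent factor $\int b\sin^{2s}(\theta/2)\,d\sigma \sim \int \theta^{-1}d\theta$ at $\theta=0$. Performing the $\sigma$-integration first for each fixed $v$ avoids this, because for each fixed $v$ the integrand $(1-\cos(v\cdot\xi^-))$ vanishes as $\theta^2$ (rather than $\theta^{2s}$) near $\theta = 0$, leaving an extra $\theta^{2-2s}$ that converts the divergent angular integral into a convergent one.
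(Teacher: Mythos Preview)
Your overall strategy---Bobylev's Fourier identity followed by the Fubini trick for the angular integral---is exactly what the paper does, and your treatment of the second piece is correct and coincides with the paper's estimate for its term $A_{2,1}$. The pointwise domination via Young's inequality together with the Cauchy--Schwarz bound on $|\widehat{f^2}(0)-\widehat{f^2}(\xi^-)|^2$ is a clean way to organize the computation; the paper instead subtracts $\|f\|_{L^2}^2\,J_1^{\Phi_0}(g)$ first and then handles separate cross-terms, but the ingredients are the same.

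There is, however, a genuine gap in your handling of the first piece. You assert that
\[
\iint b\,\bigl|\hat g(\xi^+)-\hat g(\xi)\bigr|^2\,d\sigma\,d\xi \;\lesssim\; \|g\|_{H^s}^2
\]
follows from ``the Fourier coercivity argument in Lemma~\ref{part1-coer}''. That argument produces a \emph{lower} bound for the Maxwellian-weighted integral $J_1^{\Phi_0}$, not an upper bound for this unweighted difference, and in fact the inequality you state is \emph{false}: take $\hat g(\xi)=e^{iN\xi_1}\mathbf{1}_{|\xi|\le 1}$, so that $\|g\|_{H^s}^2\sim 1$ uniformly in $N$, while for $|\xi|\le 1$ and $\theta\gtrsim 1/N$ one has $|\hat g(\xi^+)-\hat g(\xi)|^2\sim 1$ on average, giving a left-hand side $\gtrsim N^{2s}$. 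What \emph{is} true, and what the paper proves, is the weaker bound
\[
\iint b\,\bigl|\hat g(\xi^+)-\hat g(\xi)\bigr|^2\,d\sigma\,d\xi \;\lesssim\; J_{1}^{\Phi_0}(g)+\|g\|_{H^s}^2\,,
\]
obtained by writing the same Bobylev identity for $J_1^{\Phi_0}$ (that is, with $\mu_*$ in place of $f_*^2$), solving for the squared-difference term, and controlling the remaining cross-term $\iint b\,(\hat\mu(0)-\hat\mu(\xi^-))\,\hat g(\xi^+)\overline{\hat g(\xi)}\,d\sigma\,d\xi$ by $\|g\|_{H^s}^2$ via Cauchy--Schwarz together with the change of variables $\xi\to\xi^+$. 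With this corrected bound your argument closes exactly as intended, since $J_1^{\Phi_0}(g)+\|g\|_{H^s}^2\lesssim J_{1,\rho}^{\Phi_0}(g)+\|g\|_{L^2_s}^2$ by \eqref{part1-rho-triple}.
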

Once the equivalence \eqref{part1-equivalence-mu} has been established, we have
%of \eqref{part1-maxwell-est} by  that is,
\begin{coro}\label{part1-imp-upper-maxwel-2}
Assume that \eqref{part2-hyp-2} holds with $0<s<1$. Then there
exists a constant $C>0$ such that
\[
\iiint b \, f_*^2 (g'-g)^2  d\sigma dvdv_*
\leq C
\|f\|^2_{L^2_s} |||g|||_{\Phi_0}^2\,.
\]
\end{coro}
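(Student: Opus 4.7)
The plan is to obtain the corollary as a near-immediate consequence of Lemma \ref{part1-imp-upper-maxwel}, once one recognizes that the right-hand side of that lemma can be absorbed into $|||g|||_{\Phi_0}^2$ by the structural decomposition of the non-isotropic norm combined with the moment estimate on $J_2$.

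First, I would apply Lemma \ref{part1-imp-upper-maxwel} with the specific choice $\rho = 1$, which gives
\[
\iiint b\, f_*^2 (g'-g)^2 \, d\sigma\, dv\, dv_* \;\leq\; C\, \|f\|^2_{L^2_s}\, \bigl( J_{1,1}^{\Phi_0}(g) + \|g\|^2_{L^2_s} \bigr).
\]
Next, I would recall the definition $|||g|||_{\Phi_0}^2 = J_1^{\Phi_0}(g) + J_2^{\Phi_0}(g)$, where by the notational conventions of Section \ref{part1-Section5} we have $J_1^{\Phi_0}(g) = J_{1,1}^{\Phi_0}(g)$. This immediately gives the trivial bound $J_{1,1}^{\Phi_0}(g) \leq |||g|||_{\Phi_0}^2$.

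It then remains to control $\|g\|^2_{L^2_s}$ by $|||g|||_{\Phi_0}^2$. For this, I would invoke Lemma \ref{part1-radja-moment-estimate} specialized to $\gamma = 0$ (equivalently, the equivalence $J_2^{\Phi_0}(g) \sim \|g\|^2_{L^2_s}$ recorded in \eqref{part1-J-2-equivalence} with $\gamma=0$), which yields
\[
\|g\|^2_{L^2_s} \;\lesssim\; J_2^{\Phi_0}(g) \;\leq\; |||g|||_{\Phi_0}^2.
\]
Combining these two bounds gives $J_{1,1}^{\Phi_0}(g) + \|g\|^2_{L^2_s} \lesssim |||g|||_{\Phi_0}^2$, and plugging this into the consequence of Lemma \ref{part1-imp-upper-maxwel} yields the desired inequality.

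There is no genuine obstacle here: the entire content of the corollary lies in having already established Lemma \ref{part1-imp-upper-maxwel} and the $L^2_s$-coercive lower bound on $J_2^{\Phi_0}$. If anything, the only point to double-check is that the implicit constants in the equivalence $J_2^{\Phi_0}(g) \sim \|g\|^2_{L^2_s}$ do not depend on $g$, which is clear from the proof of Lemma \ref{part1-radja-moment-estimate}. No additional change of variables or cancellation lemma arguments are required at this stage.
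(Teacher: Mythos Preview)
Your proof is correct and follows essentially the same route as the paper. The paper phrases the corollary as a consequence of the equivalence \eqref{part1-equivalence-mu} together with Lemma \ref{part1-imp-upper-maxwel}, while you shortcut this slightly by taking $\rho=1$ directly in Lemma \ref{part1-imp-upper-maxwel} and then invoking \eqref{part1-J-2-equivalence} (i.e., Lemma \ref{part1-radja-moment-estimate} at $\gamma=0$) to absorb $\|g\|_{L^2_s}^2$ into $|||g|||_{\Phi_0}^2$; the two arguments amount to the same thing.
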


\begin{proof}
It is enough to consider the case $\rho=1$.
As in the proof of Lemma \ref{part1-coer}, it follows from
Proposition 2 of \cite{al-1} that
\begin{align}\label{part1-radja-intermid}
J_1^{\Phi_0}(g)&=\iiint b(\cos\theta)  {\mu} _\ast
 ( g'-  g)^2dv_*d\sigma dv \notag \\
&=\frac{1}{(2\pi)^3}\iint b \left(\frac{\xi}{|\xi|}\cdot \sigma
\right) \Big(\widehat { \mu} (0) |\widehat { g} (\xi )|^2 +
|\widehat { g} (\xi^+)|^2  \notag \\
&\qquad \qquad \qquad  - 2
Re\,\, \widehat {\mu} (\xi^-)  \widehat
{  g} (\xi^+ ) \overline{\widehat { g}} (\xi )
\Big)d\xi d\sigma  \notag \\
&=\frac{1}{(2\pi)^3}\iint b \Big(\frac{\xi}{|\xi|}\cdot \sigma \Big)
\Big(\widehat {\mu} (0) | \widehat {g} (\xi ) -
\widehat { g} (\xi^+) |^2 \\
& \qquad \qquad \qquad + 2 Re\,
\Big(\widehat{ \mu }(0) - \widehat { \mu} (\xi^-) \Big)
 \widehat {g} (\xi^+ )
\overline{\widehat {g}} (\xi ) \Big)d\xi d\sigma , \notag
\end{align}
and
\begin{align*}
A = &\iiint b(\cos\theta) f^2_\ast (g'-  g)^2dv_*d\sigma dv\\
= &\frac{1}{(2\pi)^3}\iint b \left(\frac{\xi}{|\xi|}\cdot \sigma
\right) \Big(\widehat{f^2} (0) | \widehat {g }(\xi ) -
\widehat { g} (\xi^+) |^2 \\
&\qquad \qquad \qquad +
2 Re\, \Big(\widehat{f^2} (0) - \widehat{f^2} (\xi^-) \Big)
\widehat { g}
(\xi^+ ) \overline{\widehat { g}} (\xi ) \Big)d\xi d\sigma\, .
\end{align*}
Since $\widehat{f^2} (0)=\|f\|^2_{L^2}$ and
$\widehat { \mu} (0)=c_0 >0$,  we obtain
\begin{align*}
c_0 A &= c_0 \iiint b(\cos\theta) f^2_\ast ( g'- g)^2dv_*d\sigma dv\\
&=\|f\|^2_{L^2}
J_1^{\Phi_0}(g) \\
 &\qquad-\frac{2}{(2\pi)^3} \|f\|^2_{L^2} \iint b
\left(\frac{\xi}{|\xi|}\cdot \sigma \right)\,Re\, \Big(\widehat { \mu} (0)
- \widehat { \mu} (\xi^-) \Big) \widehat { g} (\xi^+ )
\overline{\widehat { g} } (\xi )
\Big)d\xi d\sigma \\
&\qquad\qquad+ \frac{2 c_0 }{(2\pi)^3}\iint b
\left( \frac{\xi}{|\xi|}\cdot \sigma \right) Re\, \Big(\widehat{f^2}
(0) - \widehat{f^2} (\xi^-) \Big) \widehat { g} (\xi^+ )
\overline{\widehat {g}}
(\xi ) \Big) d\xi d\sigma \\
&= \|f\|^2_{L^2}
J_1^{\Phi_0}(g) + A_{1} + A_2\,.
\end{align*}
Write
\begin{align*}
A_{2} = & \frac{2 c_0 }{(2\pi)^3}
\Big \{
\int |{\widehat { g}}
(\xi ) |^2 \Big(\int b
\left( \frac{\xi}{|\xi|}\cdot \sigma \right) Re\, \Big(\widehat{f^2}
(0) - \widehat{f^2} (\xi^-) \Big)  d\sigma \Big) d \xi \\
&+\iint b
\left( \frac{\xi}{|\xi|}\cdot \sigma \right) Re\, \Big(\widehat{f^2}
(0) - \widehat{f^2} (\xi^-) \Big) \Big(\widehat { g} (\xi^+ )
- \widehat {g} (\xi) \Big)
\overline{\widehat {g}}
(\xi )  d\xi d\sigma \Big \}\\
=& A_{2,1} + A_{2,2}\,.
\end{align*}
%Since $|\widehat{f^2} (\xi^-)| \leq \widehat{f^2}(0)$,

It follows from Cauchy-Schwarz's inequality that
\begin{align*}
|A_{2,2}| &\leq
C \Big( \iint b
\left( \frac{\xi}{|\xi|}\cdot \sigma \right)
|\widehat{f^2}
(0) - \widehat{f^2} (\xi^-)|^2 |
\widehat { g}|^2
(\xi )
d\xi d\sigma
\Big)^{1/2}\\
& \qquad \times \Big(\iint b
\left( \frac{\xi}{|\xi|}\cdot \sigma \right)
|\widehat {g} (\xi^+ )
- \widehat {g} (\xi) |^2 d\xi d\sigma
\Big)^{1/2}\\
& = B_1^{1/2} \times B_2^{1/2}\,.
\end{align*}
Since
\[
|\widehat{f^2}
(0) - \widehat{f^2} (\xi^-)|
\leq \int f^2(v)|1- e^{-iv\cdot \xi^-}|dv,
\]
we have
\begin{align*}
B_1 &\leq C \iiint |\widehat { g} (\xi) |^2
f^2(v) f^2(w)\\
& \quad \times
\Big( \int b
\left( \frac{\xi}{|\xi|}\cdot \sigma \right)( |1- e^{-iv\cdot \xi^-}|^2 +
|1- e^{-iw\cdot \xi^-}|^2) d\sigma \Big) dv dw d\xi\\
&\leq
C\| g\|^2_{H^s}\|f\|_{L^2}^2 \|f\|_{L^2_s}^2,
\end{align*}
because
\begin{align*}
&\int b
\left( \frac{\xi}{|\xi|}\cdot \sigma \right)|1- e^{-iv\cdot \xi^-}|^2 d\sigma\\
&\leq C\Big ( \int_0^{(\la v\ra \la \xi \ra)^{-1}}
\theta^{-1-2s} (|v ||\xi|)^2 \theta^2
d\theta +
\int_{(\la v \ra \la \xi \ra)^{-1}}^{\pi/2}
\theta^{-1-2s}
d\theta \Big)\\
&\leq C \la v \ra^{2s} \la \xi \ra^{2s}\,.
\end{align*}
Then we have %, in view of \eqref{part1-radja-Sobolev-coercive},
$
|A_{2,1}| \leq C \|g\|_{H^s}^2 \|f\|_{L^2_s}^2 $
because
\begin{align*}
&\int b
\left( \frac{\xi}{|\xi|}\cdot \sigma \right) Re\, \Big(\widehat{f^2}
(0) - \widehat{f^2} (\xi^-) \Big)  d\sigma\\
&= \int f^2(v)  \Big(\int b
\left( \frac{\xi}{|\xi|}\cdot \sigma \right)\big (1- \cos (v\cdot \xi^-)\big)d\sigma
\Big)dv\\
& \leq C \la \xi \ra^{2s}\int f^2(v) \la v \ra^{2s}  dv \,.
\end{align*}
Since $\widehat { \mu}(\xi)$
is real-valued,
it follows that
\[
Re\, \Big(\widehat { \mu} (0)
- \widehat { \mu} (\xi^-) \Big) \widehat { g} (\xi^+ )
\overline{\widehat { g} } (\xi )
= \Big (\int
\big (1- \cos (v\cdot \xi^-)\big)
\mu(v) dv\Big)\,
Re\,\widehat { g} (\xi^+ )
\overline{\widehat { g} } (\xi )\,.
\]
Therefore, by using Cauchy-Schwarz's inequality and the change of variables
$\xi \rightarrow \xi^+$ ( see the proof of Lemma 2.8 in \cite{amuxy3}), we obtain
$
|A_{1}| \leq C \|f \|_{L^2}^2 \| g\|_{H^s}^2$.
%\leq C \|f \|_{L^2}^2 \big(J_1^{\Phi_0}(g) + \|g\|^2_{L^2_s}\big)$
Furthermore, it follows from \eqref{part1-radja-intermid} that
\begin{align*}
B_2 = &\iint b \Big(\frac{\xi}{|\xi|}\cdot \sigma \Big)
 | \widehat { g} (\xi ) -
\widehat {g} (\xi^+) |^2 d\xi d \sigma \\
\leq &C \Big( J_1^{\Phi_0}(g)   + \| g\|_{H^s}^2 \Big)
%\leq C\Big( J_1^{\Phi_0}(g)   + \| g\|_{L^2_s}^2 \Big)
\,,
\end{align*}
which yields $|A_{2,2}| \leq C\|f\|_{L^2}\|f\|_{L^2_s}
\| g\|_{H^s}\Big( J_1^{\Phi_0}(g)   + \| g\|_{H^s}^2 \Big)^{1/2} $.
Hence
\[
|A_{2}| \leq C\|f\|_{L^2_s}^2
\| g \|_{H^s}\Big( J_1^{\Phi_0}(g)   + \| g\|_{H^s}^2 \Big)^{1/2}  \,.
\]
Finally, we have $$A \leq C \|f\|_{L^2_s}^2
\| g \|_{H^s}\Big( J_1^{\Phi_0}(g)   + \| g\|_{H^s}^2 \Big)^{1/2}
\leq C \|f\|_{L^2_s}^2\Big( J_1^{\Phi_0}(g)   + \| g\|_{L^2_s}^2 \Big),
$$ by means of \eqref{part1-rho-triple} with $\gamma=0$, completing the proof of the lemma.
\end{proof}

%%%%%%%%%%%%%%%%%%%%%%%%%%%%%%%%%%%%%%%%%%%%%%%%%%%%%%%%%%%%%%%%%%%%%%
%%%%%%%%%%%%%%%%%%%%%%%%%%%%%%%%%%%%%%%%%%%%%%%%%%%%%%%%%%%%%%%%%%%%%%

\section{Estimates of nonlinear collision operator in velocity space}\label{part2-section2}
\setcounter{equation}{0}

In this section, we derive various estimates on the nonlinear collision operator. Even though we consider the soft potential case in this paper,
some of the following estimates also hold for general case so that they will be used in Part II.

\subsection{Upper bounds in general case}\label{part2-section2.2}

In this sub-section, we will establish various functional estimates which hold true under
the more general assumption $0<s<1$ and $\gamma >-3$. In particular, all the results in this part are independent of the sign of $\gamma +2s$.

\begin{prop}\label{part2-prop5.1} For all $0< s<1$ and $\gamma >-3$, we have
\begin{align*}
\big | \big( \Gamma(f,g) ,h \big)_{L^2} \big|
\lesssim |||h|||_{\Phi_\gamma}
\Big\{ \|f\|_{L^\infty} |||g|||_{\Phi_\gamma} + \Big(
\|\nabla f \|_{L^\infty} +  \|f\|_{L^\infty}\Big)
\|g\|_{L^2_{ s+\gamma/2}}^2 \Big \} .
\end{align*}
\end{prop}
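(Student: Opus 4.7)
The plan is to start from the symmetric bilinear formula \eqref{part1-2.2.5},
\[
\bigl(\Gamma(f,g),h\bigr)_{L^2} = \tfrac12\iiint b(\cos\theta)\Phi_\gamma(|v-v_*|)\bigl(f'_*g'-f_*g\bigr)\bigl(\sqrt{\mu_*}\,h-\sqrt{\mu'_*}\,h'\bigr) d\sigma dv_* dv,
\]
and to split each of the two bilinear differences into a ``transport'' part and a ``commutator'' part,
\[
f'_*g'-f_*g = f_*(g'-g) + (f'_*-f_*)g',\qquad \sqrt{\mu_*}h-\sqrt{\mu'_*}h' = \sqrt{\mu_*}(h-h') + (\sqrt{\mu_*}-\sqrt{\mu'_*})h'.
\]
Expanding the product gives one principal term and three cross terms, which I would estimate separately.

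For the principal term $T_0 = \iiint b\,\Phi_\gamma\, f_*\sqrt{\mu_*}(g'-g)(h-h')$ I would use $|f_*|\le\|f\|_{L^\infty}$ and then Cauchy--Schwarz with the symmetric weight splitting $\sqrt{\mu_*}=\mu_*^{1/4}\mu_*^{1/4}$, obtaining $\bigl(J_{1,1/2}^{\Phi_\gamma}(g)\bigr)^{1/2}\bigl(J_{1,1/2}^{\Phi_\gamma}(h)\bigr)^{1/2}$ with a factor of $\|f\|_{L^\infty}$ in front. The equivalence of $J_{1,\rho}^{\Phi_\gamma}$ for different $\rho>0$ modulo $L^2_{s+\gamma/2}$ (a consequence of the ``same conclusion with $\mu^\rho$'' clauses of Lemmas~\ref{part1-coer} and~\ref{part1-J-1-upper}, combined with Proposition~\ref{part1-prop3}) then controls this by $\|f\|_{L^\infty}|||g|||_{\Phi_\gamma}|||h|||_{\Phi_\gamma}$ up to lower-order $L^2_{s+\gamma/2}$ corrections that fit into the second contribution of the target estimate.

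The three cross terms are handled by Taylor expansions that supply the missing factors of $\theta$ needed to tame the angular singularity. For $(f'_*-f_*)g'\cdot\sqrt{\mu_*}(h-h')$ and $f_*(\sqrt{\mu_*}-\sqrt{\mu'_*})(g'-g)h'$ I would use $|f'_*-f_*|\lesssim\|\nabla f\|_{L^\infty}|v-v_*|\sin(\theta/2)$ and $|\sqrt{\mu_*}-\sqrt{\mu'_*}|\lesssim|v-v_*|\sin(\theta/2)\bigl(\mu_*^{1/8}+(\mu'_*)^{1/8}\bigr)$, the latter obtained by a mean-value argument along the segment $[v_*,v'_*]$ combined with the Gaussian decay of $\nabla\sqrt\mu$. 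The extra $\theta$ makes $b\theta$ integrable as $\theta^{-1-2s}$, and Cauchy--Schwarz followed by the $v_*$-integration via Lemma~\ref{part1-tro-funda} yields $\|\nabla f\|_{L^\infty}\|g\|_{L^2_{s+\gamma/2}}|||h|||_{\Phi_\gamma}$ and $\|f\|_{L^\infty}\|g\|_{L^2_{s+\gamma/2}}|||h|||_{\Phi_\gamma}$ respectively. The doubly-small cross term $(f'_*-f_*)(\sqrt{\mu_*}-\sqrt{\mu'_*})g'h'$ is the most benign: both differences supply $\theta$, so $b\theta^2$ is integrable on $\SS^2$ and direct Cauchy--Schwarz with the Gaussian weight closes it with an estimate of the same form.

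The main obstacle I foresee is in the principal term: after Cauchy--Schwarz the output is expressed through $J_{1,1/2}^{\Phi_\gamma}$, i.e.\ with a \emph{smaller} Maxwellian exponent than the one entering the definition of $|||\cdot|||_{\Phi_\gamma}$, and the comparison to $|||g|||_{\Phi_\gamma}$ therefore requires the full equivalence chain of Section~\ref{part1-Section5} rather than a pointwise bound. A secondary technical point is the bookkeeping of the Gaussian factor along the segment $[v_*,v'_*]$ in the mean-value estimate of $\sqrt{\mu_*}-\sqrt{\mu'_*}$; this is resolved by the standard inequality $\mu(v_*+\tau(v'_*-v_*))\le C_\kappa\,\mu(v_*)^\kappa\,\mu(v'_*)^\kappa$ valid for some $\kappa\in(0,1)$, which permits any lost Gaussian weight to be absorbed freely in the subsequent Cauchy--Schwarz and $v_*$-integration.
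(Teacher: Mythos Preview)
Your strategy differs from the paper's in one structural respect that turns out to matter. The paper does \emph{one} Cauchy--Schwarz on the full symmetric formula, keeping $\sqrt{\mu_*}\,h-\sqrt{\mu'_*}\,h'$ intact; this immediately factors out $|||h|||_{\Phi_\gamma}$ and leaves a single quadratic form
\[
A=\iiint b\,\Phi_\gamma\,\mu_*^{1/2}\bigl(f'_*g'-f_*g\bigr)^2
\]
which still carries the full Gaussian $\mu_*^{1/2}$. You instead expand both brackets first, producing four terms, and try to Cauchy--Schwarz each separately. This creates a genuine difficulty in your term $f_*(g'-g)(\sqrt{\mu_*}-\sqrt{\mu'_*})h'$: there is no way to split it so that one side is controlled by $|||h|||_{\Phi_\gamma}$ and the other side still has a Gaussian weight in $v_*$. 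If you send the whole factor $(\sqrt{\mu_*}-\sqrt{\mu'_*})^2(h')^2$ to the $h$-side you do recover $J_2(h)\le|||h|||_{\Phi_\gamma}^2$ after the pre--post change, but then the $g$-side is $\iiint b\,\Phi_\gamma\,(g'-g)^2$ with no decay in $v_*$ and diverges. If instead you peel a Gaussian fraction off $|\sqrt{\mu_*}-\sqrt{\mu'_*}|$ and keep it with $g$, the $h$-side is $\iiint b\,\Phi_\gamma\,\mu_*^{c}(h')^2$ with $b$ non-integrable. Your claim that this term yields $\|f\|_{L^\infty}\|g\|_{L^2_{s+\gamma/2}}|||h|||_{\Phi_\gamma}$ therefore does not go through as written.

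Two smaller gaps compound this. First, in your cross term $(f'_*-f_*)g'\sqrt{\mu_*}(h-h')$ you bound $|f'_*-f_*|\lesssim\|\nabla f\|_{L^\infty}|v-v_*|\,\theta$ only; after Cauchy--Schwarz and the $\sigma$-integration this yields the weight $|v-v_*|^{\gamma+2}$, hence $\|g\|_{L^2_{1+\gamma/2}}$ rather than the stated $\|g\|_{L^2_{s+\gamma/2}}$. To obtain the sharp exponent $s+\gamma/2$ you must use the two-sided bound $\bigl|(\mu^{1/8}f)'_*-(\mu^{1/8}f)_*\bigr|\le\min\{(\|\nabla f\|_{L^\infty}+\|f\|_{L^\infty})\,\theta|v-v_*|,\,\|f\|_{L^\infty}\}$, as the paper does, so that the $\sigma$-integral produces $|v-v_*|^{2s}$. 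Second, your decomposition pairs the $f$-difference with $g'$ rather than $g$; after Cauchy--Schwarz you are left with $\iiint b\,\theta^2\Phi_\gamma|v-v_*|^2\mu_*^{1/2}|g'|^2$, and converting $g'$ to $g$ by the regular change $v\to v'$ is a nontrivial step you do not mention. The paper sidesteps this by the decomposition $\mu_*^{1/4}(f'_*g'-f_*g)=\mu_*^{1/8}(\mu^{1/8}f)'_*(g'-g)+\mu_*^{1/8}\bigl[(\mu^{1/8}f)'_*-(\mu^{1/8}f)_*\bigr]g+(\mu_*^{1/8}-(\mu'_*)^{1/8})\mu_*^{1/8}f'_*g'$, which places $g$ (not $g'$) with the $f$-difference. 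The fix is straightforward: apply Cauchy--Schwarz \emph{before} expanding, exactly as the paper does.
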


\begin{proof}
Direct calculation gives
\begin{align*}
&\Big( \Gamma(f,g) ,h \Big )_{L^2}  = \iiint b \Phi_\gamma \mu_*^{1/2} \Big(f'_*
g' - f_* g \Big) h dvdv_* d \sigma \\
=& \frac{1}{2}\iiint \Big( b  \Phi_\gamma\Big)^{1/2}{\mu'_*}^{1/4}  \Big(f'_*
g' - f_* g \Big) \Big( b { \Phi_\gamma} \Big)^{1/2}
\Big( {\mu_*}^{1/4} h - {\mu'_*}^{1/4} h'\Big) \\
& + \frac{1}{2} \iiint \Big( b  \Phi_\gamma\Big)^{1/2}{\mu_*}^{1/4}
\Big(f'_*
g' - f_* g \Big) \Big( b { \Phi_\gamma} \Big)^{1/2}
 \Big( {\mu_*}^{1/4} - {\mu'_*}^{1/4} \Big)h\,.
\end{align*}
Noticing that
\[
{\mu_*}^{1/4} h - {\mu'_*}^{1/4} h' =
{\mu'_*}^{1/4} \Big( h-h' \Big) +\Big( {\mu_*}^{1/4} - {\mu'_*}^{1/4} \Big)h\,,
\]
by using the Cauchy-Schwarz inequality, we have
\begin{align*}
\Big|
\Big( \Gamma(f,g) ,h \Big ) \Big|
\lesssim &  \, \Big( \iiint  b  \Phi_\gamma \mu_*^{1/2} \Big(f'_*
g' - f_* g \Big) ^2  d \sigma dv dv_*   \Big)^{1/2} \, ||| h |||_{\Phi_\gamma} \\
=&  A^{1/2} \, ||| h|||_{\Phi_\gamma} \,,
\end{align*}
where we have used the fact that the non-isotropic norm is invariant by replacing $\mu$ by $\mu^\rho$ for any fixed
$\rho>0$ (see  the previous section).
We then estimate
\begin{align*}
A & \leq 3 \Big(
\iiint  b  \Phi_\gamma \mu_*^{1/4} \Big( \big( \mu^{1/8} f\big)'_*
 - \big( \mu^{1/8} f\big) _*  \Big) ^2  g^2 d \sigma dv dv_* \\
& \qquad + \iiint  b  \Phi_\gamma \mu_*^{1/8} \Big( \big( \mu^{1/8} f\big)'_* \Big)^2\Big(
g' - g \Big) ^2 d \sigma dv dv_*  \notag \\
&\qquad + \iiint  b \Phi_\gamma \mu_*^{1/4}
\Big(\mu^{1/8}_* - {\mu'_*}^{1/8}\Big)^2
 \Big(f'_*
g'  \Big) ^2  d \sigma dv dv_* \Big) \notag \\
&= A_1 +A_2 +A_3 \,.\notag
\end{align*}

It is easy to see that
\[
A_2 +A_3
\lesssim \| f\|_{L^\infty}^2    |||g|||^2_{\Phi_\gamma} \,.
\]
Note that
\[
\Big|\big(\mu^{1/8} f\big)'_*
 - \big( \mu^{1/8} f\big) _*  \Big|
 \leq \min \Big\{
\Big( \|\nabla f \|_{L^\infty} +  \|f\|_{L^\infty}\Big) \theta |v-v_*|,
\|f\|_{L^\infty} \Big\} .
\]
Then
we have
\begin{align*}
A_1 &\lesssim
\Big( \|\nabla f \|_{L^\infty} +  \|f\|_{L^\infty}\Big)^2
\iint \Phi_\gamma \Big (
\int b(\cos \theta) \min(\theta^2 |v-v_*|^2\,, 1 ) d\sigma \Big)
\mu_*^{1/4} g^2 dv dv_* \\
&\lesssim \Big( \|\nabla f \|_{L^\infty} +  \|f\|_{L^\infty}\Big)^2\iint |v-v_*|^{\gamma+2s} \mu_*^{1/4} g^2 dv dv_*
\lesssim \Big( \|\nabla f \|_{L^\infty} +  \|f\|_{L^\infty}\Big)^2\|g\|^2_{s+\gamma/2}\,,
\end{align*}
where we have used $\gamma +2s >-3$ and the fact that
\begin{align*}
\int b(\cos \theta) \min(\theta^2 |v-v_*|^2\,, 1 ) d\sigma
&\leq |v-v_*|^2
\int_0^{\min(\pi/2, |v-v_*|^{-1})} \theta^{1-2s} d\theta \\
&
+ \int_{\min(\pi/2, |v-v_*|^{-1})}^{\pi/2} \theta^{-1-2s} d\theta
\lesssim |v-v_*|^{2s}\,.
\end{align*}
And this completes the proof of the proposition.
\end{proof}

\begin{lemm}\label{part1-imp-upper-hard}  Let $\gamma \ge 0$.
Assume that \eqref{part2-hyp-2} holds with $0<s<1$. Then
\[
\iiint \Phi_\gamma(|v-v_*|) b \, f_*^2 (g'-g)^2  d\sigma dvdv_*
\lesssim
\|f\|^2_{L^2_{s+\gamma/2}}
|||g|||_{\Phi_\gamma}^2 \,.
\]
\end{lemm}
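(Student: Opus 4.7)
The plan is to reduce the statement to the Maxwellian non-isotropic estimate already proved in Corollary~\ref{part1-imp-upper-maxwel-2}, transferring the $\gamma$-weight out of the kinetic factor. Specifically, I will move $|v-v_*|^\gamma$ onto $v_*$ (producing an $L^2_{s+\gamma/2}$ norm of $f$) and onto $v$ via the equivalence $|||g|||_{\Phi_\gamma} \sim |||\langle v\rangle^{\gamma/2} g|||_{\Phi_0}$ from Proposition~\ref{part1-prop4}. The hypothesis $\gamma \ge 0$ enters only through the elementary splitting $|v-v_*|^\gamma \lesssim \langle v\rangle^\gamma \langle v_*\rangle^\gamma$.

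Setting $\tilde g = \langle v\rangle^{\gamma/2} g$, the identity
\[
\langle v\rangle^{\gamma/2}(g'-g) = (\tilde g' - \tilde g) + \bigl(\langle v\rangle^{\gamma/2} - \langle v'\rangle^{\gamma/2}\bigr) g'
\]
yields the pointwise bound
\[
\langle v\rangle^\gamma (g'-g)^2 \le 2(\tilde g' - \tilde g)^2 + 2\bigl(\langle v\rangle^{\gamma/2} - \langle v'\rangle^{\gamma/2}\bigr)^2 (g')^2.
\]
Substituting this in, the left-hand side splits into a main term and a commutator term. The main term,
\[
\iiint b(\cos\theta) \bigl(\langle v_*\rangle^{\gamma/2} f_*\bigr)^2 (\tilde g'-\tilde g)^2 \, d\sigma\, dv\, dv_*,
\]
is controlled directly by Corollary~\ref{part1-imp-upper-maxwel-2} applied with $(F,G)=(\langle v\rangle^{\gamma/2}f, \tilde g)$, yielding $\|f\|^2_{L^2_{s+\gamma/2}} |||\tilde g|||^2_{\Phi_0}$; Proposition~\ref{part1-prop4} then converts $|||\tilde g|||^2_{\Phi_0}$ back to $|||g|||^2_{\Phi_\gamma}$, matching the desired right-hand side.

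For the commutator term, I would proceed exactly as in the $A_2$ estimate of the proof of Lemma~\ref{part1-coer}. Combining the mean-value inequality
\[
|\langle v\rangle^{\gamma/2} - \langle v'\rangle^{\gamma/2}| \lesssim \langle v\rangle^{\gamma/2-1} \langle v_*\rangle^{|\gamma/2-1|} \langle v-v_*\rangle \theta
\]
with the uniform bound $\lesssim \langle v\rangle^{\gamma/2}\langle v_*\rangle^{\gamma/2}$ (which follows from $\langle v'\rangle \lesssim \langle v\rangle\langle v_*\rangle$), taking the minimum of the squared versions and integrating $b(\cos\theta)\min(\theta^2,1)$ over $\SS^2$ (a finite integral since $s<1$), a regular change of variables $v\to v'$ then bounds the commutator by $\|f\|^2_{L^2_{s+\gamma/2}}\|g\|^2_{L^2_{s+\gamma/2}}$, which is absorbed into $\|f\|^2_{L^2_{s+\gamma/2}}|||g|||^2_{\Phi_\gamma}$ via the lower bound $|||g|||^2_{\Phi_\gamma}\gtrsim \|g\|^2_{L^2_{s+\gamma/2}}$ furnished by Proposition~\ref{part1-prop2}.

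The main obstacle will be the bookkeeping in this commutator step: the exponent $\gamma/2-1$ is negative when $\gamma<2$, forcing the symmetrized Taylor bound $\langle v_\tau\rangle^{\gamma/2-1}\lesssim \langle v\rangle^{\gamma/2-1}\langle v_*\rangle^{|\gamma/2-1|}$ along the segment $v_\tau = v' + \tau(v-v')$, after which one must carefully verify that the polynomial weight in $\langle v_*\rangle$ produced by the angular integration and the change of variables still fits inside the prescribed $L^2_{s+\gamma/2}$ control of $f$. The core singular-integral input is already packaged as Corollary~\ref{part1-imp-upper-maxwel-2}, so no new Fourier-side analysis should be needed.
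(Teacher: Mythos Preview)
Your main term is handled cleanly, and the appeal to Corollary~\ref{part1-imp-upper-maxwel-2} together with Proposition~\ref{part1-prop4} is exactly right there. The gap is in the commutator term. Your multiplicative bound $|v-v_*|^\gamma \lesssim \langle v\rangle^\gamma \langle v_*\rangle^\gamma$ leaves an extra factor $\langle v_*\rangle^\gamma$ sitting in front of $f_*^2$ in the commutator integral
\[
\iiint b(\cos\theta)\,\langle v_*\rangle^\gamma f_*^2\,\bigl(\langle v\rangle^{\gamma/2}-\langle v'\rangle^{\gamma/2}\bigr)^2 (g')^2\,d\sigma\,dv\,dv_*\,.
\]
After the regular change $v\to v'$ and the angular splitting (at $\theta_0\sim\langle v-v_*\rangle^{-1}$, which is what the $A_2$ argument in Lemma~\ref{part1-coer} actually does --- not simply $\int b\min(\theta^2,1)\,d\sigma$), the $\sigma$–integral of the squared difference produces a contribution of size $\langle v_*\rangle^{2s+\gamma}$ coming from the uniform bound on the large–angle part. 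Multiplied by your extra $\langle v_*\rangle^\gamma$, this yields a term $\langle v_*\rangle^{2s+2\gamma} f_*^2\, g^2$, hence $\|f\|_{L^2_{s+\gamma}}^2\|g\|_{L^2}^2$. For $\gamma>0$ this is strictly stronger than $\|f\|_{L^2_{s+\gamma/2}}^2$, so it cannot be absorbed into the stated right-hand side.

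The paper avoids this by using the \emph{additive} splitting $|v-v_*|^\gamma \lesssim \langle v'\rangle^\gamma + \langle v_*\rangle^\gamma$. The $\langle v_*\rangle^\gamma$ piece becomes a separate term $A_2=\iiint b\,(\langle v_*\rangle^{\gamma/2}f_*)^2(g'-g)^2$, to which Corollary~\ref{part1-imp-upper-maxwel-2} applies directly (giving $\|f\|_{L^2_{s+\gamma/2}}^2|||g|||_{\Phi_0}^2\lesssim \|f\|_{L^2_{s+\gamma/2}}^2|||g|||_{\Phi_\gamma}^2$ since $\tilde\Phi_\gamma\ge 1$). The $\langle v'\rangle^\gamma$ piece produces the commutator $A_3$ with $g^2$ (not $(g')^2$) and \emph{no} residual $\langle v_*\rangle^\gamma$ prefactor, so the standard angular estimate lands exactly on $\|f\|_{L^2_{s+\gamma/2}}^2\|g\|_{L^2}^2+\|f\|_{L^2}^2\|g\|_{L^2_{s+\gamma/2}}^2$. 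Replacing your multiplicative split by this additive one fixes the argument with essentially no extra work.
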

\begin{proof}
Since $\Phi_\gamma(|v-v_*|) \lesssim \la v' \ra^\gamma + \la v_* \ra^\gamma$, we have
\begin{align*}
&\iiint b(\cos \theta) \Phi(|v-v_*|)f_*^2 (g' - g)^2 d\sigma dv dv_* \\
&\qquad \lesssim
 \iiint b(\cos \theta) f_*^2\Big(\la v' \ra^{\gamma/2}g'-
\la v \ra^{\gamma/2} g\,\Big)^2
d\sigma dvdv_* \\
& \qquad \qquad
+ \iiint b(\cos \theta) \Big(\la v_* \ra^{\gamma/2} f_*\Big)^2 (g' - g)^2 d\sigma dv dv_*\\
& \qquad \qquad
+ \iiint b(\cos \theta)f_*^2
\Big(
\la v \ra^{\gamma/2} -\la v' \ra^{\gamma/2}\Big)^2 |g|^2
d\sigma dvdv_*
\\
& \qquad = A_1 + A_2 + A_3\,.
\end{align*}
Noticing that
\begin{align*}
\Big|\la v \ra^{\gamma/2} -\la v' \ra^{\gamma/2}\Big|
&\leq C_\gamma \int_0^1 \la
v' + \tau(v-v') \ra^{(\gamma/2 -1)^+} d \tau |v-v_*|\theta\,\\
&\leq C'_\gamma \Big (\la v \ra^{(\gamma/2 -1)^+}
+ \la v_* \ra^{(\gamma/2 -1)^+} \Big) \, \la v-v_* \ra \theta,
\end{align*}
we have
\begin{align*}
A_3 &\lesssim \iint f_*^2 |g|^2\Big\{ \Big (\la v \ra^{(\gamma/2 -1)^+}
+ \la v_* \ra^{(\gamma/2 -1)^+} \Big)^2 \\
& \qquad \qquad \times \Big(
\int_0^{\la v -v_* \ra^{-1}} \theta^{-1-2s}  \Big(
\la v-v_* \ra \theta \Big)^2  d \theta \Big)\\
& \qquad \qquad +
\int_{\la v -v_* \ra^{-1}} ^{\pi/2}
\Big (\la v \ra^{\gamma/2}
+ \la v_* \ra^{\gamma/2} \Big)^2 \theta^{-1-2s} d\theta \Big\}
 dvdv_* \\
&\lesssim \iint   \Big (\la v \ra^{2s + \gamma}
+ \la v_* \ra^{2s + \gamma} \Big) f_*^2 |g|^2dvdv_* \\
&\lesssim \Big(\|f\|^2_{L^2_{s+\gamma/2}}\|g\|^2_{L^2} +
\|f\|^2_{L^2}\|g\|^2_{L^2_{s+\gamma/2}}\Big)\,.
\end{align*}
Applying Corollary \ref{part1-imp-upper-maxwel-2} to $A_1$ and $A_2$, it follows that
\begin{align*}
A_1 + A_2 &\lesssim \|f\|^2_{L^2_s}||| \la v \ra^{\gamma/2}g|||^2_{\Phi_0}
+ \| \la v \ra^{\gamma/2}f\|^2_{L^2_s}|||g|||^2_{\Phi_0}\\
&  \lesssim  \|f\|^2_{L^2_{s+\gamma/2}} ||| g|||^2_{\Phi_\gamma} \, ,
\end{align*}
where we have used Proposition \ref{part1-prop4} in the last inequality.
\end{proof}

\begin{prop}\label{part2-prop5.2} For all $0< s <1$ and $\gamma >-3$, one has
\begin{align*}
%\label{part2-4-2-5.2}
\big | \big( \Gamma(f,g) ,h \big )_{L^2}  \big|
\lesssim & |||h |||_{\Phi_\gamma}
\Big\{ \|f\|_{L^2_{s+\gamma/2}}|||g|||_{\Phi_\gamma}
+ \|g\|_{L^2_{s+\gamma/2}}|||f|||_{\Phi_\gamma}\notag\\
&\quad + \min \Big ( \|f\|_{L^2}\|g\|_{L^2_{s+\gamma/2}} \,,\,\|f\|_{L^2_{s+\gamma/2}}
\|g\|_{L^2}
\Big)\\
&+
{ \|\mu^{1/40}f\|_{L^2}
\|\mu^{1/60} \,  g\|_{H^{\max (-\gamma/2,\, 1)}} +   |||\mu^{1/40}f|||_{\Phi_\gamma}
 \|\mu^{1/40} \,g\|^2_{L^\infty} }
\Big \}\,.\notag
\end{align*}
\end{prop}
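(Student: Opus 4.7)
The plan is to parallel the approach of Proposition~\ref{part2-prop5.1}, but replacing the $L^\infty$ and $W^{1,\infty}$ control of $f$ by $L^2$-type controls. I would start from the standard symmetrized identity
\[
\big(\Gamma(f,g),h\big)_{L^2} = \frac{1}{2}\iiint b\,\Phi_\gamma \bigl(f'_*g' - f_*g\bigr)\bigl(\sqrt{\mu_*}\,h - \sqrt{\mu'_*}\,h'\bigr)\,d\sigma dv_* dv,
\]
obtained from the change of variables $(v,v_*)\leftrightarrow(v',v'_*)$. Inserting auxiliary Maxwellian factors of the form $\mu_*^{1/4}$ on each side (as in the proof of Proposition~\ref{part2-prop5.1}) and applying the Cauchy--Schwarz inequality, one extracts a factor $|||h|||_{\Phi_\gamma}$ using the fact, established in Section~\ref{part1-Section5}, that the non-isotropic norm is insensitive to replacing $\mu$ by any $\mu^\rho$. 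This reduces the estimate to controlling
\[
A := \iiint b\,\Phi_\gamma \,\mu_*^{1/2}\bigl(f'_*g' - f_*g\bigr)^2 \,d\sigma dv_* dv,
\]
or a suitable symmetric variant thereof.

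Next, I would split the difference of products in the natural way,
\[
f'_*g' - f_*g = f_*(g' - g) + (f'_* - f_*)\,g',
\]
square the right-hand side, and estimate each resulting integral separately. The first piece, $\iiint b\,\Phi_\gamma\,\mu_*^{1/2}\,f_*^2\,(g'-g)^2$, is handled directly by Lemma~\ref{part1-imp-upper-hard} (with $f$ weighted by a Maxwellian so as to extend to $\gamma<0$), producing the factor $\|f\|_{L^2_{s+\gamma/2}}^2\,|||g|||_{\Phi_\gamma}^2$. The symmetric piece with $(f'_*-f_*)$ and $g'$ is handled by swapping the roles of $f$ and $g$ in the same lemma (after a regular change of variables $v_* \to v_*'$ absorbed against $g'$), giving $\|g\|_{L^2_{s+\gamma/2}}^2\,|||f|||_{\Phi_\gamma}^2$. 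The cross term coming from the symmetrization, which does not carry a $(g'-g)$ or $(f'_*-f_*)$ difference, is treated by the cancellation lemma of \cite{al-1} applied to the $\sigma$ integral; this produces the $\min\{\|f\|_{L^2}\|g\|_{L^2_{s+\gamma/2}},\,\|f\|_{L^2_{s+\gamma/2}}\|g\|_{L^2}\}$ term, the min being available because the cancellation may be applied on either argument.

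The main obstacle is extending Lemma~\ref{part1-imp-upper-hard}, stated only for $\gamma\ge 0$, down to $\gamma>-3$: the factor $\Phi_\gamma(|v-v_*|)=|v-v_*|^\gamma$ is singular at $v=v_*$ and can no longer be bounded by $\langle v'\rangle^\gamma+\langle v_*\rangle^\gamma$. I would therefore introduce a smooth cutoff $\chi(|v-v_*|)$ isolating the near-diagonal region $|v-v_*|\lesssim 1$. On its complement, $\Phi_\gamma\lesssim 1$ and the argument of Lemma~\ref{part1-imp-upper-hard} carries over without change. On the near-diagonal region, the singular factor is absorbed by heavy Maxwellian weights $\mu^{1/40}$ or $\mu^{1/60}$; the remaining integral is reduced, after Bobylev/Fourier-type manipulations as in Section~\ref{part1-Section5}, to the square of a Sobolev norm of $\mu^{1/60}g$ of order $\max(-\gamma/2,1)$, the exponent $-\gamma/2$ being exactly what is needed to balance the homogeneity $|v-v_*|^\gamma$ of $\Phi_\gamma$ when $\gamma\le -2$ (the lower bound $1$ ensuring that the $s<1$ angular gain alone never suffices to cancel the singularity). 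This yields the penultimate term in the bound. Finally, the remaining pointwise piece, where the $(g'-g)$ difference meets the singular kinetic factor without regularity gain on $g$, is handled by pulling out $\|\mu^{1/40}g\|_{L^\infty}^2$ and bounding the remaining double integral by $|||\mu^{1/40}f|||_{\Phi_\gamma}^2$, giving the last term of the stated estimate.
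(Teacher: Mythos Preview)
Your broad strategy matches the paper's: Cauchy--Schwarz to extract $|||h|||_{\Phi_\gamma}$, a near/far splitting $\Phi_\gamma = \Phi_A + \Phi_B$ of the kinetic factor according to $|v-v_*|\gtrless 1$, and heavy Maxwellian weights on the near-diagonal piece. However, there is a genuine gap in your treatment of the far part and in your account of the $\min$ term.

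For the far part $\Phi_A$, the paper does \emph{not} square your two-term decomposition $f'_*g' - f_*g = f_*(g'-g) + (f'_*-f_*)g'$. Instead it inserts a factor $\mu_*^{1/8}$ and writes a \emph{three}-term decomposition, schematically
\[
\mu_*^{1/4}(f'_*g' - f_*g) \;\sim\; \mu_*^{1/8}\Bigl[\bigl((\mu^{1/8}f)'_* - (\mu^{1/8}f)_*\bigr)g \;+\; (\mu^{1/8}f)'_*(g'-g) \;+\; (\mu_*^{1/8}-{\mu'_*}^{1/8})f'_*g'\Bigr],
\]
producing pieces $A_1, A_2, A_3$. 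The first two are reduced, via $\tilde\Phi_\gamma \mu_*^{\rho} \lesssim \langle v\rangle^\gamma$, to the Maxwellian-molecule case $\Phi_0$, where Corollary~\ref{part1-imp-upper-maxwel-2} (equivalently Lemma~\ref{part1-imp-upper-hard} at $\gamma=0$) applies directly; this is how the lemma is ``extended'' to $\gamma<0$, not by a separate argument. The $\min$ term comes from $A_3$, which carries the factor $(\mu_*^{1/8}-{\mu'_*}^{1/8})^2(f'_*g')^2$ with \emph{no} difference in $f$ or $g$: it is not obtained via the cancellation lemma, nor from a cross term of a square, but by a direct partition of $\RR_v^3\times\RR_{v_*}^3$ into three regions $U_1, U_2, U_3$ according to the relative sizes of $|v-v_*|$ and $|v_*|$ (or, symmetrically, $|v|$), exploiting the decay of ${\mu'_*}^{1/4}$ on each region. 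Your two-term decomposition does not generate such a piece, and the ``cross term without differences'' you describe does not exist: the cross term from squaring $f_*(g'-g)+(f'_*-f_*)g'$ carries \emph{both} differences.

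For the near-diagonal piece $\Phi_B$, the paper does not use Bobylev/Fourier methods. After extracting Maxwellian weights from $|v-v_*|\le 1$ (since then $\mu_*\lesssim \mu^{1/2}$), the term $B_{2,1}$ responsible for the $H^{\max(-\gamma/2,1)}$ norm is handled by a first-order Taylor expansion of $G(v)-G(v')$, followed by the change of variables $v\mapsto v_\tau = v' + \tau(v-v')$ whose Jacobian is shown to be uniformly bounded below by $1/8$, and finally the Hardy inequality on $|v_\tau - v_*|^{-(-\gamma-2)^+}$. The term $|||\mu^{1/40}f|||_{\Phi_\gamma}\|\mu^{1/40}g\|_{L^\infty}^2$ comes from the analogue $B_1$ of $A_1$ (difference in $\mu^{1/8}f$, pointwise bound on $g$), not from a residual piece where $(g'-g)$ meets the singular factor.
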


\begin{proof}
We will use the decomposition
\begin{equation*}
\Phi_\gamma (z) =|z|^\gamma{\bf 1}_{\{|z|\geq 1\}}+|z|^\gamma{\bf 1}_{\{|z|\leq 1\}}=\Phi_A(z)+\Phi_B(z).
\end{equation*}
We denote by $\Gamma_A(\cdot, \cdot), \Gamma_B(\cdot, \cdot)$ the collision operators with the kinetic factor
in the cross section given by $\Phi_A$ and $\Phi_B$ respectively.
Similarly to the proof of Proposition \ref{part2-prop5.1}, we have
\begin{align*}
\Big|
\Big( \Gamma_A(f, g),\, h \Big ) \Big|
\lesssim &  \, \Big( \iiint  b  \Phi_A \mu_*^{1/2} \Big(f'_*
g' - f_* g \Big) ^2  d \sigma dv dv_*   \Big)^{1/2} \, ||| h |||_{\Phi_\gamma} \\
=&  A^{1/2} \, ||| h|||_{\Phi_\gamma} \,.\,
\end{align*}
Since $\Phi_A\leq 2^{|\gamma|}\tilde\Phi_\gamma$, we have
\begin{align*}
A & \lesssim
\iiint  b  \tilde\Phi_\gamma \mu_*^{1/4} \Big( \big( \mu^{1/8} f\big)'_*
 - \big( \mu^{1/8} f\big) _*  \Big) ^2  g^2 d \sigma dv dv_* \\
& \qquad + \iiint  b  \tilde\Phi_\gamma \mu_*^{1/8} \Big( \big( \mu^{1/8} f\big)'_* \Big)^2\Big(
g' - g \Big) ^2 d \sigma dv dv_*  \notag \\
&\qquad + \iiint  b \tilde\Phi_\gamma \mu_*^{1/4}
\Big(\mu^{1/8}_* - {\mu'_*}^{1/8}\Big)^2
 \Big(f'_*
g'  \Big) ^2  d \sigma dv dv_*  \notag \\
&= A_1 +A_2 + A_3 \,.\notag
\end{align*}

In order to estimate $A_1$, we make use of Lemma \ref{part1-imp-upper-hard}.
Since $\tilde \Phi_\gamma(|v-v_*|) \mu_*^{1/4} \lesssim \la v \ra^\gamma$, we have
by putting $f= \mu^{1/8} f$ and $g= \la v \ra^{\gamma/2} g$,
\begin{align*}
A_1 &\lesssim
\iiint  b \big( \la v \ra^{\gamma/2} g \big)^2 \Big( \big( \mu^{1/8} f\big)'_*
 - \big( \mu^{1/8} f\big)_*  \Big) ^2  d \sigma dv dv_* \\
 & \lesssim \|\la v \ra^{\gamma/2} g \|^2_{L^2_s} |||\mu^{1/8} f|||_{\Phi_0}^2
 \lesssim \|g\|^2_{L^2_{s+\gamma/2}}|||f|||_{\Phi_\gamma}^2\,.
\end{align*}
We decompose the estimation on $A_2$ as
\begin{align*}
A_2 & \lesssim
\iiint  b \Big( \big( \mu^{1/8} f\big)'_* \Big)^2\Big(
\big(\la v \ra^{\gamma/2} g\big) ' -
  \big(\la v \ra^{\gamma/2} g\big) \Big) ^2 d \sigma dv dv_*  \\
  &\qquad
  + \iiint  b
\Big( \la v \ra^{\gamma/2} - \la v' \ra^{\gamma/2} \Big)^2
\Big( \big( \mu^{1/8} f\big)'_* \Big)^2 g'^2 d \sigma dv dv_*  \\
&= A_{2,1} + A_{2,2}\,.
\end{align*}
Applying again  Lemma \ref{part1-imp-upper-hard} to $A_{2,1}$, we get
\[
A_{2,1} \lesssim \|\mu^{1/8}f \|_{L^2_s}^2 |||\la v \ra^{\gamma/2}g|||_{\Phi_0}^2
\lesssim \|f\|^2_{L^2_{s+\gamma/2}} |||g|||_{\Phi_\gamma}^2\,.
\]
For $A_{2,2}$, we note that
if $v_\tau = v' + \tau(v-v')$ for $\tau \in [0,1]$, then
\[
\la v \ra \leq \la v -v_*\ra + \la v_* \ra \leq \sqrt 2\la v_\tau -v_*\ra + \la v_*\ra
\leq (1+\sqrt 2) \la v_\tau \ra \la v_* \ra,
\]
and $\la v_\tau \ra \leq (1+\sqrt 2) \la v \ra \la v_* \ra$. Thus,
$\la v_\tau \ra^\beta \leq C_\beta \la v \ra^\beta \la v_* \ra^{|\beta|}$ for any $\beta
\in \RR$. Since it follows that
\begin{align*}
\Big|\la v \ra^{\gamma/2} -\la v' \ra^{\gamma/2}\Big|
&\leq C_\gamma \int_0^1 \la
v' + \tau(v-v') \ra^{(\gamma/2 -1)} d \tau |v-v_*|\theta\,\\
&\leq C'_\gamma \Big (\la v \ra^{(\gamma/2 -1)}
\la v_* \ra^{|\gamma/2 -1|} \Big) \, \la v-v_* \ra \theta ,
\end{align*}
we have, by using the change of variables $ (v',v'_*)\rightarrow
(v,v_*)$,
\begin{align*}
A_{2,2} &\lesssim \iint \frac{\big( \mu^{1/8} f\big)_*^2}{\la v_*\ra^{|\gamma|}}|g|^2
\Big\{ \Big (\la v \ra^{(\gamma -2)}
\la v_* \ra^{|\gamma-2|} \Big)^2 \Big(
\int_0^{\la v -v_* \ra^{-1}} \theta^{-1-2s}  \Big(
\la v-v_* \ra \theta \Big)^2  d \theta \Big)\\
& \qquad +
\int_{\la v -v_* \ra^{-1}} ^{\pi/2}
\Big (\la v \ra^{\gamma}
+ \la v \ra^\gamma \la v_* \ra^{|\gamma|} \Big)\,\theta^{-1-2s} d\theta \Big\}
 dvdv_* \\
&\lesssim \iint   \Big (\la v \ra^{2s + \gamma}
\la v_* \ra^{2s+ \max(|\gamma-2|-|\gamma|, 0)} \Big) \big( \mu^{1/8} f\big)_*^2|g|^2dvdv_*
\lesssim \|\mu^{1/10} f\|^2_{L^2}\|g\|^2_{L^2_{s+\gamma/2}}\,.
\end{align*}
Noticing  that
$
\Big(\mu^{1/8}_* - {\mu'_*}^{1/8}\Big)^2 \lesssim \min (|v-v_*|^2 \theta^2, 1),
$
we have
\begin{align*}
A_3 &\lesssim
\iint  \tilde \Phi_\gamma
  \Big( \int_{\SS^2} b(\cos \theta) \min (|v-v_*|^2 \theta^2, 1) d \sigma \Big)
f^2_*
g^2   dv dv_* \\
&\lesssim \iint  \la v-v_* \ra^{\gamma +2s}
f^2_*
g^2   dv dv_* \\
&\lesssim \iint
\la v_*\ra^{\gamma +2s}f_*^2  \la v \ra^{\gamma +2s}g^2  dv dv_*
\lesssim \|f\|^2_{L^2_{s+\gamma/2}}\|g\|^2_{L^2_{s+\gamma/2}}\,,
\end{align*}
when $\gamma+2s \geq 0$ because
$\la v-v_*\ra^{\gamma+2s} \leq \la v_*\ra^{\gamma+2s} \la v\ra^{\gamma+2s}$.

To consider the case $\gamma +2s <0$, we divide the space
$\RR_v^3 \times \RR_{v_*}^3$ into three parts
\begin{align*}
&U_1 =
\{|v-v_*| \leq |v_*|/8\}\,, \enskip U_2= \{|v-v_*| > |v_*|/8\}\cap \{|v_*|\leq 1\}\,,\\
&U_3= \{|v-v_*| > |v_*|/8\}\cap \{|v_*|> 1\}\,.
\end{align*}
Then we have
\begin{align*}
\frac{1}{3} A_3 &=  \iiint  b \tilde \Phi_\gamma {\mu'}_*^{1/4}
\Big( {\mu'}_*^{1/8} -\mu^{1/8}_*  \Big)^2
 \Big(f_*
g  \Big) ^2 d\sigma dv dv_*\\
&= \iint_{U_1} \int d\sigma dv dv_* + \iint_{U_2} \int d\sigma dv dv_*
+\iint_{U_3} \int d\sigma dv dv_* \\
&= A_{3,1} + A_{3,2} + A_{3,3} \,.
\end{align*}
Since
$|v'-v_*| \leq |v-v_*| \leq |v_*|/8$, we have $7|v_*|/8 \leq |v'|, |v| \leq 9|v_*|/8$
and $|v'_*|^2 =|v|^2+|v_*|^2 -|v'|^2 \geq |v_*|^2/2$ in $U_1$.
Hence, in this region,
 we have ${\mu'}_*^{1/4} \leq C \mu_*^{1/8} \leq C ( {\mu_*} \mu )^{1/20}$,
and this  to
\begin{align*}
A_{3,1} \lesssim \iint (\mu {\mu_*})^{1/20}\la v-v_* \ra^{\gamma +2s} f_*^2 g^2 dv dv_* \lesssim \|f\|^2_{L^2_{s+\gamma/2}}
\|g\|^2_{L^2_{s+\gamma/2}}\,.
\end{align*}
Furthermore, we have
\[
A_{3,2} \lesssim \iint_{U_2} \la v-v_* \ra^{\gamma +2s} f_*^2 g^2 dv dv_*\lesssim
\|f\|^2_{L^2_{s+\gamma/2}}
\|g\|^2_{L^2_{s+\gamma/2}} ,
\]
because $\la v -v_* \ra^{-1} \leq  \la v \ra^{-1}\la v_* \ra^{-1} \la v_* \ra^2
\leq 2 \la v \ra^{-1}\la v_* \ra^{-1}$ in $U_2$.
Since $\la v-v_* \ra^{-1} \leq 8|v_*|^{-1}\leq 16 \la v_* \ra^{-1}$ in $U_3$, we get
\[
A_{3,3} \lesssim
\|f\|^2_{L^2_{s+\gamma/2}}
\|g\|^2_{L^2}\,.
\]
Therefore, we have, when $\gamma+2s \le 0$
\[
A_3 \lesssim \|f\|^2_{L^2_{s+\gamma/2}}
\|g\|^2_{L^2}\,.
\]
By considering another partition in $\RR^6_{v,v_*}$ with $v$ and $v_*$ exchanged,
 the estimate $A_3 \lesssim \|f\|^2_{L^2}
\|g\|^2_{L^2_{s+\gamma/2}}$ holds, because $|v_*'-v| \leq |v_*-v| \leq |v|/8$ implies $7|v|/8 \leq |v'_*|, |v_*| \leq 9|v|/8$.

As a conclusion, we have in summary that
\begin{align*}
&\Big|\Big(\Gamma_A(f,g),h\Big)\Big|
\lesssim \Big\{ \|f\|_{L^2_{s+\gamma/2}}|||g|||_{\Phi_\gamma}
+ \|g\|_{L^2_{s+\gamma/2}}|||f|||_{\Phi_\gamma}\\
&\quad + \min \big ( \|f\|_{L^2}\|g\|_{L^2_{s+\gamma/2}} \,,\,\|f\|_{L^2_{s+\gamma/2}}
\|g\|_{L^2}
\big)\Big\}|||h|||_{\Phi_\gamma} \, .
\end{align*}

\smallskip

We now turn to  $\Gamma_B$. For this, firstly, it holds that
\begin{align*}
\Big|
\Big( \Gamma_B(f, g),\, h \Big ) \Big|
\lesssim &  \, \Big( \iiint  b  \Phi_B \mu_*^{1/2} \Big(f'_*
g' - f_* g \Big) ^2  d \sigma dv dv_*   \Big)^{1/2} \, ||| h |||_{\Phi_\gamma} \\
=&  B^{1/2} \, ||| h|||_{\Phi_\gamma} \,.\,
\end{align*}
Since $|v-v_*| \leq 1$ implies
$|v|^2 \leq 2 + 2|v_*|^2$ and then $\mu_* \leq e \mu^{1/2}$, we have
\begin{align*}
B \lesssim  & \iiint_{\{|v-v_*|\leq 1\}}  b \Phi_\gamma \mu_*^{1/10} \mu^{1/10} \Big( \big( \mu^{1/8} f\big)'_*
 - \big( \mu^{1/8} f\big) _*  \Big) ^2  g^2 d \sigma dv dv_* \\
& \qquad + \iiint_{\{|v-v_*|\leq 1\}}   b  \Phi_\gamma \mu_*^{1/10}
\mu^{1/10} \Big( \big( \mu^{1/8} f\big)'_* \Big)^2\Big(
g' - g \Big) ^2 d \sigma dv dv_*  \\
&\qquad + \iiint_{\{|v-v_*|\leq 1\}}   b  \Phi_\gamma \mu_*^{1/10} \mu^{1/10}
\Big(\mu^{1/8}_* - {\mu'_*}^{1/8}\Big)^2
 \Big(f'_*
g'  \Big) ^2  d \sigma dv dv_*  \\
& = B_1 + B_2 +B_3\,.
\end{align*}
Obviously,
\[
B_1 \lesssim \|\mu^{1/20} g\|^2_{L^\infty}|||\mu^{1/8}f|||_{\Phi_\gamma}
\lesssim \|\mu^{1/20} g\|^2_{L^\infty}|||f|||_{\Phi_\gamma}
\,.
\]
Since
$|\mu^{1/8}_* - {\mu'_*}^{1/8}| \lesssim |v-v_*| \theta$,
we see that by the change of variables $(v',v'_*),  \rightarrow (v,v_*)$
\begin{align*}
B_3 \lesssim &
\iiint_{\{|v-v_*|\leq 1\}}   b  \Phi_\gamma \mu_*^{1/10} \mu^{1/10}
\Big({\mu'}^{1/8}_* - {\mu_*}^{1/8}\Big)^2
 \Big(f_*
g  \Big) ^2  d \sigma dv dv_* \\
\lesssim  &  \iint_{\{|v-v_*|\leq 1\}} \big(\mu_*^{1/20}f_* \big )^2 \big(
\mu^{1/20}g\big)^2 |v-v_*|^{\gamma+2} \Big ( \int
b(\cos \theta) \theta^2 d\sigma\Big) dvdv_* \\
\lesssim &  \int
\big(\mu_*^{1/20}f_* \big )^2 \Big(
\sup_{v_*} \int \frac{\big(
\mu^{1/20}g\big)^2 }{
 |v-v_*|^{(-\gamma-2)^+}} dv \Big) dv_*\\
\lesssim & \
\|\mu^{1/20}f\|^2_{L^2_{s+\gamma/2}} \||D_v|^{(-\gamma/2 -1)^+} \mu^{1/20} g \|^2_{L^2}\, ,
\end{align*}
where we have used the Hardy inequality if $\gamma+2<0$, cf. \cite{taylor}.
If one writes
\begin{align*}
B_2 \lesssim &   \iiint_{\{|v-v_*|\leq 1\}}   b  \Phi_\gamma \mu_*^{1/10}
\mu^{1/20} \Big( \big( \mu^{1/8} f\big)'_* \Big)^2\Big(
(\mu^{1/40}g)' - (\mu^{1/40}g) \Big) ^2 d \sigma dv dv_* \\
& +
\iiint_{\{|v-v_*|\leq 1\}}   b  \Phi_\gamma \mu_*^{1/10}
\mu^{1/20} \Big( \big( \mu^{1/8} f\big)'_* \Big)^2 g'^2 \Big(
\mu^{1/40} - {\mu'}^{1/40} \Big) ^2  d \sigma dv dv_*   \\
&= B_{2,1} + B_{2,2}\,,
\end{align*}
then the second term $B_{2,2}$ has a similar upper
bound as   $B_3$.
It remains to estimate
\[
B_{2,1} =
\iiint_{\{|v-v_*|\leq 1\}}   b  \Phi_\gamma \mu_*^{1/10}
\mu^{1/20} \Big( \big( \mu^{1/8} f\big)_* \Big)^2\Big(
(\mu^{1/40}g) - (\mu^{1/40}g)' \Big) ^2 d \sigma dv dv_*,
\]
by the change of variables $(v',v'_*) \rightarrow (v,v_*)$.
By firstly putting $F = \mu^{1/8} f$ and $G = \mu^{1/40}g$, and denoting by $v_\tau =v' + \tau(v-v')$ for $\tau \in [0,1]$,
then by using
\[
|G(v) - G(v')|^2 = \Big|\int_0^1 \nabla G(v_\tau) \cdot  (v-v') d \tau\Big|^2
\leq |v-v_*|^2 (\sin^2 \theta/2) \Big(\int_0^1 |\nabla G(v_\tau)|^2 d \tau\Big) ,
\]
we have
\begin{align*}
B_{2,1} \lesssim &\,  \int_0^1\Big\{
\iiint b |v-v_*|^{\gamma+2} \sin^2 \frac{\theta}{2}
F_*^2    |\nabla G(v_\tau)|^2
dv dv_* d\sigma \Big\}d \tau\,.
\end{align*}
To estimate this term, we need the change of variables
\begin{equation*}
v \to
v_\tau =\frac{1+\tau}{2}v+\frac{1-\tau}{2}(|v-v_*|\sigma+v_*).
\end{equation*}
The Jacobian of this transform
 is  bounded from below uniformly in $v_*,\,\sigma$ and $\tau$, because
\begin{align*}
\Big|\frac{\partial (v_\tau)}{\partial (v)}\Big|&=\Big|\text{det}\Big(
\frac{1+\tau}{2}I+\frac{1-\tau}{2}\sigma \otimes {\bf k} \Big)\Big|
\quad\qquad
 ({\bf k }=\frac{v-v_*}{|v-v_*|})
\\
& =\frac{(1+\tau)^3}{2^3} \Big|1+\frac{1-\tau}{1+\tau} {\bf k}\cdot\sigma
\Big| = \frac{(1+\tau)^3}{2^3}
\Big|\frac{2\tau}{1+\tau}+2\frac{1-\tau}{1+\tau}
\cos^2\frac{\theta}{2}\Big|
\\
&\ge \frac{(1+\tau)^3}{2^3}\Big|\frac{2\tau}{1+\tau}+
\frac{1-\tau}{1+\tau} \Big|=\frac{(1+\tau)^3}{2^3}\ge \frac{1}{2^3}.
\end{align*}
If we set
$\displaystyle
\tilde b = b\Big({\bf k} \cdot \sigma\Big ) \Big (1 - {\bf k} \cdot \sigma \Big)
$, then we have $\int_{\SS^2} \tilde b d \sigma < \infty$. Therefore,
\begin{align*}
B_{2,1}
&\lesssim
\int_0^1  \int
F_*^2  \Big\{ \int_{\SS^2} \tilde b \,\,
 \int \frac{|\nabla G(v_\tau)|^2}
{|v_\tau -v_*|^{-\gamma-2}} dv_\tau d \sigma \Big \} dv_* d\tau \\
&\lesssim
\int_0^1  \int <v_*>^{(\gamma/2+1)^+}
F_*^2  \Big\{ \int_{\SS^2} \tilde b \,\,
 \int \frac{<v_\tau>^{(\gamma/2+1)^+} |\nabla G(v_\tau)|^2}
{|v_\tau -v_*|^{(-\gamma-2)^+}} dv_\tau d \sigma \Big \} dv_* d\tau \\
&\lesssim \|F\|_{L^2_{(\gamma/2+1)^+}}^2 \|\, |D|^{(-\gamma/2 -1)^+}\nabla G\|^2_{L^2_{(\gamma/2+1)^+}}
\lesssim \|F\|^2_{L^2_{(\gamma/2+1)^+}}
\| G\|^2_{H^{\max(-\gamma/2,\, 1)}_{(\gamma/2+1)^+}}\,,
\end{align*}
where we have used
$|v-v_*| \sim |v_\tau -v_*|$.
Finally we obtain
\[
B \lesssim
|||\mu^{1/40} f |||_{\Phi_\gamma} \|\mu^{1/40}g\|_{L^\infty}
+ \|\mu^{1/40} f\|_{L^2}\|\mu^{1/60} g\|_{H^{\max(-\gamma/2,\,1)}} .
\]
This concludes the proof of Proposition \ref{part2-prop5.2}.
\end{proof}

Note that the above estimation is good enough for proving the
local existence for
 the general case. However, the above upper bound related to $B$
is given in Sobolev space with positive index and
this can not be controlled by the non-isotropic norm. Hence, it is not
sufficient for the proof of global existence.

\subsection{A simple proof of Theorem \ref{theorem-1.1-b} for $\gamma>-3/2$}\label{part1-Section6}

We first give a simple proof of upper bound estimates on the Boltzmann nonlinear operator when $\gamma>-\frac 32$. We state it as
\begin{prop}\label{part1-prop5}
Assume that $0<s<1$ and $\gamma>-3/2$.
Then
\begin{align*}
&\Big|\Big(\Gamma(f,g),h\Big)\Big|
\lesssim \Big\{\|f\|_{L^2_{s+\gamma/2}}|||g|||_{\Phi_\gamma}
+ \|g\|_{L^2_{s+\gamma/2}}|||f|||_{\Phi_\gamma}\\
&\quad { + \min \Big ( \|f\|_{L^2}\|g\|_{L^2_{s+\gamma/2}} \,,\,\|f\|_{L^2_{s+\gamma/2}}\|g\|_{L^2}
\Big)}\Big\}|||h|||_{\Phi_\gamma} \, .
\end{align*}
Furthermore, together with $\gamma \ge -3s$, one has
$$
\Big|\Big(\Gamma(f,g),h\Big)\Big|
\lesssim \Big\{\|f\|_{L^2_{s+\gamma/2}}|||g|||_{\Phi_\gamma}
+ \|g\|_{L^2_{s+\gamma/2}}|||f|||_{\Phi_\gamma}
\Big\}|||h|||_{\Phi_\gamma} \, .
$$

\end{prop}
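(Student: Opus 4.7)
The plan is to adapt the argument of Proposition~\ref{part2-prop5.2}, exploiting the stronger assumption $\gamma>-3/2$ to eliminate the Sobolev term of positive regularity that appears there. Following the same scheme, I would start from the symmetric representation~\eqref{part1-2.2.5}, apply Cauchy--Schwarz to pull out $|||h|||_{\Phi_\gamma}$, and reduce to bounding
\begin{align*}
A = \iiint b\,\Phi_\gamma\,\mu_\ast^{1/2}\bigl(f'_\ast g' - f_\ast g\bigr)^2\,d\sigma\,dv\,dv_\ast
\end{align*}
after splitting $\Phi_\gamma = \Phi_A + \Phi_B$ at $|v-v_\ast|=1$. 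The $\Phi_A$ contribution has already been analyzed in Proposition~\ref{part2-prop5.2} and produces precisely the three right-hand-side terms (including the $\min$ term) of the first claim; hence all the new work lies in the $\Phi_B$ piece.

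For the $\Phi_B$ piece I would avoid the Taylor expansion on $(g'-g)^2$ that forced $\nabla g$ in the proof of Proposition~\ref{part2-prop5.2}. Instead, decompose $f'_\ast g' - f_\ast g$ into a jump in $f$, a jump in $g$, and a jump in the Maxwellian. On $\{|v-v_\ast|\le 1\}$ we have $\mu_\ast \lesssim \mu^{1/2}$, which symmetrizes the Gaussian factors in $v$ and $v_\ast$. I would then apply Lemma~\ref{part1-imp-upper-hard} at $\gamma=0$ to the jump-in-$g$ piece and Proposition~\ref{part1-prop4} to convert $|||\cdot|||_{\Phi_0}$ back into $|||\cdot|||_{\Phi_\gamma}$, with a mirror argument for the jump-in-$f$ piece. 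The Maxwellian-jump piece uses $|\mu_\ast^{1/8}-\mu_\ast'^{1/8}|\lesssim |v-v_\ast|\theta$; after angular integration this produces a $|v-v_\ast|^{\gamma+2}$ kernel on the unit ball, and the assumption $\gamma>-3/2$ gives $\gamma+2>1/2$, so the kernel is bounded and the estimate collapses to weighted $L^2$ norms with no derivatives. The main obstacle here is bookkeeping: keeping the Gaussian factors aligned through each reweighting step so that only $\|\cdot\|_{L^2_{s+\gamma/2}}$ and $|||\cdot|||_{\Phi_\gamma}$ (together with the $\min$ term, inherited from the $\Phi_A$ analysis) survive.

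For the refinement under $\gamma\ge -3s$, the $\min$ term arises only in the regime $\gamma+2s<0$ from the Maxwellian-jump piece $A_3$, where the partition of $\RR^3_v\times \RR^3_{v_\ast}$ in Proposition~\ref{part2-prop5.2} forces one plain $\|g\|_{L^2}$ factor on the region $\{|v-v_\ast|>|v_\ast|/8\}\cap\{|v_\ast|>1\}$. Under $\gamma\ge -3s$ we have $|\gamma+2s|\le s$, so instead of partitioning I would bound $A_3$ by
\begin{align*}
A_3 \lesssim \iint \la v-v_\ast\ra^{-|\gamma+2s|}\, f_\ast^2\, g^2\, dv\, dv_\ast
\end{align*}
and handle the resulting expression via the Sobolev embedding $H^s(\RR^3)\hookrightarrow L^{6/(3-2s)}$ applied to $\la v\ra^{\gamma/2}g$ (supplied by Proposition~\ref{part1-prop2}), combined with a weighted Hölder/HLS argument. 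The threshold $\gamma\ge -3s$ is exactly what is needed for the resulting weight integrability to close, allowing $\|g\|_{L^2}$ to be replaced by a combination of $\|g\|_{L^2_{s+\gamma/2}}$ and $|||g|||_{\Phi_\gamma}$. This absorption step is the most delicate part of the argument and is where I expect the main difficulty to lie.
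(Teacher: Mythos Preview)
Your plan has a genuine gap in the treatment of the $\Phi_B$ piece for $-3/2<\gamma<0$. You propose to bound the ``jump-in-$g$'' contribution
\[
B_2 \;\lesssim\; \iiint_{|v-v_*|\le 1} b\,|v-v_*|^{\gamma}\,\mu_*^{c}\,\bigl((\mu^{1/8}f)'_*\bigr)^{2}\,(g'-g)^2
\]
by invoking Lemma~\ref{part1-imp-upper-hard} at $\gamma=0$ and then converting via Proposition~\ref{part1-prop4}. But Lemma~\ref{part1-imp-upper-hard} at $\gamma=0$ controls $\iiint b\,f_*^2(g'-g)^2$, without the singular factor $|v-v_*|^{\gamma}$; on the region $|v-v_*|\le 1$ with $\gamma<0$ this factor is unbounded and cannot be absorbed into the $f_*$ weight. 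Invoking Proposition~\ref{part1-prop4} does not help either: it gives $|||g|||_{\Phi_0}\sim|||\langle v\rangle^{-\gamma/2}g|||_{\Phi_\gamma}$, which introduces a \emph{positive} weight (since $\gamma<0$) and is therefore a stronger, not weaker, norm. Nothing in your outline uses the hypothesis $\gamma>-3/2$ in an essential way for this piece, which is a sign that the argument cannot close as written.

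The paper avoids this difficulty by a different device: rather than splitting $\Phi_\gamma=\Phi_A+\Phi_B$, it factors $\Phi_\gamma=\tilde\Phi_\gamma^{1/2}\cdot(\Phi_{2\gamma}/\tilde\Phi_\gamma)^{1/2}$ and applies Cauchy--Schwarz asymmetrically, sending the \emph{smooth} factor $\tilde\Phi_\gamma$ to the $(f'_*g'-f_*g)^2$ side and the singular factor $\Phi_{2\gamma}/\tilde\Phi_\gamma$ to the $h$ side. The latter is absorbed into $|||h|||_{\Phi_\gamma}$ via the norm equivalence~\eqref{part1-mu-invariant-norm}, which is exactly where the condition $2\gamma>-3$, i.e.\ $\gamma>-3/2$, enters. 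After this step the $A$-integral involves only $\tilde\Phi_\gamma$, and the decomposition into jump-in-$f$, jump-in-$g$, and Maxwellian-jump proceeds without any near-diagonal singularity---Corollary~\ref{part1-imp-upper-maxwel-2} and Proposition~\ref{part1-prop4} then suffice. For the refinement under $\gamma\ge -3s$, the paper also argues more directly than you suggest: it splits the Maxwellian-jump term according to $|v'_*-v_*|\lessgtr\tfrac12\langle v_*\rangle$ (Taylor's formula gives a Gaussian gain on the small set; on the large set $|v-v_*|\gtrsim\langle v_*\rangle$), and then pure weight arithmetic using $-\gamma-4s\le \gamma+2s$ (equivalent to $\gamma\ge -3s$) closes the estimate with $\|\cdot\|_{L^2_{s+\gamma/2}}$ on both factors. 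No Sobolev embedding or HLS is needed.
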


Let us note that the first statement deals with general values of $\gamma>-3/2$, that is not necessarily linked with the value of $s$. For the second statement, note that the condition $\gamma \ge -3s$ is always true in the physical cases mentioned above. Indeed recall here that $\gamma =1-4s$, and that $0< s <1$. Therefore, we can conclude that together with the constraint $\gamma >-3/2$, the physical range $0<s < 5/8$ is allowed.

\begin{proof}
\noindent
{\bf The case when $\gamma \ge 0$.} Note that
\begin{align}\label{part1-elementary}
\Big(\Gamma (f,\, g) ,\, h\Big)_{L^2} &= \Big(\mu^{-1/2}Q
(\mu^{1/2} f,\, \mu^{1/2} g ),\,  h\Big)_{L^2}\\
&= \iiint \Phi_\gamma b(\cos\theta) \mu^{1/2}_\ast \Big( f'_\ast g' - f_\ast g
\Big)\,\,h \notag \\
& =\frac 12 \iiint \Phi_\gamma b(\cos\theta) \Big(f'_\ast g' - f_\ast g\Big)
\Big( \mu^{1/2}_\ast h - \mu^{1/2 '}_\ast h' \Big) \notag \\
&\leq \frac 12 \left(\iiint \Phi_\gamma b(\cos\theta) \Big(f'_\ast g'- f_\ast
g\Big)^2 \right)^{1/2} \notag \\
&\quad \times \left( \iiint \Phi_\gamma b (\cos\theta) \Big((\mu_*)^{1/2}
h - (\mu'_*)^{1/2} h' \Big)^2 \right)^{1/2} \notag\\
&\leq \frac 12  A^{1/2}\times B^{1/2}. \notag
\end{align}
For $B$, we have
\begin{align*}
B& = \iiint \Phi_\gamma b (\cos\theta) \Big( (\mu'_*)^{1/2} (h'-h) + h
\big((\mu'_*)^{1/2} - (\mu_*)^{1/2}\big)\Big)^2\\
&\leq 2 \iiint \Phi_\gamma b(\cos\theta) \left\{\mu'_\ast (h'-h)^2 +  h^2
\Big((\mu'_*)^{1/2} -(\mu_*)^{1/2} \Big)^2 \right\}
\\
&\leq 2 \iiint \Phi_\gamma b(\cos\theta) \mu_\ast (h'-h)^2 +  2 \iiint \Phi_\gamma
b(\cos\theta) h^2_\ast \Big((\mu')^{1/2} -\mu^{1/2} \Big)^2 \\
& = 2 |||
h|||^2_{\Phi_\gamma},
\end{align*}
where we have used the change of variables $(v,v_\ast )\rightarrow (v',v'_\ast )$ for the first
term and $(v,v_\ast )\rightarrow (v_*,v)$ for the second term. Similarly,
\begin{align*}
A& = \iiint \Phi_\gamma b (\cos\theta) \Big(  f'_\ast (g'-g) + g (f'_\ast
-f_\ast )\Big)^2\\
&\leq 2 \iiint\Phi_\gamma b(\cos\theta) \left\{ {f'_*}^{2} (g'-g)^2 + g^2
(f'_\ast -f_\ast )^2 \right\}\\
&\leq 2 \iiint \Phi_\gamma b(\cos\theta) {f_*}^{2} (g'-g)^2+ 2 \iiint \Phi_\gamma
b(\cos\theta) {g_*}^{2} (f'-f)^2.
\end{align*}
Then \eqref{part1-imp-upper-hard} implies that
$$
A\lesssim || f||^2_{L^2_{s+\gamma/2}} \,
||| g|||^2_{\Phi_\gamma}+ || g||^2_{L^2_{s+\gamma/2}}\,
||| f|||^2_{\Phi_\gamma}\, ,
$$
which completes the proof in the case when $\gamma \ge 0$.

\noindent
{\bf The case when $-3/2 < \gamma < 0$.} As in Subsection \ref{part1-Section5},
it is easy to check that for any fixed $\rho>0$,
\begin{align}\label{part1-mu-invariant-norm}
|||g|||^2_{\Phi_\gamma} &\sim J_{1, \rho}^{\Phi_\gamma}(g)
+ J_{2, \rho}^{\Phi_\gamma}(g) \sim J_{1, \rho}^{\Phi_\gamma}(g)
+ \|g\|^2_{L^2_{s+\gamma/2}}\\
&\sim \iiint \frac{\Phi_{2\gamma}}{\tilde \Phi_{\gamma}}
b \mu_{\rho,*} (g' -g )^2 + \iiint \frac{\Phi_{2\gamma}}{\tilde \Phi_{\gamma}}
b g_*^2\Big( \sqrt { \mu'_\rho } - \sqrt{\mu_\rho} \Big)^2\,, \notag
\end{align}
where the assumption $2\gamma > -3$ is required
for the existence of the above integral, and more precisely for
\[\int |v_*|^{2\gamma} \la v_* \ra^{2s-\gamma} \mu_\rho(v+v_*)dv_* \sim
\la v \ra^{\gamma+2s}\,.
\]
Instead of \eqref{part1-elementary}, we write
\begin{align*}
\Big( \Gamma(f,g) ,h \Big ) =& \iiint b \Phi_\gamma \mu_*^{1/2} \Big(f'_*
g' - f_* g \Big) h dvdv_* d \sigma \\
=& \frac{1}{2}\iiint \Big( b \tilde \Phi_\gamma\Big)^{1/2} \Big(f'_*
g' - f_* g \Big) \Big( b \frac{\Phi_{2\gamma}}{\tilde \Phi_\gamma} \Big)^{1/2}
{\mu'_*}^{1/4} \Big( {\mu_*}^{1/4} h - {\mu'_*}^{1/4} h'\Big) \\
& + \frac{1}{2} \iiint \Big( b \tilde \Phi_\gamma\Big)^{1/2}
\Big(f'_*
g' - f_* g \Big) \Big( b \frac{\Phi_{2\gamma}}{\tilde \Phi_\gamma} \Big)^{1/2}
{\mu_*}^{1/4} \Big( {\mu_*}^{1/4} - {\mu'_*}^{1/4} \Big)h\,.
\end{align*}
Noticing that
\[
{\mu_*}^{1/4} h - {\mu'_*}^{1/4} h' =
{\mu'_*}^{1/4} \Big( h-h' \Big) +\Big( {\mu_*}^{1/4} - {\mu'_*}^{1/4} \Big)h\,,
\]
by Cauchy-Schwarz's inequality and \eqref{part1-mu-invariant-norm}, we have
\begin{align*}
\Big|
\Big( \Gamma(f,g) ,h \Big ) \Big|
\lesssim &  \, \Big( \iiint  b \tilde \Phi_\gamma \mu_*^{1/2} \Big(f'_*
g' - f_* g \Big) ^2  d \sigma dv dv_*   \Big)^{1/2} \, ||| h |||_{\Phi_\gamma} \\
=&  A^{1/2} \, ||| h|||_{\Phi_\gamma} \,.
\end{align*}
We estimate
\begin{align*}
A & \leq 3 \Big(
\iiint  b \tilde \Phi_\gamma \mu_*^{1/4} \Big( \big( \mu^{1/8} f\big)'_*
 - \big( \mu^{1/8} f\big) _*  \Big) ^2  g^2 d \sigma dv dv_* \\
& \qquad + \iiint  b \tilde \Phi_\gamma \mu_*^{1/8} \Big( \big( \mu^{1/8} f\big)'_* \Big)^2\Big(
g' - g \Big) ^2 d \sigma dv dv_*  \\
&\qquad + \iiint  b \tilde \Phi_\gamma \mu_*^{1/4}
\Big(\mu^{1/8}_* - {\mu'_*}^{1/8}\Big)^2
 \Big(f'_*
g'  \Big) ^2  d \sigma dv dv_* \Big) \\
&= A_1 +A_2 +A_3 \,.
\end{align*}
Since $\tilde \Phi_\gamma(|v-v_*|) \mu_*^{1/4} \lesssim \la v \ra^\gamma$, %by taking
%the change of varibales $(v,v*) \rightarrow (v_*,v)$
we have by means of Corollary \ref{part1-imp-upper-maxwel-2}
\begin{align*}
A_1 &\lesssim
\iiint  b \big( \la v \ra^{\gamma/2} g \big)^2 \Big( \big( \mu^{1/8} f\big)'_*
 - \big( \mu^{1/8} f\big)_*  \Big) ^2  d \sigma dv dv_* \\
 & \lesssim \|\la v \ra^{\gamma/2} g \|^2_{L^2_s} |||\mu^{1/8} f|||_{\Phi_0}^2
 \lesssim \|g\|^2_{L^2_{s+\gamma/2}}|||f|||_{\Phi_\gamma}^2\,,
\end{align*}
where we have used Propositions \ref{part1-prop2} and \ref{part1-prop4} in the last inequality.
As for $A_2$, we decompose it as follows :
\begin{align*}
A_2 & \lesssim
\iiint  b \Big( \big( \mu^{1/8} f\big)'_* \Big)^2\Big(
\big(\la v \ra^{\gamma/2} g\big) ' -
  \big(\la v \ra^{\gamma/2} g\big) \Big) ^2 d \sigma dv dv_*  \\
  &\qquad
  + \iiint  b
\Big( \la v \ra^{\gamma/2} - \la v' \ra^{\gamma/2} \Big)^2
\Big( \big( \mu^{1/8} f\big)'_* \Big)^2 g'^2 d \sigma dv dv_*  \\
&= A_{2,1} + A_{2,2}\,.
\end{align*}
Apply Corollary \ref{part1-imp-upper-maxwel-2} again to $A_{2,1}$. Then
\[
A_{2,1} \lesssim \|\mu^{1/8}f \|_{L^2_s}^2 |||\la v \ra^{\gamma/2}g|||_{\Phi_0}^2
\lesssim \|f\|^2_{L^2_{s+\gamma/2}} |||g|||_{\Phi_\gamma}^2\,.
\]
The estimation for $A_{2,2}$ is the same as the one for $A_2$ in the proof of
Lemma \ref{part1-coer}. By using the change of variables $ (v',v'_*)\rightarrow
(v,v_*)$, we obtain
\begin{align*}
A_{2,2}
\lesssim \iint   \Big (\la v \ra^{2s + \gamma}
\la v_* \ra^{2s+ 2} \Big) \big( \mu^{1/8} f\big)_*^2  |g|^2dvdv_*
\lesssim \|\mu^{1/10} f\|^2_{L^2}\|g\|^2_{L^2_{s+\gamma/2}}\,.
\end{align*}
Noticing  that
$
\Big(\mu^{1/8}_* - {\mu'_*}^{1/8}\Big)^2 \lesssim \min (|v-v_*|^2 \theta^2, 1),
$
we have
\begin{align*}
A_3 &\lesssim
\iint  \tilde \Phi_\gamma
  \Big( \int_{\SS^2} b(\cos \theta) \min (|v-v_*|^2 \theta^2, 1) d \sigma \Big)
f^2_*
g^2   dv dv_* \\
&\lesssim \iint  \la v-v_* \ra^{\gamma +2s}
f^2_*
g^2   dv dv_* \\
&\lesssim \iint
\la v_*\ra^{\gamma +2s}f_*^2  \la v \ra^{\gamma +2s}g^2  dv dv_*
\lesssim \|f\|^2_{L^2_{s+\gamma/2}}\|g\|^2_{L^2_{s+\gamma/2}}\,,
\end{align*}
if $\gamma+2s \geq 0$ because of
$\la v-v_*\ra^{\gamma+2s} \leq \la v_*\ra^{\gamma+2s} \la v\ra^{\gamma+2s}$.

To consider the case $\gamma +2s <0$, we divide
$\RR_v^3 \times \RR_{v_*}^3$ into three parts
\begin{align*}
&U_1 =
\{|v-v_*| \leq |v_*|/8\}\,, \enskip U_2= \{|v-v_*| > |v_*|/8\}\cap \{|v_*|\leq 1\}\,,\\
&U_3= \{|v-v_*| > |v_*|/8\}\cap \{|v_*|> 1\}\,.
\end{align*}
Then we have
\begin{align*}
\frac{1}{3} A_3 &=  \iiint  b \tilde \Phi_\gamma {\mu'}_*^{1/4}
\Big( {\mu'}_*^{1/8} -\mu^{1/8}_*  \Big)^2
 \Big(f_*
g  \Big) ^2 d\sigma dv dv_*\\
&= \iint_{U_1} \int d\sigma dv dv_* + \iint_{U_2} \int d\sigma dv dv_*
+\iint_{U_3} \int d\sigma dv dv_* \\
&= A_{3,1} + A_{3,2} + A_{3,3} \,.
\end{align*}
Since
$|v'-v_*| \leq |v-v_*| \leq |v_*|/8$ implies $7|v_*|/8 \leq |v'|, |v| \leq 9|v_*|/8$
and $|v'_*|^2 =|v|^2+|v_*|^2 -|v'|^2 \geq |v_*|^2/2$.
Hence, we have ${\mu'}_*^{1/4} \leq C \mu_*^{1/8} \leq C ( {\mu_*} \mu )^{1/20}$ on $U_1$,
which leads to
\begin{align*}
A_{3,1} \lesssim \iint (\mu {\mu_*})^{1/20}\la v-v_* \ra^{\gamma +2s} f_*^2 g^2 dv dv_* \leq C\|f\|^2_{L^2_{s+\gamma/2}}
\|g\|^2_{L^2_{s+\gamma/2}}\,.
\end{align*}
Furthermore, we have
\[
A_{3,2} \lesssim \iint_{U_2} \la v-v_* \ra^{\gamma +2s} f_*^2 g^2 dv dv_*\lesssim
\|f\|^2_{L^2_{s+\gamma/2}}
\|g\|^2_{L^2_{s+\gamma/2}},
\]
because $\la v -v_* \ra^{-1} \leq  \la v \ra^{-1}\la v_* \ra^{-1} \la v_* \ra^2
\leq 2 \la v \ra^{-1}\la v_* \ra^{-1}$ on $U_2$.
Since $\la v-v_* \ra^{-1} \leq 8|v_*|^{-1}\leq 16 \la v_* \ra^{-1}$ on $U_3$, we get
\[
A_{3,3} \lesssim
\|f\|^2_{L^2_{s+\gamma/2}}
\|g\|^2_{L^2}\,.
\]
Therefore, we have in the case when $\gamma+2s <0$
\[
A_3 \lesssim \,
 \|f\|^2_{L^2_{s+\gamma/2}}
\|g\|^2_{L^2}\,.
\]
If one considers another partition in $R^6_{v,v_*}$ with $v$ and $v_*$ exchanged,
then the estimate
$$A_3 \lesssim
 \|f\|^2_{L^2}
\|g\|^2_{L^2_{s+\gamma/2}}
$$ holds, because $|v_*'-v| \leq |v_*-v| \leq |v|/8$ implies $7|v|/8 \leq |v'_*|, |v_*| \leq 9|v|/8$.

As a conclusion, when $\gamma >-3/2$ and $\gamma +2s \leq 0$ we have
\begin{align*}
&\Big|\Big(\Gamma(f,g),h\Big)\Big|
\lesssim\Big\{ \|f\|_{L^2_{s+\gamma/2}}|||g|||_{\Phi_\gamma}
+ \|g\|_{L^2_{s+\gamma/2}}|||f|||_{\Phi_\gamma}\\
&\quad + \min \Big ( \|f\|_{L^2}\|g\|_{L^2_{s+\gamma/2}} \,,\,\|f\|_{L^2_{s+\gamma/2}}
\|g\|_{L^2}
\Big)\Big\}|||h|||_{\Phi_\gamma} \, .
\end{align*}
which concludes the proof of the first statement of Proposition \ref{part1-prop5}.

\noindent{\bf The case $\gamma +2s <0$, $\gamma \geq -3s$}.  We go back to the definition of $A_3$, that is (we have performed the usual change of variables)
\begin{align*}
&A_3 \sim  \iiint  b \tilde \Phi_\gamma {\mu'}_*^{1/4}
\Big( {\mu'}_*^{1/8} -\mu^{1/8}_*  \Big)^2
 f^2_*
g^2  d\sigma dv dv_* \\
&\lesssim \iiint  b \tilde \Phi_\gamma
\Big( {\mu'}_*^{1/8} -\mu^{1/8}_*  \Big)^2
 f^2_*
g^2  d\sigma dv dv_* \, .
\end{align*}
We estimate the spherical integral as usual, that is over the sets
$$
| v'_* -v_*| \le {1\over 2} <v_*> \mbox{ and } | v'_* -v_*| \ge {1\over 2} <v_*>
$$
It follows by Taylor formula that, on the first set (which is the singular part), one has, for another non important and non negative constant $c$
$$
(\mu^{1/8 '}_* - \mu^{1/8}_* )^2 \lesssim \theta^2 | v-v_*|^2 \mu^c_* .
$$
On the other set, we just estimate the square by $1$. Note that on the second set we have, $| v-v_*| \gtrsim <v_*>$.

Then we find, by now standard computations, that
\begin{align*}
&A_3 \lesssim \iint  < v-v_*>^\gamma \mu^c_* | v-v_*|^{2s} <v_*>^{2-2s} f^2_* g^2\\
&+ \iint  < v-v_*>^\gamma  | v-v_*|^{2s} <v_*>^{-2s} f^2_* g^2 \\
&= \tilde A_{3,1}+\tilde A_{3,2}
\end{align*}
Now, for $\tilde A_{3,1}$, we write $<v-v_*>^{\gamma +2s} \lesssim < v>^{\gamma +2s} <v_*>^{-\gamma -2s}$ and we see that we may absorb all the powers of $<v_*>$ with the maxwellian, to get, for another non negative constant $d$
$$
\tilde A_{3,1} \lesssim || \mu^d f||^2_{L^2} || g||^2_{L^2_{\gamma /2 +s}} .
$$
For $\tilde A_{3,2}$, we write
$$
<v-v_*>^{\gamma +2s} <v_*>^{-2s} \lesssim <v>^{\gamma +2s} <v_*>^{-\gamma -2s} < v_*>^{-2s} = <v>^{\gamma +2s} <v_*>^{-\gamma -4s} .
$$
Note that the power $-\gamma -4s$ which enters the power over $<v_*>$ can be written
$$
-\gamma -4s = -( \gamma +2s) - 2s
$$
the first term being positive. Of course $-\gamma -4s \le 0 $ iff $ \gamma \ge -4s $, and this is true since we have assumed that $\gamma \ge -3s$. Furthermore $\gamma +4s \ge - \gamma -2s$ again because $\gamma \ge -3s$.
Therefore we obtained
$$
\tilde A_{3,2} \lesssim || f||^2_{L^2_{\gamma /2 +s}} || g||^2_{L^2_{\gamma /2 +s}} ,
$$
concluding the proof of the second statement.
\end{proof}

Let us note that the proof of Proposition \ref{part1-prop5} gives the following corollary.

\begin{coro}\label{part1-coro-radja1}
With the regularized potential, together with assumptions \eqref{part2-hyp-2}, $0<s<1$ and $\gamma \ge -3s$, one has
$$
\Big|\Big(\Gamma_{\tilde \Phi}(f,g),h\Big)\Big|
\lesssim \Big\{\|f\|_{L^2_{s+\gamma/2}}|||g|||_{\Phi_\gamma}
+ \|g\|_{L^2_{s+\gamma/2}}|||f|||_{\Phi_\gamma}
\Big\}|||h|||_{\Phi_\gamma} \, .
$$
\end{coro}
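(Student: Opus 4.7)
The plan is to repeat the proof of Proposition \ref{part1-prop5}, but starting from the smoothed kinetic factor $\tilde\Phi_\gamma$ rather than $\Phi_\gamma$, and to exploit the equivalence
$|||\cdot|||_{\Phi_\gamma}\sim|||\cdot|||_{\tilde\Phi_\gamma}$ from Proposition \ref{part1-prop3} to translate the final bound back into the norm appearing in the statement.

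First, I would write the pre-post symmetrization
\[
\bigl(\Gamma_{\tilde\Phi}(f,g),h\bigr)_{L^2}=\tfrac{1}{2}\iiint b(\cos\theta)\,\tilde\Phi_\gamma(|v-v_*|)\bigl(f'_*g'-f_*g\bigr)\bigl(\mu_*^{1/2}h-\mu_*^{1/2\prime}h'\bigr)\,,
\]
and then apply Cauchy--Schwarz to split this into $A^{1/2}B^{1/2}$. For the $B$ factor, expanding in the usual way and changing variables gives $B\lesssim |||h|||^2_{\tilde\Phi_\gamma}$, which by Proposition \ref{part1-prop3} is $\sim|||h|||^2_{\Phi_\gamma}$. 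The main work is estimating
\[
A=\iiint b\,\tilde\Phi_\gamma\,\mu_*^{1/2}\bigl(f'_*g'-f_*g\bigr)^2\,d\sigma\,dv\,dv_*\,.
\]

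For $A$ I would use the same threefold splitting $A\leq 3(A_1+A_2+A_3)$ as in Proposition \ref{part1-prop5}, obtained by writing $f'_*g'-f_*g=(f'_*-f_*)g+f_*(g'-g)$ after first distributing the mollified Maxwellian. The terms $A_1$ and $A_2$ already appear with $\tilde\Phi_\gamma$ in the proof of Proposition \ref{part1-prop5}; since there $\tilde\Phi_\gamma(|v-v_*|)\mu_*^{1/4}\lesssim\la v\ra^\gamma$, one applies Corollary \ref{part1-imp-upper-maxwel-2} together with Propositions \ref{part1-prop2} and \ref{part1-prop4} to obtain
\[
A_1\lesssim \|g\|^2_{L^2_{s+\gamma/2}}|||f|||^2_{\Phi_\gamma},\qquad
A_2\lesssim \|f\|^2_{L^2_{s+\gamma/2}}|||g|||^2_{\Phi_\gamma}+\|\mu^{1/10}f\|^2_{L^2}\|g\|^2_{L^2_{s+\gamma/2}}\,,
\]
and the extra $\|\mu^{1/10}f\|_{L^2}$ factor is controlled by $\|f\|_{L^2_{s+\gamma/2}}$.

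The main obstacle is the term $A_3$, which carries the factor $\bigl(\mu_*^{1/8}-\mu_*^{1/8\prime}\bigr)^2$ whose naive estimate produces the unwanted $\min(\|f\|_{L^2}\|g\|_{L^2_{s+\gamma/2}},\|f\|_{L^2_{s+\gamma/2}}\|g\|_{L^2})$ term from the first part of Proposition \ref{part1-prop5}. To eliminate it I would follow exactly the end of that proof: split the spherical integration according to $|v'_*-v_*|\leq\tfrac{1}{2}\la v_*\ra$ and its complement, use Taylor expansion to gain a factor $\theta^2|v-v_*|^2\mu_*^c$ on the singular piece, and bound the square by $1$ on the other piece where $|v-v_*|\gtrsim\la v_*\ra$. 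This reduces the estimate to controlling
\[
\iint\la v-v_*\ra^{\gamma+2s}\la v_*\ra^{-2s}f_*^2 g^2\,dv\,dv_*,
\]
and splitting $\la v-v_*\ra^{\gamma+2s}\lesssim\la v\ra^{\gamma+2s}\la v_*\ra^{-\gamma-2s}$ yields the weight $\la v_*\ra^{-\gamma-4s}$, which is integrable and absorbable into $\la v_*\ra^{\gamma+2s}$ precisely under the hypothesis $\gamma\geq -3s$. This gives
\[
A_3\lesssim \|f\|^2_{L^2_{s+\gamma/2}}\|g\|^2_{L^2_{s+\gamma/2}}\,.
\]
Combining the estimates on $A_1,A_2,A_3$ with $B\lesssim|||h|||^2_{\Phi_\gamma}$ finishes the proof.
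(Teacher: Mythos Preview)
Your proposal is correct and follows exactly the route the paper intends: the paper states only that ``the proof of Proposition \ref{part1-prop5} gives the following corollary'' and remarks that ``the constraint $\gamma>-3/2$ is removed,'' and you have correctly identified why --- the restriction $2\gamma>-3$ entered in Proposition \ref{part1-prop5} solely through the equivalence \eqref{part1-mu-invariant-norm}, which was needed to pass from $\Phi_\gamma$ to $\tilde\Phi_\gamma$ in the Cauchy--Schwarz step, and this step is unnecessary when one starts directly from $\Gamma_{\tilde\Phi}$. Your treatment of $A_1,A_2$ and the refined $A_3$ splitting (with the $\gamma\ge -3s$ condition on $\tilde A_{3,2}$) reproduces verbatim the corresponding parts of the proof of Proposition \ref{part1-prop5}; the only cosmetic point is that the Cauchy--Schwarz yielding your $A$ with the factor $\mu_*^{1/2}$ is the refined one from the proof of Proposition \ref{part2-prop5.1} (splitting $\mu_*^{1/2}h-\mu_*^{\prime\,1/2}h'$ via $\mu_*^{1/4}$), not a single direct application to the symmetrized form.
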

Note carefully that in this last result, the constraint $\gamma >-3/2$ is removed, and we have retained the constraint $\gamma \ge -3s$, which is always true for physical cases, as we saw above.

\subsection{Proof of Theorem \ref{theorem-1.1-b}}
The general case is long and will be divided into several steps. One of the key ingredients in the proof is to split
the term $(\Gamma (f,g); h)$ into two parts. In this way, one part
can be dealt with the method introduced in the previous subsection, while the other one will be analyzed by direct Fourier transform.

\begin{lemm}\label{part2-weight-radja-lemma-2.1} For any integer $k \geq 2$ we can write
\begin{align*}
& \mu_*^{1/2} = \Big (\mu^{a^1} - \mu^{a^1}_* \Big)^k  \sum_{i=1}^{k+2} \alpha^2_i  \mu_*^{a^2_i}\mu^{ b^2_i}
+ \sum^{k}_{i=1}\alpha^3_i \mu_*^{a^3_i} \mu^{b^3_i}\\
& =^{^{\hskip-0.25cm {\mbox{\tiny def } }}} \mu (v,v_*) + \sum^{k}_{i=1}\alpha^3_i \mu_*^{a^3_i} \mu^{b^3_i} .
\end{align*}
In the above,  $\alpha_i^j$ are real numbers for
all $i$ and $j$, and the other exponents are
strictly positive, at the exception of $b_i^2 =0$, with  $b^3_i > a^3_i$.
\end{lemm}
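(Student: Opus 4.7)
The plan is to construct the decomposition explicitly by choosing $a^1$ in a small range depending on $k$ and by solving for the coefficients of a linear ansatz for the polynomial multiplying $(\mu^{a^1}-\mu_*^{a^1})^k$.

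First I would fix $a^1 \in (1/(4k),\, 1/(4k-2))$ (a nonempty interval for every $k \ge 2$) and set $D := \mu^{a^1} - \mu_*^{a^1}$. The ansatz is
\[
Y := \sum_{j=0}^{k-1} c_j\, \mu^{ja^1}\mu_*^{1/2-(k+j)a^1},
\]
with coefficients $c_0, \ldots, c_{k-1}$ to be determined. Since $a^1 < 1/(4k-2) \le 1/(2(k+j))$ for $0 \le j \le k-1$, every exponent $1/2-(k+j)a^1$ is strictly positive, and $ja^1 \ge 0$ with equality only at $j=0$. Thus $Y$ is a sum of $k$ monomials $\mu_*^{a_i^2}\mu^{b_i^2}$ with $a_i^2 > 0$ and $b_i^2 \ge 0$, the exceptional $b_i^2 = 0$ case corresponding to $j=0$.

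Next, I would expand $D^k$ by the binomial theorem and multiply with $Y$. Every resulting monomial has the form $\mu^{qa^1}\mu_*^{1/2-qa^1}$ for some $q \in \{0,1,\ldots,2k-1\}$, giving
\[
D^k Y = \sum_{q=0}^{2k-1} A_q\, \mu^{qa^1}\mu_*^{1/2-qa^1},
\]
where each $A_q$ is an explicit linear combination of $c_0, \ldots, c_{k-1}$. The decisive step is to impose $A_0 = 1$ (so that $D^k Y$ produces the target monomial $\mu_*^{1/2}$) and $A_q = 0$ for $q = 1, \ldots, k-1$ (to kill all terms that would violate $b^3_i > a^3_i$ in the lemma). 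The equation $A_0 = 1$ forces $c_0 = (-1)^k$, and the remaining $k-1$ equations form a triangular system in $c_1, \ldots, c_{k-1}$ whose diagonal entries are $\binom{k}{k}(-1)^k \ne 0$, so these coefficients are uniquely determined.

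Finally, setting
\[
R := -\sum_{q=k}^{2k-1} A_q\, \mu^{qa^1}\mu_*^{1/2-qa^1},
\]
the identity $\mu_*^{1/2} = D^k Y + R$ holds by construction. For each $q \in \{k,\ldots,2k-1\}$, the inequality $qa^1 > 1/2 - qa^1$ (i.e.\ $b_i^3 > a_i^3$) is ensured by $a^1 > 1/(4k)$, while $1/2 - qa^1 > 0$ (i.e.\ $a_i^3 > 0$) is ensured by $a^1 < 1/(4k-2)$; hence $R$ consists of exactly $k$ monomials satisfying $b_i^3 > a_i^3 > 0$. The ansatz yields $k$ terms in $Y$, and the upper bound $k+2$ in the statement is met trivially by zero-padding. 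The main obstacle is the combined exponent bookkeeping: one must verify that the two windows of acceptable exponents --- one keeping the monomials in $Y$ well-defined with positive $a^2_i$, the other forcing $b^3_i > a^3_i > 0$ in $R$ --- are simultaneously satisfiable, which is precisely why the interval $(1/(4k),\, 1/(4k-2))$ is required and must be shown non-empty for all $k \ge 2$.
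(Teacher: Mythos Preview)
Your proof is correct and takes a genuinely different route from the paper's. The paper proceeds top-down: it differentiates the identity $\sum_{j=0}^{2k} x^j = (1-x^{2k+1})/(1-x)$ exactly $k-1$ times, rearranges, and after setting $x=B/A$ and clearing denominators obtains a closed-form polynomial identity
\[
A^{2k+2} = (A-B)^k \sum_{j=k-1}^{2k}\frac{j!}{(j-k+1)!}\,A^{2k-j+1}B^{j-k+1}
+ \sum_{j=0}^{k-1}\gamma_j\, A^{j+1}B^{2k-j+1},
\]
and then substitutes $A=\mu_*^{1/(4k+4)}$, $B=\mu^{1/(4k+4)}$. This yields exactly $k+2$ terms in the first sum with explicit falling-factorial coefficients, and the single value $a^1=1/(4k+4)$. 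Your approach is bottom-up: you posit a $k$-term ansatz for the multiplier $Y$, expand $D^kY$, and solve the resulting lower-triangular linear system for the coefficients $c_j$, which works for any $a^1$ in the open interval $(1/(4k),\,1/(4k-2))$. Your argument is more elementary (no need to discover the differentiated geometric-series identity) and shows that a whole range of $a^1$ values is admissible, at the price of non-explicit coefficients and two fewer terms in the first sum (which, as you note, is harmless by zero-padding). Both constructions produce exactly one term with $b_i^2=0$ and $k$ remainder terms satisfying $b_i^3>a_i^3>0$, so the structural content used downstream is identical.
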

\begin{proof}
 Differentiating  $k-1$ times the identity
$\displaystyle \sum_{j=0}^{2k} x^j = \frac{1-x^{2k+1}}{1-x}$ we have
\begin{align*}
\sum_{j=k-1}^{2k} \frac{j!}{(j-k+1)!} x^{j-k+1} &= \left( \frac{1}{1-x} \right)^{(k-1)} - \sum_{j=0}^{k-1}
\left(
\begin{array}{c}
k-1\\ j \end{array}
\right) \left(\frac{1}{1-x}\right)^{(j)} \Big( x^{2k+1} \Big)^{(k-1-j)}\\
&= \frac{(k-1)!}{(1-x)^k} \left\{1 - \sum_{j=0}^{k-1} \frac{(2k+1)!}{j!}
\left(\sum_{n=0}^{k-j-1} \frac{(-1)^n}{n! (2k-j-n+1)!}\right) x^{2k-j+1}\right\}.
\end{align*}
 By setting $x= B/A$ and multiplying the above identity by  $A^{2k+2}\Big( 1 -B/A \Big)^k / (k-1)!$, we obtain
 \begin{align*}%\label{part2-polyno}
 A^{2k+2}&= (A-B)^k \sum_{j=k-1}^{2k}\frac{j!}{(j-k+1)!}  A^{2k-j+1}B^{j-k+1}\\
 &+ \sum_{j=0}^{k-1} \frac{(2k+1)!}{j!}
\left(\sum_{n=0}^{k-j-1} \frac{(-1)^n}{n! (2k-j-n+1)!}\right)  A^{j+1}B^{2k-j+1} \,,%\notag
 \end{align*}
 which gives the desired formula.
 \end{proof}

With the help of Lemma \ref{part2-weight-radja-lemma-2.1}, we can
analyze   $ (\Gamma (f,g) ; h )$ as follows. Write
$$
(\Gamma (f , g),\, h) = (\Gamma_\mu (f , g),\, h) + (\Gamma_{rest} (f , g),\, h),
$$
with
\begin{equation}\label{part2-weight-radja-2}
(\Gamma_\mu (f , g) , h) = \iiint b\Big( {{v-v_*}\over{| v-v_*|}}\cdot\sigma\Big) \Phi_\gamma (v-v_*) \mu (v,v_*) \big(f'_* g'- f_* g\big) h dvdv_* d\sigma,
\end{equation}
and $(\Gamma_{rest} (f , g) ; h) $ is a finite linear combination of terms
in the form of
\begin{align}\label{part2-weight-radja-3}
(\Gamma_{mod,i} (f , g) , h) &= \iiint b\Big( {{v-v_*}\over{| v-v_*|}}\cdot\sigma\Big) \Phi_\gamma (v-v_*)
\big(f'_* g'- f_* g\big)\mu_*^{c_i}\mu^{c_i} \mu^{d_i} h dvdv_* d\sigma\\
&= (Q ( \mu^{c_i} f, \mu^{c_i} g) ) \,,\,  \mu^{d_i} h ), \notag
\end{align}
with $d_i>0$, $c_i>0$, for $1\leq i \leq 3$.

The following two propositions give estimates on each of these scalar products, and all together imply Theorem \ref{theorem-1.1-b}.

\begin{prop}
%\label{part2-weight-radja-proposition-2.3}
For all $0< s< 1$ and  $\gamma >-3$, one has
\begin{align*}
| (\Gamma_{\mu} (f , g) ; h) | &\lesssim \Big\{ \|f\|_{L^2_{s+\gamma/2}}|||g|||_{\Phi_\gamma}
+ \|g\|_{L^2_{s+\gamma/2}}|||f|||_{\Phi_\gamma}\\
 &+ \min\lbrace  ||f||_{L^2} || g||_{L^2_{s+\gamma /2}} ,||f||_{L^2_{s+\gamma /2}} || g||_{L^2} \rbrace \Big\}|||h|||_{\Phi_\gamma} \, .
\end{align*}
\end{prop}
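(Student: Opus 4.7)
The plan is to mirror the strategy of Proposition \ref{part2-prop5.2}, while leveraging the crucial fact that $\mu(v,v_*) = (\mu^{a^1}-\mu^{a^1}_*)^k\,\Psi(v_*)$ with $\Psi(v_*)=\sum_i \alpha_i^2 \mu_*^{a_i^2}$ vanishes to order $k$ on the diagonal $\{v=v_*\}$. First I would symmetrize $(\Gamma_\mu(f,g),h)$ via the involution $(v,v_*,\sigma)\leftrightarrow(v',v'_*,(v-v_*)/|v-v_*|)$, which preserves $b\,\Phi_\gamma\,dv\,dv_*\,d\sigma$ and sends $\mu(v,v_*)$ to $\mu(v',v'_*)$; this yields
\[
2(\Gamma_\mu(f,g),h) = \iiint b\,\Phi_\gamma\,(f'_*g'-f_*g)\bigl[\mu(v,v_*)\,h-\mu(v',v'_*)\,h'\bigr]\,dv\,dv_*\,d\sigma.
\]

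Next I would split the bracket as $\mu(v',v'_*)(h-h') + (\mu(v,v_*)-\mu(v',v'_*))\,h$ and apply the Cauchy--Schwarz inequality with a weight $|\mu(v',v'_*)|^{1/2}$, as in the proof of Proposition \ref{part2-prop5.2}. The factor carrying $h$ should be controlled by $|||h|||_{\Phi_\gamma}^2$: the $(h-h')$ part contributes directly from the definition of the non-isotropic norm (modulo the $\mu\leftrightarrow\mu^\rho$ equivalence used through Propositions \ref{part1-prop3} and \ref{part1-prop4}), while for the $h$ term I would Taylor-expand
\[
\mu(v,v_*)-\mu(v',v'_*) = \mathcal{O}\bigl((|v-v'|+|v_*-v'_*|)\,e^{-c|v_*|^2}\bigr) = \mathcal{O}\bigl(|v-v_*|\,\theta\,e^{-c|v_*|^2}\bigr),
\]
so that the angular singularity $b\sim\theta^{-2-2s}$ is absorbed by the factor $\theta^2$ and the resulting contribution is $\lesssim \|h\|_{L^2_{s+\gamma/2}}^2 \lesssim |||h|||^2_{\Phi_\gamma}$.

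The main work is then the estimate of $A := \iiint b\,\Phi_\gamma\,|\mu(v',v'_*)|\,(f'_*g'-f_*g)^2\,dv\,dv_*\,d\sigma$. I would split $f'_*g'-f_*g = f'_*(g'-g)+(f'_*-f_*)g$, producing three pieces $A_1, A_2, A_3$ analogous to those in the proof of Proposition \ref{part2-prop5.2}. The critical point is that the effective kinetic weight $b\,\Phi_\gamma\,|(\mu^{a^1}-\mu^{a^1}_*)|^{k}$ is no longer singular at $v=v_*$ once $k$ is chosen larger than $|\gamma|+3$: this removes the need for the $\Phi_A/\Phi_B$ splitting of Proposition \ref{part2-prop5.2}, which is precisely what introduced the Sobolev-norm terms there. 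Applying Lemma \ref{part1-imp-upper-hard} and Corollary \ref{part1-imp-upper-maxwel-2} on each piece then yields the first two terms of the stated bound, and the $\min$ term comes from the analysis of the $(f'_*-f_*)g$ contribution via a region decomposition of $\RR^6_{v,v_*}$ into $\{|v-v_*|\leq|v_*|/8\}$ and its complements (together with the symmetric partition exchanging $v$ and $v_*$), exactly as in the handling of $A_3$ in Proposition \ref{part2-prop5.2}.

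The main obstacle is the bookkeeping of the Taylor expansion of $\mu(v,v_*)-\mu(v',v'_*)$ under the non-symmetric pre/post-collisional correspondence: writing $\mu(v,v_*)=F^k\,\Psi$ with $F=\mu^{a^1}-\mu^{a^1}_*$, both $F^k$ (which vanishes on the diagonal to high order but carries delicate $v$-dependence) and $\Psi$ (which has Gaussian decay only in $v_*$) must be expanded, with their contributions combined against the polynomial growth appearing through the pre/post-collisional Jacobians. Once this is carried out carefully, the absence of any Sobolev norm on the right-hand side --- in contrast to Proposition \ref{part2-prop5.2} --- is precisely what makes this bound strong enough to close the global energy estimates of the subsequent sections.
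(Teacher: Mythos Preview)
Your strategy is the right one and matches the paper's: the factor $(\mu^{a}-\mu^{a}_*)^k$ vanishes on the diagonal $v=v_*$ and therefore neutralises the singularity of $\Phi_\gamma$, so that the quadratic form in $(f'_*g'-f_*g)$ reduces to the mollified kinetic factor $\tilde\Phi_\gamma$ and one can invoke the analysis of $A$ already carried out in Proposition~\ref{part2-prop5.2}. Your identification of the $\min$ term as coming from the $A_3$-type region decomposition is also exactly what the paper does.

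The organisational difference is in the Cauchy--Schwarz step. You propose the symmetric weight $|\mu(v',v'_*)|^{1/2}$ after splitting the bracket. This is clean for the piece $\mu(v',v'_*)(h-h')$, but for the piece $(\mu(v,v_*)-\mu(v',v'_*))\,h$ it is awkward: if the same weight is reused, the $h$-factor picks up $|\mu(v',v'_*)|^{-1}$, which vanishes on the diagonal and wherever $\Psi(v'_*)$ changes sign; if instead you split $|\mu-\mu'|$ itself symmetrically, each factor carries only one power of $\theta$, not enough to tame $b\sim\theta^{-2-2s}$ when $s\ge 1/2$. Note also that your Taylor bound $\mu-\mu'=\mathcal O(|v-v_*|\,\theta\,e^{-c|v_*|^2})$ is weaker than what actually holds, namely $\mathcal O(|v-v_*|^{k}\,\theta\,e^{-c|v_*|^2})$; the extra $|v-v_*|^{k-1}$ is precisely what is needed to absorb $\Phi_\gamma^2/\tilde\Phi_\gamma$ near the diagonal.

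The paper resolves this with an \emph{asymmetric} Cauchy--Schwarz: writing $\Phi_\gamma=\tilde\Phi_\gamma^{1/2}\cdot(\Phi_\gamma\tilde\Phi_\gamma^{-1/2})$ and factoring $\mu_*^{2b}=\mu_*^{b}\cdot\mu_*^{b}$, it places $\tilde\Phi_\gamma\,\mu_*^{2b}$ cleanly on the $A$-side (so $A$ is literally the quantity from Proposition~\ref{part2-prop5.2}) and leaves the \emph{entire} diagonal-vanishing factor $(\mu^a-\mu^a_*)^k$ on the $h$-side, where the two pointwise inequalities
\[
\big(\Phi_\gamma\tilde\Phi_\gamma^{-1/2}(\mu^a-\mu^a_*)^k\big)^2\lesssim\Phi_\gamma,
\qquad
\big(\Phi_\gamma\tilde\Phi_\gamma^{-1/2}\{F'^{\,k}\mu_*'^b-F^k\mu_*^b\}\big)^2\lesssim\Phi_\gamma(\mu_*^b+\mu_*'^b)\min(|v-v_*|^2\theta^2,1)
\]
(with $F=\mu^a-\mu^a_*$) feed directly into the definition of $|||H|||_{\Phi_\gamma}$. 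This avoids both the inversion of $\mu$ and the loss of a $\theta$-power. If you adopt this asymmetric splitting, the rest of your plan goes through unchanged.
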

\begin{proof}
Since $\mu (v,v_*)$ is a finite sum of $(\mu^{a} - \mu^{a}_*)^k \mu_*^{2b}\mu^{c}$ with $a,b >0$ and
$c \geq 0$, by setting $H = \mu^c h$, we can write
\begin{align*}%\label{part2-case1-1}
(\Gamma_\mu (f , g) , h)
& = \iiint b\,
\Phi_\gamma (\mu^{a} - \mu^{a}_*)^k \mu_*^b \big(f'_* g'- f_* g\big) \mu_*^b H d \sigma dv dv_*  \\
&=  {1\over 2}\iiint b\,\Phi_\gamma (\mu^{a} - \mu^{a}_*)^k \mu_*^b \big( f'_* g'- f_* g\big) \big( \mu_*^b H-
{\mu'}_*^b H' \big) d \sigma dv dv_*\\
& + {1\over 2}\iiint b\, \Phi_\gamma \big( f'_* g'- f_* g \big) \Big\{  ( {\mu'}^{a} - {\mu'}^{a}_* )^k {\mu'}_*^b -
(\mu^a- \mu^a_*)^k \mu_*^b \Big\}
{\mu'}_*^b H'  d \sigma dv dv_*\,.
\end{align*}
By setting $\Phi_\gamma = \tilde \Phi_\gamma^{1/2} \big( \Phi_\gamma\tilde \Phi_\gamma^{-1/2}\big)$,   the Cauchy-Schwarz inequality gives
\[
\left|(\Gamma_\mu (f , g) , h) \right|
\leq \frac 1 2 A^{1/2} \Big( D^{1/2} + E^{1/2}\Big)\,,
\]
where
\begin{align*}
A& =
\iiint  b  \tilde \Phi_\gamma \mu_*^{2b} \Big(f'_*
g' - f_* g \Big) ^2  d \sigma dv dv_* \,,\\
D&= \iiint  b \big( \Phi_\gamma\tilde \Phi_\gamma^{-1/2}  (\mu^{a} - \mu^{a}_*)^k   \big)^2
\big( \mu_*^b H-
{\mu'}_*^b H' \big)^2 d \sigma dv dv_* \,, \\
E&= \iiint  b \Big( \Phi_\gamma\tilde \Phi_\gamma^{-1/2} \Big\{  ( {\mu'}^{a} - {\mu'}^{a}_* )^k {\mu'}_*^b -
(\mu^a- \mu^a_*)^k \mu_*^b \Big\}   \Big)^2 H^2
 d \sigma dv dv_* \,.
\end{align*}
It is easy to see  $D + E \lesssim |||H|||_{\Phi_\gamma} ^2$ because
$\big( \Phi_\gamma\tilde \Phi_\gamma^{-1/2}  (\mu^{a} - \mu^{a}_*)^k   \big)^2 \lesssim
 \Phi_\gamma$, and
\[
\Big( \Phi_\gamma\tilde \Phi_\gamma^{-1/2} \Big\{  ( {\mu'}^{a} - {\mu'}^{a}_* )^k {\mu'}_*^b -
(\mu^a- \mu^a_*)^k \mu_*^b \Big\}   \Big)^2
\lesssim \Phi_\gamma\big( \mu_*^b + {\mu'_*}^b \big) \min \{ |v-v_*|^2 \theta^2\,,\, 1 \}  \,.
\]
The estimation on $A$ is just the same as the one in the proof of Proposition \ref{part2-prop5.2}. And this completes the proof of the proposition.% of Subsection \ref{part2-section2.2} .
\end{proof}

%\begin{rema}
%Note carefully that in fact, we obtain upper estimates in terms of $F$, $G$ and $H$, that is our functions $f$, $g$, and $h$ times some maxwellian, at the exception of Case $2$, which is in terms of $f$, $g$ and $g$. In other words, for Case 1, we can have all the desired weight, while Case 2 needs the precise estimations of the coercive norms as displayed in Part I, in the regularized potential framework. This is useful for example if we take $h$ as some function $\tilde h$ multiplied by some weight $W_l$ ($l$ of any sign), which is the basic step in energy estimations process. Then the commutators of this weight in Case 1 are not needed at all. We just need to study the commutator in Case 2.
%\end{rema}

%\begin{rema}

To estimate the last term in \eqref{part2-weight-radja-3}, we have the following proposition.

\begin{prop}\label{part2-weight-radja-proposition-2.2}
For all $0<s<1$ and  $\gamma>\max\{-3,- 3/2 -2s\} $,  one has
$$
| (\Gamma_{mod,i} (f , g) , h) | \lesssim  \Big\{ \|f\mu^{c_i}\|_{L^2_{s+\gamma/2}}|||g\mu^{c_i}|||_{\Phi_\gamma}
+ \|g\mu^{c_i}\|_{L^2_{s+\gamma/2}}|||f\mu^{c_i}|||_{\Phi_\gamma} \Big\}|||h\mu^{d_i}|||_{\Phi_\gamma} \, .
$$
\end{prop}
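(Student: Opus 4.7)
Setting $F = \mu^{c_i}f$, $G = \mu^{c_i}g$, $H = \mu^{d_i}h$ (with $c_i, d_i > 0$ by Lemma \ref{part2-weight-radja-lemma-2.1}), the claim reduces to
$$|(Q^{\Phi_\gamma}(F,G), H)| \lesssim \bigl\{\|F\|_{L^2_{s+\gamma/2}}|||G|||_{\Phi_\gamma} + \|G\|_{L^2_{s+\gamma/2}}|||F|||_{\Phi_\gamma}\bigr\}|||H|||_{\Phi_\gamma}.$$
The extra leverage compared to Proposition \ref{part2-prop5.2} is that $H = \mu^{d_i}h$ itself carries the Gaussian that, in $|||H|||_{\Phi_\gamma}^2$, plays the role of $\mu_*$; this is precisely what enables absorbing the Sobolev-type remainders that Proposition \ref{part2-prop5.2} was forced to leave untreated.

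In line with the strategy announced at the start of Section \ref{part2-section2}, the plan is to split the kinetic factor $\Phi_\gamma = \Phi_A + \Phi_B$ with $\Phi_A = \Phi_\gamma\,\mathbf 1_{\{|v-v_*|\ge 1\}}$ and $\Phi_B = \Phi_\gamma\,\mathbf 1_{\{|v-v_*|\le 1\}}$, writing $I = I_A + I_B$. Since $\Phi_A \lesssim \tilde\Phi_\gamma$, the $I_A$-contribution is handled as in the A-block of Proposition \ref{part2-prop5.2}: after the symmetrization
$$(Q^{\Phi_A}(F,G),H) = \tfrac12\iiint b\Phi_A(F'_*G'-F_*G)(H-H')\,d\sigma dv dv_*,$$
one applies Cauchy--Schwarz with a weight $\mu_*^{\rho}$ ($\rho>0$ small) on the $H$-side, uses the norm equivalence $J_{1,\rho}^{\Phi_\gamma}\sim J_1^{\Phi_\gamma}$ from Section \ref{part1-Section5} to replace the resulting integral by $|||H|||_{\Phi_\gamma}^2$, and invokes Lemma \ref{part1-imp-upper-hard} together with Corollary \ref{part1-imp-upper-maxwel-2} and Proposition \ref{part1-prop4} to bound the remaining factor by the desired bilinear combination.

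The delicate part is $I_B$. On the support $\{|v-v_*|\le 1\}$ one has the pointwise inequality $\mu^{d_i}(v) \le e^{d_i/2}\mu^{d_i/2}(v_*)$, which transfers the Gaussian weight from $v$ to $v_*$. After this transfer, I would apply the symmetrized identity and Cauchy--Schwarz with weight $\mu_*^{d_i/4}$: the $H$-side factor
$$\iiint b\Phi_B\,\mu_*^{d_i/4}(H-H')^2\,d\sigma dv dv_* \;\lesssim\; |||H|||_{\Phi_\gamma}^2$$
again by the $\rho$-equivalence of Section \ref{part1-Section5}. For the complementary factor $\iiint b\Phi_B\,\mu_*^{-d_i/4}(F'_*G'-F_*G)^2$, I would expand $F'_*G'-F_*G = F'_*(G'-G)+(F'_*-F_*)G$, use the Taylor expansion of $G'-G$ as in the $B_{2,1}$-step of Proposition \ref{part2-prop5.2}, and close by a direct Fourier/Hardy argument: the factor $|v-v_*|^{\gamma+2}\theta^2$ combined with $\theta^{-1-2s}$ produces a convolution whose $L^2$-bound is controlled by $\|\,|D_v|^{(-\gamma/2-1)^+}G\|_{L^2}^2$ via Hardy's inequality (the same inequality as in the $B_3$-estimate of Proposition \ref{part2-prop5.2}). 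Under the assumption $\gamma > -\tfrac32 - 2s$ one has $(-\gamma/2-1)^+ < s$, so this Sobolev gain is absorbed into $|||G|||_{\Phi_\gamma}^2 \gtrsim \|G\|_{H^s_{\gamma/2}}^2$ supplied by Proposition \ref{part1-prop2}, instead of leaving a genuine Sobolev norm as in Proposition \ref{part2-prop5.2}. The symmetric argument with $F$ and $G$ swapped controls the $(F'_*-F_*)G$ piece.

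The principal obstacle is this last Hardy/Fourier step on $I_B$: the hypothesis $\gamma > -\tfrac32 - 2s$ enters exactly once, precisely to bound the gained Sobolev order by $s$, so that $|||G|||_{\Phi_\gamma}$ (and not a strictly stronger norm) suffices. The mere assumption $\gamma > -3$ would leave a $(-\gamma/2-1)$-Sobolev remainder on $G$ uncontrolled by the dissipation; the sharper threshold together with the Gaussian transfer $\mu^{d_i}(v)\lesssim \mu^{d_i/2}(v_*)$ on $\{|v-v_*|\le 1\}$ is exactly what upgrades Proposition \ref{part2-prop5.2} to the present statement.
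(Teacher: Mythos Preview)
Your handling of $I_A$ is essentially the paper's Lemma \ref{part2-weight-radja-proposition-4.1}: exploit that $F'_*G'-F_*G=(\mu_*\mu)^{c_i}(f'_*g'-f_*g)$ carries the pre--post invariant weight $(\mu_*\mu)^{c_i}$, symmetrize, and reduce to the $A$-block of Proposition~\ref{part2-prop5.2}. That part is fine.

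The gap is in $I_B$. You write that Taylor expansion of $G'-G$ followed by Hardy yields a bound by $\|\,|D_v|^{(-\gamma/2-1)^+}G\|_{L^2}$, and then observe $(-\gamma/2-1)^+<s$. But the Taylor step produces $\nabla G$, not $G$: exactly as in the $B_{2,1}$ computation you cite, one arrives at
\[
\iint |v-v_*|^{\gamma+2}\,|\nabla G(v_\tau)|^2\,\Big(\int b\,\theta^2\,d\sigma\Big)\,dv\,dv_*\,,
\]
and Hardy then controls this by $\|\,|D_v|^{(-\gamma/2-1)^+}\nabla G\|_{L^2}^2=\|G\|^2_{H^{\max(-\gamma/2,\,1)}}$. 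The Sobolev order is therefore at least $1$, which for every $s\in(0,1)$ strictly exceeds what $|||G|||_{\Phi_\gamma}\sim\|G\|_{H^s_{\gamma/2}}+\|G\|_{L^2_{s+\gamma/2}}$ supplies. The condition $\gamma>-3/2-2s$ does not rescue this step; no physical-space Taylor argument on $G'-G$ can avoid the full gradient.

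The paper's remedy is a genuinely different tool: for the near-singular piece (a smooth compactly supported $\Phi_c$ replacing your $\Phi_B$) it passes to Fourier variables via the Bobylev formula, writes $(Q_c(F,G),H)$ as an integral involving $\hat\Phi_c(\xi_*-\xi^-)-\hat\Phi_c(\xi_*)$, and Taylor-expands $\hat\Phi_c$ (not $G$). Using $|\partial^k\hat\Phi_c(\xi)|\lesssim\langle\xi\rangle^{-3-\gamma-k}$, the resulting kernel is bounded by $\langle\xi\rangle^{2s}\langle\xi_*\rangle^{-(3+\gamma+2s)}$, and the condition $\gamma>-3/2-2s$ enters precisely as $\langle\xi_*\rangle^{-(3+\gamma+2s)}\in L^2(\RR^3_{\xi_*})$. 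With $m=-s$ this yields control by $\|F\|_{L^2}\|G\|_{H^s}+\|F\|_{H^s}\|G\|_{L^2}$ against $\|H\|_{H^s}$, which is then absorbed into the triple norms thanks to the Gaussian weights in $F,G,H$. The Fourier route is what reduces the required Sobolev order from $1$ down to $s$.
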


For this, we consider
$$
(\Gamma_{mod, i} (f , g) , h) = (Q ( \mu^{c_i} f, \mu^{c_i} g) , \mu^{d_i} h ) .
$$
We therefore  consider  $(Q(F, G), H)$ with
\begin{equation}\label{part2-weight-radja-4}
F =f\mu^{c_1},\  G= g\mu^{c_2},\ H = h\mu^{c_3},
\end{equation}
for some positive constants $c_1$, $c_2$ and $c_3$.

Let $0\leq \phi (v)\leq 1$ be a smooth  radial function with
value $1$ for $v$ close to $0$, and $0$ for large values of $v$. Set
$$
\Phi_\gamma (v) = \Phi_\gamma (v) \phi (v) + \Phi_\gamma (v) (1-\phi (v)) = \Phi_c (v) + \Phi_{\bar c} (v).
$$
And then correspondingly we can write
$$
Q (F, G) = Q_c (F, G) + Q_{\bar c} (F, G),
$$
where the kinetic factor in the collision
operator is defined according to the decomposition respectively.
To prove  Proposition \ref{part2-weight-radja-proposition-2.2}, it suffices to prove the following two lemmas, by taking $m=-s$ in the statements. The general form below for any real $m$ will be needed in Part II.

\begin{lemm} \label{part2-weight-radja-proposition-4.1}
For all $0< s < 1$ and  $\gamma >-3$, one has
$$
\Big|\Big( Q_{\bar c} (F,G),H\Big)\Big|
\leq C\Big\{ \|F\|_{L^2_{s+\gamma/2}}|||G|||_{\Phi_\gamma}
+ \|G\|_{L^2_{s+\gamma/2}}|||F|||_{\Phi_\gamma} \Big\}|||H|||_{\Phi_\gamma} \, .
$$
\end{lemm}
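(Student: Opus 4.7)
The plan is to follow the scheme used for $\Gamma_A$ in the proof of Proposition~\ref{part2-prop5.2}, adapted to the absence of the Gaussian localization $\sqrt{\mu_*}$. The essential feature that makes the argument work in the present setting is that $\Phi_{\bar c}$ is supported in $\{|v-v_*|\geq r_0\}$ for some $r_0>0$, so $\Phi_{\bar c}\lesssim\tilde\Phi_\gamma$ uniformly, and the singular behaviour of the kinetic factor at the diagonal is absent.

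First I would rewrite the bilinear form in a shape suitable for Cauchy--Schwarz. Using the pre-post collisional involution $(v,v_*,\sigma)\leftrightarrow(v',v'_*,\sigma')$ on the gain term and averaging, I arrive at the symmetric representation
$$
(Q_{\bar c}(F,G),H)=\tfrac{1}{2}\iiint b\,\Phi_{\bar c}\bigl(F'_*G'-F_*G\bigr)\bigl(H-H'\bigr)\,d\sigma\, dv\, dv_*.
$$
Next I would decompose $F'_*G'-F_*G=F'_*(G'-G)+(F'_*-F_*)G$ and apply Cauchy--Schwarz to each of the two resulting pieces, introducing a small auxiliary factor $\mu_*^{\delta/2}\cdot\mu_*^{-\delta/2}$ (for $\delta>0$ chosen small) so as to place the weight $\mu_*^{\delta/2}$ with $(H-H')$ and $\mu_*^{-\delta/2}$ with $(F,G)$.

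This reduces the problem to estimating, on the one hand, an integral of the form $\iiint b\,\Phi_{\bar c}\,\mu_*^{\delta}(H-H')^2$, which is controlled by $|||H|||^2_{\Phi_\gamma}$ via Proposition~\ref{part1-prop3} together with the $\mu^\rho$-independence asserted in the last statement of Lemma~\ref{part1-J-1-upper}; and on the other hand, integrals of the form $\iiint b\,\Phi_{\bar c}\,\mu_*^{-\delta}F_*^2(G'-G)^2$ (and its symmetric counterpart with the roles of $F$ and $G$ swapped). For the latter I would use the involution and the bound $|v'_*|^2\leq 2|v_*|^2+8|v|^2$ to transfer $\mu_*^{-\delta}$ to an expression of the form $\mu_*^{-c\delta}\mu^{-c\delta}$, then apply an extension of Lemma~\ref{part1-imp-upper-hard} to the mollified factor $\tilde\Phi_\gamma$ valid for all $\gamma>-3$ (obtained by splitting $\tilde\Phi_\gamma(v-v_*)\lesssim\langle v\rangle^{\gamma/2}\langle v_*\rangle^{|\gamma|/2}\cdot\langle v\rangle^{|\gamma|/2}\langle v_*\rangle^{\gamma/2}$ and invoking Corollary~\ref{part1-imp-upper-maxwel-2} and Proposition~\ref{part1-prop4}), to obtain the desired bound $\lesssim\|F\|^2_{L^2_{s+\gamma/2}}\,|||G|||^2_{\Phi_\gamma}$. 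Combining the two sides with $\sqrt{a+b}\leq\sqrt a+\sqrt b$ yields the asserted estimate.

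The hardest step will be handling the growth of the inverse Gaussian weight $\mu_*^{-\delta}$ without being able to absorb it into the polynomial $L^2_{s+\gamma/2}$-weights prescribed by the right-hand side. This I would resolve by partitioning $\RR^3_{v,v_*}$ as in the analysis of $A_3$ in the proof of Proposition~\ref{part2-prop5.2}: on the region $\{|v-v_*|\leq|v_*|/8\}$ one has $|v|\sim|v_*|$ and so $\mu_*^{-\delta}\sim\mu^{-\delta}$, which permits transferring the weight onto the $v$-variable via the change of variables $(v,v_*)\to(v',v'_*)$; on the complementary region $\{|v-v_*|>|v_*|/8\}$ one has $\langle v_*\rangle^{-1}\lesssim \langle v-v_*\rangle^{-1}$, so that the exponential factor $\mu_*^{-\delta}$ is absorbed by the polynomial decay of $\Phi_{\bar c}$ and $\langle v-v_*\rangle^\gamma$. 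Together with the support condition $|v-v_*|\geq r_0$, these localizations let the inverse Gaussian weight be tamed and the estimate closed.
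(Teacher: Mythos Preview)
Your proposal has a genuine gap rooted in overlooking a structural hypothesis. The lemma is stated in the paper immediately after \eqref{part2-weight-radja-4}, which fixes $F=f\mu^{c_1}$, $G=g\mu^{c_2}$, $H=h\mu^{c_3}$ with $c_1,c_2,c_3>0$ (and in the application $c_1=c_2$). The paper's proof uses this from the first line: because $(\mu'_*\mu')^\beta=(\mu_*\mu)^\beta$ one can \emph{extract} a factor $\mu_*^\beta\mu^\beta$ from the integrand, symmetrize, and apply Cauchy--Schwarz to obtain $A^{1/2}B^{1/2}$ with
\[
B=\iiint b\,\Phi_{\bar c}\,\mu_*^\beta\mu^\beta(H'-H)^2\lesssim |||H|||_{\Phi_\gamma}^2,\qquad
A=\iiint b\,\Phi_{\bar c}\,\mu_*^\beta\mu^\beta(F'_*G'-F_*G)^2,
\]
and $A$ is then handled exactly as the term called $A$ in the proof of Proposition~\ref{part2-prop5.2}, since $\Phi_{\bar c}\lesssim\tilde\Phi_\gamma$ and the extracted weight $\mu_*^\beta$ plays the role that $\mu_*^{1/2}$ played there. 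No inverse Gaussian ever appears.

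Your scheme instead inserts $\mu_*^{\delta/2}\cdot\mu_*^{-\delta/2}$ artificially and then tries to control the resulting $\mu_*^{-\delta}$ by partitioning. This fails on both pieces. On $\{|v-v_*|>|v_*|/8\}$ you assert that the polynomial decay of $\Phi_{\bar c}\sim\langle v-v_*\rangle^\gamma$ absorbs $\mu_*^{-\delta}=Ce^{\delta|v_*|^2/2}$; but even with $|v_*|\lesssim|v-v_*|$ one gets at best $\mu_*^{-\delta}\lesssim e^{c|v-v_*|^2}$, and no polynomial in $\langle v-v_*\rangle$ can compensate exponential growth. On $\{|v-v_*|\leq|v_*|/8\}$, transferring the weight to $\mu^{-\delta}$ on the $v$-variable places an exponential factor on $G$ (or $G'$) that neither $\|G\|_{L^2_{s+\gamma/2}}$ nor $|||G|||_{\Phi_\gamma}$---both polynomial-weight norms---can absorb. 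In short, without the built-in Gaussians the estimate is not available in the stated form; once you recognise \eqref{part2-weight-radja-4} you can \emph{extract} rather than \emph{insert} the Gaussian weight, and the difficulty you flag as ``hardest'' disappears.
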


\begin{proof}
One has for some positive constant $\beta$
\begin{align*}
& | (Q_{\bar c} (F , G) ; H) = | \iiint b\, \Phi_{\bar c} (v-v_*) \mu_*^\beta \mu^\beta [F'_* G'- F_* G]  H dvdv_*d\sigma |\\
&= \frac 1 2 | \iiint b\, \Phi_{\bar c} (v-v_*)  [F'_* G'- F_* G]  \mu_*^\beta \mu^\beta[H' - H] dvdv_*d\sigma |\\
&\lesssim A^{1/2}\,\,B^{1/2},
\end{align*}
where
$$ A = \iiint b\ \Phi_{\bar c} (v-v_*)  [F'_* G'- F_* G]^2  \mu_*^\beta \mu^\beta dvdv_*d\sigma,
$$
and
$$
B= \iiint b  \Phi_{\bar c} (v-v_*)\mu_*^\beta \mu^\beta[H' - H]^2 dvdv_*d\sigma .
$$
$B$ is clearly estimated from above by the dissipative norm of $H$, while for $A$, we note that  $\Phi_{\bar c} \lesssim \tilde \Phi_\gamma$.
%It is therefore enough to apply the results from Part I. It is important to notice that we do not need any constraint on $\gamma$ except for $\gamma >-3$, since all test functions are compensated by maxwellians, so that we can cancel out or add any weight. In particular, the term $A_3$ from Part I, or from
The proof of Proposition \ref{part2-prop5.2} in Subsection \ref{part2-section2.2} can
then be applied to $A$. And this gives the desired estimate and then completes
the proof of the lemma.
\end{proof}

Next, let us note that, from the Appendix, $| \hat \Phi_c (\xi )| \lesssim < \xi>^{-3-\gamma}$ .
We shall prove
\begin{lemm} \label{part2-weight-radja-proposition-4.2}

Let $m\in \RR$.
For all $0< s<1$ and   $\gamma>\max\{-3, -\frac 32 -2s\} $, one has
$$
| (Q_c (F,  G); H )|  \lesssim \Big( || F|_{L^2} || G||_{H^{ (m+2s)^+ }}+
|| F||_{H^{{ s+ (m+s)^+}}} || G||_{L^2}
+ || F||_{H^{{ s}}} || G||_{H^{ (m+s)^+}}\Big)|| H||_{H^{ -m}}\,.
$$
\end{lemm}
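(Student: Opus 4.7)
The plan is to exploit the two ingredients peculiar to the truncated operator $Q_c$: first, the smoothness of $\Phi_c$, which translates via $|\hat\Phi_c(\xi)|\lesssim\langle\xi\rangle^{-3-\gamma}$ into a usable Fourier decay (provided $\gamma>-\tfrac32-2s$ so that $-3-\gamma<-\tfrac32+2s$); and second, the usual angular cancellation from $b(\cos\theta)\sim\theta^{-2-2s}$. Unlike the non-isotropic norm estimates of the previous subsection, which only produce negative-index Sobolev control, I will use a direct Fourier/Plancherel argument that yields positive-index Sobolev norms, as the statement demands.

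First I would split the trilinear form in the standard way,
\begin{align*}
(Q_c(F,G),H) &= \iiint b\,\Phi_c\,F_*(G'-G)\,H\,dvdv_*d\sigma +\iiint b\,\Phi_c\,(F_*'-F_*)\,G'\,H\,dvdv_*d\sigma\\
&=T_1+T_2.
\end{align*}
For $T_1$ I would insert the inverse Fourier representation $\Phi_c(v-v_*)=\int\hat\Phi_c(\eta)e^{i\eta\cdot(v-v_*)}d\eta$, write $F$, $G$, $H$ via their Fourier transforms, integrate the $v$ variable (producing a delta that identifies $\hat H$ at $-\xi_1-\xi_2$), and change variables $u=v-v_*$. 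This produces a Bobylev-type formula
$$
T_1 = c\iiint \hat\Phi_c(\eta)\,\hat F(\xi_1)\,\hat G(\xi_2)\,\overline{\hat H(\xi_1+\xi_2)}\,K(\eta,\xi_1,\xi_2)\,d\eta\,d\xi_1\,d\xi_2,
$$
where $K(\eta,\xi_1,\xi_2)=\int_{\SS^2}b(\cos\theta)\bigl(e^{i|u|\sigma\cdot\xi_2/2}-1\bigr)[\,\cdot\,]d\sigma$ (the bracket being the phase obtained after the $u$-integration coupled with $\hat\Phi_c$). The standard bound $\int b(\cos\theta)\min(|\xi_2|^2\theta^2,1)\,d\sigma\lesssim\langle\xi_2\rangle^{2s}$ yields $|K|\lesssim\langle\xi_2\rangle^{2s}$, while the $\eta$-integration is controlled by $\hat\Phi_c$.

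The three terms on the right-hand side then arise from three ways of distributing the weight $\langle\xi_2\rangle^{2s}$ (and the $\langle\xi_1+\xi_2\rangle^{-m}$ coming from pairing with $H$) between $\hat F$ and $\hat G$ via Cauchy--Schwarz and Plancherel. Schematically: absorbing all of $\langle\xi_2\rangle^{2s+m}$ on $\hat G$ gives $\|F\|_{L^2}\|G\|_{H^{(m+2s)^+}}\|H\|_{H^{-m}}$; shifting $\xi_1\to\xi_1+\xi_2$ so that the high frequency lands on $\hat F$ (at the cost of an extra $\langle\cdot\rangle^s$ from the shift) gives $\|F\|_{H^{s+(m+s)^+}}\|G\|_{L^2}\|H\|_{H^{-m}}$; splitting evenly gives the balanced $\|F\|_{H^s}\|G\|_{H^{(m+s)^+}}\|H\|_{H^{-m}}$. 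The positive parts $(\cdot)^+$ reflect that when $m+s\le 0$ or $m+2s\le 0$ the extra decay of $\hat\Phi_c$ (which gives $-3-\gamma$ free powers of $\langle\eta\rangle$) together with Plancherel suffices to absorb the frequency with only an $L^2$ norm. For $T_2$, I would argue in a perfectly symmetric fashion, using either the change of variable $v_*\to v'_*$ (with its bounded Jacobian) to convert the $F$-difference into a form handled by the same Bobylev-type identity, or directly carrying out the analogue of the $T_1$ Fourier computation.

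The main obstacle I foresee is the symbol analysis: one must verify that the kernel bound for $K$, combined with $|\hat\Phi_c|\lesssim\langle\eta\rangle^{-3-\gamma}$, leaves enough room in the $\eta$-integral to close Cauchy--Schwarz in $L^2(\RR^3_\eta)$. This is precisely where the hypothesis $\gamma>-\tfrac32-2s$ enters: it ensures that the weighted $L^2$ norm $\int|\hat\Phi_c(\eta)|^2\langle\eta\rangle^{4s}d\eta$ (which appears after moving $2s$ derivatives into the $\eta$-variable during the frequency redistribution) is finite. A secondary delicate point is a careful paraproduct-style case split on whether $\xi_1$ or $\xi_2$ dominates $\xi_1+\xi_2$, in order to convert shift factors like $\langle\xi_1+\xi_2\rangle^{-m}$ into $\langle\xi_i\rangle^{-m}$ times controllable loss, so that the final Sobolev regularities match exactly the exponents $(m+2s)^+$, $s+(m+s)^+$, and $(m+s)^+$ stated in the lemma.
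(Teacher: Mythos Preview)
Your approach has a genuine gap in how the cancellation is extracted, and this is exactly why the range of $\gamma$ you obtain does not match the lemma. You put the angular difference on $G$ (or $F$) and keep $\hat\Phi_c(\eta)$ as an independent factor; the resulting condition you identify, namely finiteness of $\int|\hat\Phi_c(\eta)|^2\langle\eta\rangle^{4s}d\eta$, gives $\gamma>-\tfrac32+2s$, not $\gamma>-\tfrac32-2s$. The paper does the opposite: it applies the exact Bobylev formula to $Q_c(F,G)$ as a whole, obtaining
\[
(Q_c(F,G),H)=\iiint b\Big(\tfrac{\xi}{|\xi|}\cdot\sigma\Big)\bigl[\hat\Phi_c(\xi_*-\xi^-)-\hat\Phi_c(\xi_*)\bigr]\hat F(\xi_*)\hat G(\xi-\xi_*)\overline{\hat H(\xi)}\,d\sigma\,d\xi\,d\xi_*,
\]
so the cancellation lands on $\hat\Phi_c$ itself. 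One then splits according to $|\xi^-|\lessgtr\tfrac12\langle\xi_*\rangle$ and, on the small-$|\xi^-|$ region, Taylor-expands $\hat\Phi_c$ to second order, using the Appendix bound $|D^k\hat\Phi_c(\xi)|\lesssim\langle\xi\rangle^{-3-\gamma-k}$. It is precisely these extra $k$ powers of decay from the \emph{derivatives} of $\hat\Phi_c$---together with the symmetry cancellation of the first-order term on the sphere---that convert the integrability requirement into $\langle\xi_*\rangle^{-(3+\gamma+2s)}\in L^2$, i.e.\ $\gamma>-\tfrac32-2s$. Your scheme never touches $\nabla\hat\Phi_c$ or $\nabla^2\hat\Phi_c$, so it cannot reach this range.

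A second concrete error: for $T_2$ you invoke the change $v_*\to v_*'$ with ``bounded Jacobian''. That Jacobian is $\tfrac14\sin^2(\theta/2)$, which vanishes at $\theta=0$; inverting it adds a factor $\sin^{-2}(\theta/2)$ to the already non-integrable $b\sim\theta^{-2-2s}$, so this route does not close. The paper avoids the $T_1/T_2$ split entirely; the single Bobylev identity above already encodes all the needed cancellation in the difference of $\hat\Phi_c$ values.
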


It is important to note that even though
the statement of Lemma \ref{part2-weight-radja-proposition-4.2} is
not as sharp as the one  of Lemma \ref{part2-weight-radja-proposition-4.1},
 by recalling \eqref{part2-weight-radja-4}, we have
 all the needed weights because we are dealing with functions of the form $F$, $G$ and $H$ that contain Gaussians. Hence,
 these two lemmas together imply Proposition \ref{part2-weight-radja-proposition-2.2}.

 For the proof of Lemma \ref{part2-weight-radja-proposition-4.2}, first of all, by using the formula from the Appendix of \cite{al-1}, we have
\begin{align*}
\cF(Q_c (F, G)) (\xi ) & = \iint_{\RR^3_{\xi_*}\times \SS^2}b \Big({\xi\over{ | \xi |}} \cdot \sigma \Big) \hat \Phi_c (\xi_* ) \hat F(\xi^-+ \xi_* ) \hat G (\xi^+ -\xi_*) \\
&- \iint_{\RR^3_{\xi_*}\times \SS^2} b \Big({\xi\over{ | \xi |}} \cdot \sigma \Big) \hat \Phi_c (\xi_* ) \hat F (\xi_* ) \hat G(\xi - \xi_* ).
\end{align*}
We change variables in $\xi_*$ in the first integral to obtain
\begin{align*}
( Q_c(F, G); H ) =& \iiint b \Big({\xi\over{ | \xi |}} \cdot \sigma \Big) [ \hat\Phi_c (\xi_* - \xi^- ) - \hat \Phi_c (\xi_* ) ] \hat F (\xi_* ) \hat G(\xi - \xi_* ) \overline{{\hat H} (\xi )} d\xi d\xi_*d\sigma .\\
= & \iiint_{ | \xi^- | \leq {1\over 2} \la \xi_*\ra }  \cdots d\xi d\xi_*d\sigma
+ \iiint_{ | \xi^- | \geq {1\over 2} \la \xi_*\ra } \cdots d\xi d\xi_*d\sigma \,\\
=& A_1(F,G,H)  +  A_2(F,G,H) \,\,.
\end{align*}
 $A_2(F,G,H)$ can be naturally decomposed into
\begin{align*}
A_2 &=  \iiint b \Big({\xi\over{ | \xi |}} \cdot \sigma \Big) {\bf 1}_{ | \xi^- | \ge {1\over 2}\la \xi_*\ra }
\hat\Phi_c (\xi_* - \xi^- ) \hat F (\xi_* ) \hat G(\xi - \xi_* ) \overline{{\hat H} (\xi )} d\xi d\xi_*d\sigma .\\
&- \iiint b \Big({\xi\over{ | \xi |}} \cdot
 \sigma \Big){\bf 1}_{ | \xi^- | \ge {1\over 2}\la \xi_*\ra } \hat \Phi_c (\xi_* ) \hat F (\xi_* ) \hat G(\xi - \xi_* ) \overline{{\hat H} (\xi )} d\xi d\xi_*d\sigma \\
&= A_{2,1}(F,G,H) - A_{2,2}(F,G,H)\,.
\end{align*}

For $A_1$, we use the Taylor expansion of $\hat \Phi_c$ at order $2$ to have
$$
A_1 = A_{1,1} (F,G,H) +A_{1,2} (F,G,H)
$$
where
$$
A_{1,1} = \iiint b\,\, \xi^-\cdot (\nabla\hat\Phi_c)( \xi_*)
{\bf 1}_{ | \xi^- | \leq {1\over 2} \la \xi_*\ra }  \hat F (\xi_* ) \hat G(\xi - \xi_* ) \bar{\hat H} (\xi ) d\xi d\xi_*d\sigma,
$$
and $A_{1,2} (F,G,H)$ is the remaining term corresponding to the second order term in the Taylor expansion of $\hat\Phi_c$. The $A_{i,j}$ with
$i,j=1,2$ are estimated by the following lemmas.

\begin{lemm}%\label{part2-weight-radja-lemma-4.3}
 Let $m \in \RR$.
For all $\gamma>\max (-3, -  \frac{3}{2} - 2s )$, one has
$$
| A_{1,j} | \lesssim \Big( || F|_{L^2} || G||_{H^{ (m+2s)^+ }}+ || F||_{H^{{ s+ (m+s)^+}}} || G||_{L^2}
+ || F||_{H^{{ s}}} || G||_{H^{ (m+s)^+}}\Big)|| H||_{H^{ -m}} \,\,, j=1,2 \,.
$$
\end{lemm}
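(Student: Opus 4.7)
The plan is to bound $A_{1,1}$ and $A_{1,2}$ by Fourier-side estimates, relying on Cauchy--Schwarz in $\sigma$, Peetre's inequality, and Young's convolution inequality. The key analytic input is the decay estimate $|\nabla^\alpha \hat\Phi_c(\xi)| \lesssim \langle\xi\rangle^{-3-\gamma-|\alpha|}$ for $|\alpha|\le 2$ (quoted from the appendix), the geometric identity $|\xi^-|=|\xi|\sin(\theta/2)$, and the truncation $|\xi^-|\le \langle\xi_*\rangle/2$ enforced by the indicator.

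I would begin with the easier term $A_{1,2}$. The second-order Taylor remainder gives a kernel pointwise bounded by $|\xi^-|^2\,|\nabla^2\hat\Phi_c(\xi_*-\tau\xi^-)|$. On the support of the indicator, $\langle\xi_*-\tau\xi^-\rangle\sim\langle\xi_*\rangle$, so $|\nabla^2\hat\Phi_c|\lesssim \langle\xi_*\rangle^{-5-\gamma}$. The angular integral is absolutely convergent since $|\xi^-|^2\sim|\xi|^2\theta^2$; combined with the truncation $\theta\le \min(\pi/2,\,C\langle\xi_*\rangle/|\xi|)$ supplied by the indicator, a two-regime analysis ($|\xi|\le\langle\xi_*\rangle$ versus $|\xi|>\langle\xi_*\rangle$) will yield
\[
\int_{\SS^2} b(\cos\theta)\,\mathbf{1}_{\{|\xi^-|\le\langle\xi_*\rangle/2\}}\,|\xi^-|^2\,d\sigma \;\lesssim\; |\xi|^{2s}\langle\xi_*\rangle^{2-2s}.
\]
The effective trilinear kernel for $A_{1,2}$ is thus $|\xi|^{2s}\langle\xi_*\rangle^{-3-2s-\gamma}$.

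For $A_{1,1}$ the plan is parallel but requires Cauchy--Schwarz in $\sigma$ to absorb the borderline singular weight $b(\cos\theta)\,|\xi^-|$, whose naive $\sigma$-integral fails to converge for $s\ge 1/2$. Splitting $b=b^{1/2}\cdot b^{1/2}$ and using $|\xi^-|\le|\xi|\sin(\theta/2)$ on one factor and $|\xi^-|\le\langle\xi_*\rangle/2$ (from the indicator) on the other, the same two-regime argument yields $\int b\,\mathbf{1}\,|\xi^-|\,d\sigma \lesssim |\xi|^{2s}\langle\xi_*\rangle^{1-2s}$. Combined with $|\nabla\hat\Phi_c(\xi_*)|\lesssim \langle\xi_*\rangle^{-4-\gamma}$, the effective kernel for $A_{1,1}$ is again $|\xi|^{2s}\langle\xi_*\rangle^{-3-2s-\gamma}$, identical to that of $A_{1,2}$.

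Once both kernels are in this common form, I would extract $\langle\xi\rangle^{-m}$ from $\hat H$ to expose $\|H\|_{H^{-m}}$, then use Peetre's inequality $\langle\xi\rangle^{m+2s}\lesssim \langle\xi_*\rangle^a\langle\xi-\xi_*\rangle^b$ in three distinct regimes --- $(a,b)=(0,(m+2s)^+)$, $(s+(m+s)^+,0)$, and $(s,(m+s)^+)$ --- to produce exactly the three summands of the target estimate. Each resulting trilinear integral closes by a single $L^1_{\xi_*}\!\ast L^2_{\xi-\xi_*}\!\to L^2_\xi$ Young estimate, with the residual weight $\langle\xi_*\rangle^{-3-2s-\gamma}$ belonging to $L^2_{\xi_*}(\RR^3)$ precisely when $\gamma>-3/2-2s$, which is the sharp assumption in the lemma. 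The principal obstacle will be the borderline angular integral in $A_{1,1}$: the indicator-driven truncation of $|\xi^-|$ is essential, and the three Peetre splits are forced by the need to cover both signs of $m$ and of $m+s$.
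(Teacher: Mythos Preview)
Your treatment of $A_{1,2}$ is correct and coincides with the paper's argument. The gap is in $A_{1,1}$.

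The claimed angular bound
\[
\int_{\SS^2} b(\cos\theta)\,\mathbf{1}_{\{|\xi^-|\le\langle\xi_*\rangle/2\}}\,|\xi^-|\,d\sigma \;\lesssim\; |\xi|^{2s}\langle\xi_*\rangle^{1-2s}
\]
is false for $s\ge 1/2$. When $|\xi|\lesssim\langle\xi_*\rangle$ the indicator imposes no truncation near $\theta=0$, and the left side is $\sim|\xi|\int_0^{\pi/2}\theta^{-2s}\,d\theta=+\infty$. No Cauchy--Schwarz or interpolation in $\sigma$ repairs this: writing $|\xi^-|\le(|\xi|\sin(\theta/2))^{a}(\langle\xi_*\rangle/2)^{1-a}$ for any $a\in[0,1]$ still leaves $\int_0^{\pi/2}\theta^{a-1-2s}\,d\theta$, which diverges unless $a>2s$, impossible once $s\ge 1/2$. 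The indicator only helps in the large-$|\xi|$ regime; in the small-$|\xi|$ regime you must extract an extra power of $\theta$ from somewhere, and the only available source is cancellation.

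The paper obtains that extra power by the symmetry decomposition
\[
\xi^- \;=\; \frac{|\xi|}{2}\Big(\big(\tfrac{\xi}{|\xi|}\cdot\sigma\big)\tfrac{\xi}{|\xi|}-\sigma\Big)\;+\;\big(1-\tfrac{\xi}{|\xi|}\cdot\sigma\big)\frac{\xi}{2}.
\]
The first piece is odd on $\SS^2$ (for fixed $\xi$) and integrates to zero against $b(\tfrac{\xi}{|\xi|}\cdot\sigma)\,\mathbf{1}_{\{|\xi^-|\le\langle\xi_*\rangle/2\}}$, since the indicator depends only on $|\xi^-|$. What survives is the second piece, carrying the factor $1-\cos\theta\sim\theta^2/2$, so the effective angular weight is $b(\cos\theta)\theta^2\sim\theta^{-2s}$ on the sphere --- integrable for all $s<1$. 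This reduces $A_{1,1}$ to a kernel $K(\xi,\xi_*)$ that behaves like $\langle\xi\rangle/\langle\xi_*\rangle^{4+\gamma}$ when $|\xi|\lesssim\langle\xi_*\rangle$ and like $\langle\xi\rangle^{2s-1}\langle\xi_*\rangle^{-2-\gamma-2s}$ when $|\xi|\gtrsim\langle\xi_*\rangle$; the two regimes are precisely what generate the $(m+s)^+$ terms and the $(m+2s)^+$ term, respectively, in the statement. Your proposed common kernel $|\xi|^{2s}\langle\xi_*\rangle^{-3-2s-\gamma}$ would, even if it were valid, yield only the $(m+2s)^+$ splitting and miss the finer $(m+s)^+$ structure.
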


\begin{proof}
We first consider $A_{1,1}$. By writing
\[
\xi^- = \frac{|\xi|}{2}\left(\Big(\frac{\xi}{|\xi|}\cdot \sigma\Big)\frac{\xi }{|\xi|}-\sigma\right)
+ \left(1- \Big(\frac{\xi}{|\xi|}\cdot \sigma\Big)\right)\frac{\xi}{2},
\]
we see that the integral corresponding to the first term on the right hand side vanishes because of the symmetry
on $\SS^2$.
Hence, we have
\[
A_{1,1}= \iint_{\RR^6} K(\xi, \xi_*)
\hat F (\xi_* ) \hat G(\xi - \xi_* ) \bar{\hat H} (\xi ) d\xi d\xi_* \,,
\]
where
\[
K(\xi,\xi_*) = \int_{\SS^2}
 b \Big({\xi\over{ | \xi |}} \cdot \sigma \Big)
\left(1- \Big(\frac{\xi}{|\xi|}\cdot \sigma\Big)\right)\frac{\xi}{2}\cdot
(\nabla\hat\Phi_c)( \xi_*)
{\bf 1}_{ | \xi^- | \leq {1\over 2} \la \xi_*\ra } d \sigma \,.
\]
Note that $| \nabla \hat \Phi_c (\xi_*) | \lesssim {1\over{\la \xi_*\ra^{3+\gamma +1}}}$, from the Appendix.
If $\sqrt 2 |\xi| \leq \la \xi_* \ra$, then $|\xi^-| \leq \la \xi_* \ra/2$
and this  implies that for
$0 \leq \theta \leq \pi/2$,
\begin{align*}
|K(\xi,\xi_*)| &\lesssim \int_0^{\pi/2} \theta^{1-2s} d \theta\frac{ \la \xi\ra}{\la \xi_*\ra^{3+\gamma +1}}
\lesssim \frac{\la \xi\ra^s  \la \xi_*\ra^s   }{\la \xi_*\ra^{3+\gamma +2s}}\left(
\frac{\la \xi \ra}{\la \xi_*\ra}\right)^{1-s} \\
&\lesssim \frac{\la \xi\ra^{ m+s}  \la \xi_*\ra^s   }{\la \xi_*\ra^{3+\gamma +2s}}\la \xi\ra^{-m}
\lesssim \la \xi_*\ra^s \frac{  \la \xi_* \ra^{( m+s)^+} + \la \xi -\xi_* \ra^{( m+s)^+}      }{\la \xi_*\ra^{3+\gamma +2s}}\la \xi\ra^{ -m}
\end{align*}
On the other hand, if $\sqrt 2 |\xi| \geq \la \xi_* \ra$, then
\begin{align*}|K(\xi,\xi_*)| &\lesssim \int_0^{\pi\la \xi_*\ra /(2|\xi|)} \theta^{1-2s} d \theta\frac{ \la \xi\ra}{\la \xi_*\ra^{3+\gamma +1}}
\lesssim \frac{\la \xi\ra^{2s}   }{\la \xi_*\ra^{3+\gamma +2s}}
\frac{\la \xi_* \ra}{\la \xi \ra}\\
&\lesssim \frac{ \Big( \la \xi - \xi_* \ra^{(m+2s)^+} + \la \xi_* \ra^{ (m+2s)^+} \Big)  }{\la \xi_*\ra^{3+\gamma +2s}} \la \xi\ra^{ -m}\,.
\end{align*}
Since $\la \xi_*\ra^{-(3+\gamma +2s)} \in L^2$ when
 $\gamma >-3/2 -2s$, we obtain the desired estimate for $A_{1,1}$.

Now we turn to $A_{1,2} (F, G, H)$, which comes from the second order term of the Taylor expansion. Note that
$$
A_{1,2} = \iiint  b \Big({\xi\over{ | \xi |}} \cdot \sigma \Big)\int^1_0 d\tau (\nabla^2\hat \Phi_c) (\xi_* -\tau\xi^- ) \cdot\xi^- \cdot\xi^- \hat F (\xi_* ) \hat G(\xi - \xi_* ) \bar{\hat H} (\xi ) d\sigma d\xi d\xi_*\, .
$$
>From the Appendix, we have
$$
| (\nabla^2\hat \Phi_c) (\xi_* -\tau\xi^- ) | \lesssim {1\over{\la  \xi_* -\tau \xi^-\ra^{3+\gamma +2}}}
\lesssim
 {1\over{\la \xi_*\ra^{3+\gamma +2}}},
$$
because $|\xi^-| \leq \la \xi_*\ra/2$.
Similar to $A_{1,1}$, we can obtain
\[
|A_{1,2}| \lesssim
 \iint_{\RR^6} \tilde K(\xi, \xi_*)
\hat F (\xi_* ) \hat G(\xi - \xi_* ) \bar{\hat H} (\xi ) d\xi d\xi_* \,,
\]
where $\tilde K(\xi,\xi_*)$ has the following upper bound
\[
\tilde K(\xi,\xi_*) \lesssim \int_0^{\min(\pi/2, \,\, \pi\la \xi_*\ra /(2|\xi|))} \theta^{1-2s} d \theta
\frac{ \la \xi\ra^2}{\la \xi_*\ra^{3+\gamma +2}}
\lesssim \frac{\la \xi\ra^{2s}   }{\la \xi_*\ra^{3+\gamma +2s}}\,,
\]
which yields the desired estimate for $A_{1,2}$. And this completes
the proof of the lemma.
\end{proof}

\begin{lemm}%\label{part2-weight-radja-lemma-4.5'}
Let $m \in \RR$.
For all $0<s<1$ and $\gamma>\max(-3, - {3\over 2}- 2s)$, one has
$$
 | A_{2,1} |+| A_{2,2}|  \lesssim \big( || F|_{L^2} || G||_{H^{ (m+2s)^+ }}+ || F||_{H^{{ (m+2s)^+}}} || G||_{L^2} \big)|| H||_{H^{-m}} .
$$
\end{lemm}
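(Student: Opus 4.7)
The plan is to reduce both $A_{2,1}$ and $A_{2,2}$ to a common weighted double integral
\[
\iint \mathcal{K}(\xi,\xi_*)\,|\hat F(\xi_*)|\,|\hat G(\xi-\xi_*)|\,|\hat H(\xi)|\,d\xi\,d\xi_*,
\]
with kernel dominated by $\mathcal{K}(\xi,\xi_*)\lesssim \la\xi\ra^{2s}\la\xi_*\ra^{-3-\gamma-2s}$. Two ingredients feed into this bound: the Fourier decay $|\hat\Phi_c(\eta)|\lesssim \la\eta\ra^{-3-\gamma}$ from the Appendix, and the $b$-weighted measure of the angular set $\{\sigma\in\SS^2:|\xi^-|\geq \la\xi_*\ra/2\}$. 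The hypothesis $\gamma>-\frac{3}{2}-2s$ will be imposed exactly so that $\la\cdot\ra^{-3-\gamma-2s}\in L^2(\RR^3)$, which is the single sharp requirement in the reduction that follows.

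For $A_{2,2}$ the kernel bound is immediate. Using $|\xi^-|=|\xi|\sin(\theta/2)$ and $b(\cos\theta)\sin\theta\sim\theta^{-1-2s}$, the angular integral
\[
\int_{\SS^2} b\Bigl(\frac{\xi}{|\xi|}\cdot\sigma\Bigr){\bf 1}_{\{|\xi^-|\geq \la\xi_*\ra/2\}}\,d\sigma \lesssim \Bigl(\frac{\la\xi\ra}{\la\xi_*\ra}\Bigr)^{2s}
\]
combines with $|\hat\Phi_c(\xi_*)|\lesssim \la\xi_*\ra^{-3-\gamma}$ to yield the target $\mathcal{K}$. I would then decompose $\la\xi\ra^{2s}=\la\xi\ra^{-m}\cdot\la\xi\ra^{m+2s}$, pair $\la\xi\ra^{-m}$ with $|\hat H(\xi)|$ to produce the $\|H\|_{H^{-m}}$ factor, and shift $\la\xi\ra^{m+2s}$ onto either $\xi_*$ or $\xi-\xi_*$ through Peetre's inequality $\la\xi\ra^{m+2s}\lesssim \la\xi_*\ra^{(m+2s)^+}+\la\xi-\xi_*\ra^{(m+2s)^+}$. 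Each of the two resulting integrands is a convolution in $\xi_*$ with the $L^2$-kernel $\la\xi_*\ra^{-3-\gamma-2s}$; Cauchy--Schwarz in $\xi_*$ together with Plancherel then controls the $L^2_\xi$-norm of the inner integral by $\|F\|_{H^{(m+2s)^+}}\|G\|_{L^2}$ or $\|F\|_{L^2}\|G\|_{H^{(m+2s)^+}}$, and a final Cauchy--Schwarz in $\xi$ against $\la\xi\ra^{-m}|\hat H|$ produces the desired estimate.

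The treatment of $A_{2,1}$ follows the same blueprint, the only issue being the coupling of Fourier and angular variables through $\hat\Phi_c(\xi_*-\xi^-)$. To re-establish the clean $\la\xi_*\ra$-decay I will split the $\sigma$-integration according to whether $|\xi_*-\xi^-|\geq \la\xi_*\ra/4$ or not. On the first regime Peetre's inequality directly yields $\la\xi_*-\xi^-\ra\gtrsim\la\xi_*\ra$, hence $|\hat\Phi_c(\xi_*-\xi^-)|\lesssim \la\xi_*\ra^{-3-\gamma}$, reducing the analysis to that of $A_{2,2}$. On the complementary regime, $\xi^-$ lies in a spherical cap of diameter $\la\xi_*\ra/4$ around $\xi_*$; I will discard $\hat\Phi_c$ using its uniform bound $\|\hat\Phi_c\|_{L^\infty}<\infty$ (finite because $\Phi_c\in L^1$ under $\gamma>-3$) and observe that, under the underlying restriction $|\xi^-|\geq \la\xi_*\ra/2$, the $b$-weighted $\sigma$-measure of this cap is still controlled by $\la\xi_*\ra^{-2s}\la\xi\ra^{2s}$. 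Both regimes thus produce the same kernel bound, and the rest of the argument is identical.

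The main obstacle will be this kernel estimate for $A_{2,1}$: the $\sigma$-dependence of the argument of $\hat\Phi_c$ prevents the direct factorization that made $A_{2,2}$ routine, and one must combine Peetre's inequality with a geometric control of the small spherical caps to recover the sharp $\la\xi_*\ra^{-3-\gamma-2s}$ decay in the kernel. Once past this point, the hypothesis $\gamma>-\frac{3}{2}-2s$ makes $\la\cdot\ra^{-3-\gamma-2s}$ square-integrable, and a clean Cauchy--Schwarz / Plancherel argument delivers the stated bound.
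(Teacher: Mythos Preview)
Your treatment of $A_{2,2}$ is correct and matches the paper's: the angular integral over $\{|\xi^-|\geq \la\xi_*\ra/2\}$ produces $\la\xi\ra^{2s}\la\xi_*\ra^{-2s}$, and $|\hat\Phi_c(\xi_*)|\lesssim \la\xi_*\ra^{-3-\gamma}$ supplies the remaining decay.

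For $A_{2,1}$ there is a genuine gap in your second regime $|\xi_*-\xi^-|<\la\xi_*\ra/4$. You bound $\hat\Phi_c$ by $\|\hat\Phi_c\|_{L^\infty}$ and assert that the $b$-weighted $\sigma$-measure of the cap is $\lesssim \la\xi\ra^{2s}\la\xi_*\ra^{-2s}$. That cap bound is correct, but it is \emph{no better} than the bound on the full set $\{|\xi^-|\geq \la\xi_*\ra/2\}$: the constraint confines $\xi^-$ to a patch of surface area $\lesssim \la\xi_*\ra^2$ on the $\xi^-$-sphere of radius $|\xi|/2$, hence $\sigma$-area $\lesssim \la\xi_*\ra^2/|\xi|^2$; on this patch $|\xi^-|\sim\la\xi_*\ra$, so $b(\cos\theta)\sim(|\xi|/\la\xi_*\ra)^{2+2s}$, and the product is again $\la\xi\ra^{2s}\la\xi_*\ra^{-2s}$. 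Your kernel on this regime is therefore only $\la\xi\ra^{2s}\la\xi_*\ra^{-2s}$, \emph{not} $\la\xi\ra^{2s}\la\xi_*\ra^{-3-\gamma-2s}$; the missing factor $\la\xi_*\ra^{-3-\gamma}$ cannot be recovered from geometry once $\hat\Phi_c$ has been thrown away. With this kernel the convolution/Cauchy--Schwarz step fails, since $\la\cdot\ra^{-2s}\notin L^2(\RR^3)$ for $s<3/4$.

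The paper does not attempt a pointwise kernel bound for $A_{2,1}$. After restricting (as you do on your first regime) to the set where $|\xi^-|\lesssim|\xi_*|$, it applies an \emph{asymmetric} Cauchy--Schwarz that groups $|\hat\Phi_c(\xi_*-\xi^-)|^2$ with $|\hat H(\xi)|^2$ in one factor $D$, and $\hat F,\hat G$ in the other factor $D_j$. In $D$ one then performs the change of variables $u=\xi_*-\xi^-$ in the $\xi_*$-integral: the angular integral over $\{|\xi^-|\gtrsim\la u\ra\}$ yields $(|\xi|/\la u\ra)^{2s}$, and one is left with $\int_{\RR^3}|\hat\Phi_c(u)|^2\la u\ra^{-4s}\,du$, finite exactly when $2(3+\gamma)+4s>3$, i.e.\ $\gamma>-\tfrac32-2s$. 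It is the $L^2$-integrability of $\hat\Phi_c(\cdot)\la\cdot\ra^{-2s}$ after this change of variables, not any pointwise control, that closes the estimate; your pointwise-kernel strategy cannot substitute for it on the critical regime.
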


\begin{proof}
In view of the definition of $A_{2,2}$, Since
we assume that $\theta \ge {1/2 } | \xi_*| | \xi |^{-1}$, we also
have ${1/2 } \la \xi_*\ra | \xi |^{-1} \le {\pi\over 2}$, that is, $\la  \xi_* \ra \lesssim | \xi|$.
We can then
directly compute the spherical integral appearing inside $A_{2,2}$ together with $\Phi$ by using the inequality
\begin{align*}
  {1\over{\la \xi_* \ra^{3+\gamma }}} \frac{\la  \xi\ra^{2s} }{\la \xi_*\ra^{2s}}
\lesssim {1\over{\la \xi_* \ra^{3+\gamma +{2 s}}}}  \la \xi \ra^{ -m}
\Big( \la\xi_*\ra ^{(m+2s)^+ } + \la \xi - \xi_* \ra^{  (m+2s)^+} \Big) \,,
\end{align*}
to obtain the estimate for $A_{2, 2}$.

We now turn to
$$
A_{2,1}=  \iiint b\,\, {\bf 1}_{ | \xi^- | \ge {1\over 2} \la  \xi_*\ra }\hat \Phi_c (\xi_* - \xi^-) \hat F (\xi_* ) \hat G(\xi - \xi_* ) \bar{\hat H} (\xi ) d\sigma d\xi d\xi_* .
$$
Firstly, note that we can  work on the set $| \xi_* \,\cdot\,\xi^-| \ge {1\over 2} | \xi^-|^2$. In fact, on the complementary of this
set, we have
 $| \xi_* \,\cdot\,\xi^-| \leq {1\over 2} | \xi^-|^2$ so that
 $|\xi_* -\xi^-| \gtrsim | \xi_*|$, and in this case,
we can proceed in the same way as for $A_{2,2}$. Therefore, it suffices to estimate
\begin{align*}
A_{2,1,p}&=  \iiint b\,\, {\bf 1}_{ | \xi^- | \ge {1\over 2} \la \xi_*\ra }{\bf 1}_{| \xi_* \,\cdot\,\xi^-| \ge {1\over 2} | \xi^-|^2}\hat \Phi_c (\xi_* - \xi^-) \hat F (\xi_* ) \hat G(\xi - \xi_* ) \overline{{\hat H} (\xi )} d\sigma d\xi d\xi_* .\\
&= \iiint b\,\, \frac{{\bf 1}_{ | \xi^- | \ge {1\over 2} \la \xi_*\ra }{\bf 1}_{| \xi_* \,\cdot\,\xi^-| \ge {1\over 2} | \xi^-|^2}}
{\Big( \la\xi_*\ra ^{ (m+2s)^+ } + \la \xi - \xi_* \ra^{ (m+2s)^+}  \Big)}
\,\hat \Phi_c (\xi_* - \xi^-)
\\&\qquad  \qquad \times
\overline{{\hat H} (\xi ) }  \sum_{j=1}^2\hat F_j (\xi_* ) \hat G_j(\xi - \xi_* ) d\sigma d\xi d\xi_* \,,
\end{align*}
where $\hat F_1= \la \xi \ra^{ (m+2s)^+} \hat F$ , $\hat G_1 =\hat G$ and $\hat F_2 = \hat F$,
$\hat G_2 = \la \xi \ra^{ (m+2s)^+} \hat G$.  On the set for
the above integral, we have $\la \xi_* -\xi^- \ra^{2s} \lesssim \,
\la \xi_* \ra^{2s}$, because $| \xi^- | \lesssim | \xi_*|$
that follows from  $| \xi^-|^2 \le 2 | \xi_* \cdot\xi | \lesssim |\xi^-|\, | \xi_*|$.
By the Cauchy-Shwarz inequality, we have
\[
|A_{2,1,p}| \leq \sum_{j=1}^2 D^{1/2} D_j^{1/2},
\]
where
$$
D= \int_{\RR^3}\Big(\iint_{\RR^3\times \SS^2} b \Big({\xi\over{ | \xi |}} \cdot \sigma \Big) {\bf 1}_{ | \xi^- | \ge {1\over 2} \la \xi_*\ra} {{| \hat \Phi_c (\xi_*  - \xi^-)|^2}\over{\la\xi\ra^{ 2s +2m}  \la \xi_* - \xi^-\ra^{2s}}} d\sigma d\xi_* \Big) |\hat H(\xi)|^2 d\xi
$$
and
$$
D_j = \iiint_{\RR^6\times \SS^2} b \Big({\xi\over{ | \xi |}} \cdot \sigma \Big)
\frac{ {\bf 1}_{ | \xi^- | \ge {1\over 2} \la \xi_*\ra } \la\xi\ra^{ 2s +2m}  \la \xi_* \ra^{2s}  }
{\Big( \la\xi_*\ra ^{ (m+2s)^+ } + \la \xi - \xi_* \ra^{ (m+2s)^+}  \Big)^2}
\, |\hat F_j (\xi_* )|^2 |\hat G_j(\xi-\xi_*)|^2   d\sigma d\xi_* d\xi.
$$
Since $\int_{\SS^2}  b \Big({\xi\over{ | \xi |}} \cdot \sigma \Big)
 {\bf 1}_{ | \xi^- | \ge {1\over 2} \la \xi_* \ra} d \sigma \lesssim |\xi|^{2s}\la \xi_*\ra^{-2s}$, we obtain
\[
D_j \lesssim \|F_j\|^2_{L^2} \|G_j\|^2_{L^2}\,\,.
\]

For $D$, we use the change of variables in $\xi_*$, $u= \xi_* -\xi^-$ to get
$$
D=\int_{\RR^3}\Big( \iint_{\RR^3\times \SS^2} b \Big({\xi\over{ | \xi |}} \cdot \sigma \Big) {\bf 1}_{ | \xi^- | \ge {1\over 2}
\la u+\xi^-\ra } {{| \hat \Phi_c (u)|^2}\over{\la \xi\ra ^{2s+2m }   \la u \ra^{2s}}} d\sigma d u\Big)   |\hat H(\xi)|^2 d\xi\,.
$$
By noting that $| \xi ^-| \ge {1\over 2} \la u +\xi^-\ra $ implies  $|\xi^-| \geq \la u\ra/\sqrt {10}$, we have
$$
 D\lesssim \int_{\RR^6} \left(\frac{|\xi|}{\la u\ra } \right)^{2s}
\frac {| \hat \Phi_c (u)|^2}{\la\xi \ra^{ 2s+2m }  \la u \ra^{2s}}   |\hat H(\xi)|^2 d\xi  d u
\lesssim \|H\|^2_{H^{  -m }}\,,
$$
because $2(\gamma+3) +4s > 3$. And this completes the proof of the lemma.
\end{proof}

\subsection{Estimation of commutators }
%\label{part2-section2.4}
 By using the arguments similar to those used
in previous subsections, we now prove the following estimation on commutators.

\begin{prop}\label{part2-weight-radja-lemma-6.1} Assume that  $0< s<1$ and $\gamma >-3$. Then, for any $l\ge 1$, one has
\begin{align}\label{part2-comm-1}
& \Big| \Big( W_l \Gamma (f,g) - \Gamma (f, W_lg),\,\,  h \Big)_{L^2}\Big| \notag\\
& \lesssim \Big( || f||_{L^2_{s+\gamma /2}} || W_{l-s} g ||_{L^2_{s+\gamma /2}} + || g||_{L^2_{s+\gamma /2}} || W_{l-s} f ||_{L^2_{s+\gamma /2}}\notag\\
& + \min \lbrace || f||_{L^2} || W_{l-s} g ||_{L^2_{s+\gamma /2}} , || f||_{L^2_{s+\gamma /2}} || W_{l-s} g ||_{L^2} \rbrace  \\
&+ \min \lbrace || g||_{L^2} || W_{l-s} f ||_{L^2_{s+\gamma /2}} , || g||_{L^2_{s+\gamma /2}} || W_{l-s} f ||_{L^2} \rbrace \Big) ||| h|||_{\Phi_\gamma}\, ;\notag
\end{align}
and for any $0<l <1$, one has
\begin{align}\label{part2-comm-2}
&\Big| \Big( W_l \Gamma (f,g) - \Gamma (f, W_lg),\,\,  h \Big)_{L^2}\Big| \notag\\
&  \lesssim \Big( || f||_{L^2_{s+\gamma /2}} || W_{l-sl} g ||_{L^2_{s+\gamma /2}} + || g||_{L^2_{s+\gamma /2}} || W_{l-sl} f ||_{L^2_{s+\gamma /2}}\notag\\
& +\min \lbrace || f||_{L^2} || W_{l-sl} g ||_{L^2_{s+\gamma /2}} , || f||_{L^2_{s+\gamma /2}} || W_{l-sl} g ||_{L^2} \rbrace  \\
&+ \min \lbrace || g||_{L^2} || W_{l-sl} f ||_{L^2_{s+\gamma /2}} , || g||_{L^2_{s+\gamma /2}} || W_{l-sl} f ||_{L^2} \rbrace \Big) ||| h|||_{\Phi_\gamma}\, .\notag
\end{align}
\end{prop}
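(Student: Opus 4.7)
My approach starts from an explicit integral representation of the commutator and then reduces the estimate to bilinear functional estimates of the type already proved in the previous subsections. The key algebraic observation is that when one writes out $W_l\Gamma(f,g)$ and $\Gamma(f,W_l g)$ from the definition of $\Gamma$, the ``loss'' contributions at $v$ cancel and only the gain term survives,
\begin{equation*}
\bigl(W_l \Gamma(f,g) - \Gamma(f, W_l g),\, h\bigr)_{L^2} = \iiint b(\cos\theta)\,\Phi_\gamma\, \sqrt{\mu_*}\, f'_* g'\, \bigl(W_l(v) - W_l(v')\bigr)\, h \,dv\,dv_*\,d\sigma =: I_l.
\end{equation*}
Thus the commutator is governed entirely by the pointwise oscillation of $W_l$ between pre- and post-collisional velocities, a quantity that vanishes as $\theta\to 0$ and therefore provides the cancellation needed against the singular factor $b(\cos\theta)$.

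The second ingredient is a pointwise bound on $W_l(v)-W_l(v')$ that trades a power of $|v-v'|\sim |v-v_*|\sin(\theta/2)$ against a weight of strictly lower order. For $l\geq 1$ the fundamental theorem of calculus applied on the segment from $v'$ to $v$, combined with the standard inequality $\langle v_\tau\rangle^\beta \lesssim \langle v\rangle^\beta\langle v_*\rangle^{|\beta|}$ already exploited in Sections~\ref{part1-Section3}--\ref{part1-Section4}, yields
\begin{equation*}
|W_l(v) - W_l(v')| \lesssim \bigl(\langle v\rangle^{l-1} + \langle v'\rangle^{l-1}\bigr)\,\langle v_*\rangle^{l-1}\, |v-v_*|\,\sin(\theta/2).
\end{equation*}
For $0<l<1$ the weight $W_l$ is only globally H\"older-$l$ continuous, and I use instead
\begin{equation*}
|W_l(v) - W_l(v')| \lesssim |v - v'|^l \lesssim |v-v_*|^l\sin^l(\theta/2).
\end{equation*}
In both cases the full power of $\sin(\theta/2)$ (respectively $\sin^l(\theta/2)$) can be passed through a Cauchy-Schwarz and remain paired with $b(\cos\theta)$, so that the resulting angular integral stays convergent; the surplus weight on $v$ that is \emph{not} used for the angular integral is exactly what accounts for the drop from $W_l$ to $W_{l-s}$ in the first case and from $W_l$ to $W_{l-sl}$ in the second.

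To extract the explicit bound I then apply the pre/post-collisional symmetry $(v,v_*,\sigma)\leftrightarrow (v',v'_*,k)$, which is involutive with unit Jacobian and preserves $b\,\Phi_\gamma$, to $I_l$ and average with the original. Using the elementary identity $\sqrt{\mu_*}f'_* g'h - \sqrt{\mu'_*}f_* g h' = \sqrt{\mu_*}f'_* g'(h-h') + \sqrt{\mu_*}(f'_* g'-f_* g)h' + (\sqrt{\mu_*}-\sqrt{\mu'_*})f_* g\,h'$, this gives the decomposition
\begin{align*}
2 I_l &= \iiint b\, \Phi_\gamma\, \bigl(W_l(v)-W_l(v')\bigr) \sqrt{\mu_*}\, f'_* g'\, (h-h') \,dv\,dv_*\,d\sigma\\
&\quad + \iiint b\, \Phi_\gamma\, \bigl(W_l(v)-W_l(v')\bigr) \sqrt{\mu_*}\,(f'_* g' - f_* g)\, h' \,dv\,dv_*\,d\sigma\\
&\quad + \iiint b\, \Phi_\gamma\, \bigl(W_l(v)-W_l(v')\bigr)\, (\sqrt{\mu_*}-\sqrt{\mu'_*})\, f_* g\, h' \,dv\,dv_*\,d\sigma.
\end{align*}
The first piece is handled by Cauchy-Schwarz, with the $(h-h')^2$ factor producing the $J_1$-part of $|||h|||^2_{\Phi_\gamma}$ and the remaining factor, after inserting the pointwise bound from the previous paragraph, reducing to a bilinear expression in $f$ and $W_{l-s}g$ (respectively $W_{l-sl}g$) which is estimated by Propositions~\ref{part2-prop5.1} and \ref{part2-prop5.2}. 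The second piece has the shape of a $\Gamma$-scalar product once $W_l-W_l'$ is absorbed as a multiplicative weight into the functional arguments, so it is bounded directly by Theorem~\ref{theorem-1.1-b}. The third piece is handled by the same technique as the analogous term in Lemma~\ref{part1-radja-upper-estimate-L-2}, exploiting the rapid decay of $(\sqrt{\mu_*}-\sqrt{\mu'_*})^2$ combined with the Gaussian $f_*$. Exchanging the roles of $f$ and $g$ via the involution $(v,v_*)\leftrightarrow (v_*,v)$ with $\sigma\to-\sigma$ produces the symmetric partners on the right-hand sides of \eqref{part2-comm-1}--\eqref{part2-comm-2}.

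I expect the main obstacle to lie in the angular bookkeeping of the Cauchy-Schwarz step, particularly for $0<l<1$: only $\sin^l(\theta/2)$ can be extracted from $|W_l-W_l'|$, so the way this single power is shared between the factor paired with $(h-h')^2$ (which must remain integrable against $b\,d\sigma$ to yield $|||h|||^2_{\Phi_\gamma}$) and the factor merged into the bilinear form in $f$ and $W_{l-sl}g$ is delicate and forces the exponent $l-sl$ rather than $l-s$. The secondary difficulty is distributing the $W_l$-weight through the symmetric pairings and the exchange of $f$ and $g$: each of the four pairs of terms on the right-hand sides corresponds to a distinct placement of the weight, and confirming that no placement produces an unbounded contribution makes essential use of the Maxwellian equivalence of non-isotropic norms (Proposition~\ref{part1-prop3}), which allows Gaussian factors to be shifted freely between $f$, $g$, and $h$.
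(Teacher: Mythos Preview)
Your starting integral representation is correct, and your three-piece decomposition via the pre/post-collisional involution is close in spirit to the paper's $A+B+C$ split. However, two substantive steps fail.

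First, for $0<l<1$ the H\"older bound $|W_l-W_l'|\lesssim|v-v'|^l$ is too weak: squared it contributes only $\sin^{2l}(\theta/2)$, and $\int_{\SS^2}b(\cos\theta)\,\theta^{2l}\,d\sigma$ diverges whenever $l\le s$. Since $W_l=\langle v\rangle^l$ is smooth with $|\nabla W_l|\lesssim\langle v\rangle^{l-1}\le 1$ for $l<1$, the Lipschitz bound $|W_l-W_l'|\lesssim\theta\,|v-v_*|$ is always available; the paper interpolates it against the trivial bound $|W_l-W_l'|^2\lesssim\langle v\rangle^{2l}+\langle v_*\rangle^{2l}$ to obtain
\[
\int_{\SS^2}b\,|W_l-W_l'|^2\,d\sigma\;\lesssim\;|v-v_*|^{2s}\bigl(\langle v\rangle^{2l(1-s)}+\langle v_*\rangle^{2l(1-s)}\bigr),
\]
and it is precisely this interpolation --- not H\"older regularity --- that produces the exponent $l-sl$.

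Second, in your Cauchy--Schwarz on the first piece you send all of $\sqrt{\mu_*}$ to the $(h-h')^2$ factor in order to recover $J_1$; the remaining factor $\iiint b\,\Phi_\gamma|W_l-W_l'|^2(f_*')^2(g')^2$ then carries the singular kinetic factor $|v-v_*|^\gamma$ with no Gaussian to tame it, and after angular integration you face $|v-v_*|^{\gamma+2s}$, which for $\gamma+2s<0$ cannot be controlled by products of weighted $L^2$ norms of $f$ and $g$. Propositions~\ref{part2-prop5.1}--\ref{part2-prop5.2} estimate trilinear forms $(\Gamma(\cdot,\cdot),\cdot)$, not this bilinear integral. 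The paper circumvents this by first invoking the algebraic decomposition of Lemma~\ref{part2-weight-radja-lemma-2.1}, which replaces $\sqrt{\mu_*}$ by $(\mu^c-\mu^c_*)^k\mu_*^a$ modulo terms carrying a Gaussian in $v$; the factor $(\mu^c-\mu^c_*)^k$ converts $\Phi_\gamma$ to $\tilde\Phi_\gamma$ and a Gaussian survives on the $(f,g)$-side after Cauchy--Schwarz and the involution, reducing the remainder exactly to the term $A_3$ already handled in the proof of Proposition~\ref{part2-prop5.2}. Two smaller issues confirm the gap: in your second piece $W_l-W_l'$ depends on both $v$ and $v'$ and cannot be ``absorbed as a multiplicative weight'' into single-variable arguments of $\Gamma$; and in your third piece $f_*$ is an arbitrary function, not a Gaussian, so the analogy with Lemma~\ref{part1-radja-upper-estimate-L-2} is unfounded.
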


\begin{rema}\label{part2-weight-radja-lemma-6.1+1} Assume that  $0< s<1$ and $\gamma >-3$. Then, for any $l\ge 0$, one has
\begin{align*}
 &\Big| \Big( \tilde{W}_l \Gamma (f,g) - \Gamma (f, \tilde{W}_lg),\,\,  h \Big)_{L^2}\Big| \\
 & \lesssim \Big( || f||_{L^2_{s+\gamma /2}} || \tilde{W}_{l} g ||_{L^2_{s+\gamma /2}} + || g||_{L^2_{s+\gamma /2}} || \tilde{W}_{l} f ||_{L^2_{s+\gamma /2}}\\
& + \min \lbrace || f||_{L^2} || \tilde{W}_{l} g ||_{L^2_{s+\gamma /2}} , || f||_{L^2_{s+\gamma /2}} || \tilde{W}_{l} g ||_{L^2} \rbrace  \\
&+ \min \lbrace || g||_{L^2} || \tilde{W}_{l} f ||_{L^2_{s+\gamma /2}} , || g||_{L^2_{s+\gamma /2}} || \tilde{W}_{l} f ||_{L^2} \rbrace \Big) ||| h|||_{\Phi_\gamma}\, ,
\end{align*}
where we use \eqref{part2-comm-1} if $l\, |\gamma/2+s|\geq 1$ and \eqref{part2-comm-2} if $l\, |\gamma/2+s|<1$.
\end{rema}

\begin{proof}
In view of the decomposition given for $\Gamma$, it is enough to consider \eqref{part2-weight-radja-2}
with
$$%\begin{equation*}%\label{part2-commut-radja-2}
\mu (v,v_*) =    (\mu^c - \mu^{c}_*)^2  \mu_*^{a},
$$%\end{equation*}
for some constants $a,c>0$.
Indeed, all the other terms have compensation by some Gaussian function so that
any algebraic weight is not a problem.
For this term, the commutator is then given by
$$
\Big| \Big( W_l \Gamma _\mu(f,g) - \Gamma_\mu (f, W_l g),\,\,  h \Big)_{L^2}\Big|  = \iiint b\, \Phi_\gamma (v-v_*) \mu (v,v_*) f'_* g'(W_l -W_l')  h dvdv_* d\sigma,
$$
which can be written as
\begin{align*}
&\iiint b\, \Phi_\gamma (v-v_*) \mu (v,v_*) f'_* g'(W_l -W_l')  h dvdv_* d\sigma\\
&= \iiint b\, \Phi_\gamma (v-v_*) \mu (v,v_*) f'_* g' (W_l -W_l') ( h- h') dvdv_* d\sigma\\
&+ \iiint b\, \Phi_\gamma (v-v_*) [\mu (v',v_*') - \mu (v,v_*)] f_* g(W_l' -W_l)  h dvdv_* d\sigma\\
&+ \iiint b\, \Phi_\gamma  (v-v_*) \mu (v,v_*) f_* g(W_l' -W_l)  h dvdv_* d\sigma\\
&=A + B+ C,
\end{align*}
by the usual change of variables. For $A$, we use the Cauchy-Schwarz inequality to get
\begin{align*}
A& = \iiint b\, \Phi_\gamma  (v-v_*) \mu (v,v_*) f'_* g' (W_l -W_l') [ h- h'] dvdv_* d\sigma \\
&\lesssim \Big( \iiint b\, \Phi_\gamma (v-v_*)  (\mu^c - \mu^{c}_*)^4  \mu_*^{a} |f'_*|^2 |g'|^2 |W_l -W_l'|^2  dvdv_* d\sigma \Big)^{1/2} ||| h|||_{\Phi_\gamma} \\
&\lesssim U^{1/2}||| h|||_{\Phi_\gamma},
\end{align*}
where
$$
U = \iiint b\,  <v-v_*>^\gamma \mu^{a '}_*|f_*|^2 |g|^2 |W_l' -W_l|^2  dvdv_* d\sigma .
$$
If $l\ge 1$, by using the Taylor's formula, we have
$$
| W_l'- W_l |^2 \lesssim \min \lbrace \theta^2 | v-v_*|^2, \, ( <v>+ <v_*>)^2 \rbrace\, ( <v_*>^{l-1} + <v>^{l-1})^2\, ,
$$
and then
$$
\int_{\SS^2} b \, | W_l'- W_l |^2 d\sigma \lesssim | v-v_*|^{2s} ( <v>^{2l- 2s}+ <v_*>^{2l-2s}).
$$
Then we note immediately that $U$ is similar to the term $A_3$ in the proof of Proposition \ref{part2-prop5.2}, because we have a Gaussian
 inside the definition of $U$. Taking into account the weights here gives
\begin{align*}
 U &\lesssim  || f||_{L^2_{s+\gamma /2}} || W_{l-s} g ||_{L^2_{s+\gamma /2}} + \min \lbrace || f||_{L^2} || W_{l-s} g ||_{L^2_{s+\gamma /2}} ,\,
 || f||_{L^2_{s+\gamma /2}} || W_{l-s} g ||_{L^2} \rbrace\\
&+ || g||_{L^2_{s+\gamma /2}} || W_{l-s} f ||_{L^2_{s+\gamma /2}} + \min \lbrace || g||_{L^2} || W_{l-s} f ||_{L^2_{s+\gamma /2}} ,\, || g||_{L^2_{s+\gamma /2}} || W_{l-s} f ||_{L^2} \rbrace .
\end{align*}
If $0 < l\le 1$, then we note that
$$
| W'_l  - W_l |^2\lesssim\, < v>^{2l} + <v_*>^{2l} \mbox{ and } | W'_l -W_l |^2 \lesssim \theta^2 | v-v_*|^2 ,
$$
so that
$$
| W'_l -W_l |^2 \lesssim \min \lbrace \theta^2 | v-v_*|^2 ,\, < v>^{2l} + <v_*>^{2l} \rbrace .
$$
Then we obtain
\begin{equation}\label{part2-aa2}
\int_{\SS^2} b\, | W_l'- W_l |^2 d\sigma \lesssim | v-v_*|^{2s} ( <v>^{2(1-s)l} + <v_*>^{2 (1-s)l}),
\end{equation}
and therefore the same argument gives
\begin{align*}
U &\lesssim  || f||_{L^2_{s+\gamma /2}} || W_{l-sl} g ||_{L^2_{s+\gamma /2}} + \min \lbrace || f||_{L^2} || W_{l-sl} g ||_{L^2_{s+\gamma /2}} ,\,
 || f||_{L^2_{s+\gamma /2}} || W_{l-sl} g ||_{L^2} \rbrace\\
&+ || g||_{L^2_{s+\gamma /2}} || W_{l-sl} f ||_{L^2_{s+\gamma /2}} + \min \lbrace || g||_{L^2} || W_{l-sl} f ||_{L^2_{s+\gamma /2}} ,\, || g||_{L^2_{s+\gamma /2}} || W_{l-sl} f ||_{L^2} \rbrace,
\end{align*}
which gives the final conclusion, for $\gamma >-3$. Terms $B$ and $C$ can be dealt with similarly so that we omit the
details for brevity. And this completes the proof of the proposition.
\end{proof}

%%%%%%%%%%%%%%%%%%%%%%%%%%%%%%%%%%%%%%%%%%%%%%%%%%%%%%%%%%%%%%%%%%%%%%
%%%%%%%%%%%%%%%%%%%%%%%%%%%%%%%%%%%%%%%%%%%%%%%%%%%%%%%%%%%%%%%%%%%%%%

\section{Functional estimates in full space }\label{part2-section3}
\setcounter{equation}{0}

In this section, we prove the estimations on the collision operators in some weighted function space of
variables $(x, v)\in\RR^6$. Together with the essential coercivity estimates proved in Section \ref{part1-section2}, we give
coercivity results for the linear operator in some weighted spaces. These tools are crucial for the proofs of the
existence results, both in the local and global cases. Recall the assumption $\gamma+2s\leq 0$.

Let $N\in\NN, l\in\RR$, we define the weighted function spaces
\begin{align*}
\cB^N_\ell(\RR^6)&=\Big\{g\in\cS'(\RR^6_{x, v})\, ;\,\,
\|g\|^2_{{\cB}^N_\ell(\RR^6)}=\sum_{|\alpha|+|\beta|\leq N}\int_{\RR^3_x}|||W_{\ell-|\beta|}\,
\partial^\alpha_{\beta} g(x,\, \cdot\,)|||^2_{\Phi_\gamma}dx<+\infty\,\Big\}\, ,\\
{\widetilde\cB\,}^N_\ell(\RR^6)&=\Big\{ g\in\cS'(\RR^6);\,\,
||g||^2_{{\widetilde\cB\,}^N_\ell(\RR^6)}
=\sum_{|\alpha|+|\beta|\leq N}\int_{\RR^3_x}|||\tilde W_{\ell-|\beta|}\,
\partial^\alpha_{\beta} g(x,\, \cdot\,)|||^2_{\Phi_\gamma}dx <+\infty\Big\}\, ,
\end{align*}
and also
$$
\cX^{N}(\RR^6)=\Big\{ g\in\cS'(\RR^6);\,\,\
||g||_{\cX^N(\RR^6)}^2=\sum_{|\alpha|\le N}
\int_{\RR^3_x}|||\p_x^\alpha g|||^2_{\Phi_\gamma}dx<+\infty\Big \}.
$$

\subsection{Estimations without weight}
First of all, one has
\begin{lemm}\label{part2-lemm2.6} For all $0<s<1$, $\gamma >-3$, and for any $\alpha, \beta\in  \NN^3$,
\begin{equation}\label{part2-4-2-2.8}
||\partial^{\alpha}_\beta\,\pP
g||_{\cX^0(\RR^6)}+||\pP (
\partial^{\alpha}_\beta\,\, g)||_{\cX^0(\RR^6)}
\leq C_{\beta}||\partial^{\alpha}_x
g||^2_{L^2(\RR^6)},
\end{equation}
\begin{equation}\label{part2-4-2-2.9}
\frac{\eta_0}{2}||g||^2_{\cX^0(\RR^6)}-C||g||^2_{L^2(\RR^6)} \leq \Big(\cL
g\, ,g\Big)_{L^2(\RR^6)}\leq  2\Big(\cL_1 g,\, g\Big)_{L^2(\RR^6)}
 \lesssim ||g||^2_{\cX^0(\RR^6)},
\end{equation}
and
\begin{equation}\label{part2-4-2-2.10}
||g||^2_{L^2_{s+\frac{\gamma}{2}}(\RR^6)} +||g||^2_{L^2(\RR^3_x; H^s_{\frac{\gamma}{2}}(\RR^3_v))} \lesssim
||g||^2_{\cX^0(\RR^6)}\lesssim ||g||^2_{L^2(\RR^3_x;
H^{s}_{s+\frac{\gamma}{2}}(\RR^3_v))}.
\end{equation}
\end{lemm}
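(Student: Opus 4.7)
The plan is to obtain the three displayed inequalities of Lemma~\ref{part2-lemm2.6} by lifting the pointwise--in--$x$ estimates from Section~\ref{part1-section2} (Propositions \ref{part1-prop2.1} and \ref{part1-prop2}, together with Lemma~\ref{part1-lemm2.1.1}) to the $x$--integrated functional setting $\cX^0(\RR^6)$, and then to handle the macroscopic projection $\pP$ by exploiting its explicit finite-dimensional structure.

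First I would dispatch \eqref{part2-4-2-2.10}, which is the most immediate. Since $\|g\|_{\cX^0(\RR^6)}^2 = \int_{\RR^3_x}|||g(x,\cdot)|||_{\Phi_\gamma}^2\,dx$, it suffices to integrate the pointwise inequality
\[
C_1\Bigl(\|g\|_{H^s_{\gamma/2}(\RR^3_v)}^2+\|g\|_{L^2_{s+\gamma/2}(\RR^3_v)}^2\Bigr) \leq |||g|||_{\Phi_\gamma}^2 \leq C_2\|g\|_{H^s_{s+\gamma/2}(\RR^3_v)}^2
\]
furnished by Proposition~\ref{part1-prop2}, in the $x$ variable, and to identify the resulting norms with the anisotropic $L^2(\RR^3_x;H^s_\cdot(\RR^3_v))$ norms.

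Next I would treat \eqref{part2-4-2-2.9}. The middle inequality $(\cL g,g)_{L^2(\RR^6)}\le 2(\cL_1 g,g)_{L^2(\RR^6)}$ is just Lemma~\ref{part1-lemm2.1.1} integrated in $x$. The right-most inequality $(\cL_1 g,g)_{L^2(\RR^6)}\lesssim\|g\|_{\cX^0(\RR^6)}^2$ follows by integrating the upper bound $(\cL_1 g,g)_{L^2(\RR^3_v)}\le (C_2/2)|||g|||_{\Phi_\gamma}^2$ from Proposition~\ref{part1-prop2.1}. For the left-most inequality, Proposition~\ref{part1-prop2.1} gives $C_1|||(\iI-\pP)g|||_{\Phi_\gamma}^2\le (\cL g,g)_{L^2(\RR^3_v)}$ pointwise in $x$. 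Using $|||g|||^2\le 2|||(\iI-\pP)g|||^2+2|||\pP g|||^2$, the task reduces to bounding $|||\pP g(x,\cdot)|||_{\Phi_\gamma}^2$ by $\|g(x,\cdot)\|_{L^2(\RR^3_v)}^2$. This is where the explicit form of $\pP$ enters: writing $\pP g(x,v)=\sum_{i=1}^{5}a_i(x)\,e_i(v)\sqrt{\mu(v)}$ with $e_i\in\{1,v_1,v_2,v_3,|v|^2\}$, each basis function $e_i\sqrt\mu$ lies in Schwartz class, so $|||e_i\sqrt\mu|||_{\Phi_\gamma}^2\le C$, and $|a_i(x)|\le C\|g(x,\cdot)\|_{L^2(\RR^3_v)}$ by Cauchy--Schwarz applied to $a_i(x)=(g(x,\cdot),e_i\sqrt\mu)_{L^2_v}/\|e_i\sqrt\mu\|_{L^2_v}^2$. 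Integrating and choosing $\eta_0 = C_1$ yields $\tfrac{\eta_0}{2}\|g\|_{\cX^0}^2-C\|g\|_{L^2}^2\le (\cL g,g)_{L^2(\RR^6)}$.

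Finally, for \eqref{part2-4-2-2.8}, I would use the same representation $\pP g=\sum_i a_i(x)\,e_i(v)\sqrt\mu$. For $\partial^\alpha_\beta\pP g$, commuting derivatives gives
\[
\partial^\alpha_\beta \pP g(x,v)=\sum_{i}\partial^\alpha_x a_i(x)\,\partial^\beta_v\!\bigl(e_i(v)\sqrt{\mu(v)}\bigr),
\]
so $|||\partial^\alpha_\beta \pP g(x,\cdot)|||_{\Phi_\gamma}^2\le C_\beta\sum_i|\partial^\alpha_x a_i(x)|^2$; but $\partial^\alpha_x a_i(x)$ is just the projection coefficient of $\partial^\alpha_x g(x,\cdot)$, so $|\partial^\alpha_x a_i(x)|\le C\|\partial^\alpha_x g(x,\cdot)\|_{L^2_v}$. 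For $\pP(\partial^\alpha_\beta g)$, the same scheme applies after integrating by parts in $v$ to move $\partial^\beta_v$ off $g$ onto the Schwartz function $e_i\sqrt\mu$, giving $|(\pP(\partial^\alpha_\beta g))_i(x)|\le C_\beta\|\partial^\alpha_x g(x,\cdot)\|_{L^2_v}$. Integration in $x$ completes both bounds.

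The only non-routine point is the lower bound in \eqref{part2-4-2-2.9}, where one must control the ``lost'' piece $|||\pP g|||_{\Phi_\gamma}^2$ by $\|g\|_{L^2}^2$ (absorbed into the $-C\|g\|^2_{L^2(\RR^6)}$ term); this works cleanly precisely because $\pP$ has a five-dimensional range of Schwartz-type functions, so every $v$-norm on that range is equivalent to $L^2_v$.
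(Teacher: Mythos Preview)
Your proof is correct and follows essentially the same approach as the paper: integrate the velocity-space estimates of Propositions~\ref{part1-prop2.1} and~\ref{part1-prop2} and Lemma~\ref{part1-lemm2.1.1} in $x$ for \eqref{part2-4-2-2.9} and \eqref{part2-4-2-2.10}, and use the explicit finite-dimensional form of $\pP g$ (with integration by parts in $v$ for the second term) to handle \eqref{part2-4-2-2.8}. The paper's argument is more terse but identical in substance; your identification of the key point---that $|||\pP g|||_{\Phi_\gamma}^2 \lesssim \|g\|_{L^2_v}^2$ because the range of $\pP$ consists of Schwartz-type functions---is exactly what drives both \eqref{part2-4-2-2.8} and the lower bound in \eqref{part2-4-2-2.9}.
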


\begin{proof}

{}From \cite{guo-1}, one has
$$
\pP g = \Big( a_g (t,x)  + v.b_g (t,x) + | v|^2 c_g (t,x) \Big) \mu^{1/2},
$$
where
$$
a_g (t,x) = \int_{\RR^3} \big(2 - {{| v|^2}\over 2} \big) g((t,x,v) \mu^{1/2} (v) dv,
$$
$$
b_g (t,x) =  \int_{\RR^3} g(t,x,v) v\mu^{1/2} (v) dv,
$$
and
$$
c_g (t,x) = \int_{\RR^3} \big( {{| v|^2}\over 6} - {1\over2} \big) g(t,x,v) \mu^{1/2} (v) dv .
$$
Thus, \eqref{part2-4-2-2.8} can be obtained by using integration by parts.
To get \eqref{part2-4-2-2.9}, we use the results from Section \ref{part1-section2} to obtain
\begin{align*}
||g||^2_{\cX^0(\RR^6)}&\gtrsim \Big(\cL
g,\, g\Big)_{L^2(\RR^6)}
\geq \eta_0||(\iI-\pP) g||^2_{\cX^0(\RR^6)}\\
&\geq \frac{\eta_0}{2}||g||^2_{\cX^0(\RR^6)}-
C||\pP g||^2_{\cX^0(\RR^6)} \geq \frac{\eta_0}{2}||g||^2_{\cX^0(\RR^6)}
-C||g||^2_{L^2(\RR^6)}.
\end{align*}
Finally, \eqref{part2-4-2-2.10} follows directly {}from Section \ref{part1-section2}.
\end{proof}

The following Lemma is an application of the Sobolev embedding for functions with values in a Hilbert space.

\begin{lemm}\label{part2-lemm2.7}
\begin{equation*}
\sup_{x\in\RR^3}\||f(x,\,\cdot\,)\||_{\Phi_\gamma}\lesssim
\|f\|_{\cX^{\,2}(\RR^6)}.
\end{equation*}
\end{lemm}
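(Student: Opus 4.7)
The statement is the vector-valued Sobolev embedding $H^2_x \hookrightarrow L^\infty_x$ for functions taking values in the (semi-)Hilbert space defined by $\||\cdot\||_{\Phi_\gamma}$, specialized to $\dim = 3$. My plan is to avoid developing Bochner-space theory by applying the scalar Sobolev embedding pointwise in the velocity variables and then swapping the order of integration against the non-isotropic kernel.

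More concretely, recall that in $\RR^3_x$ the inequality $H^2(\RR^3_x)\hookrightarrow L^\infty(\RR^3_x)$ gives, for any scalar function $F\in H^2(\RR^3_x)$ and every $x\in\RR^3$,
\[
|F(x)|^2 \lesssim \sum_{|\alpha|\le 2}\int_{\RR^3_y}|\partial^\alpha_y F(y)|^2\,dy.
\]
First, I would split the non-isotropic norm as $\||g\||_{\Phi_\gamma}^2 = J_1 + J_2$ according to \eqref{part1-2.2.1}. Fixing $x\in\RR^3$ and $(v,v_*,\sigma)$, I apply the embedding to the two scalar functions $F_1(x)=f(x,v')-f(x,v)$ and $F_2(x)=f(x,v_*)$, which yields
\[
|f(x,v')-f(x,v)|^2 \lesssim \sum_{|\alpha|\le 2}\int_{\RR^3_y}\big|\partial^\alpha_y f(y,v')-\partial^\alpha_y f(y,v)\big|^2\,dy
\]
and an analogous bound for $|f(x,v_*)|^2$. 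Multiplying the first by the nonnegative weight $\Phi_\gamma(|v-v_*|)b(\cos\theta)\mu_*$, the second by $\Phi_\gamma(|v-v_*|)b(\cos\theta)(\sqrt{\mu'}-\sqrt{\mu})^2$, integrating over $\RR^3_v\times\RR^3_{v_*}\times\SS^2_\sigma$, and using Tonelli to exchange this integration with the $y$-integration, I obtain
\[
\||f(x,\,\cdot\,)\||_{\Phi_\gamma}^2 \lesssim \sum_{|\alpha|\le 2}\int_{\RR^3_y}\||\partial^\alpha_y f(y,\,\cdot\,)\||_{\Phi_\gamma}^2\,dy = \|f\|_{\cX^2(\RR^6)}^2.
\]
The right-hand side is independent of $x$, so taking the supremum over $x$ finishes the proof.

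There is really no substantive obstacle: the only things to check are the nonnegativity of the kernels (which makes Tonelli legitimate) and the measurability needed to apply the embedding slice by slice in $v,v_*,\sigma$, both of which are automatic for the ``suitable functions'' (e.g.\ Schwartz class) for which the norms are finite. The general case then follows by density.
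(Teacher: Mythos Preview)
Your proof is correct and follows essentially the same approach as the paper: apply the scalar Sobolev embedding $H^2(\RR^3_x)\hookrightarrow L^\infty(\RR^3_x)$ pointwise in $(v,v_*,\sigma)$ to the differences $f(x,v')-f(x,v)$ and to $f(x,v_*)$, then integrate against the nonnegative kernel and swap the order of integration by Tonelli. The paper's version differs only cosmetically, first pulling the supremum in $x$ inside the $(v,v_*,\sigma)$-integral before invoking the embedding.
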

\begin{proof}
It follows from the definition that
\begin{align*}
\Big(\sup_{x\in\RR^3}\||f(x,\,\cdot\,)\||_{\Phi_\gamma}\Big)^2 & \leq
 \iiint B\, \mu_*\, \Big(\sup_{x\in \RR^3}
\big(f(x,v')-f(x,v)\,\big)^2 \Big)dv dv_* d\sigma \,\\
 &+\iiint B\,
\Big(\sup_{x\in\RR^3} f(x,v_*)^2 \Big)\big(\sqrt{\mu'}\,\, - \sqrt{\mu}\,\,
\big)^2dv dv_* d\sigma  \\
&\lesssim
\iiint B\, \mu_*\, \Big(\sum_{|\alpha| \leq 2} \int
\big(\pa_x^\alpha f(x,v')-\pa_x^\alpha f(x,v)\,\big)^2 dx \Big)dv dv_* d\sigma \,\\
 &+\iiint B\,
\Big(\sum_{|\alpha| \leq 2} \int  f(x,v_*)^2 dx \Big)\big(\sqrt{\mu'}\,\, - \sqrt{\mu}\,\,
\big)^2dv dv_* d\sigma  \\
&\lesssim  \sum_{|\alpha| \leq 2} \int|||\pa_x^\alpha f(x,\cdot)|||^2_{\Phi_\gamma}dx\,.
\end{align*}
\end{proof}

\begin{prop}\label{part2-prop2.8}
Under the assumption
of Theorem \ref{part2-theo1.1}, for any $ N\geq 3$, we have, for all $\alpha\in\NN^3, |\alpha|\leq N$,
\begin{align*}
\left|\Big(\partial^\alpha_{x} \Gamma(f,\, g\,),\,\,
h\Big)_{L^2(\RR^6)}\right|&\lesssim \Big\{ ||f||_{H^N(\RR^3_x; L^2(\RR^3_v))}\,\,
|| g||_{\cX^{N}(\RR^6)} \\
&+||f||_{\cX^N(\RR^6)}\,\,
|| g||_{H^N(\RR^3_x; L^2(\RR^3_v))}\Big\}\,\, || h||_{\cX^0(\RR^6)}.
\end{align*}
\end{prop}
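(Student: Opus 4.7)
The strategy is to apply Leibniz in $x$, invoke Theorem \ref{theorem-1.1-b} fibrewise in $x$, and then combine H\"older in $x$ with a standard Sobolev embedding. Since $\Gamma$ differentiates only in $v$,
$$
\partial^\alpha_x \Gamma(f,g) = \sum_{\alpha_1+\alpha_2=\alpha} \binom{\alpha}{\alpha_1}\, \Gamma\big(\partial^{\alpha_1}_x f,\, \partial^{\alpha_2}_x g\big),
$$
so it suffices to bound each inner product $\big(\Gamma(\partial^{\alpha_1}_x f, \partial^{\alpha_2}_x g),\, h\big)_{L^2(\RR^6)}$ by the right-hand side of the proposition.

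For fixed $x$, Theorem \ref{theorem-1.1-b} applied to the three functions $\partial^{\alpha_1}_x f(x,\cdot)$, $\partial^{\alpha_2}_x g(x,\cdot)$, $h(x,\cdot)$ produces the factor $|||h(x,\cdot)|||_{\Phi_\gamma}$ together with a sum of products of velocity norms of $\partial^{\alpha_i}_x f$ and $\partial^{\alpha_i}_x g$ of the form $L^2_v$, $L^2_{v,\,s+\gamma/2}$, or $|||\cdot|||_{\Phi_\gamma}$. Integration in $x$ followed by Cauchy--Schwarz on the $h$-factor extracts $\|h\|_{\cX^0(\RR^6)}$, and reduces the task to estimating $L^2_x$-norms of such products.

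The key step is a standard high/low splitting. Since $N\geq 3$ and $|\alpha_1|+|\alpha_2|\leq N$, at least one of the two indices---say $|\alpha_1|$---satisfies $|\alpha_1|\leq [N/2]\leq N-2$. By H\"older in $x$, we place $L^\infty_x$ on the $\partial^{\alpha_1}_x$-factor and $L^2_x$ on the $\partial^{\alpha_2}_x$-factor. For any Hilbert space $Y$, the vector-valued Sobolev embedding $H^2(\RR^3_x;Y)\hookrightarrow L^\infty(\RR^3_x;Y)$ controls the resulting $L^\infty_x$-norm by a norm involving at most $|\alpha_1|+2\leq N$ derivatives in $x$; concretely, Lemma \ref{part2-lemm2.7} handles the case $Y=|||\cdot|||_{\Phi_\gamma}$, and the same argument handles $Y=L^2_v$ or $Y=L^2_{v,\,s+\gamma/2}$ via \eqref{part2-4-2-2.10}. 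Finally, the soft potential hypothesis $\gamma+2s\leq 0$ gives $s+\gamma/2\leq 0$, so $\|\cdot\|_{L^2_{v,\,s+\gamma/2}} \leq \|\cdot\|_{L^2_v}$; this lets us downgrade the weighted velocity norm on the ``non-coercive'' factor to an unweighted one.

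The main bookkeeping obstacle is to select, for each Leibniz piece and each of the three summands in Theorem \ref{theorem-1.1-b}, the placement of $L^\infty_x$ so that the final bound is the asymmetric mixed product $\|f\|_{H^N(\RR^3_x;L^2)}\|g\|_{\cX^N} + \|f\|_{\cX^N}\|g\|_{H^N(\RR^3_x;L^2)}$ rather than the stronger $\|f\|_{\cX^N}\|g\|_{\cX^N}$. For the first two summands this is achieved by putting $L^\infty_x$ on the factor carrying $|||\cdot|||_{\Phi_\gamma}$ (controlled by $\cX^N$) and $L^2_x$ on the $L^2_{v,\,s+\gamma/2}$-factor (downgraded to $L^2_v$ using $s+\gamma/2\leq 0$, hence controlled by $H^N(\RR^3_x;L^2)$); the min-term is handled by choosing the pairing whose weighted factor sits on the side that can absorb $L^\infty_x$. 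The soft potential inequality is precisely what makes this matching possible; once it is used, everything else is routine.
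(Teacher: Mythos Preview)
Your overall strategy---Leibniz in $x$, Theorem \ref{theorem-1.1-b} fibrewise, Cauchy--Schwarz to extract $\|h\|_{\cX^0}$, then a high/low split using Lemma \ref{part2-lemm2.7} and $H^2(\RR^3_x)\hookrightarrow L^\infty(\RR^3_x)$---is exactly the paper's. The gap is in your last paragraph's bookkeeping. The rule ``put $L^\infty_x$ on the factor carrying $|||\cdot|||_{\Phi_\gamma}$'' fails when that factor carries the high derivatives: in the summand $\|\partial^{\alpha_1}_x f\|_{L^2_{s+\gamma/2}}\,|||\partial^{\alpha_2}_x g|||$ with $|\alpha_2|\ge N-1$, Lemma \ref{part2-lemm2.7} would then demand $|\alpha_2|+2>N$ derivatives of $g$. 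Likewise, your prescription for the $\min$-term (weighted factor on the $L^\infty_x$ side) yields, after downgrading, only $\|f\|_{H^N(\RR^3_x;L^2)}\|g\|_{H^N(\RR^3_x;L^2)}$, which is \emph{not} dominated by the asymmetric target because $\|\cdot\|_{L^2(\RR^3_v)}\not\lesssim|||\cdot|||$ in the soft potential regime.

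The correct rule is the one you stated in your third paragraph and then abandoned: always place $L^\infty_x$ on the factor with at most $N-2$ derivatives, whatever velocity norm it carries. If that low factor has norm $L^2_{s+\gamma/2}$, downgrade (using $s+\gamma/2\le0$) to land in $H^N(\RR^3_x;L^2)$; if it has $|||\cdot|||$, Lemma \ref{part2-lemm2.7} lands it in $\cX^N$. The high-derivative factor, taken in $L^2_x$, automatically carries the complementary norm and lands in the other space. For the $\min$-term, choose the option with unweighted $L^2$ on the low side and $L^2_{s+\gamma/2}$ on the high side; the latter is $\lesssim|||\cdot|||$ by Proposition \ref{part1-prop2}, hence controlled by $\cX^N$. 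This is precisely what the paper does (splitting according to $|\alpha_1|\le N-2$ versus $|\alpha_1|\ge N-1$).
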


\begin{proof} Firstly, if $ |\alpha_1|\leq N-2$, we get from Theorem \ref{theorem-1.1-b},
Lemma \ref{part2-lemm2.7}, and usual Sobolev embedding, replacing the "min" term by the corresponding terms without the weights that
\begin{align*}
& \left|\Big(\Gamma(\partial^{\alpha_1} f,\, \partial^{\alpha_2} g),\,\, h\Big)_{L^2(\RR^6)}\right|
\\
&\lesssim \left(\int_{\RR^3_x}\Big( \|\partial^{\alpha_1}
f\|^2_{L^2}||| \partial^{\alpha_2} g|||^2_{\Phi_\gamma}
+\||\partial^{\alpha_1} f\||^2_{\Phi_\gamma}\|\partial^{\alpha_2} g||^2_{L^2}
{ + ||\partial^{\alpha_1} f||_{L^2_{}}^2 || \partial^{\alpha_2} g ||^2_{L^2} }\Big)dx
\right)^{1/2}||h||_{\cX^{\,0}(\RR^6)} \\
&\lesssim \Big(\|f\|_{H^{|\alpha_1|+3/2+\epsilon}(\RR^3_x; L^2(\RR^3_v))}
||g||_{\cX^{|\alpha_2|}(\RR^6)}
+||f||_{\cX^{|\alpha_1|+2}(\RR^6)}||g||_{H^{|\alpha_2|}(\RR^3_x; L^2(\RR^3_v))}\Big)
||h||_{\cX^0(\RR^6)}\, .
\end{align*}
If $|\alpha_1|=N-1, N$, then $|\alpha_2|+2\leq N$, we get in a similar way, again from Theorem \ref{theorem-1.1-b} that
\begin{align*}
& \left|\Big(\Gamma(\partial^{\alpha_1} f,\, \partial^{\alpha_2} g),\,\, h\Big)_{L^2(\RR^6)}\right|\\
&\lesssim \left(\int_{\RR^3_x}\Big( \|\partial^{\alpha_1}
f\|^2_{L^2}||| \partial^{\alpha_2} g|||^2_{\Phi_\gamma}
+\||\partial^{\alpha_1} f\||^2_{\Phi_\gamma}\|\partial^{\alpha_2} g||^2_{L^2}
{ + ||\partial^{\alpha_1} f||_{L^2}^2 || \partial^{\alpha_2} g ||^2_{L^2} }\Big)dx
\right)^{1/2}||h||_{\cX^{\,0}(\RR^6)} \\
&\lesssim \Big(\|f\|_{H^{|\alpha_1|}(\RR^3_x; L^2(\RR^3_v))}
||g||_{\cX^{|\alpha_2|+2}(\RR^6)}
+||f||_{\cX^{|\alpha_1|}(\RR^6)}||g||_{H^{|\alpha_2|+3/2+\epsilon}(\RR^3_x; L^2(\RR^3_v))}\Big)
||h||_{\cX^0(\RR^6)}\, .
\end{align*}
The proof of the proposition is then completed by recalling the Leibniz formula
$$
\partial_x^\alpha \Gamma (f,g) = \sum_{\alpha_1 +\alpha_2 =\alpha} C_{\alpha_1 , \alpha_2} \Gamma(\partial_x^{\alpha_1} f,\,\,
\partial_x^{\alpha_2} g)\,.
$$
\end{proof}
\begin{rema}
%\label{part2-rema2.9}
The above proof shows that, for $|\alpha_1|<N$,
\begin{equation*}
%\label{part2-4-2-2.13}
\left|\Big(\Gamma(\partial^{\alpha_1} f,\, \partial^{\alpha_2}
g\,)
,\,\, h\Big)_{L^2(\RR^6)}\right|
\lesssim\| f\|_{H^{N}(\RR^3_x; L^2(\RR^3_v))} \,||h||_{\cX^0(\RR^6)} \Big(||g||_{\cX^N(\RR^6)}
+||g||_{H^{N}(\RR^3_x; L^2(\RR^3_v))}\Big)\, .
\end{equation*}
\end{rema}

Finally, the estimate on
the linear operator $\cL_2$ can be  given as follows, which in fact can be  deduced from Section \ref{part1-section2}.
\begin{prop}\label{part2-prop2.10} For all $0< s<1$, $\gamma >-3$ and any $\alpha\in\NN^3$, we have
\begin{equation*}
\left|\Big(\partial^\alpha_{x} \cL_2(f),\,\,
h\Big)_{L^2(\RR^6)}\right|\leq C_{|\alpha|} ||f||_{H^{|\alpha|}(\RR^3_x; L^2(\RR^3_v))}\,\, ||\mu^{1/10^3}
h||_{L^2(\RR^6)}\, .
\end{equation*}
\end{prop}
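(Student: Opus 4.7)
The plan is to reduce this statement to the velocity-only estimate already established in Lemma \ref{part1-radja-upper-estimate-L-2}. The crucial observation is that the linearized operator $\cL_2$ acts only in the velocity variable: since $\cL_2 g = -\Gamma(g,\sqrt\mu)$ and $\Gamma$ is defined through integration in $v,v_*,\sigma$ with the Maxwellian $\sqrt\mu$ in $v$, the $x$-derivatives commute cleanly with $\cL_2$, so that
\[
\partial_x^\alpha \cL_2(f) = \cL_2(\partial_x^\alpha f).
\]
No Leibniz-type commutator terms arise, which is what makes this estimate much simpler than its nonlinear counterparts.

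Next I would write the scalar product on $\RR^6$ as an iterated integral, applying at each fixed $x$ the pointwise-in-$x$ velocity estimate from Lemma \ref{part1-radja-upper-estimate-L-2}:
\[
\bigl|\bigl(\cL_2(\partial_x^\alpha f)(x,\cdot),\, h(x,\cdot)\bigr)_{L^2(\RR^3_v)}\bigr|
\lesssim \|\mu^{1/10^3}\partial_x^\alpha f(x,\cdot)\|_{L^2(\RR^3_v)} \,
\|\mu^{1/10^3} h(x,\cdot)\|_{L^2(\RR^3_v)}.
\]
Integrating in $x$ and applying the Cauchy--Schwarz inequality in $x$ yields
\[
\bigl|\bigl(\partial_x^\alpha \cL_2 f,\, h\bigr)_{L^2(\RR^6)}\bigr|
\lesssim \|\mu^{1/10^3}\partial_x^\alpha f\|_{L^2(\RR^6)} \,
\|\mu^{1/10^3} h\|_{L^2(\RR^6)}.
\]

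Finally, since $\mu^{1/10^3}(v) \le 1$, the first factor is bounded by $\|\partial_x^\alpha f\|_{L^2(\RR^6)}$, which is controlled by $\|f\|_{H^{|\alpha|}(\RR^3_x; L^2(\RR^3_v))}$. This gives the claimed inequality with a constant $C_{|\alpha|}$ depending only on the multi-index (coming from the combinatorial constant implicit in Lemma \ref{part1-radja-upper-estimate-L-2}). There is no real obstacle here: the entire argument is the commutation of $\partial_x^\alpha$ with $\cL_2$, followed by a single pointwise application of the previously proved velocity-variable bound. Thus Proposition \ref{part2-prop2.10} is essentially a corollary of Lemma \ref{part1-radja-upper-estimate-L-2}.
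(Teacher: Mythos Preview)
Your proposal is correct and matches the paper's approach: the paper does not give a detailed proof of this proposition, stating only that it ``can be deduced from Section \ref{part1-section2}'', i.e., from Lemma \ref{part1-radja-upper-estimate-L-2}. Your argument---commuting $\partial_x^\alpha$ through the velocity-only operator $\cL_2$, applying Lemma \ref{part1-radja-upper-estimate-L-2} pointwise in $x$, then Cauchy--Schwarz in $x$ and dropping the harmless weight $\mu^{1/10^3}\le 1$ on $f$---is exactly the deduction the paper has in mind.
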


\subsection{Estimation with weight}
We now prove the following upper bound with weights.
\begin{prop}\label{part2-prop5.5}
For all $0<s <1$,  $\gamma >-3$, and for any $N\geq 6, \ell\geq N$, $|\alpha|+|\beta|\leq N$,
we have
\begin{align*}
&\big|\big(W_{\ell-|\beta|}\partial^\alpha_\beta\,\Gamma(f,\, g),\, h\big)_{L^2(\RR^6)}\big|\lesssim ||h||_{\cB^0_0(\RR^6)}\Big(\|f\|_{ \cH^N_\ell(\RR^6)}\|g\|_{ \cH^N_\ell(\RR^6)}
\\
&\qquad\qquad\qquad+\|f\|_{\cH^N_\ell(\RR^6)}\|g\|_{{\cB\,}^N_\ell(\RR^6)}
+\|g\|_{{ \cH\,}^N_\ell(\RR^6)}\|f\|_{{ \cB\,}^N_\ell(\RR^6)}
\Big).\nonumber
\end{align*}
\end{prop}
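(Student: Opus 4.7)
The plan is to combine three ingredients already established in the paper: a Leibniz-type decomposition for $\partial^\alpha_\beta \Gamma(f,g)$, the commutator estimates with weights (Proposition \ref{part2-weight-radja-lemma-6.1} and Remark \ref{part2-weight-radja-lemma-6.1+1}), and the central nonlinear upper bound of Theorem \ref{theorem-1.1-b}. First I would write
\[
\partial^\alpha_\beta \Gamma(f,g)=\sum_{\alpha_1+\alpha_2=\alpha,\ \beta_1+\beta_2\le \beta} C_{\alpha,\beta}^{\alpha_1,\beta_1}\,\Gamma_i\bigl(\partial^{\alpha_1}_{\beta_1} f,\ \partial^{\alpha_2}_{\beta_2} g\bigr),
\]
where the $\Gamma_i$ arise from differentiating the $\sqrt\mu$ factors inside $\Gamma$ and enjoy exactly the same upper bound structure as $\Gamma$ itself (the extra Gaussian factors are absorbed harmlessly into the weighted $L^2$ norms). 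This reduces everything to a finite sum of pieces of the form $\bigl(W_{\ell-|\beta|}\Gamma_i(F,G),h\bigr)_{L^2(\RR^6)}$ with $F=\partial^{\alpha_1}_{\beta_1}f$, $G=\partial^{\alpha_2}_{\beta_2}g$.

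Next I would push the weight $W_{\ell-|\beta|}$ inside the second argument via
\[
W_{\ell-|\beta|}\,\Gamma_i(F,G)=\Gamma_i(F,\,W_{\ell-|\beta|}G)+\bigl[W_{\ell-|\beta|},\Gamma_i(F,\cdot)\bigr]G,
\]
and estimate the commutator by Proposition \ref{part2-weight-radja-lemma-6.1} (splitting according to whether $\ell-|\beta|\ge 1$ or $<1$, as in Remark \ref{part2-weight-radja-lemma-6.1+1}). The key accounting point is that the commutator outputs factors weighted by $W_{\ell-|\beta|-s}$ or $W_{\ell-|\beta|-sl}$, both of which are dominated by $W_{\ell-|\beta_2|}$ (since $|\beta_2|\le|\beta|$), so the commutator contribution fits inside the $\cH^N_\ell$ norm. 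To the remaining "main" piece $\Gamma_i(F,W_{\ell-|\beta|}G)$ (and the symmetric rewriting $\Gamma_i(W_{\ell-|\beta|}F,G)$ when we want the weight on the first slot) I would apply Theorem \ref{theorem-1.1-b} in the $v$ variable, producing three types of bilinear contributions against $|||h(x,\cdot)|||_{\Phi_\gamma}$.

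Integration in $x$ is then handled by Cauchy--Schwarz, with the exponent $|||h|||_{\Phi_\gamma}$ integrated in $x$ contributing exactly $\|h\|_{\cB^0_0(\RR^6)}$. To close the $x$-integrated estimate, I would invoke the Sobolev embedding $H^2(\RR^3_x)\hookrightarrow L^\infty(\RR^3_x)$: since $N\ge 6$, in each summand at least one of $|\alpha_1|+|\beta_1|$ and $|\alpha_2|+|\beta_2|$ is $\le N-2$, so the lower-order factor is placed in $L^\infty_x$ (controlled by $\cH^N_\ell$), while the higher-order factor is kept in $L^2_x$ with the non-isotropic norm in $v$ (controlled by $\cB^N_\ell$) or in the weighted $L^2_v$ norm (controlled by $\cH^N_\ell$). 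The "min" term in Theorem \ref{theorem-1.1-b} gives the flexibility to always place the weight $W_{\ell-|\beta|}$ on whichever factor has the higher $(\alpha,\beta)$-order.

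The main obstacle is the bookkeeping of weights: one must verify that every term produced after the Leibniz decomposition, the commutator expansion, and the dual pairing fits into the inequality's right-hand side with the correct weight $W_{\ell-|\beta_i|}$ on the derivative of order $|\beta_i|$. The hypothesis $\ell\ge N$ is precisely what guarantees that the loss of at most $s$ (or $sl$) in the commutator, combined with the redistribution of $|\beta|=|\beta_1|+|\beta_2|$, never exceeds the weight available in $\cH^N_\ell$ or $\cB^N_\ell$. Once this check is done systematically over the finite list of cases generated above, the three products on the right-hand side of the claimed estimate appear naturally from the three types of terms produced by Theorem \ref{theorem-1.1-b} (the two "symmetric" terms and the "min" term).
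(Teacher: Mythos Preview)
Your overall architecture (Leibniz decomposition, commutator with the weight on the second slot, then an upper bound on the bilinear operator, closed by Sobolev embedding in $x$) is exactly the framework the paper uses. The gap is in the choice of bilinear upper bound. You invoke Theorem \ref{theorem-1.1-b}, but that theorem carries the extra hypothesis $\gamma>\max\{-3,-\frac32-2s\}$, whereas Proposition \ref{part2-prop5.5} is stated (and, more importantly, applied in Theorem \ref{part2-theorem-anygamma}) for the full range $\gamma>-3$. For $s<3/4$ the interval $-3<\gamma\le -\frac32-2s$ is nonempty and your argument does not cover it.

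The paper avoids this restriction by not using Theorem \ref{theorem-1.1-b} on the main piece at all: after the commutator step it splits according to the size of $|\alpha_1|+|\beta_1|$ and applies Proposition \ref{part2-prop5.1} when few derivatives fall on $f$ (the $L^\infty_v$, $\|\nabla f\|_{L^\infty_v}$ bound) and Proposition \ref{part2-prop5.2} when many do. Both of these hold for every $\gamma>-3$. The price is that Proposition \ref{part2-prop5.2} produces ``bad'' terms of the form $\|\mu^{1/40}f\|_{L^2}\,\|\mu^{1/60}g\|_{H^{\max(-\gamma/2,1)}}$ and $|||\mu^{1/40}f|||\,\|\mu^{1/40}g\|_{L^\infty}$, with positive Sobolev index on $g$; these are absorbed only because in those cases $|\alpha_2|+|\beta_2|$ is small enough that the extra $v$-derivatives (and $L^\infty$ embedding) fit inside $\cH^N_\ell$ thanks to the Gaussian factor and $N\ge 6$. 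This is why the paper needs four cases for the main term rather than the single dichotomy you describe. A secondary point: your ``symmetric rewriting $\Gamma_i(W_{\ell-|\beta|}F,G)$'' is neither available (the commutator estimate of Proposition \ref{part2-weight-radja-lemma-6.1} is only stated for the second slot) nor needed --- the paper always keeps the weight on the second argument.
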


\begin{proof} By using Leibniz formula, we have
\begin{align*}
&(W_{\ell-|\beta|}\partial^\alpha_\beta\,\Gamma(f,\, g),\, h)=
\sum C^{\alpha_1, \beta_1}_{\alpha_2, \beta_2,\beta_3}
(W_{\ell-|\beta|}\,\cT(\partial^{\alpha_1}_{\beta_1} f,\, \partial^{\alpha_2}_{\beta_2} g, \mu_{\beta_3}),\, h)\\
&=\sum C^{\alpha_1, \beta_1}_{\alpha_2, \beta_2,\beta_3}
(W_{\ell-|\beta|}\,\cT(\partial^{\alpha_1}_{\beta_1} f,\, \partial^{\alpha_2}_{\beta_2} g, \mu_{\beta_3})-\cT(\partial^{\alpha_1}_{\beta_1} f,\, W_{\ell-|\beta|}\,\partial^{\alpha_2}_{\beta_2} g, \mu_{\beta_3}),\, h)\\
&\qquad\qquad\qquad\qquad+\sum C^{\alpha_1, \beta_1}_{\alpha_2, \beta_2,\beta_3}
(\cT(\partial^{\alpha_1}_{\beta_1} f,\, W_{\ell-|\beta|}\,\partial^{\alpha_2}_{\beta_2} g, \mu_{\beta_3}),\, h)\,.
\end{align*}
Note that $\cT$ shares the same upper bound properties as $\Gamma$ given in
the previous propositions. In fact, by using Proposition \ref{part2-weight-radja-lemma-6.1}
\begin{align*}
A=&\big|(W_{\ell-|\beta|}\,\cT(\partial^{\alpha_1}_{\beta_1} f,\,
\partial^{\alpha_2}_{\beta_2} g, \mu_{\beta_3})-\cT(\partial^{\alpha_1}_{\beta_1} f,\,
W_{\ell-|\beta|}\,\partial^{\alpha_2}_{\beta_2} g, \mu_{\beta_3}),\, h)_{L^2(\RR^6)}\big|\\
&\lesssim ||h||_{\cB^0_0(\RR^6)}\Big\{\int_{\RR^3}
\Big( \|W_{\ell-|\beta|+\gamma/2}\partial^{\alpha_1}_{\beta_1} f(x, \cdot\,)\|^2_{L^2(\RR^3_v)} \|\partial^{\alpha_2}_{\beta_2}g(x, \cdot\,)\|^2_{L^2_{\gamma/2+s}(\RR^3_v)} \\
&+ \|\partial^{\alpha_1}_{\beta_1}f(x, \cdot\,)\|^2_{L^2_{\gamma/2+s}(\RR^3_v)} \|W_{\ell-|\beta|+\gamma/2} \partial^{\alpha_2}_{\beta_2}g(x, \cdot\,)\|^2_{L^2(\RR^3_v)} \\
& + \min\Big\{\|\partial^{\alpha_1}_{\beta_1} f(x, \cdot\,)\|^2_{L^2(\RR^3_v)} \|W_{l-|\beta|+\gamma/2}\partial^{\alpha_2}_{\beta_2}g(x, \cdot\,)\|^2_{L^{2}(\RR^3_v)}\, ,\\
&\quad\|\partial^{\alpha_1}_{\beta_1} f(x, \cdot\,)\|^2_{L^2_{\gamma/2+s}(\RR^3_v)} \|W_{l-s}\partial^{\alpha_2}_{\beta_2}g(x, \cdot\,)\|^2_{L^2(\RR^3_v)}\Big\}\\
& + \min\Big\{\|\partial^{\alpha_2}_{\beta_2}g(x, \cdot\,)\|^2_{L^2(\RR^3_v)}\|W_{l-|\beta|+\gamma/2}\partial^{\alpha_1}_{\beta_1} f(x, \cdot\,)\|^2_{L^2(\RR^3_v)},\,\\
&\quad\|\partial^{\alpha_2}_{\beta_2}g(x, \cdot\,)\|^2_{L^2_{\gamma/2+s}(\RR^3_v)}
\|W_{l-|\beta|-s}\partial^{\alpha_1}_{\beta_1} f(x, \cdot\,)\|^2_{L^2(\RR^3_v)} \Big\}
\Big)dx\Big\}^{1/2}.
\end{align*}
For this, we divide the discussion into two cases.

{\bf Case 1: $|\alpha_1|+|\beta_1|\leq N-2$}. We have, by using $H^2(\RR^3_x)\subset L^\infty(\RR^3_x)$,
 and $\gamma+2s\leq 0$ that
\begin{align*}
A\lesssim &||h||_{\cB^0_0(\RR^6)}\Big( \|W_{\ell-|\beta|+\gamma/2}\nabla^{2}_{x} \partial^{\alpha_1}_{\beta_1} f\|_{L^2(\RR^6)} \|\partial^{\alpha_2}_{\beta_2}g\|_{L^2(\RR^6)} \\
&+ \|\nabla^2_x \partial^{\alpha_1}_{\beta_1}f\|_{L^2(\RR^6)} \|W_{\ell-|\beta|+\gamma/2} \partial^{\alpha_2}_{\beta_2}g\|_{L^2(\RR^6)}
\Big)\\
&\lesssim ||h||_{\cB^0_0(\RR^6)}\|f\|_{ \cH^N_\ell(\RR^6)}\|g\|_{\cH^N_\ell(\RR^6)}.
\end{align*}

{\bf Case 2: $|\alpha_1|+|\beta_1|> N-2$}. Then $|\alpha_2|+|\beta_2|\leq 1$ and $|\alpha_2|+|\beta_2|+5\leq N$, and we have
\begin{align*}
A\lesssim &||h||_{\cB^0_0(\RR^6)}\Big( \|W_{\ell-|\beta|+\gamma/2}
\partial^{\alpha_1}_{\beta_1} f\|_{L^2(\RR^6)} \|\nabla^{2}_{x}
\partial^{\alpha_2}_{\beta_2}g\|_{L^2(\RR^6)} \\
&+ \|\partial^{\alpha_1}_{\beta_1}f\|_{L^2(\RR^6)}
\|W_{\ell-|\beta|+\gamma/2}\nabla^{2}_{x}
\partial^{\alpha_2}_{\beta_2}g\|_{L^2(\RR^6)}
\Big)\\
&\lesssim ||h||_{\cB^0_0(\RR^6)}\|f\|_{H^\cH_\ell(\RR^6)}\|g\|_{ \cH^N_\ell(\RR^6)}.
\end{align*}
Therefore, we get
\begin{align}\label{part2-4-2-5.4+1}
&\sum C^{\alpha_1, \beta_1}_{\alpha_2, \beta_2,\beta_3}
\big|\big(W_{\ell-|\beta|}\,\cT(\partial^{\alpha_1}_{\beta_1} f,\,
\partial^{\alpha_2}_{\beta_2} g, \mu_{\beta_3})-
\cT(\partial^{\alpha_1}_{\beta_1} f,\, W_{\ell-|\beta|}\,
\partial^{\alpha_2}_{\beta_2} g, \mu_{\beta_3}),\, h\big)\big|\\
&\qquad\qquad\leq C||h||_{\cB^0_0(\RR^6)}\|f\|_{\cH^N_\ell(\RR^6)}
\|g\|_{\cH^N_\ell(\RR^6)}\, .\nonumber
\end{align}
Next, we deal with the following term and the discussion is
also divided into several steps:
$$\sum C^{\alpha_1, \beta_1}_{\alpha_2, \beta_2,\beta_3}
(\cT(\partial^{\alpha_1}_{\beta_1} f,\, W_{\ell-|\beta|}\,\partial^{\alpha_2}_{\beta_2} g, \mu_{\beta_3}),\, h)\,.
$$

{\bf Case I: $|\alpha_1|+|\beta_1|\leq 1$}. From Proposition \ref{part2-prop5.1}, we get
\begin{align*}
B=&\big | \big(\cT(\partial^{\alpha_1}_{\beta_1} f,\, W_{\ell-|\beta|}\,
\partial^{\alpha_2}_{\beta_2} g, \mu_{\beta_3}),\, h \big)_{L^2(\RR^6)} \big|\\
&\lesssim ||h||_{\cB^0_0}
\Big\{ \|f\|_{H^5(\RR^6)} ||W_{\ell-|\beta|}\,\partial^{\alpha_2}_{\beta_2} g||_{\cB^0_0(\RR^6)} +
\| f \|_{H^6(\RR^6)}
\|W_{\ell-|\beta|}\,\partial^{\alpha_2}_{\beta_2} g\|_{L^2_{ s+\gamma/2}(\RR^6)} \Big \}\\
&\lesssim ||h||_{\cB^0_0}
\|f\|_{{\cH\, }^N_\ell(\RR^6)} ||g||_{{\cB\,}^N_\ell(\RR^6)}\,.
\end{align*}

{\bf Case II: $2\leq |\alpha_1|+|\beta_1|\leq 3$}. Again from Proposition \ref{part2-prop5.1}, one has
\begin{align*}
&B\lesssim ||h||_{\cB^0_0}
\Big\{ \|f\|_{H^5(\RR^6)} ||W_{\ell-|\beta|}\nabla^2_x\,\partial^{\alpha_2}_{\beta_2} g||_{\cB^0_0(\RR^6)} +
\| f \|_{H^6(\RR^6)}
\|W_{\ell-|\beta|}\,\nabla^2_x\partial^{\alpha_2}_{\beta_2} g\|_{L^2_{ s+\gamma/2}(\RR^6)} \Big \}\\
&\lesssim ||h||_{\cB^0_0}
\|f\|_{{\cH\, }^N_\ell(\RR^6)} ||g||_{{\cB\,}^N_\ell(\RR^6)}\,.
\end{align*}

{\bf Case III: $4\leq |\alpha_1|+|\beta_1|\leq N-1$}. Then
$|\alpha_2|+|\beta_2|\leq N-4$, and from Proposition \ref{part2-prop5.2}, we get
\begin{align*}
&B\lesssim ||h||_{\cB^0_0}
\Big\{ \|\partial^{\alpha_1}_{\beta_1} f\|_{L^2_{\gamma/2+s}(\RR^6)}
||W_{\ell-|\beta|}\nabla^2_x\,\partial^{\alpha_2}_{\beta_2} g||_{\cB^0_0(\RR^6)} +
\|\partial^{\alpha_1}_{\beta_1}f \|_{\cB^0_0(\RR^6)}
\|W_{\ell-|\beta|}\,\nabla^2_x\partial^{\alpha_2}_{\beta_2} g\|_{L^2_{s+\gamma/2}(\RR^6)} \Big \}\\
&+ \|\partial^{\alpha_1}_{\beta_1} f\|_{L^2(\RR^6)}
||W_{\ell-|\beta|}\nabla^2_x\,\partial^{\alpha_2}_{\beta_2} g||_{L^2_{\gamma/2+s}(\RR^6)}
+ \|\mu^{1/60}\partial^{\alpha_1}_{\beta_1} f\|_{L^2(\RR^6)}
||\mu^{1/60}W_{\ell-|\beta|}\,\partial^{\alpha_2}_{\beta_2} g||_{H^{3/2+\epsilon}(\RR^3_x; H^{3/2-\epsilon}(\RR^3_v))}\\
&+ \|\mu^{1/60}\partial^{\alpha_1}_{\beta_1} f\|_{H^s(\RR^6)}
||\mu^{1/60}W_{\ell-|\beta|}\,\partial^{\alpha_2}_{\beta_2} g||_{H^{3+\epsilon}(\RR^6)}\lesssim ||h||_{\cB^0_0}
\|f\|_{{\cH\, }^N_\ell(\RR^6)} ||g||_{{H\, }^N_\ell(\RR^6)}\,.
\end{align*}

{\bf Case IV: $|\alpha_1|+|\beta_1|=|\alpha|+|\beta|\leq N$}. Again from Proposition \ref{part2-prop5.2}, we have
\begin{align*}
&B=\big | \big(\Gamma(\partial^\alpha_\beta f,\, W_{\ell-|\beta|} g),\, h \big )_{L^2(\RR^6)}  \big|
\lesssim ||h||_{\cB^0_0(\RR^6)}
\Big\{ \|\partial^\alpha_\beta f\|_{L^2_{s+\gamma/2}(\RR^6)}||W_{\ell-|\beta}\nabla^2_x g||_{\cB^0_0(\RR^6)}\\
&\qquad+ \|W_{\ell-|\beta|}\nabla^2_x g\|_{L^2_{s+\gamma/2}(\RR^6)}||\partial^\alpha_\beta f||_{\cB^0_0(\RR^6)}+ \|\partial^\alpha_\beta f\|_{L^2_{s+\gamma/2}(\RR^6)}\|W_{\ell-|\beta}\nabla^2_x g\|_{L^2(\RR^6)} \\
&\qquad+
\|\mu^{1/40}\partial^\alpha_\beta f\|_{L^2_{s+\gamma/2}(\RR^6)}
\|\mu^{1/40} \,W_{\ell-|\beta|}g\|_{H^{3/2+\epsilon}(\RR^3_x; H^{\max (-\gamma/2,\, 1)}(\RR^3_v))}\\
 &\qquad\qquad\qquad\qquad+   ||\mu^{1/40}\partial^\alpha_\beta f||_{\cB^0_0(\RR^6)}
 \|\mu^{1/40} \,W_{\ell-|\beta|} g\|^2_{L^\infty(\RR^6)}
\Big \}\\
&\qquad\lesssim ||h||_{\cB^0_0(\RR^6)}
\|g\|_{{\cH\,}^N_\ell(\RR^6)}\|f\|_{{\cB\,}^N_\ell(\RR^6)}\,.
\end{align*}
Finally, the proof of the proposition is completed.
\end{proof}

\subsection{Estimation with modified weight}
In the sequel, we shall often use the inequalities
\begin{equation}\label{part2-sgamma}
\|f\|_{L^2_{s+\gamma/2}}\le \|f\|_{L^2}, \quad
 \|f\|_{L^2_{s+\gamma/2}}\le |||f|||.
\end{equation}
The first inequality is valid since  we are assuming
that $\gamma +2s \le 0$, while the second one comes
from Section \ref{part1-section2}.

To obtain the global existence, the upper bound given in Proposition \ref{part2-prop5.5} for the general case is not enough because
this bound can not be controlled by  the coercivity
estimate of the linearized operator that contains loss of weight in the case of soft potential.
To overcome this,
we now prove  the following upper bound  under the conditions of Theorem \ref{part2-theo1.2}.

\begin{prop}\label{part2-upperGG}
Under the assumptions  of Theorem \ref{part2-theo1.2} on the parameters $\gamma $ and $s$, for any $N\geq 6, \ell\geq N$, $|\alpha|+|\beta|\leq N$, one has
 \begin{align}\label{part2-WpGamma}
|(\tilde{W}_{l-|\beta|}\p^\alpha_\beta\Gamma(f,g), \tilde{W}_{l-|\beta|}\p^\alpha_\beta h)_{L^2( \RR^6)}|
\le &
\Big(\|f\|_{\tilde{\cH}^N_l(\RR^6)}\ ||g||_{\tilde{\cB}^N_l(\RR^6)}
\\&\hspace{1cm}+\|g\|_{\tilde{\cH}^N_l(\RR^6)}\ ||f||_{\tilde{\cB}^N_l(\RR^6)}\Big)
||h||_{\tilde{\cB}^N_l(\RR^6)}.\notag
\end{align}
\end{prop}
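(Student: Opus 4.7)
The plan is to mimic the argument used in Proposition \ref{part2-prop5.5}, replacing the weight $W_{\ell-|\beta|}$ by the modified weight $\tilde W_{\ell-|\beta|}$ throughout, and then take advantage of the fact that, unlike $W_{\ell-|\beta|}$, the weight $\tilde W_{\ell-|\beta|}=\langle v\rangle^{|s+\gamma/2|(\ell-|\beta|)}$ loses exactly the scale $|s+\gamma/2|$ per unit of $\ell$, which is precisely what the non-isotropic norm and the $L^2_{s+\gamma/2}$-norm (under the soft potential assumption $\gamma+2s\le 0$) can absorb via \eqref{part2-sgamma}. I expect the bookkeeping of weight powers to be the main obstacle, and the trick is to never pay more than one power of $\langle v\rangle^{s+\gamma/2}$ per differentiation in $v$ or unit of $\ell$ removed.

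First I would apply the Leibniz formula to write
\[
\partial^\alpha_\beta\,\Gamma(f,g)=\sum C^{\alpha_1,\beta_1}_{\alpha_2,\beta_2,\beta_3}\,
\cT\bigl(\partial^{\alpha_1}_{\beta_1}f,\,\partial^{\alpha_2}_{\beta_2}g,\,\mu_{\beta_3}\bigr),
\]
where $\cT$ denotes the trilinear form associated to $\Gamma$ with an additional Gaussian factor $\mu_{\beta_3}$ (coming from differentiating $\sqrt\mu$ in $v$), summed over $\alpha_1+\alpha_2=\alpha$ and $\beta_1+\beta_2+\beta_3=\beta$. Then I would insert the weight in two steps by the commutator decomposition
\[
\tilde W_{\ell-|\beta|}\,\cT(F,G,\mu_{\beta_3})
=\bigl[\tilde W_{\ell-|\beta|}\,\cT(F,G,\mu_{\beta_3})-\cT(F,\tilde W_{\ell-|\beta|}G,\mu_{\beta_3})\bigr]
+\cT(F,\tilde W_{\ell-|\beta|}G,\mu_{\beta_3}).
\]

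For the commutator term I would invoke Remark \ref{part2-weight-radja-lemma-6.1+1}: the gain $\tilde W_{\ell-|\beta|}-\tilde W'_{\ell-|\beta|}$ is strictly tamer than the corresponding $W_{\ell-|\beta|}-W'_{\ell-|\beta|}$ by the factor $|s+\gamma/2|$, so after applying Cauchy--Schwarz in $v$ the cost can be packaged as either $L^2_{s+\gamma/2}$ (absorbed by $|||\cdot|||$) or as an additional $\tilde W$-weight already present in the $\tilde{\cB}^N_\ell$ or $\tilde{\cH}^N_\ell$ norms. For the main term $\cT(F,\tilde W_{\ell-|\beta|}G,\mu_{\beta_3})$, I would apply Theorem \ref{theorem-1.1-b} in the $v$ variable pointwise in $x$, integrate in $x$ by Cauchy--Schwarz, and use \eqref{part2-sgamma} to replace the $L^2_{s+\gamma/2}$-norms by $L^2$-norms where convenient.

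Finally I would split into the two cases according to which set of indices is the larger: if $|\alpha_1|+|\beta_1|\le N-2$, put $\partial^{\alpha_1}_{\beta_1}F$ in $L^\infty_x$ via the Sobolev embedding $H^2(\RR^3_x)\hookrightarrow L^\infty(\RR^3_x)$ (taking the remaining two derivatives to form $\nabla^2_x$), which gives a factor controlled by $\|f\|_{\tilde{\cH}^N_\ell(\RR^6)}$, while the other factor becomes $\|g\|_{\tilde{\cB}^N_\ell(\RR^6)}$; symmetrically for $|\alpha_2|+|\beta_2|\le N-2$. The $\mu_{\beta_3}$ factor from the Leibniz expansion provides Gaussian decay in $v_*$ that absorbs any additional algebraic weight arising from $\tilde W$ applied to $\sqrt\mu$, so no new weight issue appears. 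Summing the two cases and the commutator contribution yields the claimed inequality \eqref{part2-WpGamma}. The main technical point to verify carefully is that in each application of the commutator estimate from Remark \ref{part2-weight-radja-lemma-6.1+1} the loss of weight (which depends on whether $(\ell-|\beta|)|s+\gamma/2|\gtrless 1$) is exactly compensated by the remaining $\tilde W$-weight, so that everything closes inside $\tilde{\cB}^N_\ell$ and $\tilde{\cH}^N_\ell$.
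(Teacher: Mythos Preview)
Your plan is essentially the paper's own proof: Leibniz expansion into $\cT$-terms, the commutator splitting $\tilde W_{\ell-|\beta|}\cT(F,G,\mu_{\beta_3})-\cT(F,\tilde W_{\ell-|\beta|}G,\mu_{\beta_3})$ handled by Remark~\ref{part2-weight-radja-lemma-6.1+1} together with \eqref{part2-sgamma}, the main term by Theorem~\ref{theorem-1.1-b}, and a dichotomy on the size of $|\alpha_1|+|\beta_1|$ followed by Sobolev embedding in $x$. Two small remarks: the paper splits at $|\alpha_1|+|\beta_1|\le N/2$ rather than $N-2$ (either works for $N\ge 6$), and for the cross term $|||\partial^{\alpha_1}_{\beta_1}f|||\,\|\tilde W_{\ell-|\beta|}\partial^{\alpha_2}_{\beta_2}g\|_{L^2_v}$ you must place the \emph{triple norm} in $L^\infty_x$, which requires the Hilbert-valued embedding of Lemma~\ref{part2-lemm2.7} rather than the scalar $H^2\hookrightarrow L^\infty$; this is what produces the $\|f\|_{\tilde\cB^N_\ell}\|g\|_{\tilde\cH^N_\ell}$ contribution in \eqref{part2-WpGamma}.
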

\begin{proof}
First, notice that from Remark \ref{part2-weight-radja-lemma-6.1+1} and \eqref{part2-sgamma},
we have for $\gamma>-3$,
\begin{align}\label{part2-comm1}
|(\tilde{W}_l \Gamma(f,g)-\Gamma(f,\tilde{W}_l g), h)_{L^2(\RR^3_v)}|
\le \Big(\|f\|_{L^2(\RR^3_v)}|||\tilde{W}_l  g|||_{\Phi_\gamma}+|||\tilde{W}_l f|||_{\Phi_\gamma}
\ \|g\|_{L^2(\RR^3_v)}||\Big)|||h|||_{\Phi_\gamma}.
\end{align}

Recall the definition of $\cT$ to deduce that
\begin{align*}
&|(\tilde{W}_l\p_\beta\Gamma(f,g), \tilde{W}_l\p_\beta h)_{L^2(\RR^3_v)} | \le
 \sum_{\beta_1+\beta_2+\beta_3=\beta}|(\tilde{W}_l\cT(\p_{\beta_1}f,
 \p_{\beta_2}g, \p_{\beta_3}\mu^{1/2}), \tilde{W}_l\p_\beta h)|_{L^2(\RR^3_v)}|\\
 &\quad=\sum \Big\{
|(\cT(\p_{\beta_1}f,\tilde{W}_l\p_{\beta_2}g, \p_{\beta_3}\mu^{1/2}), \tilde{W}_l\p_\beta h)_{L^2(\RR^3_v)}|\\
&\qquad +|(\tilde{W}_l\cT(\p_{\beta_1}f,\p_{\beta_2}g, \p_{\beta_3}\mu^{1/2})
-(\cT(\p_{\beta_1}f,\tilde{W}_l\p_{\beta_2}g, \p_{\beta_3}\mu^{1/2}), \tilde{W}_l\p_\beta h)_{L^2(\RR^3_v)}|\Big\}.
\end{align*}
By using Theorem \ref{theorem-1.1-b}, we obtain
\begin{align*}
 &|(\Gamma(\p_{\beta_1}f,\tilde{W}_l\p_{\beta_2}g), \tilde{W}_l\p_\beta h)|_{L^2(\RR^3_v)}
\\&
\le\Big(\|\p_{\beta_1}f\|_{L^2(\RR^3_v)}\ |||\tilde{W}_l\p_{\beta_2}g|||_{\Phi_\gamma}
+
||| \p_{\beta_1}f|||_{\Phi_\gamma}\ \|\tilde{W}_l\p_{\beta_2}g\|_{L^2(\RR^3_v)}\Big)|||\tilde{W}_l\p_{\beta}h|||_{\Phi_\gamma}.
\end{align*}
Moreover, \eqref{part2-comm1} implies that
\begin{align*}
&|(\tilde{W}_l \Gamma(\p_{\beta_1}f,\p_{\beta_2}g)-\Gamma(\p_{\beta_1}f,\tilde{W}_l\p_{\beta_2}g),
\tilde{W}_l\p_\beta h)_{L^2(\RR^3_v)}|
\\
&\le \Big(\|\p_{\beta_1}f\|_{L^2(\RR^3_v)}|||\tilde{W}_l \p_{\beta_2}g|||_{\Phi_\gamma}+|||\tilde{W}_l\p_{\beta_1}f|||_{\Phi_\gamma}
\ \|\p_{\beta_2}g\|_{L^2(\RR^3_v)}\Big)|||\tilde{W}_l\p_{\beta}h|||_{\Phi_\gamma}.
\end{align*}
As a consequence,
\begin{align}
%\label{part2-upperG3}
|(\tilde{W}_l\p_\beta&\Gamma(f,g), \tilde{W}_l\p_\beta h)_{L^2(\RR^3_v)} |
 \le\sum \Big(\|\p_{\beta_1}f\|_{L^2(\RR^3_v)}\ |||\tilde{W}_l\p_{\beta_2}g|||_{\Phi_\gamma}
\\&+\notag
||| \p_{\beta_1}f|||_{\Phi_\gamma}\ \|\tilde{W}_l\p_{\beta_2}g\|_{L^2(\RR^3_v)}
+|||\tilde{W}_l\p_\beta f|||_{\Phi_\gamma}\ \|\p_{\beta_2}g\|_{L^2(\RR^3_v)}\Big)|||\tilde{W}_l\p_{\beta}h|||_{\Phi_\gamma}.
\end{align}
By the Leibniz rule in $x$ variable, one has
\begin{align*}
&|(\tilde{W}_{l-|\beta|}\p^\alpha_\beta\Gamma(f,g), \tilde{W}_{l-|\beta|}\p^\alpha_\beta h)_{L^2(\RR^6)}|
\\&
\le
\sum_{\alpha_1+\alpha_2=\alpha}|(\tilde{W}_{l-|\beta|}\p_\beta\Gamma(\p^{\alpha_1}f,
\p^{\alpha_2}g), \tilde{W}_{l-|\beta|}\p^\alpha_\beta h)_{L^2(\RR^6)}|
\\&\le\sum\int_{\RR^3}
\Big(\|\p^{\alpha_1}_{\beta_1}f\|_{L^2(\RR^3_v)}
\ |||\tilde{W}_{l-|\beta|}\p^{\alpha_2}_{\beta_2}g|||_{\Phi_\gamma}
+||| \p^{\alpha_1}_{\beta_1}f|||_{\Phi_\gamma}\ \|\tilde{W}_{l-|\beta|}\p^{\alpha_2}_{\beta_2}g\|_{L^2(\RR^3_v)}
\\&\hspace{1cm}
+|||\tilde{W}_{l-|\beta|}\p^{\alpha_1}_{\beta_1}f|||_{\Phi_\gamma}
\ \|\p^{\alpha_2}_{\beta_2}g\|_{L^2(\RR^3_v)}\Big)
|||\tilde{W}_{l-|\beta|}\p^\alpha_{\beta}h|||_{\Phi_\gamma}dx \notag
\\&
\le\sum\int_{\RR^3}
\Big(\|\tilde{W}_{l-\beta_1}\p^{\alpha_1}_{\beta_1}f\|_{L^2(\RR^3_v)}\
|||\tilde{W}_{l-\beta_2}\p^{\alpha_2}_{\beta_2}g|||_{\Phi_\gamma}
+||| \tilde{W}_{l-\beta_1}\p^{\alpha_1}_{\beta_1}f|||_{\Phi_\gamma}\
\|\tilde{W}_{l-\beta_2}\p^{\alpha_2}_{\beta_2}g\|_{L^2(\RR^3_v)}
\\&\hspace{1cm}
+|||\tilde{W}_{l-\beta_1}\p^{\alpha_1}_{\beta_1}f|||_{\Phi_\gamma}
\ \|\tilde{W}_{l-\beta_2}\p^{\alpha_2}_{\beta_2}g\|_{L^2(\RR^3_v)}\Big)|||\tilde{W}_{l-|\beta|}
\p^\alpha_{\beta}h|||_{\Phi_\gamma} dx \notag\\
&\le\sum\int_{\RR^3}
\Big(\|\tilde{W}_{l-\beta_1}\p^{\alpha_1}_{\beta_1}f\|_{L^2(\RR^3_v)}
\ |||\tilde{W}_{l-\beta_2}\p^{\alpha_2}_{\beta_2}g|||_{\Phi_\gamma}
\\&\hspace{1cm}+||| \tilde{W}_{l-\beta_1}\p^{\alpha_1}_{\beta_1}f|||_{\Phi_\gamma}
\ \|\tilde{W}_{l-\beta_2}\p^{\alpha_2}_{\beta_2}g\|_{L^2(\RR^3_v)}
\Big)|||\tilde{W}_{l-|\beta|}\p^\alpha_{\beta}h|||_{\Phi_\gamma} dx\notag
\\&=\sum\Big(G^1_{\alpha_1,\beta_1,\beta_2}+
G^2_{\alpha_1,\beta_1,\beta_2}\Big).
\end{align*}
Now these terms are discussed in the following two cases.

\noindent$\bullet$ When $|\alpha_1|+|\beta_1|\le N/2$, we have
\begin{align*}
G^1_{\alpha_1,\beta_1,\beta_2}\le&
\|\tilde{W}_{l-\beta_1}\p^{\alpha_1}_{\beta_1}f\|_{L^\infty(\RR^3_x;L^2(\RR^3_v))}\
\int_{\RR^3}|||\tilde{W}_{l-\beta_2}\p^{\alpha_2}_{\beta_2}g|||_{\Phi_\gamma} \
|||\tilde{W}^{l-|\beta|}\p^\alpha_{\beta}h|||_{\Phi_\gamma} dx
\\&\le ||f||_{\tilde{\cH}^N_l(\RR^6)}   ||g||_{\tilde{\cB}^N_l(\RR^6)}
||h||_{\tilde{\cB}^N_l(\RR^6)} .
\\
G^2_{\alpha_1,\beta_1,\beta_2}\le&
\| \ |||\tilde{W}_{l-\beta_1}\p^{\alpha_1}_{\beta_1}f|||_{\Phi_\gamma}\ \|_{L^\infty(\RR^3_x)}
\int_{\RR^3}\|\tilde{W}_{l-\beta_2}\p^{\alpha_2}_{\beta_2}g\|_{L^2_v}
|||\tilde{W}^{l-|\beta|}\p^\alpha_{\beta}h|||_{\Phi_\gamma} dx
\\&\le
||f||_{\tilde{\cB}^N_l(\RR^6)}
||g||_{\tilde{\cH}^N_l(\RR^6)}   ||h\|_{\tilde{\cB}^N_l(\RR^6)}  .
\end{align*}
$\bullet$ When $|\alpha_1|+|\beta_1|\ge N/2$, we have
\begin{align*}
G^1_{\alpha_1,\beta_1,\beta_2}\le&
\| \ |||\tilde{W}_{l-\beta_2}\p^{\alpha_2}_{\beta_2}g|||_{\Phi_\gamma}\ \|_{L^\infty(\RR^3_x)}
\int_{\RR^3} \|\tilde{W}_{l-\beta_1}\p^{\alpha_1}_{\beta_1}f\|_{L^2_v}
\ \ |||\tilde{W}^{l-|\beta|}\p^\alpha_{\beta}h|||_{\Phi_\gamma} dx
\\&\le||f||_{\tilde{\cH}^N_l(\RR^6)}\| g\|_{\tilde{\cB}^N_l(\RR^6)}   ||h||_{\tilde{\cB}^N_l(\RR^6)} .
\\
G^2_{\alpha_1,\beta_1,\beta_2}\le&
\|\tilde{W}_{l-\beta_2}\p^{\alpha_2}_{\beta_2}g\|_{L^\infty_x(L^2_v)}
\int_{\RR^3} |||\tilde{W}_{l-\beta_1}\p^{\alpha_1}_{\beta_1}f|||_{\Phi_\gamma}\
|||\tilde{W}^{l-|\beta|}\p^\alpha_{\beta}h|||_{\Phi_\gamma} dx
\\&\le
||f||_{\tilde{\cB}^N_l(\RR^6)}  ||g||_{\tilde{\cH}^N_l(\RR^6)}
  ||h\|_{\tilde{\cB}^N_l(\RR^6)}   .
\end{align*}
Here, we have used Lemma \ref{part2-lemm2.7} to get
\begin{align*}
\| \ |||\tilde{W}_{l-\beta_1}\p^{\alpha_1}_{\beta_1}f|||_{\Phi_\gamma}\ \|_{L^\infty(\RR^3_x)}\
\le \|f\|_{\tilde{\cB}^N(\RR^6)},
\end{align*}
for $|\alpha_1|+|\beta_1|\le N/2$. Therefore, we complete the proof of the
proposition.
\end{proof}

\subsection{Weighted coercivity of the linearized operator}
We turn to the weighted lower estimates, more precisely,
 the lower bound for $(W_l \cL g , W_l g)_{L^2(\RR^3_v)}$.
Let us recall that  in Section \ref{part1-section2} it was shown that, if $\gamma >-3$, then there exists a constant $C>0$ such that
\begin{equation}\label{part2-aa1}
|||g |||^2_{\Phi_\gamma}\geq \Big(\cL_1 g,\, g\Big)_{L^2(\RR^3_v)} \geq  \eta_0 |||g |||_{\Phi_\gamma}^2 - C
\|g\|_{L^2_{\gamma/2}(\RR^3_v)}^2\,.
\end{equation}

In the estimation on the weighted linearized collisional operator $\cL_1$, we need to consider the commutator estimate
of the weight and the linearized operator. However, we can not apply the Proposition \ref{part2-weight-radja-lemma-6.1} directly because the error
term then will have  the weight of the order of $s +\gamma/2$. The purpose of the following proposition is to show that
the error term coming from this commutator has the weight
of  order  $\gamma/2$ only.

\begin{prop}\label{part2-weight-radja-proposition-8.1}
For all $0<s<1$, $\gamma >-3$, and for any $l\ge 0$, there exists a positive constant
$C$ such that
\begin{equation}\label{part2-aa3}
(W_l \cL_1 g, W_l g)_{L^2(\RR^3_v)}
\ge \frac{\eta_0}{2} ||| W_l g|||^2_{\Phi_\gamma} - C|| W_l g||^2_{L^2_{\gamma /2}(\RR^3_v)} .
\end{equation}
Moreover, for any $\beta\in\NN^3\setminus \{0\}$, one has
\begin{align*}
(W_l\partial_\beta \cL_1 (g), W_l \partial_\beta g)_{L^2(\RR^3_v)} &\ge \frac{\eta_0}{2}
||| W_l \partial_\beta g|||^2_{\Phi_\gamma} - C|| W_l \partial_\beta g||^2_{L^2_{\gamma /2}(\RR^3_v)} \\
 &-C\Big( \sum_{\beta_1 <\beta} ||| W_l \partial_{\beta_1}
 g|||_{\Phi_\gamma} \Big) ||| W_l \partial_\beta g |||_{\Phi_\gamma} .
\end{align*}
\end{prop}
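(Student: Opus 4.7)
The plan is to split the weighted Dirichlet form into a principal coercive piece plus a commutator, then absorb the commutator into the dissipation. Write
\begin{equation*}
(W_l \cL_1 g, W_l g)_{L^2} = (\cL_1(W_l g), W_l g)_{L^2} + ([W_l,\cL_1]g, W_l g)_{L^2},
\end{equation*}
where $[W_l,\cL_1]g = -\bigl(W_l\Gamma(\sqrt\mu,g)-\Gamma(\sqrt\mu,W_l g)\bigr)$. Applying Proposition \ref{part1-radja-mouhot-strain} directly to $W_l g$ gives the lower bound $\eta_0 |||W_l g|||^2_{\Phi_\gamma} - C\|W_l g\|^2_{L^2_{\gamma/2}}$ for the first term, and our task reduces to absorbing the commutator into $\tfrac{\eta_0}{2}|||W_l g|||^2_{\Phi_\gamma} + C\|W_l g\|^2_{L^2_{\gamma/2}}$.

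For the commutator, I would apply Proposition \ref{part2-weight-radja-lemma-6.1} with $f=\sqrt\mu$. Because $\sqrt\mu$ and $\la v\ra^r \sqrt\mu$ are rapidly decreasing for every $r$, all $\sqrt\mu$-factors appearing on the right-hand side are uniformly $O(1)$, and the only surviving $g$-factor (in the case $l\ge 1$) is $\|W_{l-s}g\|_{L^2_{s+\gamma/2}}$. The key arithmetic identity
\begin{equation*}
\|W_{l-s}g\|_{L^2_{s+\gamma/2}} = \|g\|_{L^2_{l+\gamma/2}} = \|W_l g\|_{L^2_{\gamma/2}}
\end{equation*}
yields precisely the tolerated error, so that $|([W_l,\cL_1]g,W_l g)_{L^2}|\lesssim \|W_l g\|_{L^2_{\gamma/2}}|||W_l g|||_{\Phi_\gamma}$, and Cauchy--Schwarz with a small $\epsilon$ gives the first statement \eqref{part2-aa3}. (The case $0<l<1$, in which \eqref{part2-comm-2} produces $W_{l-sl}$ instead of $W_{l-s}$, is handled by a short log-convexity interpolation between $\|g\|_{L^2}$ and $\|W_l g\|_{L^2}$.)

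For the derivative statement, I would expand $\partial_\beta \Gamma(\sqrt\mu,g)$ via the Leibniz rule through the trilinear operator $\cT$ used in Proposition \ref{part2-prop5.5}:
\begin{equation*}
\partial_\beta \cL_1 g = \cL_1(\partial_\beta g)\; -\!\!\sum_{\substack{\beta_1+\beta_2+\beta_3=\beta\\ (\beta_1,\beta_3)\neq(0,0)}} C\,\cT\bigl(\partial^{\beta_1}\!\sqrt\mu,\,\partial^{\beta_2} g,\,\mu_{\beta_3}\bigr).
\end{equation*}
The principal term is disposed of by the first statement applied to $\partial_\beta g$. In every remainder, at least one Gaussian factor is retained (either $\partial^{\beta_1}\sqrt\mu$ with $\beta_1>0$ or $\mu_{\beta_3}$ with $\beta_3>0$), so Theorem \ref{theorem-1.1-b} combined with Proposition \ref{part2-weight-radja-lemma-6.1} (to push $W_l$ past $\cT$ at the cost of the same $L^2_{\gamma/2}$-type error) gives
\begin{equation*}
\bigl|\bigl(W_l\,\cT(\partial^{\beta_1}\!\sqrt\mu,\partial^{\beta_2} g,\mu_{\beta_3}),\,W_l\partial_\beta g\bigr)\bigr| \lesssim |||W_l\partial^{\beta_2} g|||_{\Phi_\gamma}\,|||W_l\partial_\beta g|||_{\Phi_\gamma},
\end{equation*}
with $|\beta_2|<|\beta|$, which yields the final tail $\bigl(\sum_{\beta_1<\beta}|||W_l\partial_{\beta_1} g|||_{\Phi_\gamma}\bigr)|||W_l\partial_\beta g|||_{\Phi_\gamma}$. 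The main obstacle is the delicate weight bookkeeping in the commutator step: a crude commutator estimate would leave an error of order $\|W_l g\|_{L^2_{s+\gamma/2}}$, which in the soft potential regime $\gamma+2s\le 0$ cannot be absorbed into $|||W_l g|||^2_{\Phi_\gamma}$; Proposition \ref{part2-weight-radja-lemma-6.1} was tuned precisely so that the $s$ lost in velocity weight on one side is compensated by the $L^2_{s+\gamma/2}\to L^2_{\gamma/2}$ shift on the other side, and verifying this matching carefully (including the $0<l<1$ case via interpolation) is the crux of the proof.
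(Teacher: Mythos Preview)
Your approach is correct but takes a different route from the paper's. Both start by writing $(W_l\cL_1 g, W_l g)=(\cL_1(W_l g),W_l g)+I$ with $I=([W_l,\cL_1]g,W_l g)$ and invoke Proposition~\ref{part1-radja-mouhot-strain} for the principal piece; the divergence is in how the commutator $I$ is controlled. You treat $I$ as a special case of the general commutator bound Proposition~\ref{part2-weight-radja-lemma-6.1} with $f=\sqrt\mu$, relying on the algebraic identity $\|W_{l-s}g\|_{L^2_{s+\gamma/2}}=\|W_l g\|_{L^2_{\gamma/2}}$ and (for $0<l<1$) an interpolation in the weight exponent. The paper instead exploits the self-adjoint structure of $\cL_1$ directly: writing $I$ as an explicit integral and adding to it its image under the pre-/post-collisional change of variables yields the symmetrized identity
\[
2I=-\iiint b\,\Phi_\gamma\,\mu_*^{1/2}{\mu'_*}^{1/2}\,g\,g'\,(W'_l-W_l)^2\,d\sigma\,dv\,dv_*,
\]
so that Cauchy--Schwarz in the full variables gives $|I|\lesssim\iiint b\,\Phi_\gamma\,\mu_*^{1/2}g^2(W'_l-W_l)^2$, a pure $g^2$ expression which is then bounded by elementary pointwise estimates on $(W'_l-W_l)^2$ and a large-$R$ splitting for $0<l<1$. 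The paper's route is more self-contained and produces the $L^2_{\gamma/2}$ error in one stroke for $l\ge 1$; your route is more modular, recycling existing machinery, but you should be a bit more careful: Proposition~\ref{part2-weight-radja-lemma-6.1} also leaves terms of the form $\|g\|_{L^2_{s+\gamma/2}}$ (not only $\|W_{l-s}g\|_{L^2_{s+\gamma/2}}$), and the interpolation needed for $0<l<1$ is between $\|W_l g\|_{L^2_{\gamma/2}}$ and $\|W_l g\|_{L^2_{s+\gamma/2}}\lesssim|||W_l g|||$, not between $\|g\|_{L^2}$ and $\|W_l g\|_{L^2}$ as you wrote. For the $\partial_\beta$ statement the two arguments coincide.
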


\begin{proof}
According to \eqref{part2-aa1}, it is enough to show that
$$
|([W_l, \cL_1] g, W_l g)_{L^2} |\le C_\delta || W_l g||^2_{L^2_{\gamma /2}} +\delta || W_l g||^2_{L^2_{s+\gamma /2}},
$$
where $\delta>0$ can be arbitrarily small.
By using the above expression, one has
\begin{align*}
(W_l \cL_1 g ,& W_l g)_{L^2} = - \iiint b\,\Phi_\gamma \Big((\mu'_*)^{1/2} g'-
(\mu_\ast)^{1/2} g \Big) \mu^{1/2}_* W_l W_l g dv_*d\sigma dv\\
& = - \iiint b\,\Phi_\gamma \Big((\mu'_*)^{1/2} W_l g'-
(\mu_\ast)^{1/2} W_lg \Big) \mu^{1/2}_* W_l  g dv_*d\sigma dv\\
& = - \iiint b\,\Phi_\gamma \Big( [ (\mu'_*)^{1/2} W'_l g'-
(\mu_\ast)^{1/2} W_lg ] + (\mu'_*)^{1/2}g' ( W_l - W'_l )\Big) \mu^{1/2}_* W_l  g \\
& = {1/2} \iiint b\,\Phi_\gamma \Big( (\mu'_*)^{1/2} W'_l g'-
(\mu_\ast)^{1/2} W_lg \Big)^2 dv_*d\sigma dv\\
& - \iiint b\,\Phi_\gamma \Big( (\mu'_*)^{1/2}g' ( W_l - W'_l )\Big) \mu^{1/2}_* W_l  g dv_*d\sigma dv\\
& = (\cL_1 (W_l g), (W_lg))_{L^2} + I\, ,\\
\end{align*}
where
\begin{align*}
 I=([W_l, \cL_1] g, W_l g)_{L^2}= - \iiint b\,\Phi_\gamma  (\mu'_*)^{1/2}g' \mu^{1/2}_* W_l  g ( W_l - W'_l )  dv_*d\sigma dv .\\
\end{align*}
Changing variables yields
$$
I = - \iiint b\,\Phi_\gamma (\mu_*)^{1/2}g \mu^{1/2 '}_* W'_l  g' ( W'_l - W_l ) dv_*d\sigma dv .
$$
Adding the above two equations gives
$$
2I = - \iiint b\,\Phi_\gamma  (\mu_*)^{1/2}g \mu^{1/2 '}_*   g' ( W'_l - W_l )^2  dv_*d\sigma dv .
$$
Then, by using the Cauchy-Schwarz inequality with respect to full variables, we find
$$
|I| \lesssim \iiint b\,\Phi_\gamma  (\mu_*)^{1/2}g^2 ( W'_l - W_l )^2 dv_*d\sigma dv.
$$
When $l\ge 1$, since
\begin{align*}
&\int_{\SS^2} b\, ( W'_l - W_l )^2 d\sigma  \lesssim  | v-v_*|^{2s} [ <v>+ <v_*>]^{2l-2s}\\
&\lesssim < v>^{2s} <v_*>^{2s} ( <v>^{2l-2s}+ <v_*>^{2l-2s})\\
&\lesssim \,\, <v_*>^{2s} <v>^{2l}+ <v>^{2s}<v_*>^{2l} \lesssim \,\,<v_*>^{2l} <v>^{2l}\, ,
\end{align*}
by using Section \ref{part1-section2}, we have
$$|I|\lesssim ||W_l g||^2_{L_{\gamma/2}^2}.
$$
When  $ 0< l \le 1$, by \eqref{part2-aa2}, we have
\begin{align*}
\int_{\SS^2} b \, | W_l'- W_l |^2 d\sigma
&\lesssim\, < v>^{2l(1-s)} <v_*>^{2l (1-s)} | v-v_*|^{2s}.
\end{align*}

Consider
$$
I = \iiint_{| v| \le R} +\iiint_{| v| \ge R}=I_1+I_2\, .
$$
It is obvious that for any fixed $R$,
$$
|I_1| \lesssim || W_l g||^2_{L^2_{\gamma /2}} .
$$
For $I_2$, we split
$$
I_2 = \iint_{| v| \ge R}\int_{\theta | v-v_*| \le {1\over 2} | v|} +\iint_{| v| \ge R}
\int_{\theta | v-v_*| \ge {1\over 2} | v|}=I_{2,1}+I_{2,2},
$$
which are the singular part  and the non-singular part respectively.
For the singular part $I_{2,1}$, note that
\begin{align*}
\int_{\theta | v-v_*| \le {1\over 2} | v|} b(\cos\theta) |W'_l-W_l|^2d\sigma &
\lesssim  | v-v_*|^2 <v>^{2(l-1)}{{| v|^{2-2s}}\over{| v-v_*|^{2-2s}}} \\
&\lesssim | v-v_*|^{2s} <v>^{2l -2s} \lesssim \,<v>^{2l} <v_*>^{2s};
\end{align*}
while for the non singular part $I_{2,2}$, one has
$$\int_{\theta | v-v_*| \ge {1\over 2} | v|} b(\cos\theta) |W'_l-W_l|^2d\sigma \lesssim | v-v_*|^{2s}  <v>^{2l}  <v_*>^{2l} | v|^{-2s} .
$$
Therefore, we have
$$
I_{2, 1} \lesssim || W_l g||^2_{L^2_{\gamma /2}},
$$
and
$$
I_{2,2} \lesssim R^{-2s} ||W_l g||^2_{L^2_{s+\gamma /2}}.
$$
By taking $R$ large enough,  we complete the proof of \eqref{part2-aa3}.

Now we turn to the derivatives in $v$ variable. For $\beta\in\NN^3\setminus\{0\}$, we have
$$
\partial_\beta \cL_1 (g)  =\cL_1 (\partial_\beta g ) + \sum_{\beta_1 < \beta } C_{\beta_1 , \beta_2}  \cT(\partial_{\beta_2} \mu ,\partial_{\beta_1} g, \partial_{\beta_3}\mu).
 $$
By \eqref{part2-aa3}, we have
\begin{align*}
(W_l\partial_\beta \cL_1 (g), W_l \partial_\beta g)  &= (W_l \cL_1 (\partial_\beta g ), W_l \partial_\beta g) + \sum_{\beta_1 <\beta } C_{\beta_1 , \beta_2} (W_l  \cT (\partial_{\beta_2} \mu ,\partial_{\beta_1} g, \partial_{\beta_3}\mu) , W_l \partial_\beta g)\\
&\ge \frac{\eta_0}{2} ||| W_l \partial_\beta g|||^2_{\Phi_\gamma} - C|| W_l \partial_\beta g||_{L^2_{\gamma /2}}  + II ,
\end{align*}
where
$$
II=  \sum_{\beta_1 <\beta} C_{\beta_1 , \beta_2} (W_l \cT(\partial_{\beta_2} \mu ,\partial_{\beta_1} g, \partial_{\beta_3}\mu) ; W_l \partial_\beta g) .
$$

Recall that the operator $\cT$ shares the same commutator properties  as $\Gamma$. As in the proofs given in Section \ref{part1-section2}, the linearized operator
$ \cT(\partial_{\beta_2} \mu ,\partial_{\beta_1} g, \partial_{\beta_3}\mu)$ satisfies
$$
| (W_l  \cT(\partial_{\beta_2} \mu ,\partial_{\beta_1} g, \partial_{\beta_3}\mu) ; W_l \partial_\beta g ) | \lesssim ||| W_l \partial_{\beta_1} g|||_{\Phi_\gamma} ||| W_l \partial_\beta g |||_{\Phi_\gamma} .
$$
Hence
$$
| II | \lesssim \Big( \sum_{\beta_1 <\beta} ||| W_l \partial_{\beta_1} g|||_{\Phi_\gamma} \Big)
 ||| W_l \partial_\beta g |||_{\Phi_\gamma} .
 $$
 This completes the proof of the proposition.
\end{proof}

%%%%%%%%%%%%%%%%%%%%%%%%%%%%%%%%%%%%%%%%%%%%%%%%%%%%%%%%%%%%%%
%%%%%%%%%%%%%%%%%%%%%%%%%%%%%%%%%%%%%%%%%%%%%%%%%%%%%%%%%%%%%%
\section{Local existence}
%\label{part2-section4}
\smallskip
\setcounter{equation}{0}
In the following two subsections, we prove Theorem \ref{part2-theorem-anygamma} and the local existence of solutions in the function space considered in Theorem \ref{part2-theo1.1}.
\subsection{Classical solutions}
We now proceed to the proof of Theorem \ref{part2-theorem-anygamma}. The restriction of soft potential $\gamma +2s \le 0$ will play an important role.

Consider the following Cauchy problem for a linear Boltzmann equation with a given
function $f$,
\begin{equation}\label{part2-4-2-2.16}
\partial_t g + v\,\cdot\,\nabla_x g + \cL_1 g = \Gamma (f,\,g) -\cL_2 f
,\qquad g|_{t=0} = g_0\,,
\end{equation}
which is equivalent to the problem:
\begin{equation*}
\partial_t G + v\,\cdot\,\nabla_x G=
Q(F,\,G) ,\qquad G|_{t=0} = G_0,
\end{equation*}
with $F=\mu+\sqrt\mu\,f$ and $G=\mu+\sqrt\mu\,g$. The proof is based on energy estimates in the functional space
${\cH}^N_\ell(\RR^6)$.

For $N\geq 6, \ell\geq N$  and $\alpha, \beta\in\NN^3,
|\alpha|+|\beta|\leq N$, taking
$$
\varphi(t, x, v)= (-1)^{|\alpha|+|\beta|}\partial^\alpha_\beta
W^2_{\ell-|\beta|}\,\partial^\alpha_\beta\, g(t, x, v),
$$
as a test function for equation \eqref{part2-4-2-2.16}, we get
\begin{align*}
&\frac 12 \frac{d}{d t}\|W_{\ell-|\beta|}\partial^\alpha_\beta\,g\|^2_{L^2(\RR^6)}+
\Big(W_{\ell-|\beta|}\partial^\alpha_\beta \cL_1(g),\,
W_{\ell-|\beta|}\partial^\alpha_\beta g\Big)_{L^2(\RR^6)}
\\
&\qquad\qquad\qquad+\left( W_{\ell-|\beta|}\,[\partial^\alpha_\beta\,,\,\,v]\,
\cdot\,\nabla_x g,\,W_{\ell-|\beta|}\,\partial^\alpha_\beta g\right)_{L^2(\RR^6)}
\\
&=\Big(W_{\ell-|\beta|}\,\partial^\alpha_\beta \Gamma(f, \, g),\,
W_{\ell-|\beta|}\,\partial^\alpha_\beta g\Big)_{L^2(\RR^6)}-
\Big(W_{\ell-|\beta|}\,\partial^\alpha_\beta \cL_2(f),\,
W_{\ell-|\beta|}\,\partial^\alpha_\beta g\Big)_{L^2(\RR^6)},
\end{align*}
where we have used the fact that
$$
\left( v\,\cdot\,\nabla_x \big(W_{\ell-|\beta|}\,\partial^\alpha_\beta\,
g\big),\,W_{\ell-|\beta|}\,\partial^\alpha_\beta g\right)_{L^2(\RR^6)}=0\, .
$$
We have immediately
$$
\left|\Big( W_{\ell-|\beta|}\,[\partial^\alpha_\beta\,,\,\,v]\,\cdot\,
\nabla_x g,\,W_{\ell-|\beta|}\,\partial^\alpha_\beta g\Big)_{L^2(\RR^6)}
\right|\lesssim \|f\|_{\cH^{N}_\ell(\RR^6)}
\|g\|_{\cH^{N}_\ell(\RR^6)}.
$$
By Proposition \ref{part2-prop2.10}, one has
$$
\left|\Big(W_{\ell-|\beta|}\,\partial^\alpha_\beta \cL_2(f),\,
W_{\ell-|\beta|}\,\partial^\alpha_\beta g\Big)_{L^2(\RR^6)}
\right|\lesssim \|g\|^2_{\cH^{|\alpha|+|\beta|}_\ell(\RR^6)}.
$$
Now by using  \eqref{part2-4-2-5.4+1} with $f=\mu$, we have
$$
\Big|\big(W_{\ell-|\beta|}\partial^\alpha_\beta \cL_1(g)-
\cL_1\Big(W_{\ell-|\beta|}\,\partial^\alpha_\beta g\Big)
\, ,\,\,
W_{\ell-|\beta|}\partial^\alpha_\beta g\big)_{L^2(\RR^6)}\Big|\leq
C_\delta\|g\|^2_{\cH^N_\ell(\RR^6)}
+\delta\, \|g\|^2_{{\cB\,}^N_\ell(\RR^6)}\, .
$$
Applying  Proposition \ref{part2-prop5.5}, we get for any
$|\alpha|+|\beta|\leq N$,
\begin{align*}
&\frac 12 \frac{d}{d t}\|W_{\ell-|\beta|}\,\partial^\alpha_\beta\,g\|^2_{L^2(\RR^6)}+
\Big(\cL_1\Big(W_{\ell-|\beta|}\,\partial^\alpha_\beta g\Big),\,
W_{\ell-|\beta|}\,\partial^\alpha_\beta g\Big)_{L^2(\RR^6)}\\
&\lesssim  \|f\|_{\cH^{N}_\ell(\RR^6)} \,
||g||^2_{\cB^N_\ell(\RR^6)}+
\|f\|_{\cB^N_\ell(\RR^6)} \,\|g\|_{\cH^{N}_\ell(\RR^6)}
||g||_{\cB^N_\ell(\RR^6)}\\
&\qquad+ \|g\|^2_{\cH^{N}_\ell(\RR^6)}+
\|f\|_{\cH^{N}_\ell(\RR^6)} \,
||g||_{\cB^N_\ell(\RR^6)}+ \delta
||g||^2_{\cB^N_\ell(\RR^6)} .
\end{align*}

By using now the coercivity estimate from Section \ref{part1-section2} and Lemma \ref{part2-lemm2.6},
and by
taking summation over $|\beta|\leq N$, the Cauchy-Schwarz inequality and soft potential assumption imply that
\begin{align}\label{part2-4-2-5.6}
&\frac{d}{d t}\|g\|^2_{{\cH\,}^{N}_\ell(\RR^6)}+\frac{\eta_0}{2}
||g||^2_{{\cB\,}^N_\ell(\RR^6)}\leq C \Big\{
\|f\|_{{\cH\,}^{N}_\ell(\RR^6)}\,
||g||^2_{{\cB\,}^N_\ell(\RR^6)}\\
&\qquad+
\|g\|^2_{{\cH\,}^{N}_\ell(\RR^6)}\,
||f||^2_{{\cB\,}^N_\ell(\RR^6)}+ \|g\|^2_{{\cH\,}^{N}_\ell(\RR^6)}+
\|f\|^2_{{\cH\,}^{N}_\ell(\RR^6)} \Big\}\, .\nonumber
\end{align}

In conclusion, we are ready to prove the following proposition.

\begin{prop}\label{part2-theo5.6} Assume that $0 <s<1$, $\gamma>-3$ and let  $N\geq 6, \ell\geq N$.
 Suppose that  $g_0\in {\cH\,}^{N}_\ell(\RR^6)$ and
$$
f\in L^\infty([0, T];\,{\cH\,}^{N}_\ell(\RR^6)))\bigcap L^2([0, T];\, {\cB\,}^N_\ell(\RR^6)).$$
If $g\in L^\infty([0,
T];\,{\cH\,}^{N}_\ell(\RR^6))\bigcap L^2([0, T];\, {\cB\,}^N_\ell(\RR^6))$ is a
solution of the Cauchy problem \eqref{part2-4-2-2.16}, then there exists
$\epsilon_0>0$ such that if
\begin{equation}\label{part2-4-2-5.7}
\|f\|^2_{L^\infty([0, T];\,{\cH\,}^{N}_\ell(\RR^6))}+\|f\|^2_{L^2([0, T];\,{\cB\,}^N_\ell(\RR^6))}\leq \epsilon^2_0,
\end{equation}
we have
\begin{equation}\label{part2-4-2-5.8}
\|g\|^2_{L^\infty([0, T];\, {\cH\,}^{N}_\ell(\RR^6))}+ ||g||^2_{L^2([0,
T];\,{\cB\,}^N_\ell(\RR^6))}\leq Ce^{C\,
T}\Big(\|g_0\|^2_{{\cH\,}^{N}_\ell(\RR^6)}+\epsilon_0^2 T\Big),
\end{equation}
for a constant $C>0$ depending only on $N$ and $\ell$.
\end{prop}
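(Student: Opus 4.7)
The plan is to close the differential energy estimate \eqref{part2-4-2-5.6} by absorption and Gronwall, exploiting the smallness assumption \eqref{part2-4-2-5.7} on $f$. First I would fix $\epsilon_0 > 0$ small enough that $C\epsilon_0 \le \eta_0/4$, where $C$ is the constant in \eqref{part2-4-2-5.6}. Then on $[0,T]$ the term $C\|f\|_{\cH^N_\ell(\RR^6)}\|g\|^2_{\cB^N_\ell(\RR^6)}$ on the right-hand side of \eqref{part2-4-2-5.6} is bounded pointwise in $t$ by $(\eta_0/4)\|g\|^2_{\cB^N_\ell(\RR^6)}$, so it can be absorbed into the dissipative term on the left. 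This produces the simplified inequality
$$
\frac{d}{dt}\|g\|^2_{\cH^N_\ell(\RR^6)} + \frac{\eta_0}{4}\|g\|^2_{\cB^N_\ell(\RR^6)} \le C\bigl(1+\|f\|^2_{\cB^N_\ell(\RR^6)}\bigr)\|g\|^2_{\cH^N_\ell(\RR^6)} + C\|f\|^2_{\cH^N_\ell(\RR^6)}.
$$

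Next, I would apply Gronwall's lemma to the resulting $U' \le A(t)U + B(t)$ with $A(t) = C(1+\|f(t)\|^2_{\cB^N_\ell(\RR^6)})$ and $B(t) = C\|f(t)\|^2_{\cH^N_\ell(\RR^6)}$. The smallness assumption \eqref{part2-4-2-5.7} gives $\int_0^T A \le CT + C\epsilon_0^2$, so the Gronwall factor is controlled by $e^{CT}$ (after absorbing the $\epsilon_0$-dependent constant into $C$), and $\int_0^T B \le C\epsilon_0^2 T$. This immediately yields the $L^\infty_t \cH^N_\ell$ part of \eqref{part2-4-2-5.8}:
$$
\sup_{t\in[0,T]} \|g(t)\|^2_{\cH^N_\ell(\RR^6)} \le C e^{CT}\bigl(\|g_0\|^2_{\cH^N_\ell(\RR^6)} + \epsilon_0^2 T\bigr).
$$

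Finally, to obtain the $L^2_t \cB^N_\ell$ bound, I would integrate the absorbed differential inequality over $[0,T]$, so that $\frac{\eta_0}{4}\int_0^T \|g\|^2_{\cB^N_\ell} dt$ is bounded by $\|g_0\|^2_{\cH^N_\ell}$ plus the two integrated right-hand side terms. Inserting the $L^\infty_t$ bound on $\|g\|^2_{\cH^N_\ell}$ just obtained, together with \eqref{part2-4-2-5.7} to control $\int_0^T \|f\|^2_{\cB^N_\ell} dt$ and $\int_0^T \|f\|^2_{\cH^N_\ell} dt$, gives the second half of \eqref{part2-4-2-5.8}.

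No step is a genuine obstacle, since all the nontrivial functional analysis has already been packaged into \eqref{part2-4-2-5.6}; the only care required is bookkeeping the smallness constants so that the absorption step is legitimate, which forces the choice $C\epsilon_0 \le \eta_0/4$ at the very start. The soft potential hypothesis $\gamma + 2s \le 0$ enters only implicitly through its use in deriving \eqref{part2-4-2-5.6}.
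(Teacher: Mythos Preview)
Your proposal is correct and follows essentially the same approach as the paper: both start from the differential inequality \eqref{part2-4-2-5.6} and close it by absorption plus Gronwall. The only cosmetic difference is the order of operations---you absorb the $C\|f\|_{\cH^N_\ell}\|g\|^2_{\cB^N_\ell}$ term first (with a $T$-independent condition $C\epsilon_0\le\eta_0/4$) and then apply Gronwall, whereas the paper multiplies by the integrating factor $e^{-Ct}$ first, integrates, takes the supremum, and then absorbs both the $\|g\|^2_{L^2_t\cB}$ and $\|g\|^2_{L^\infty_t\cH}$ terms at once using the $T$-dependent conditions $Ce^{CT}\epsilon_0\le\eta_0/4$ and $Ce^{CT}\epsilon_0^2\le 1/2$. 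Both routes are standard and yield \eqref{part2-4-2-5.8}; your observation that the nontrivial work is already packaged in \eqref{part2-4-2-5.6} is exactly right.
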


\begin{proof}
{}From \eqref{part2-4-2-5.6}, we have, for $t\in ]0, T[$,
\begin{align*}
&\|g(t)\|^2_{{\cH\,}^{N}_\ell(\RR^6)}+\frac{\eta_0}{2} e^{C\, t}\int^t_0e^{-C\, s}
||g(s)||^2_{{\cB\,}^N_\ell(\RR^6)}ds\leq e^{C T}
\|g_0\|^2_{{\cH\,}^{N}_\ell(\RR^6)}\\
&\qquad + C e^{C\, T}\Big\{\int^T_0e^{-C\, s}
\|f(s)\|_{{\cH\,}^{N}_\ell(\RR^6)} \,
||g(s)||^2_{{\cB\,}^N_\ell(\RR^6)}ds\\
&\qquad +
\int^T_0e^{-C\, s}\|g(s)\|^2_{{\cH\,}^{N}_\ell(\RR^6)} \,
||f(s)||^2_{{\cB\,}^N_\ell(\RR^6)} ds+
\int^T_0e^{-C\, s}\|f(s)\|^2_{{\cH\,}^{N}_\ell(\RR^6)}ds \Big\}\, .
\end{align*}
Then
\begin{align*}
&\|g\|^2_{L^\infty([0, T]; {\cH\,}^{N}_\ell(\RR^6))}+\frac{\eta_0}{2}
||g||^2_{L^2([0, T]; {\cB\,}^N_\ell(\RR^6))}\leq e^{C\, T}\|g_0\|^2_{{\cH\,}^{N}_\ell(\RR^6)}\\
&\qquad + C e^{C\, T}\Big\{
\|f\|_{L^\infty([0, T]; {\cH\,}^{N}_\ell(\RR^6))} \,
||g||^2_{L^2([0, T]; {\cB\,}^N_\ell(\RR^6))}\\
&\qquad +\|g\|^2_{L^\infty([0, T]; {\cH\,}^{N}_\ell(\RR^6))} \,
||f||^2_{L^2([0, T]; {\cB\,}^N_\ell(\RR^6))} +
T\|f\|^2_{L^\infty([0, T]; {\cH\,}^{N}_\ell(\RR^6))} \Big\}\, .
\end{align*}
Hence, if we choose
$$
C e^{C\,  T}\epsilon_0\leq \frac{\eta_0}{4}, \,\,\,
C e^{C\, T}\epsilon^2_0\leq \frac{1}{2},
$$
then \eqref{part2-4-2-5.7} implies that
\begin{align*}
&\frac 1 2\|g\|^2_{L^\infty([0, T]; {\cH\,}^{N}_\ell(\RR^6))}+\frac{\eta_0}{4}
||g||^2_{L^2([0, T]; {\cB\,}^N_\ell(\RR^6))}\\
&\leq e^{C\, T}\|g_0\|^2_{{\cH\,}^{N}_\ell(\RR^6)}
+ C e^{C\, T}
T\epsilon^2_0 \, .
\end{align*}
And this completes the proof of the proposition.
\end{proof}

{}From the energy estimate \eqref{part2-4-2-5.8}, one can deduce the local existence as in \cite{amuxy3},
and we have proved the following precise version of Theorem \ref{part2-theorem-anygamma}.

\begin{theo}\label{part2-local-S}
Under the assumptions of Theorem \ref{part2-theorem-anygamma}, let $N\geq 6, \ell\geq N$. There exist $\epsilon_1,\, T>0$
 such that if $g_0\in \cH^{N}_\ell(\RR^6)$ and
$$
\|g_0\|_{\cH^{N}_\ell(\RR^6)}\leq \epsilon_1\, ,
$$
then the Cauchy problem \eqref{part2-4-2-1.5} admits a solution
$$
g \in L^\infty([0, T];\, \cH^{N}_\ell(\RR^6))\cap L^2([0, T];\,
\cB^N_\ell(\RR^6)).
$$
\end{theo}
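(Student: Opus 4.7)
The plan is to build a contraction/iteration scheme whose uniform bound is provided by Proposition \ref{part2-theo5.6}, and then to pass to the limit by exhibiting a Cauchy property in a weaker space.

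First I would set up the iteration. Put $g^0 \equiv 0$, and, given $g^n$, define $g^{n+1}$ as the solution of the linearized Cauchy problem
\[
\partial_t g^{n+1} + v\cdot\nabla_x g^{n+1} + \cL_1 g^{n+1} = \Gamma(g^n,g^{n+1}) - \cL_2 g^n, \qquad g^{n+1}|_{t=0}=g_0.
\]
The existence of each $g^{n+1}$ in the target space $L^\infty_t\cH^N_\ell\cap L^2_t\cB^N_\ell$ for small $T$ is a standard linear step: one regularizes the kernel (truncation of the angular singularity near $\theta=0$ and of the kinetic factor near $|v-v_*|=0$ and near infinity) so that $\Gamma$ and $\cL_1$ become bounded operators on $\cH^N_\ell$; solves the regularized linear problem by an ODE in Banach space; and passes to the limit using the uniform energy bound \eqref{part2-4-2-5.8}, which is insensitive to the regularization because all the upper‑bound constants depend only on $N,\ell,\gamma,s$.

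Next I would inductively propagate the smallness. Fix $\epsilon_0>0$ so that the hypothesis of Proposition \ref{part2-theo5.6} applies, then choose $T>0$ and $\epsilon_1\in(0,\epsilon_0)$ such that
\[
C e^{CT}\bigl(\epsilon_1^{\,2} + \epsilon_0^{\,2}\,T\bigr)\le \epsilon_0^{\,2}.
\]
Assuming $\|g_0\|_{\cH^N_\ell}\le \epsilon_1$, I would check by induction on $n$ that
\[
\|g^n\|_{L^\infty_T\cH^N_\ell}^2 + \|g^n\|_{L^2_T\cB^N_\ell}^2 \le \epsilon_0^{\,2},
\]
which is immediate from \eqref{part2-4-2-5.8} applied with $f=g^n$ and $g=g^{n+1}$.

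The main step—and, I expect, the main obstacle—is showing that $\{g^n\}$ is a Cauchy sequence in a weaker topology. Let $w^n=g^{n+1}-g^n$; subtracting the equations for two consecutive iterates gives
\[
\partial_t w^n + v\cdot\nabla_x w^n + \cL_1 w^n = \Gamma(w^{n-1},g^{n+1}) + \Gamma(g^{n-1},w^n) - \cL_2 w^{n-1}, \quad w^n|_{t=0}=0.
\]
The delicate point is the choice of the ambient space for $w^n$: because of the soft potential loss in the coercivity estimate (the dissipation only controls the weight $\ell-|\beta|+\gamma/2$, with $\gamma+2s\le 0$), one cannot close the difference estimate at the top regularity/weight level. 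I would therefore work in $L^\infty_T\cH^{N-1}_{\ell-1}\cap L^2_T\cB^{N-1}_{\ell-1}$, combining Proposition \ref{part2-weight-radja-lemma-6.1}, the commutator bound from Proposition \ref{part2-weight-radja-proposition-8.1} on $\cL_1$ with weight, the nonlinear estimate of Proposition \ref{part2-prop5.5}, and Proposition \ref{part2-prop2.10} for $\cL_2$, to obtain, after Gr\"onwall and absorbing the dissipation on $w^n$ from the left‑hand side,
\[
\|w^n\|_{L^\infty_T\cH^{N-1}_{\ell-1}}^{\,2} + \|w^n\|_{L^2_T\cB^{N-1}_{\ell-1}}^{\,2} \le \tfrac12\Bigl(\|w^{n-1}\|_{L^\infty_T\cH^{N-1}_{\ell-1}}^{\,2} + \|w^{n-1}\|_{L^2_T\cB^{N-1}_{\ell-1}}^{\,2}\Bigr),
\]
provided $\epsilon_0$ and $T$ are further reduced. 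Here the factor $\tfrac12$ comes from the $\epsilon_0$-smallness of $g^{n\pm1}$ that multiplies the $\Gamma$‑terms.

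Finally I would pass to the limit. The contraction produces $g^n\to g$ strongly in $L^\infty_T\cH^{N-1}_{\ell-1}\cap L^2_T\cB^{N-1}_{\ell-1}$, while the uniform bound yields weak-$\ast$ convergence in $L^\infty_T\cH^N_\ell$ and weak convergence in $L^2_T\cB^N_\ell$. The strong convergence is enough to identify the limits of $\Gamma(g^n,g^{n+1})$ and $\cL_2 g^n$ in the sense of distributions via the bilinear estimates of Section \ref{part2-section2}, so $g$ solves \eqref{part2-4-2-1.5} and, by weak lower semicontinuity, belongs to the required space. The initial condition is preserved because each $g^{n+1}(0)=g_0$ and the trace in time is continuous for the transport part of the equation.
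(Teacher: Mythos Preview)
Your proposal is correct and follows the same route the paper indicates: the paper simply remarks that ``from the energy estimate \eqref{part2-4-2-5.8}, one can deduce the local existence as in \cite{amuxy3},'' i.e.\ by the standard iteration on the linearized problem \eqref{part2-4-2-2.16} using Proposition~\ref{part2-theo5.6} for the uniform bound, which is precisely the scheme you wrote out in detail. Your additional step of establishing the Cauchy property in the lower-index space $\cH^{N-1}_{\ell-1}$ is the natural way to close the argument and is consistent with the approach of \cite{amuxy3}.
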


\begin{rema}
By using the Proposition \ref{part2-upperGG}, we can get the same results as Theorem \ref{part2-local-S} if we replace
${\cH\,}^{N}_\ell(\RR^6), {\cB\,}^{N}_\ell(\RR^6)$ by ${\tilde{\cH}\,}^{N}_\ell(\RR^6), {\tilde{\cB}\,}^{N}_\ell(\RR^6)$ respectively.
In other words, Theorem \ref{part2-theorem-anygamma} holds also in the function space ${\tilde{\cH}\,}^{N}_\ell(\RR^6)$.
\end{rema}

\subsection{$L^2$-solutions}
Under some more restrictive conditions on the parameters $\gamma$ and $s$, we can prove local existence of
solutions with only differentiation in the $x$ variable. That is, we will
 deduce the energy estimate for the  equation \eqref{part2-4-2-2.16} in the function space
$H^N(\RR^3_x; L^2(\RR^3_v))$.
For $N\geq 3$  and $\beta\in\NN^3,
|\beta|\leq N$, by taking
$$
\varphi(t, x, v)= (-1)^{|\beta|}\partial^\beta_{x}
\partial^\beta_{x} g(t, x, v),
$$
as a test function on $\RR^3_x\times\RR^3_v$, we get
\begin{align*}
&\frac 12 \frac{d}{d t}\|\partial^\beta_x\,g\|^2_{L^2(\RR^6)}+
\Big(\partial^\beta_x \cL_1(g),\,
\partial^\beta_x g\Big)_{L^2(\RR^6)}\\
&=\Big(\partial^\beta_x \Gamma(f, \, g),\,
\partial^\beta_x g\Big)_{L^2(\RR^6)}-
\Big(\partial^\beta_x \cL_2(f),\,
\partial^\beta_x g\Big)_{L^2(\RR^6)},
\end{align*}
where we have used the fact that $
\left( v\,\cdot\,\nabla_x \Big(\partial^\beta_x\,
g\Big),\,\partial^\beta_x g\right)_{L^2(\RR^6)}=0\,$.

Applying now Proposition \ref{part2-prop2.8}, Proposition \ref{part2-prop2.10}, we get for any  $N\geq 3$ and
$|\beta|\leq N$,
\begin{align*}
&\frac 12 \frac{d}{d t}\|\partial^\beta\,g\|^2_{L^2(\RR^6)}+
\Big(\cL_1\Big(\partial^\beta_{x} g\Big),\,
\partial^\beta_{x}g\Big)_{L^2(\RR^6)}\\
&\lesssim \|f\|_{H^{N}(\RR^3_x; L^2(\RR^3_v))} \,
||g||^2_{\cX^N(\RR^6)}+
\|f\|_{\cX^N(\RR^6)} \,\|g\|_{H^{N}(\RR^3_x; L^2(\RR^3_v))}
||g||_{\cX^{N}(\RR^6)}\\
&\qquad+ \|g\|^2_{H^{N}(\RR^3_x; L^2(\RR^3_v))}+
\|f\|_{H^{N}(\RR^3_x; L^2(\RR^3_v))} \,
||g||_{\cX^N(\RR^6)}+ \|g\|_{H^{N}(\RR^3_x; L^2(\RR^3_v))}
||g||_{\cX^N(\RR^6)} .
\end{align*}
By using now the coercivity estimate \eqref{part2-4-2-2.9},
and by taking summation over $|\beta|\leq N$, the Cauchy-Schwarz
inequality leads to
\begin{align}\label{part2-4-2-2.17}
&\frac{d}{d t}\|g\|^2_{H^{N}(\RR^3_x; L^2(\RR^3_v))}+\frac{\eta_0}{2}
||g||^2_{\cX^N(\RR^6)}\leq C\Big\{
\|f\|_{H^{N}(\RR^3_x; L^2(\RR^3_v))} \,
||g||^2_{\cX^N(\RR^6)}\\
&\qquad+
\|g\|^2_{H^{N}(\RR^3_x; L^2(\RR^3_v))} \,
||f||^2_{\cX^N(\RR^6)}+ \|g\|^2_{H^{N}(\RR^3_x; L^2(\RR^3_v))}+
\|f\|^2_{H^{N}(\RR^3_x; L^2(\RR^3_v))} \Big\}\, .\nonumber
\end{align}

We are now ready to  prove the following

\begin{prop}
%\label{part2-theo2.13}
Under the assumptions  of Theorem \ref{part2-theo1.1}, let  $N\geq 3$, $g_0\in H^{N}(\RR^3_x; L^2(\RR^3_v))$ and
$$
f\in L^\infty([0, T];\,H^{N}(\RR^3_x; L^2(\RR^3_v)))\bigcap L^2([0, T];\, \cX^N(\RR^6)).$$
If $g\in L^\infty([0,
T];\,H^{N}(\RR^3_x; L^2(\RR^3_v)))\bigcap L^2([0, T];\, \cX^N(\RR^6))$ is a
solution of the Cauchy problem \eqref{part2-4-2-2.16}, then there exists
$\epsilon_0>0$ such that if
\begin{equation}
%\label{part2-4-2-2.18}
\|f\|^2_{L^\infty([0, T];\,H^{N}(\RR^3_x; L^2(\RR^3_v)))}+\|f\|^2_{L^2([0, T];\,\cX^N(\RR^6))}\leq \epsilon^2_0,
\end{equation}
we have
\begin{equation}\label{part2-4-2-2.19}
\|g\|^2_{L^\infty([0, T];\,H^{N}(\RR^3_x; L^2(\RR^3_v)))}+ ||g||^2_{L^2([0,
T];\,\cX^N(\RR^6))}\leq Ce^{C\,
T}\Big(\|g_0\|^2_{H^{N}(\RR^3_x; L^2(\RR^3_v))}+\epsilon_0^2 T\Big),
\end{equation}
for a constant $C>0$ depending only on $N$ .
\end{prop}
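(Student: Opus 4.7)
The plan is to apply a Gronwall-type argument to the already-derived energy inequality \eqref{part2-4-2-2.17}, following exactly the pattern of Proposition \ref{part2-theo5.6} but in the unweighted spaces $H^N(\RR^3_x;L^2(\RR^3_v))$ and $\cX^N(\RR^6)$. Since the differential inequality is in hand, the remainder of the argument is essentially bookkeeping: absorb the bad $f$-dependent terms into the coercive dissipation and into a Gronwall constant, then close under smallness of $\epsilon_0$.

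Concretely, I would first multiply \eqref{part2-4-2-2.17} by $e^{-Ct}$ and integrate on $[0,t]$ for $t\in[0,T]$; this yields
\[
\|g(t)\|^2_{H^N(\RR^3_x;L^2(\RR^3_v))}+\frac{\eta_0}{2}e^{Ct}\int_0^t e^{-Cs}\|g(s)\|^2_{\cX^N(\RR^6)}\,ds\le e^{CT}\|g_0\|^2_{H^N(\RR^3_x;L^2(\RR^3_v))}
\]
plus $Ce^{CT}$ times the time integral of the three remaining terms on the right of \eqref{part2-4-2-2.17}. Taking the supremum over $t\in[0,T]$ on the left (separately for each of the two summands) and estimating the time integrals trivially by $L^\infty_tL^2_x$-times-$L^2_t\cX^N$ products and by $T\|f\|^2_{L^\infty_tH^N}$ produces the schematic bound
\begin{align*}
&\|g\|^2_{L^\infty_tH^N(\RR^3_x;L^2(\RR^3_v))}+\tfrac{\eta_0}{2}\|g\|^2_{L^2_t\cX^N(\RR^6)}\\
&\quad\le e^{CT}\|g_0\|^2_{H^N(\RR^3_x;L^2(\RR^3_v))}\\
&\qquad+Ce^{CT}\Bigl\{\|f\|_{L^\infty_tH^N}\|g\|^2_{L^2_t\cX^N}+\|g\|^2_{L^\infty_tH^N}\|f\|^2_{L^2_t\cX^N}+T\|f\|^2_{L^\infty_tH^N}\Bigr\}.
\end{align*}

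Now choose $\epsilon_0>0$ so small that $Ce^{CT}\epsilon_0\le\eta_0/4$ and $Ce^{CT}\epsilon_0^2\le 1/2$. Under the hypothesis on $f$, the first bracket-term contributes at most $\tfrac{\eta_0}{4}\|g\|^2_{L^2_t\cX^N}$ and the second at most $\tfrac12\|g\|^2_{L^\infty_tH^N}$, both of which can be absorbed into the left-hand side. The result is
\[
\tfrac12\|g\|^2_{L^\infty_tH^N(\RR^3_x;L^2(\RR^3_v))}+\tfrac{\eta_0}{4}\|g\|^2_{L^2_t\cX^N(\RR^6)}\le e^{CT}\|g_0\|^2_{H^N(\RR^3_x;L^2(\RR^3_v))}+CTe^{CT}\epsilon_0^2,
\]
which is precisely \eqref{part2-4-2-2.19} after relabeling the constant.

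There is essentially no obstacle here since all the nontrivial work — establishing the coercivity estimate \eqref{part2-4-2-2.9}, the trilinear bound of Proposition \ref{part2-prop2.8}, the $\cL_2$-estimate of Proposition \ref{part2-prop2.10}, and the integration-by-parts identity $(v\cdot\nabla_x\partial^\beta_xg,\partial^\beta_xg)_{L^2(\RR^6)}=0$ — has already been carried out to produce \eqref{part2-4-2-2.17}. The only care required is in the order in which $L^\infty_t$-suprema and $L^2_t$-integrals are extracted so that the Cauchy-Schwarz applications produce exactly the norms on the right-hand side of the target inequality; this is handled as above by taking suprema separately on the two nonnegative summands on the left of the integrated form.
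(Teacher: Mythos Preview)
Your proposal is correct and follows essentially the same approach as the paper: multiply the differential inequality \eqref{part2-4-2-2.17} by an integrating factor, integrate in time, take suprema, and absorb the $f$-dependent terms under the smallness conditions $Ce^{CT}\epsilon_0\le\eta_0/4$ and $Ce^{CT}\epsilon_0^2\le 1/2$, exactly as in Proposition \ref{part2-theo5.6}. The paper's proof is identical in structure and refers back to that proposition for the closing step.
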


\begin{proof}
{}From \eqref{part2-4-2-2.17}, we have, for $t\in ]0, T[$,
\begin{align*}
&\|g(t)\|^2_{H^{N}(\RR^3_x; L^2(\RR^3_v))}+\frac{\eta_0}{2} e^{C t}\int^t_0e^{-C s}
||g(s)||^2_{\cX^N(\RR^6)}ds\leq e^{C T}
\|g_0\|^2_{H^{N}(\RR^3_x; L^2(\RR^3_v))}\\
&\qquad + Ce^{C T}\Big\{\int^T_0e^{-C s}
\|f(s)\|_{H^{N}(\RR^3_x; L^2(\RR^3_v))} \,
||g(s)||^2_{\cX^N(\RR^6)}ds\\
&\qquad +
\int^T_0e^{-C s}\|g(s)\|^2_{H^{N}(\RR^3_x; L^2(\RR^3_v))} \,
||f(s)||^2_{\cX^N(\RR^6)} ds+
\int^T_0e^{-C s}\|f(s)\|^2_{H^{N}(\RR^3_x; L^2(\RR^3_v))}ds \Big\}\, .
\end{align*}
Then
\begin{align*}
&\|g\|^2_{L^\infty([0, T]; H^{N}(\RR^3_x; L^2(\RR^3_v)))}+\frac{\eta_0}{2}
||g||^2_{L^2([0, T]; \cX^N(\RR^6))}\leq e^{C T}\|g_0\|^2_{H^{N}(\RR^3_x; L^2(\RR^3_v))}\\
&\qquad + Ce^{C T}\Big\{
\|f\|_{L^\infty([0, T]; H^{N}(\RR^3_x; L^2(\RR^3_v)))} \,
||g||^2_{L^2([0, T]; \cX^N(\RR^6))}\\
&\qquad +\|g\|^2_{L^\infty([0, T]; H^{N}(\RR^3_x; L^2(\RR^3_v)))} \,
||f||^2_{L^2([0, T]; \cX^N(\RR^6))} +
T\|f\|^2_{L^\infty([0, T]; H^{N}(\RR^3_x; L^2(\RR^3_v)))} \Big\}\, .
\end{align*}
By choosing $T$ small as in Proposition \ref{part2-theo5.6}, we complete the proof.
\end{proof}

As in \cite{amuxy3}, the energy estimate \eqref{part2-4-2-2.19} yields

\begin{theo}
%\label{part2-theo2.14}
Under the assumptions of Theorem \ref{part2-theo1.1}, for $N\geq 3$, there exist $\epsilon_1,\, T>0$
 such that if $g_0\in H^{N}(\RR^3_x; L^2(\RR^3_v))$ and
$$
\|g_0\|_{H^{N}(\RR^3_x; L^2(\RR^3_v))}\leq \epsilon_1\, ,
$$
then the Cauchy problem \eqref{part2-4-2-1.5} admits a solution
$$
g \in L^\infty([0, T];\, H^{N}(\RR^3_x; L^2(\RR^3_v)))\cap L^2([0, T];\,
\cX^N(\RR^6)).
$$
\end{theo}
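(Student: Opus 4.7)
The plan is to obtain the local solution through a standard Picard-type iteration based on the linear problem \eqref{part2-4-2-2.16}, invoking the energy estimate \eqref{part2-4-2-2.19} for uniform bounds and then extracting a limit via a contraction argument.

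First I would set up the iteration. Define $g^0 \equiv 0$ and, given $g^n$, let $g^{n+1}$ be the solution of the linear Cauchy problem
\begin{equation*}
\partial_t g^{n+1} + v\cdot\nabla_x g^{n+1} + \cL_1 g^{n+1} = \Gamma(g^n, g^{n+1}) - \cL_2 g^n, \qquad g^{n+1}|_{t=0}=g_0.
\end{equation*}
Existence of $g^{n+1}$ in $L^\infty([0,T]; H^N(\RR^3_x;L^2(\RR^3_v))) \cap L^2([0,T]; \cX^N(\RR^6))$ for each fixed $g^n$ follows from standard arguments: regularize either by adding a small $-\varepsilon\Delta_v$ viscosity (or by Galerkin truncation in $x$), use the coercivity \eqref{part2-4-2-2.9} to obtain uniform-in-$\varepsilon$ estimates, and pass to the limit $\varepsilon\to 0$. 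The fact that $\cL_1$ is nonnegative and that $v\cdot\nabla_x$ is antisymmetric makes this step routine modulo the regularization.

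Next I would establish uniform bounds by induction. Assuming $g^n$ satisfies
\begin{equation*}
\|g^n\|_{L^\infty([0,T];H^N(\RR^3_x;L^2(\RR^3_v)))}^2 + \|g^n\|^2_{L^2([0,T];\cX^N(\RR^6))} \le \epsilon_0^2,
\end{equation*}
with $\epsilon_0$ chosen as in the energy estimate, and taking $f=g^n$, $g=g^{n+1}$ in \eqref{part2-4-2-2.19}, we get
\begin{equation*}
\|g^{n+1}\|_{L^\infty([0,T];H^N)}^2 + \|g^{n+1}\|^2_{L^2([0,T];\cX^N)} \le C e^{CT}\big(\epsilon_1^2 + \epsilon_0^2 T\big).
\end{equation*}
Choosing first $\epsilon_0$ so that $C\epsilon_0 \le \eta_0/4$, then $T$ small enough that $CT\le 1$, and finally $\epsilon_1$ small enough that $C e^{CT}(\epsilon_1^2 + \epsilon_0^2 T) \le \epsilon_0^2$, closes the induction.

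Then I would show the sequence is contractive. Setting $w^n = g^{n+1}-g^n$, the difference satisfies
\begin{equation*}
\partial_t w^n + v\cdot\nabla_x w^n + \cL_1 w^n = \Gamma(g^n,w^n) + \Gamma(w^{n-1},g^n) - \cL_2 w^{n-1},
\end{equation*}
with zero initial data. The same energy identity used to derive \eqref{part2-4-2-2.17}, together with Propositions \ref{part2-prop2.8} and \ref{part2-prop2.10}, gives an estimate of the form
\begin{equation*}
\|w^n\|_{L^\infty([0,T];H^N)}^2 + \|w^n\|^2_{L^2([0,T];\cX^N)} \le C T e^{CT}\epsilon_0^2\,\Big(\|w^{n-1}\|_{L^\infty([0,T];H^N)}^2 + \|w^{n-1}\|^2_{L^2([0,T];\cX^N)}\Big),
\end{equation*}
which for $T$ small becomes a genuine contraction. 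Cauchy-ness thus yields strong convergence $g^n \to g$ in $L^\infty([0,T];H^N)\cap L^2([0,T];\cX^N)$, and passing to the limit in the equation (using Proposition \ref{part2-prop2.8} for the nonlinear term and Proposition \ref{part2-prop2.10} for $\cL_2$) gives the desired solution.

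The main obstacle is the contraction step: unlike the $\cB^N_\ell$ setting of Theorem~\ref{part2-local-S}, the function space carries no $v$-derivatives and only a polynomial weight replaced by none, so one genuinely needs Proposition \ref{part2-prop2.8} — whose proof crucially exploits the Sobolev embedding $H^2(\RR^3_x)\hookrightarrow L^\infty(\RR^3_x)$ and the condition $\gamma>\max(-3,-3/2-2s)$ entering Theorem \ref{theorem-1.1-b} — in order to bound the nonlinear remainder in terms of the non-isotropic dissipation norm $\cX^N$. In particular, losing a bit of velocity regularity into the dissipation is unavoidable, and this is why the soft-potential condition $\gamma+2s\le 0$ (used to bring weighted $L^2_{s+\gamma/2}$ norms below $L^2$) is essential for closing the a priori estimate in this weaker functional framework.
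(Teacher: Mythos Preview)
Your approach is correct and matches the paper's: the paper itself gives no detailed proof here, writing only ``As in \cite{amuxy3}, the energy estimate \eqref{part2-4-2-2.19} yields'' the theorem, and what you have written is precisely the standard iteration scheme from that reference---set up the sequence via the linear problem \eqref{part2-4-2-2.16}, propagate the smallness bound \eqref{part2-4-2-2.19} by induction, and close by contraction. One minor correction: in the contraction step the $\cL_2 w^{n-1}$ term carries no smallness factor, so the contraction constant should read $C e^{CT}(\epsilon_0^2+T)$ rather than $CTe^{CT}\epsilon_0^2$; this does not affect the conclusion since both $T$ and $\epsilon_0$ are taken small.
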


%%%%%%%%%%%%%%%%%%%%%%%%%%%%%%%%%%%%%%%%%%%%%%%%%%%%%%%%%%%%%%
%%%%%%%%%%%%%%%%%%%%%%%%%%%%%%%%%%%%%%%%%%%%%%%%%%%%%%%%%%%%%%
\section{Global solutions}
%\label{part2-section5}
\smallskip
\setcounter{equation}{0}

We are now ready to prove the global existence of weak and classical
solutions in the following two subsections.

\subsection{$L^2$-solutions }
We now conclude for the global existence issue in Theorem \ref{part2-theo1.1}.
We already gave  the
macro-micro decomposition of solutions introduced in \cite{guo-1}:
\begin{align*}
 &g=\pP g+(\iI-\pP)g=g_1+g_2,
 \\& g_1=(a+v\cdot b+ |v|^2 c)\sqrt {\mu}, \qquad \cA =(a,b,c)\, .
\end{align*}
Notice that
\begin{align*}
&||g||_{H^N(\RR^3_x; L^2(\RR^3_v))}^2\sim
\|\cA\|_{H^N(\RR^3)}^2+\|g_2\|_{H^N(\RR^3_x; L^2(\RR^3_v))}^2,
\\&
||g||^2_{\cX^{N}(\RR^6)}\sim \|\cA\|_{H^N(\RR^3)}^2+\|g_2\|_{\cX^N(\RR^6)}^2.
\end{align*}
The temporal energy functional and dissipation integral of solutions are
defined by
\begin{align*}
&\cE_N=\|g\|_{H^N(\RR^3_x; L^2(\RR^3_v))}^2=\|g_1\|_{H^N(\RR^3_x; L^2(\RR^3_v))}^2+
\|g_2\|_{H^N(\RR^3_x; L^2(\RR^3_v))}^2\\
&\qquad\sim
\|\cA\|_{H^N(\RR^3)}^2+\|g_2\|_{H^N(\RR^3_x; L^2(\RR^3_v))}^2,
\\&
\cD_N=
\|\nabla_x g_1\|^2_{H^{N-1}(\RR^3_x; L^2(\RR^3_v))}+\|g_2\|^2_{\cX^{N}(\RR^6)}
\sim \|\nabla_x\cA\|^2_{H^{N-1}(\RR^3)}+\|g_2\|^2_{\cX^{N}(\RR^6)},
\end{align*}
respectively.
Let  $g=g(t,x,v)$ be a solution to
\begin{equation}\label{part2-1.1}
 g_t+v\cdot\nabla_x g+\mathcal{L}g=\Gamma(g, g)\,,\,\,\,\,\,\,g|_{t=0}=g_0.
\end{equation}

We start with the  macroscopic energy estimate.
It is well-known that the macroscopic component
$
g_1=\pP g\sim \cA=(a,b,c),
$
satisfies the following set of equations
\begin{equation}\label{part2-macroeq}
\left\{
\begin{array}{rlrl}
v_i | v|^2 \mu^{1/2} :&\quad&\nabla_x c &= -\partial_t r_c+l_c + h_c,
\\
v^2_i \mu^{1/2}:&\quad&\partial_t c +\partial_ib_i &= -\partial_t r_i+l_i + h_i ,
\\
 v_iv_j \mu^{1/2}:& &\partial_ib_j + \partial_j b_i &
= -\partial_t r_{ij}+l_{ij} + h_{ij} , \quad i\neq j,
\\
 v_i \mu^{1/2} :&&\partial_t b_i + \partial_i a &
= -\partial_t r_{bi}+l_{bi} + h_{bi},
\\
 \mu^{1/2} :&& \partial_t a &= -\partial_t r_a+l_a + h_a,
\end{array}
\right.
\end{equation}
where
\begin{align*}
&r=(g_2,e)_{L^2(\RR^3_v)},\qquad l=-(v\cdot\nabla_x g_2+\mathcal{L} g_2,e)_{L^2(\RR^3_v)},
\qquad h=(\Gamma(g,g),e)_{L^2(\RR^3_v)},
\end{align*}
stand for $r_c, \cdots, h_a$, while
\begin{align*}
 e\in \text{span}
\lbrace v_i | v|^2 \mu^{1/2} , v^2_i \mu^{1/2} ,
v_iv_j \mu^{1/2}, v_i \mu^{1/2} , \mu^{1/2} \rbrace.
\end{align*}
\begin{lemm}\label{part2-abc2}  Assume $N\ge 3$ and let $\p^\alpha=\p^\alpha_x$, $\alpha\in\NN^3$, $|\alpha|\le N$. Then,
\begin{equation*}
\|\p^\alpha \cA^2\|_{L^2(\RR^3_x)}\le \|\nabla_x \cA
\|_{H^{N-1}(\RR^3_x)}\|\cA\|_{H^{N-1}(\RR^3_x)}.
\end{equation*}
\end{lemm}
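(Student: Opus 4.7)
The plan is to expand via the Leibniz rule and treat each resulting bilinear term with Hölder's inequality and a Sobolev embedding. Writing
$$
\p^\alpha \cA^2 = \sum_{\alpha_1 + \alpha_2 = \alpha} C_{\alpha_1 \alpha_2} \, \p^{\alpha_1} \cA \cdot \p^{\alpha_2} \cA,
$$
I take the $L^2$ norm term by term and, by symmetry of the sum, restrict to the case $|\alpha_1| \le |\alpha_2|$, so that $|\alpha_1| \le N/2$ and $|\alpha_2| \ge |\alpha|/2 \ge 1$ (the lemma being applied with $|\alpha| \ge 1$, as is natural in the macroscopic dissipation estimates that follow, where a derivative is always present on the product).

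Next, I apply $\|\p^{\alpha_1}\cA \cdot \p^{\alpha_2}\cA\|_{L^2(\RR^3_x)} \le \|\p^{\alpha_1}\cA\|_{L^\infty} \|\p^{\alpha_2}\cA\|_{L^2}$ together with the three-dimensional Sobolev embedding $H^2(\RR^3_x) \hookrightarrow L^\infty(\RR^3_x)$, giving $\|\p^{\alpha_1}\cA\|_{L^\infty} \lesssim \|\p^{\alpha_1}\cA\|_{H^2}$. Since $|\alpha_2| \ge 1$, I factor out one derivative as $\p^{\alpha_2} = \p_{x_j}\p^{\alpha_2 - e_j}$ for some index $j$, yielding $\|\p^{\alpha_2}\cA\|_{L^2} \le \|\nabla_x \cA\|_{H^{|\alpha_2|-1}} \le \|\nabla_x \cA\|_{H^{N-1}}$. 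For the companion factor, if $|\alpha_1| = 0$ I simply note $\|\cA\|_{H^2} \le \|\cA\|_{H^{N-1}}$ (valid since $N - 1 \ge 2$); if instead $|\alpha_1| \ge 1$, I peel off a derivative in the same way to obtain $\|\p^{\alpha_1}\cA\|_{H^2} \le \|\nabla_x \cA\|_{H^{|\alpha_1|+1}}$, and check that $|\alpha_1| + 1 \le N - 1$ using $|\alpha_1| \le N/2 \le N - 2$. Assembling the pieces produces the desired bound.

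The delicate point is the index bookkeeping in the borderline case $N = 3$: there the embedding $H^2 \hookrightarrow L^\infty$ saturates, so no room to spare remains, and one has to verify that the split $|\alpha_1| \le N/2$ leaves exactly enough derivative budget on both factors (which it does, since $|\alpha_1| \le 1$ forces $|\alpha_1| + 1 \le 2 = N - 1$). An alternative that avoids this tightness is to apply Hölder with exponents $(6, 3)$ and invoke the embeddings $H^1 \hookrightarrow L^6$ and $H^{1/2} \hookrightarrow L^3$, but this brings in fractional Sobolev indices; the $L^\infty \cdot L^2$ split above is cleaner and suffices for all $N \ge 3$.
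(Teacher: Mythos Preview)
Your argument for $|\alpha|\ge 1$ is essentially the paper's own: Leibniz expansion, the $L^\infty\cdot L^2$ H\"older split, and the embedding $H^2(\RR^3_x)\hookrightarrow L^\infty(\RR^3_x)$, with the same index bookkeeping. That part is fine.

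The gap is your dismissal of $|\alpha|=0$. The lemma is stated for all $|\alpha|\le N$, and the case $\alpha=0$ is genuinely used downstream: in the estimate of $\|\p^\alpha h\|_{L^2_x}$ via the term $H^{(11)}\sim \cA^2$ (where $|\alpha|\le N-1$ includes $\alpha=0$), and again in the nonlinear estimates for $J^{11}$ and $K_{111}$, where the paper explicitly isolates $\alpha_1=\alpha_2=0$. Your $L^\infty\cdot L^2$ split cannot produce the required factor $\|\nabla_x\cA\|_{H^{N-1}}$ when no derivative falls on $\cA^2$: you would only obtain $\|\cA\|_{H^{N-1}}^2$, which is not the dissipation-type bound needed. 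The $(6,3)$ H\"older you mention at the end is not merely a cosmetic alternative---it is precisely the tool that extracts a gradient via $\|\cA\|_{L^6}\lesssim\|\nabla_x\cA\|_{L^2}$, and this is exactly how the paper handles $\alpha=0$:
\[
\|\cA^2\|_{L^2(\RR^3_x)}\le \|\cA\|_{L^6(\RR^3_x)}\|\cA\|_{L^3(\RR^3_x)}\lesssim \|\nabla_x\cA\|_{L^2(\RR^3_x)}\|\cA\|_{H^1(\RR^3_x)}.
\]
So you should reinstate this case rather than assume it away.
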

\begin{proof}
Firstly, one has, for $|\alpha|=0$
\begin{align*}
&\|\cA^2\|_{L^2(\RR^3_x)}\le
\|\cA\|_{L^6(\RR^3_x)}\|\cA\|_{L^3(\RR^3_x)}
\le \|\nabla_x \cA\|_{L^2(\RR^3_x)}\|\cA\|_{H^1(\RR^3_x)}.
\end{align*}
Also for $|\alpha|=1$, we have
\begin{align*}
&\|\p \cA^2\|_{L^2(\RR^3_x)}\le \|\cA\p \cA\|_{L^2(\RR^3_x)}
\le \|\cA\|_{L^\infty(\RR^3_x)}\|\nabla_x \cA\|_{L^2(\RR^3_x)}
\\&\hspace*{3cm}
\le\|\cA\|_{H^{N-1}(\RR^3_x)}\|\nabla_x \cA\|_{L^2(\RR^3_x)},
\end{align*}
and for $2\le |\alpha|\le N$,
\begin{align*}
&\|\p^\alpha \cA^2\|_{L^2(\RR^3_x)}\le \sum_{k'\le \frac N2}
\|\p_x^{k'}\cA\p_x^{k-k'}\cA\|_{L^2(\RR^3_x)}
\\&\le \sum\|\p_x^{k'}\cA\|_{L^\infty(\RR^3_x)}
\|\p_x^{k-k'} \cA\|_{L^2(\RR^3_x)}
\le \|\cA\|_{H^{N-1}(\RR^3_x)}
\|\nabla_x \cA\|_{H^{N-1}(\RR^3_x)}.
\end{align*}
This completes the proof of the lemma.
\end{proof}

\begin{lemm} \label{part2-rlh} Assume $\gamma >-3$, $N\ge 3$. Let $\p^\alpha=\p^\alpha_x$, $\p_i=\p_{x_i}$, $|\alpha|\le N-1$. Then, one has
\begin{align}\label{part2-Dr}
&\|\p_i\p^\alpha r\,\|_{L^2(\RR^3_x)}+\|\p^\alpha   l\,\|_{L^2(\RR^3_x)}
\le\|g_2\|_{\cX^N(\RR^6)}\equiv A_1,
\\
 &\label{part2-Dh}
\|\p^\alpha   h\|_{L^2(\RR^3_x)}\le \|\nabla_x \cA\|_{H^{N-2}(\RR^3_x)}\|\cA\|_{H^{N-1}(\RR^3_x)}
\\&\quad+\|  \cA\|_{H^{N-1}(\RR^3_x)}\|g_2\|_{\cX^N(\RR^6)}+\|g_2\|_{H^N(\RR^3_x; L^2(\RR^3_v))}\|g_2\|_{\cX^N(\RR^6)}\notag\equiv A_2.
\end{align}
\end{lemm}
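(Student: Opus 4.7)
\textbf{Plan for Lemma \ref{part2-rlh}.} The plan is to treat each of the two inequalities separately, exploiting that the test functions $e$ span a fixed finite-dimensional space of Gaussian-weighted polynomials, together with the macro--micro decomposition $g=g_1+g_2$.

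\emph{Proof of \eqref{part2-Dr}.} Since $e$ is a fixed Gaussian polynomial, Cauchy--Schwarz in $v$ gives pointwise in $x$
\begin{equation*}
|\partial_i\partial^\alpha r(x)|=|(\partial_i\partial^\alpha g_2(x,\cdot),e)_{L^2_v}|\lesssim \|\partial_i\partial^\alpha g_2(x,\cdot)\|_{L^2_v},
\end{equation*}
and squaring and integrating in $x$ controls $\|\partial_i\partial^\alpha r\|_{L^2_x}$ by $\|g_2\|_{\cX^N}$ whenever $|\alpha|+1\le N$. For $l$, split
$l=-\sum_i(\partial_{x_i}g_2,v_ie)_{L^2_v}-(\cL g_2,e)_{L^2_v}$. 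The first sum is handled as above since $v_ie$ is again a Gaussian polynomial. For the second term, $\cL$ is self-adjoint and positive semi-definite on $L^2_v$, so Cauchy--Schwarz for the bilinear form $(\cL\,\cdot\,,\cdot)_{L^2_v}$ together with Proposition \ref{part1-prop2.1} yields $|(\cL g_2,e)_{L^2_v}|\lesssim |||g_2(x,\cdot)|||_{\Phi_\gamma}$; integrating in $x$ gives the bound by $\|g_2\|_{\cX^N}$.

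\emph{Proof of \eqref{part2-Dh}.} Write $g_1=\sum_{k=1}^{5}A_k(x)E_k(v)$ with $A_k\in\{a,b_1,b_2,b_3,c\}$ and $E_k$ fixed Gaussian polynomials, and decompose
\begin{equation*}
h=(\Gamma(g_1,g_1),e)_{L^2_v}+(\Gamma(g_1,g_2),e)_{L^2_v}+(\Gamma(g_2,g_1),e)_{L^2_v}+(\Gamma(g_2,g_2),e)_{L^2_v}=I+II+III+IV.
\end{equation*}
For $I$, bilinearity reduces it to $I=\sum_{j,k}c_{jk}A_jA_k$ with numerical constants $c_{jk}=(\Gamma(E_j,E_k),e)_{L^2_v}$; a Moser product estimate of the same type as Lemma \ref{part2-abc2}, applied at order $N-1$ rather than $N$, gives $\|\partial^\alpha(A_jA_k)\|_{L^2_x}\lesssim\|\nabla_x\cA\|_{H^{N-2}}\|\cA\|_{H^{N-1}}$ for $|\alpha|\le N-1$. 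For $IV$, argue by duality:
\begin{equation*}
\|\partial^\alpha IV\|_{L^2_x}=\sup_{\|\phi\|_{L^2_x}\le 1}|(\partial^\alpha\Gamma(g_2,g_2),e(v)\phi(x))_{L^2(\RR^6)}|,
\end{equation*}
and apply Proposition \ref{part2-prop2.8} with test function $h=e\phi$; since $|||e|||_{\Phi_\gamma}$ is a finite constant, $\|e\phi\|_{\cX^0}\lesssim\|\phi\|_{L^2_x}$, which yields $\|\partial^\alpha IV\|_{L^2_x}\lesssim\|g_2\|_{H^N(\RR^3_x;L^2_v)}\|g_2\|_{\cX^N}$. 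For $II$ (and symmetrically $III$), write $II=\sum_k A_k(x)\Psi_k[g_2](x)$ with $\Psi_k[g_2](x)=(\Gamma(E_k,g_2(x,\cdot)),e)_{L^2_v}$ and apply Theorem \ref{theorem-1.1-b} pointwise in $x$; since $\|E_k\|_{L^2_{s+\gamma/2}}$ and $|||E_k|||_{\Phi_\gamma}$ are fixed constants, one obtains $|\Psi_k[g_2](x)|\lesssim|||g_2(x,\cdot)|||_{\Phi_\gamma}$. Expanding $\partial^\alpha II$ by Leibniz and estimating each product $\partial^{\alpha_1}A_k\cdot\Psi_k[\partial^{\alpha_2}g_2]$ in $L^2_x$ by H\"older, one places the lower-order factor in $L^\infty_x$ via $H^2(\RR^3_x)\hookrightarrow L^\infty(\RR^3_x)$ (combined with Lemma \ref{part2-lemm2.7} when the non-isotropic norm of $g_2$ appears in $L^\infty_x$), keeping the top-order factor in $L^2_x$; the constraint $|\alpha|\le N-1$ with $N\ge 3$ ensures that all Sobolev indices fit within $H^{N-1}$ on the $\cA$-side and within $\cX^N$ on the $g_2$-side, producing the bound $\|\cA\|_{H^{N-1}}\|g_2\|_{\cX^N}$.

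\emph{Main obstacle.} A naive application of Proposition \ref{part2-prop2.8} to the cross terms $II$ and $III$ would produce $\|g_1\|_{H^N}\sim\|\cA\|_{H^N}$, one derivative above the $\|\cA\|_{H^{N-1}}$ allowed by the statement. The delicate point is to stay at level $H^{N-1}$ on $\cA$: this requires pointwise-in-$x$ control of $\Gamma(E_k,\partial^{\alpha_2}g_2)$ via Theorem \ref{theorem-1.1-b} (crucially exploiting that $E_k$ is a fixed Gaussian, so its norms become numerical constants), followed by the case analysis above on how the $x$-derivatives split between $\cA$ and $g_2$.
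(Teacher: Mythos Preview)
Your overall strategy is sound and close to the paper's, but there are two points where it deviates in a way that matters.

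\textbf{On \eqref{part2-Dr}.} Your Cauchy--Schwarz step gives $|\partial_i\partial^\alpha r|\lesssim\|\partial_i\partial^\alpha g_2\|_{L^2_v}$, but in the soft-potential regime $s+\gamma/2<0$ the plain $L^2_v$ norm is \emph{not} controlled by $|||\cdot|||_{\Phi_\gamma}$, hence not by $\|g_2\|_{\cX^N}$. The fix is trivial: since $e$ is a Gaussian polynomial, insert a weight and bound instead by $\|\partial_i\partial^\alpha g_2\|_{L^2_{s+\gamma/2}}$, which is dominated by the non-isotropic norm via Proposition~\ref{part1-prop2}. This is exactly what the paper does. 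Your treatment of the $\cL$-term via Cauchy--Schwarz for the bilinear form $(\cL\cdot,\cdot)$ is a nice alternative to the paper's use of the adjoint, and is valid.

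\textbf{On \eqref{part2-Dh}.} Here your route genuinely differs. For $II$, $III$, $IV$ you invoke Theorem~\ref{theorem-1.1-b} and Proposition~\ref{part2-prop2.8} as black boxes. Both of these require $\gamma>\max\{-3,-\tfrac32-2s\}$, whereas the lemma is stated for the full range $\gamma>-3$. The paper avoids this restriction by proving directly, via a second-order Taylor expansion of the polynomial $q$ in $e=q\mu^{1/2}$ together with the symmetry cancellation $\int_{\SS^2}b(\sigma\cdot\mathbf k)(\sigma-(\sigma\cdot\mathbf k)\mathbf k)\,d\sigma=0$, the pointwise bound
\[
|(\Gamma(f,g),e)_{L^2_v}|\lesssim \|f\|_{L^2_l(\RR^3_v)}\|g\|_{L^2_m(\RR^3_v)}\quad\text{for any }l,m\in\RR,
\]
valid for all $\gamma>-3$. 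This single estimate, applied with $l,m\in\{0,s+\gamma/2\}$, handles all four blocks $H^{(ij)}$ uniformly and cheaply. Your approach is more economical in that it recycles existing trilinear estimates, and it does yield the stated bound under the hypotheses of Theorems~\ref{part2-theo1.1}--\ref{part2-theo1.2} (which is where the lemma is actually used), but as written it does not prove the lemma in the generality claimed.
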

\noindent
\begin{proof} \eqref{part2-Dr} follows from
\begin{align*}
\|\p_i\p^\alpha r\,\|_{L^2(\RR^3_x)}
&=
\| (\p_i\p^\alpha g_2, e)_{L^2(\RR^3_v)}\|_{L^2(\RR^3_x)}
=\| (\tilde{W}^{-1}\p_i\p^\alpha g_2, \tilde{W} e)_{L^2(\RR^3_v)}\|_{L^2(\RR^3_x)}
\\&\le
\|\ \|\p_i\p^\alpha  g_2\|_{L^2_{s+\gamma/2}}
\|_{L^2(\RR^3_{x})}\le \|\nabla g_2\|_{\cX^{N-1}(\RR^6)},
\end{align*}
and
\begin{align*}
\|\p^\alpha   l\,\|_{L^2_{x}}&\le
 \| (\tilde{W}^{-1}\nabla_x\p^\alpha g_2, \tilde{W}\,v\,e)_{L^2(\RR^3_v)}\|_{L^2(\RR^3_x)}+
 \|(\tilde{W}^{-1}\p^\alpha g_2, \tilde{W}\mathcal{L}^*e)_{L^2(\RR^3_v)}\|_{L^2(\RR^3_x)}
 \\&
 \le \| \nabla_x\p^\alpha g_2 \|_{L^2_{s+\gamma/2}(\RR^6)}+
 \|\p^\alpha g_2\|_{L^2_{s+\gamma/2}(\RR^6)}
\le \|g_2\|_{\cX^N(\RR^6)}.
\end{align*}
Here, we have used \eqref{part2-sgamma}.

We prove \eqref{part2-Dh} as follows.
Firstly, set
\begin{align*}%\label{part2-h}
H(f,g)&=(\Gamma(f,g),e)_{L^2(\RR^3_v)}
\\&
=\iiint b(\cos\theta)|v-v_*|^\gamma f_* g\Big((\mu^{1/2})_*'e'-\mu_*^{1/2}e\Big)dvdv_*d\sigma\\
&
=\iiint b(\cos\theta)|v-v_*|^\gamma(\mu^{1/2}f)_*(\mu^{1/2} g)\Big(q(v')-q(v)\Big)dvdv_*d\sigma,
\end{align*}
where $q(v)$ is a polynomial. Of course, $h=H(g,g)$.
We now write the
Taylor expansion of the second order of the function $q(v)$ as
\[
q(v)-q(v')=(\nabla q)(v)\cdot (v'-v)+\frac 12\int_0^1\nabla^2 q(v+\tau(v'-v')d\tau (v'-v)^2.
\]
Setting $ {\bf k} = \frac{v-v_*}{|v-v_*|}$, we
recall
\begin{align*}
v' -v = \frac{1}{2}|v-v_*|\Big( \sigma - (\sigma \cdot {\bf k})  {\bf k} \Big)
+ \frac{1}{2} ((\sigma \cdot {\bf k})-1)( v-v_*),
\end{align*}
and  notice that by virtue of  the symmetry
\[
\int_{\SS^2} b(\sigma
\cdot {\bf k})\Big( \sigma - (\sigma \cdot {\bf k})  {\bf k} \Big) d
\sigma =0.
\]
Therefore, we have
\begin{align*}
&H(f,g) = \frac{1}{2}\iint (\mu^{1/2}f)(\mu^{1/2} g)_* |v-v_*|^\gamma
\Big\{\int_{\SS}b(\cos\theta)
(\cos \theta -1)d\sigma \Big\}(\nabla q)(v)\cdot( v-v_*)
dv dv_*
\\
&\quad +
\frac{1}{2}\int_0^1\Big(\iiint (\mu^{1/2}f)(\mu^{1/2} g)_* |v-v_*|^\gamma
  b(\cos\theta) \nabla^2 q(v+\tau(v'-v')d\tau (v'-v)^2\
d\sigma dv dv_* \Big)
d \tau\\
&\quad = H_1(f,g)+H_2(f,g).
\end{align*}
For $H_1$, clearly, the integral in $\sigma$ is bounded for $0<s<1$.
By the Cauchy-Schwarz inequality, since $\gamma+1>-3$,
\begin{align*}
&|H_1(f,g)|\le \iint (|\nabla q(v)|\mu^{1/2}|f|)(\mu^{1/2}| g|)_* |v-v_*|^{\gamma+1}dvdv_*
\\&
\le\iint  \mu_*^{1/8}(\mu^{1/8}|f|) (\mu^{1/8} |g|)_* \mu^{1/8}|v-v_*|^{\gamma+1}dv_*dv
\\&
\le
\Big( \iint
 (\mu^{1/4} |g|^2)_* \mu^{1/4}|v-v_*|^{\gamma+1}dv_*dv \Big)^{1/2}
 \Big( \iint
 (\mu^{1/4} |f|^2) \mu_*^{1/4}|v-v_*|^{\gamma+1}dv_*dv \Big)^{1/2}
 \\&
\le \|\langle v\rangle^{(\gamma+1)/2}\mu^{1/8}f\|_{L^2_v}
\|\langle v\rangle^{(\gamma+1)/2}\mu^{1/8}g\|_{L^2_v}
\le \|f\|_{L^2_{l}(\RR^3_v)}\|g\|_{L^2_{m}(\RR^3_v)},
\end{align*}
for any $l, m\in\RR$.
Similarly, $\gamma+2>-3$ implies,
\begin{align*}
|H_2(f,g)|&\le
\iiint b(\cos\theta) \theta^2(\mu^{1/2}|f|)(\mu^{1/2} |g|)_* |v-v_*|^{\gamma +2}
  (|\nabla^2 q(v)|+|\nabla^2 q(v')|)d\sigma
dv dv_*
\\&
\le \|\langle v\rangle^{(\gamma+2)/2}\mu^{1/8}f\|_{L^1_v}\|\langle v\rangle^{(\gamma+2)/2}\mu^{1/8}g\|_{L^2_v}
\le \|f\|_{L^2_{l}(\RR^3_v)}\|g\|_{L^2_{m}(\RR^3_v)}.
\end{align*}
Combining these two estimates yields
\begin{align}\label{part2-Phiij}
|H(f,g)|\le \|f\|_{L^2_{l}(\RR^3_v)}\|g\|_{L^2_{m}(\RR^3_v)}.
\end{align}

Now $h$ is computed as follows.
\[
h=H(g,g)=\sum_{i,j=1,2}H(g_i,g_j)=\sum_{i,j=1,2}H^{(ij)}.
\]
Firstly, we have
\[
H^{(11)}\sim \cA^2H(\varphi_k,\varphi_m),
\]
where $\varphi_k\in \mathcal{N}$. Applying \eqref{part2-Phiij} for $l=m=0$, we get
by virtue of Lemma \ref{part2-abc2} and for $|\alpha|\le N-1$ that
$$
\|\p^\alpha  H^{(11)}\|_{L^2(\RR^3_x)}\le \|\p^\alpha  \cA^2\|_{L^2(\RR^3_x)}
 \le \|\nabla_x \cA\|_{H^{N-2}(\RR^3_x)}\|\cA\|_{H^{N-1}(\RR^3_x)};
$$
while taking  $l, m$ to be $0$ or $s+\gamma/2$ in \eqref{part2-Phiij}
and by the Leibniz rule and by \eqref{part2-sgamma},
\begin{align*}
  \|\p^\alpha   H^{(12)}\|_{L^2(\RR^3_x)}
&\le \sum_{\alpha_1+\alpha_2=\alpha}\|\p^{\alpha_1} \cA\|_{L^2(\RR^3_x)}
  \|\p^{\alpha_2}  g_2\|_{L^2_{s+\gamma/2}(\RR^6_{x, v})}
  \le \|  \cA\|_{H^{N-1}(\RR^3_x)}\|g_2\|_{\cX^N(\RR^6)},\\
\|\p^\alpha  H^{(21)}\|_{L^2(\RR^3_x)}&\le
\sum_{\alpha_1+\alpha_2=\alpha}\|\p^{\alpha_1}  g_2\|_{L^2_{s+\gamma/2}(\RR^6_{x, v})}
 \|\p^{\alpha_2}  \cA\|_{L^2(\RR^3_x)}
\le \|g_2\|_{\cX^N(\RR^6)}\|  \cA\|_{H^{N-1}(\RR^3_x)},\\
\|\p^\alpha   H^{(22)}\|_{L^2(\RR^3_x)}&\le
\sum_{\alpha_1+\alpha_2=\alpha}\|\p^{\alpha_1}  g_2\|_{L^2(\RR^6_{x, v})}^2
\|\p^{\alpha_2} g_2\|_{L^2_{s+\gamma/2}(\RR^6_{x, v})}\le \|g_2\|_{H^N(\RR^3_x; L^2(\RR^3_v))}\|g_2\|_{\cX^N(\RR^6)}.
 \end{align*}
Now the proof of Lemma \ref{part2-rlh} is completed.
\end{proof}

Next, we shall prove
\begin{lemm}\label{part2-pcA}  Assume $\gamma >-3$.
Let $|\alpha|\le N-1$. Then,
\begin{align}\label{part2-pabc}
\|\nabla_x \p^\alpha \cA\|_{L^2(\RR^3_x)}^2
\le
-\frac{d}{dt}\Big\{(\p^\alpha r,&\nabla_x \p^\alpha (a, - b, c))_{L^2(\RR^3_x)}
+(\p^\alpha b, \nabla_x \p^\alpha a)_{L^2(\RR^3_x)}\Big\}
\\&\quad +\notag\|g_2\|_{\cX^N(\RR^6)}^2+\|g_2\|_{H^N(\RR^3_x; L^2(\RR^3_v))}\cD_N.
\end{align}
\end{lemm}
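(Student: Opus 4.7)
The plan is to follow the standard macro--micro energy identity technique of \cite{guo,guo-1}: apply $\p^\alpha$ to each of the five equations in \eqref{part2-macroeq} and test against a suitably chosen spatial derivative of a component of $\cA$. The left-hand side will produce $\|\nabla_x \p^\alpha \cA\|_{L^2(\RR^3_x)}^2$, while the terms involving $\p_t r$ on the right will, after integration by parts in $t$, generate the total time derivative stated in \eqref{part2-pabc}, and the terms $l$ and $h$ will be controlled by Lemma \ref{part2-rlh}.

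\emph{Step 1 (estimate for $\nabla_x c$).} Apply $\p^\alpha$ to the first equation $\nabla_x c = -\p_t r_c + l_c + h_c$, test against $\nabla_x \p^\alpha c$, and integrate in $x$. Transferring $\p_t$ from $r_c$ produces $-\frac{d}{dt}(\p^\alpha r_c, \nabla_x \p^\alpha c)_{L^2(\RR^3_x)}$ together with a residual $(\p^\alpha r_c, \nabla_x \p^\alpha \p_t c)$. Replace $\p_t c$ in the residual using the second macro equation $\p_t c + \p_i b_i = -\p_t r_i + l_i + h_i$; the new $\p_t$ can again be absorbed by another integration by parts in $t$ (these lower-order terms will be swallowed by the non--time-derivative part of the final bound).

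\emph{Step 2 (estimate for $\nabla_x b$).} Apply $\p^\alpha$ to the equations $\p_i b_j + \p_j b_i = -\p_t r_{ij} + l_{ij} + h_{ij}$ (for $i\ne j$) and $\p_t c + \p_i b_i = -\p_t r_i + l_i + h_i$. Test the symmetric-gradient system with $\p_j \p^\alpha b_i + \p_i \p^\alpha b_j$; in the whole space, the quadratic form $\sum_{i,j} \|\p_i b_j + \p_j b_i\|_{L^2}^2$ dominates $\|\nabla_x b\|_{L^2}^2$ (by a standard Fourier-side computation, not requiring any Korn-type deficit here since we are on $\RR^3$). The $\p_t r_{ij}$ terms produce the $-\frac{d}{dt}(\p^\alpha r_b, -\nabla_x \p^\alpha b)_{L^2}$ piece of the stated expression; the $\p_t c$ term in the diagonal equation is treated similarly.

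\emph{Step 3 (estimate for $\nabla_x a$).} Apply $\p^\alpha$ to $\p_t b_i + \p_i a = -\p_t r_{bi} + l_{bi} + h_{bi}$ and test with $\p_i \p^\alpha a$. Integration by parts produces $\|\nabla_x \p^\alpha a\|_{L^2(\RR^3_x)}^2$ and a term $(\p_t \p^\alpha b_i, \p_i \p^\alpha a)$; the latter is rewritten as $\frac{d}{dt}(\p^\alpha b_i, \p_i \p^\alpha a) - (\p^\alpha b_i, \p_i \p^\alpha \p_t a)$, and the last piece is handled via $\p_t a = -\p_t r_a + l_a + h_a$, producing the remaining $-\frac{d}{dt}(\p^\alpha r_a, \nabla_x \p^\alpha a)$ contribution and further $r, l, h$ residuals.

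\emph{Step 4 (closure).} All non--time-derivative residuals are products of $\p^\alpha r,\,\p^\alpha l,\,\p^\alpha h$ with some $\nabla_x \p^\alpha \cA$ or $\p^\alpha \cA$, to which we apply Cauchy--Schwarz and the bounds $A_1, A_2$ from Lemma \ref{part2-rlh}. The $l$-contribution yields terms bounded by $\|g_2\|_{\cX^N(\RR^6)} \|\nabla_x \p^\alpha \cA\|_{L^2(\RR^3_x)}$, absorbed by $\tfrac12 \|\nabla_x \p^\alpha \cA\|^2 + C\|g_2\|_{\cX^N}^2$; the $h$-contribution gives, after Cauchy--Schwarz, a term $\|\nabla_x \cA\|_{H^{N-2}}\|\cA\|_{H^{N-1}}\|\nabla_x\p^\alpha\cA\|$ (absorbed into the LHS up to a factor $\|\cA\|_{H^{N-1}}$, which is small by the \emph{a priori} smallness assumption on $\cE_N$) together with $\|g_2\|_{H^N(\RR^3_x;L^2(\RR^3_v))}\|g_2\|_{\cX^N(\RR^6)} \|\nabla_x \p^\alpha \cA\|_{L^2}$, which produces the last term $\|g_2\|_{H^N}\cD_N$ of \eqref{part2-pabc}.

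\emph{Main obstacle.} The delicate point is the bookkeeping of the residual $\p_t$-derivatives: each time one integrates by parts in $t$ to remove a $\p_t r$, a new time derivative $\p_t$ lands on $\cA$, which must be eliminated by invoking another equation of \eqref{part2-macroeq}. Making sure that these substitutions close up---so that the accumulated total time derivative has precisely the compact form $(\p^\alpha r,\nabla_x \p^\alpha(a,-b,c)) + (\p^\alpha b,\nabla_x \p^\alpha a)$, with all remaining terms absorbed in either $\|g_2\|_{\cX^N}^2$ or $\|g_2\|_{H^N}\cD_N$---is the combinatorially heaviest part; the smallness of $\|\cA\|_{H^{N-1}}$ coming from the \emph{a priori} bound on $\cE_N$ is essential to absorb the quadratic $\cA^2$ pieces arising from $h$.
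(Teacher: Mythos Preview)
Your overall plan is correct and essentially the paper's, but the handling of the residual $\p_t$-derivatives (which you rightly flag as the main obstacle) is the point where you diverge, and your cascading-substitution idea is both unnecessarily heavy and would not reproduce the stated time-derivative functional. The paper never substitutes $\p_t c$, $\p_t a$ back through the macro system. Instead it bounds $\|\p_t\p^\alpha\cA\|_{L^2(\RR^3_x)}$ \emph{directly in one step}: since $\pP\cL=0$ and $\pP\Gamma(g,g)=0$, applying $\pP$ to the full Boltzmann equation gives $\p_t\pP g=-\pP(v\cdot\nabla_x g)$, whence
\[
\|\p_t\p^\alpha\cA\|_{L^2(\RR^3_x)}\lesssim\|\nabla_x\p^\alpha\cA\|_{L^2(\RR^3_x)}+\|\nabla_x\p^\alpha g_2\|_{L^2_{s+\gamma/2}(\RR^6)}.
\]
Each residual $(\nabla_x\p^\alpha r,\p_t\p^\alpha\cA)$ is then controlled by $C_\eta A_1^2+\eta\|\p_t\p^\alpha\cA\|^2\le C_\eta A_1^2+\eta\|\nabla_x\p^\alpha\cA\|^2+\eta\|g_2\|_{\cX^N}^2$, and the $\eta\|\nabla_x\p^\alpha\cA\|^2$ is absorbed into the left side. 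No iteration is needed, and one lands exactly on the functional $(\p^\alpha r,\nabla_x\p^\alpha(a,-b,c))+(\p^\alpha b,\nabla_x\p^\alpha a)$. Your iterated integrations by parts in $t$ would instead generate extra pieces such as $\frac{d}{dt}(\p^\alpha r_c,\nabla_x\p^\alpha r_i)$ that are not in \eqref{part2-pabc}.

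For $\nabla_x b$ the paper also differs slightly: rather than the symmetric-gradient/Korn argument, it combines the off-diagonal and diagonal macro equations into the elliptic identity
\[
\Delta_x\p^\alpha b_i+\p_i^2\p^\alpha b_i=\p_i\p^\alpha(-\p_t r+l+h),
\]
pairs with $\p^\alpha b_i$, and treats the $\p_t r$ contribution by the same direct $\|\p_t\p^\alpha\cA\|$ bound. Your Korn route is fine on $\RR^3$, but the diagonal part $\p_i b_i$ is coupled to $\p_t c$, so you would still need the one-shot $\|\p_t\cA\|$ estimate above to close it without extra time-derivative terms.
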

\begin{proof}
(a) Estimate of $\nabla_x\p^\alpha  a$. Let $A_1,A_2$ be as in Lemma \ref{part2-rlh}.
{}From \eqref{part2-macroeq},
\begin{align*}
 \|\nabla_x \p^\alpha a\|_{L^2(\RR^3_x)}^2
&=(\nabla_x \p^\alpha a,\nabla_x \p^\alpha a)_{L^2(\RR^3_x)}
\\&= (\p^\alpha (-\p_t b-\p_tr+l+h),\nabla_x \p^\alpha a)_{L^2(\RR^3_x)}
\\&\le R_1+C_\eta (A_1^2+A_2^2)+\eta \|\nabla_x \p^\alpha a\|_{L^2(\RR^3_x)}^2.
\end{align*}
Here,
\begin{align*}
R_1&=-(\p^\alpha \p_tb+\p^\alpha \p_tr,\nabla_x \p^\alpha a)_{L^2(\RR^3_x)}
\\&=-\frac{d}{dt}(\p^\alpha (b+r),\nabla_x \p^\alpha a)_{L^2(\RR^3_x)}+
(\nabla_x\p^\alpha (b+r), \p_t \p^\alpha a)_{L^2(\RR^3_x)}
\\&
\le
-\frac{d}{dt}(\p^\alpha (b+r),\nabla_x \p^\alpha a)_{L^2(\RR^3_x)}+
C_\eta ( \|\nabla_x\p^\alpha  b\|_{L^2(\RR^3_x)}^2+A_1^2)
+\eta
\|\p_t\p^\alpha  a\|^2_{L^2(\RR^3_x)}.
\end{align*}
(b) Estimate of $\nabla_x\p^\alpha  b$. {}From \eqref{part2-macroeq} ,
\begin{align*}
\Delta_x&\p^\alpha b_i+\p^2_{i}\p^\alpha   b_i=
{\sum_{j \ne i} \p_j \p^{\alpha}( \p_j b_i + \p_i b_j)
+ \p_i \p^\alpha( 2 \p_i b_i - \sum_{j\ne i} \p_i b_j)
}
\\
&\hskip2cm =
\p_i\p^\alpha  (-\p_t r +l+h),
\\
&\|\nabla_x\p^\alpha  b\|_{L^2(\RR^3_x)}^2+\|\p_i\p^\alpha  b\|_{L^2(\RR^3_x)}^2=
-(\Delta_x\p^\alpha b_i+\p^2_{i}\p^\alpha   b_i, \p^\alpha  b)_{L^2(\RR^3_x)}
= R_2+R_3+R_4,
\end{align*}
where
\begin{align*}
R_2&=(\p_t\p^\alpha   r,\p_i\p^\alpha  b)_{L^2(\RR^3_x)}
=\frac{d}{dt}(\p^\alpha   r,\p_i\p^\alpha  b)_{L^2(\RR^3_x)}+
(\p_i\p^\alpha   r,\p_t\p^\alpha  b)_{L^2(\RR^3_x)}
\\&\hspace*{1cm}\le
\frac{d}{dt}(\p^\alpha   r,\p_i\p^\alpha  b)_{L^2(\RR^3_x)}+
C_\eta A_1^2
+\eta
\|\p_t\p^\alpha  b\|^2_{L^2(\RR^3_x)},
\\
R_3&=-(\p^\alpha   l,\p_i\p^\alpha  b)_{L^2(\RR^3_x)}\le
C_\eta A_1^2
+
\eta
\|\p_i\p^\alpha  b\|^2_{L^2(\RR^3_x)},
\\R_4&=-(\p^\alpha   h,\p_i\p^\alpha  b)_{L^2(\RR^3_x)}\le
C_\eta A_2^{{2}}
+
\eta
\|\p_i\p^\alpha  b\|^2_{L^2(\RR^3_x)}.
\end{align*}
(c) Estimate of $\nabla_x\p^\alpha  c$. {}From \eqref{part2-macroeq} ,
 \begin{align*}
 \|\nabla_x \p^\alpha &c\|_{L^2(\RR^3_x)}^2
=(\nabla_x \p^\alpha c,\nabla_x \p^\alpha c)_{L^2(\RR^3_x)}
= (\p^\alpha (-\p_tr+l+h),\nabla_x \p^\alpha c)_{L^2(\RR^3_x)}
\\&
\le R_5
+C_\eta(A_1^2+A_2^2)+\eta \|\nabla_x \p^\alpha c\|_{L^2(\RR^3_x)}^2,
\end{align*}
where
\begin{align*}
R_5&=-(\p^\alpha \p_tr,\nabla_x \p^\alpha c)_{L^2(\RR^3_x)}=
-\frac{d}{dt}(\p^\alpha r,\nabla_x \p^\alpha c)_{L^2(\RR^3_x)}+
(\nabla_x\p^\alpha r, \p_t \p^\alpha c)_{L^2(\RR^3_x)}
\\&\quad
\le
-\frac{d}{dt}(\p^\alpha r,\nabla_x \p^\alpha c)_{L^2(\RR^3_x)}+
C_\eta A_1^2
+\eta
\|\p_t\p^\alpha  c\|^2_{L^2(\RR^3_x)}.
\end{align*}
(d) Estimate of $\p_t\p^\alpha  (a,b,c)$.
\begin{align*}%\label{part2-ptabc}
\|\p_t\p^\alpha & \cA\|_{L^2(\RR^3_x)}=\|\p^\alpha \p_t \pP g\|_{L^2(\RR^6_{x, v})}
 \notag
\\&=\|\p^\alpha  \pP\Big(-v\cdot\nabla_xg-\mathcal{L}g+\Gamma(g,g)\Big)\|_{L^2(\RR^6_{x, v})}\notag
=\|\p^\alpha  \pP(v\cdot\nabla_xg)\|_{L^2(\RR^6_{x, v})}
\\&\le \|\nabla_x\p^\alpha  \cA\|_{L^2(\RR^3_x)}+
\|\nabla_x\p^\alpha \pP(vw^{-1}w g_2)\|_{L^2(\RR^6_{x, v})}
\\&
\le  \|\nabla_x\p^\alpha  (a,b,c)\|_{L^2(\RR^3_x)}+
\|\nabla_x\p^\alpha g_2\|_{L^2_{s+\gamma/2}(\RR^6_{x, v})}.
\notag
\end{align*}
By combining the above estimates and
taking $\eta>0$ sufficiently small, we deduce
\begin{align*}%\label{part2-dnababc}
\|\nabla_x \p^\alpha \cA\|_{L^2(\RR^3_x)}^2
&\le
-\frac{d}{dt}\Big\{(\p^\alpha r,\nabla_x \p^\alpha (a, -b,c))_{L^2(\RR^3_x)}
+(\p^\alpha b, \nabla_x \p^\alpha a)_{L^2(\RR^3_x)}\Big\}
\\&\quad +\notag
A_1^2+A_2^2+
 \eta\|\nabla_x\p^\alpha g_2\|_{L^2_{s+\gamma/2}(\RR^6_{x, v})}.
\end{align*}
Finally, by  choosing $|\alpha|\le N-1$, and using Lemma \ref{part2-rlh}, we obtain
\begin{align*}
A_1^2&+A_2^2+
 \eta\|\nabla_x\p^\alpha g_2\|_{L^2_{s+\gamma/2}(\RR^6_{x, v})}^2
\le \mathcal D_N\| g\|_{H^N(\RR^3_x; L^2(\RR^3_v))} +\eta \| g\|_{\cX^N(\RR^3)}^2,
\end{align*}
which completes the proof of Lemma \ref{part2-pcA}.
\end{proof}

We now turn to the estimation on the microscopic component $g_2$
 in the function space ${H^N(\RR^3_x; L^2(\RR^3_v))}$. Actually, we shall establish
\begin{lemm}\label{part2-microenergy}  Under the assumptions of Theorem \ref{part2-theo1.1}, for  $N\geq3$,
\begin{equation}\label{part2-micro}
\frac{d}{dt}\cE_N+\|g_2\|_{\cX^N(\RR^6)}^2\lesssim\cE_N^{1/2}\cD_N .
\end{equation}
\end{lemm}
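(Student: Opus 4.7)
The plan is to derive \eqref{part2-micro} by a direct energy estimate on $\partial^\alpha_x g$ for $|\alpha|\le N$, exploiting the macro--micro decomposition $g=g_1+g_2$ together with the conservation laws built into $Q$. Apply $\partial^\alpha_x$ to \eqref{part2-1.1}, test against $\partial^\alpha_x g$ in $L^2(\RR^6)$, and sum over $|\alpha|\le N$: the transport term $(v\cdot\nabla_x\partial^\alpha_x g,\partial^\alpha_x g)_{L^2(\RR^6)}$ vanishes by integration by parts in $x$, leaving
\[
\tfrac{1}{2}\tfrac{d}{dt}\|g\|_{H^N(\RR^3_x;L^2(\RR^3_v))}^2
+\sum_{|\alpha|\le N}(\cL\partial^\alpha_x g,\partial^\alpha_x g)_{L^2(\RR^6)}
=\sum_{|\alpha|\le N}(\partial^\alpha_x\Gamma(g,g),\partial^\alpha_x g)_{L^2(\RR^6)}.
\]

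For the linear term, since $\pP$ commutes with $\partial^\alpha_x$ and $\cL\pP h=0$, one has $\cL\partial^\alpha_x g=\cL\partial^\alpha_x g_2$. The coercivity of $\cL$ fiberwise in $x$ (Proposition \ref{part1-prop2.1}, used here through Lemma \ref{part2-lemm2.6}) then yields $(\cL\partial^\alpha_x g_2,\partial^\alpha_x g_2)_{L^2(\RR^6)}\gtrsim \int_{\RR^3}|||\partial^\alpha_x g_2|||^2_{\Phi_\gamma}\,dx$, and summation delivers the microscopic dissipation $\|g_2\|_{\cX^N(\RR^6)}^2$ on the left-hand side, as required.

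For the nonlinear term, the crucial step is to use that $Q$ preserves the five collision invariants, i.e.\ $\pP\Gamma(g,g)=0$; hence $(\partial^\alpha_x\Gamma(g,g),\partial^\alpha_x g_1)_{L^2(\RR^6)}=(\pP\partial^\alpha_x\Gamma(g,g),\partial^\alpha_x g)_{L^2(\RR^6)}=0$, reducing the scalar product to $(\partial^\alpha_x\Gamma(g,g),\partial^\alpha_x g_2)_{L^2(\RR^6)}$. Then Proposition \ref{part2-prop2.8} applied with $f=g$, $h=\partial^\alpha_x g_2$ gives
\[
\sum_{|\alpha|\le N}|(\partial^\alpha_x\Gamma(g,g),\partial^\alpha_x g_2)_{L^2(\RR^6)}|
\lesssim \|g\|_{H^N(\RR^3_x;L^2(\RR^3_v))}\,\|g\|_{\cX^N(\RR^6)}\,\|g_2\|_{\cX^N(\RR^6)}.
\]
To recognize the right-hand side as $\cE_N^{1/2}\cD_N$, split $\|g\|_{\cX^N}\le \|g_1\|_{\cX^N}+\|g_2\|_{\cX^N}$. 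The first piece is $\lesssim \|\cA\|_{H^N(\RR^3_x)}\lesssim \|\cA\|_{L^2}+\|\nabla_x\cA\|_{H^{N-1}}\lesssim\cE_N^{1/2}+\cD_N^{1/2}$ because $g_1=(a+v\cdot b+|v|^2 c)\sqrt\mu$ has Gaussian decay in $v$, and the second is $\le\cD_N^{1/2}$. Together with $\|g\|_{H^N(\RR^3_x;L^2(\RR^3_v))}=\cE_N^{1/2}$, the full bound becomes $\lesssim \cE_N^{1/2}(\cE_N^{1/2}+\cD_N^{1/2})\cD_N^{1/2}=\cE_N^{1/2}\cD_N+\cE_N\,\cD_N^{1/2}$.

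The main obstacle will be the leftover cross term $\cE_N\,\cD_N^{1/2}$, which does not literally match $\cE_N^{1/2}\cD_N$. This will be handled by Young's inequality together with the a priori smallness of $\cE_N$: writing $\cE_N\,\cD_N^{1/2}=\cE_N^{1/2}\cdot\cE_N^{1/2}\cdot\cD_N^{1/2}$ and bounding $\cE_N^{1/2}\le 1$ yields $\cE_N\,\cD_N^{1/2}\lesssim\cE_N^{1/2}\cD_N^{1/2}\lesssim\cE_N^{1/2}\cD_N$ once $\cD_N\gtrsim 1$, while in the complementary regime the term is subdominant and can be absorbed into $\frac{d}{dt}\cE_N$ by Gronwall's lemma. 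Beyond this absorption step the argument is routine and relies entirely on ingredients already available in the paper.
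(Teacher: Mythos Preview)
Your setup and the linear part are exactly what the paper does: apply $\partial^\alpha_x$, test with $\partial^\alpha_x g$, drop the transport term, use coercivity of $\cL$ on $(\iI-\pP)g$ to produce $\|g_2\|_{\cX^N}^2$, and note that $(\partial^\alpha_x\Gamma(g,g),\partial^\alpha_x g_1)=0$. The divergence is entirely in the nonlinear estimate, and there the gap is real.

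By invoking Proposition~\ref{part2-prop2.8} with $f=g$ you end up with a factor $\|g\|_{\cX^N}$, and its macroscopic piece $\|g_1\|_{\cX^N}\sim\|\cA\|_{H^N}$ contains the undifferentiated $\|\cA\|_{L^2}$. That is the source of your leftover term $\cE_N\cD_N^{1/2}$, and it cannot be controlled by $\cE_N^{1/2}\cD_N$: the inequality $\cE_N\cD_N^{1/2}\lesssim\cE_N^{1/2}\cD_N$ is equivalent to $\cE_N\lesssim\cD_N$, which fails on the whole space since $\cD_N$ contains only $\|\nabla_x\cA\|_{H^{N-1}}$ and there is no Poincar\'e inequality for $\|\cA\|_{L^2}$. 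Your proposed fix does not help: for small solutions $\cD_N\ll 1$, so $\cD_N^{1/2}\not\lesssim\cD_N$; and absorbing $\cD_N$ via Young leaves $\frac{d}{dt}\cE_N\lesssim\cE_N^2$, which does not give a uniform bound and in any case does not prove the stated inequality \eqref{part2-micro}.

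What the paper does instead is split $\Gamma(g,g)=\sum_{i,j}\Gamma(g_i,g_j)$ and treat the purely macroscopic piece $J^{11}=(\partial^\alpha\Gamma(g_1,g_1),\partial^\alpha g_2)$ separately. Since $g_1=\cA\cdot\varphi$ with $\varphi$ a fixed Gaussian-weighted polynomial, $J^{11}$ reduces to $\int (\partial^{\alpha_1}\cA)(\partial^{\alpha_2}\cA)\,(\Gamma(\varphi_k,\varphi_m),\partial^\alpha g_2)_{L^2_v}\,dx$; for $|\alpha|\ge 1$ at least one factor of $\cA$ carries a spatial derivative, and for $\alpha=0$ the product bound $\|\cA^2\|_{L^2_x}\lesssim\|\cA\|_{L^3_x}\|\cA\|_{L^6_x}\lesssim\|\cA\|_{H^1_x}\|\nabla_x\cA\|_{L^2_x}$ manufactures one. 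In every case one gets $\|\cA\|_{H^N}\|\nabla_x\cA\|_{H^{N-1}}\|g_2\|_{\cX^N}\lesssim\cE_N^{1/2}\cD_N$. The cross terms $J^{12},J^{21}$ and $J^{22}$ are bounded by $\|\cA\|_{H^N}\|g_2\|_{\cX^N}^2$ and $\|g_2\|_{H^N(L^2)}\|g_2\|_{\cX^N}^2$, both of the form $\cE_N^{1/2}\cD_N$. The point you are missing is precisely this extraction of a spatial gradient from the macroscopic--macroscopic interaction; Proposition~\ref{part2-prop2.8} applied to the full $g$ is too coarse to see it.
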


\begin{proof}
We apply $\p_x^\alpha $ to \eqref{part2-1.1} and take the $L^2(\RR^6)$ inner product with
$\p_x^\alpha g$. Since the inner product including $v\cdot\nabla_xg$ vanishes by
integration by parts, we get
\begin{align}\label{part2-microinner}
\frac 12\frac{d}{dt}\|\p_x^\alpha g\|^2_N+(\mathcal{L}\p_x^\alpha g,
 \p_x^\alpha g)_{L^2(\RR^6)}=
(\p_x^\alpha \Gamma(g, g), \p_x^{\alpha}g)_{L^2(\RR^6)}.
\end{align}
In view of Section \ref{part1-section2} ,  we have,  for all $\gamma >-3$,
\[
\sum_{|\alpha|\le N}
(\mathcal{L}\p_x^\alpha g,
 \p_x^\alpha g)_{L^2(\RR^6)}\ge \eta_0
\sum_{|\alpha|\le N}\int_{\RR^3_x}||| \p_x^\alpha(\iI-\pP) g|||^2dx=\eta_0\| g_2\|_{\cX^N(\RR^6)}^2,
 \]
 while we shall show below that for $|\alpha|\le N,$ $N\geq 3$,
   \begin{align}\label{part2-nonlinear5}
 \Big|(\p_x^\alpha \Gamma(g, g), \p_x^{\alpha}g)_{L^2(\RR^6)}
\Big|
\lesssim  \cE_N^{1/2}\cD_N.
\end{align}
Lemma \ref{part2-microenergy} can then be
 concluded by plugging  these two estimates into \eqref{part2-microinner}.

\noindent
{\bf Proof of \eqref{part2-nonlinear5}:}
Write
 \begin{align*}
(\p_x^{\alpha}\Gamma(g, &g), \p_x^{\alpha}g)_{L^2(\RR^6)}
 =J^{11}+J^{12}+J^{21}+J^{22},
  \\&
J^{ij}=(\p_x^{\alpha}\Gamma(g_i, g_j), \p_x^{\alpha} g_2)_{L^2(\RR^6)}.
 \end{align*}

% {\bf Estimation of $J^0$:}
% Recall
%  the null space $\cN$ of $\cL$  given in \eqref{part2-null}.
% We write
% with some $\varphi\in\cN$

% \begin{align*}
% J^0&\sim\int_{\RR^3}(\p_x^{\alpha}\Gamma(g,g),
%  \varphi)_{L^2(\RR^3)}\partial^\alpha_x\cA dx.
%  \end{align*}
%  When $\alpha=0$, we have $J^0=0$ because
% $\Gamma(g,g)\in\cN^\perp$ for any $g$. Thus we have only to consider  the case $\alpha\not=0$.
%  Recall the definition \eqref{part2-h} of $\Phi(f,g)$ and rewrite it with $e$ replaced by $\varphi$.
%  Then, we get
%  \begin{align*} J^0\sim
%  \int_{\RR^3}&\p_x^{\alpha}\Phi(g,g)
%  \partial^\alpha_x\cA dx
%  \\&
% \le
% \|\p^\alpha \Phi(g,g)\|_{L^2(\RR^3)}\|\p^\alpha \cA\|_{L^2(\RR^3)}
% \\& \le \|\p^\alpha \Phi(g,g)\|_{L^2(\RR^3)}\|\nabla_x\cA\|_{H^{N-1}(\RR^3)},
% \end{align*}
% since $|\alpha|\ge 1$.
% Now the estimates \eqref{part2-Phi1} - \eqref{part2-Phi2}
% yield
% \begin{align*}
% |J^0|\le&  \Big( \|\nabla_x \cA\|_{H^{N-2}(\RR^3_x)}\|\cA\|_{H^{N-1}(\RR^3_x)}
% \\&+
% \|  \cA\|_{H^{N-1}(\RR^3_x)}\|g_2\|_{\cB^N(\RR^6)}+\|g_2\|_{\cH^N(\RR^6)}\|g_2\|_{\cB^N(\RR^6)}\Big)\|\nabla_x\cA\|_{H^{N-1}(\RR^3)}
% \\&
% \le \cE_N^{1/2}\cD_N.
% \end{align*}
\noindent
{\bf Estimation of $J^{11}$.}
We shall estimate
\begin{align*}
&J^{11} \sim
\int_{\RR^3_x}(\p_x^{\alpha_1}\cA)(\p_x^{\alpha_2}\cA)
(\Gamma(\varphi_k,\varphi_m), \p_x^{\alpha} g_2)_{L^2(\RR^3_v)}dx.
\end{align*}
Firstly, for any function $\phi, \psi\in\cN$, set
\begin{align*}
\Psi_1&=(\Gamma(\phi,\psi), h)_{L^2(\RR^3)}.
\end{align*}
We shall prove that for
$\gamma>-3$,
\begin{align}\label{part2-Psi1}
|\Psi_1|\le \|h\|_{L^2_m(\RR^3)},
\end{align}
holds for any $m\in\RR$.

\noindent {\bf Proof of \eqref{part2-Psi1}.}  Notice that
\begin{align*}
\Psi_1&=\iiint b(\cos\theta)|v-v_*|^\gamma (\mu_*)^{1/2}\Big((\phi_*)'\psi'-
\phi_*\psi\Big)h\ dvdv_*d\sigma
\\&=
\iiint b(\cos\theta)|v-v_*|^\gamma (\mu_*)^{1/2}(\mu_*)^{1/2}(\mu)^{1/2}
\Big(q_*'r'-q_*r\Big)\ hdvdv_*d\sigma,
\end{align*}
where $q=q(v)$ and $r=r(v)$ are some polynomials.
First, write
\[
q_*'r'-q_*r=(q_*'-q_*)(r'-r)+(q_*'-q_*)r+q_*(r'-r)=S_1+S_2+S_3,
\]
and make a decomposition
\begin{align*}
\Psi_1&=\sum_{i=1}^3
\iiint b(\cos\theta)|v-v_*|^\gamma(\mu_*)^{1/2}(\mu_*)^{1/2}(\mu)^{1/2}\ S_i\ hdvdv_*d\sigma
\\&=\Psi_1^{1}+\Psi_1^{2}+\Psi_1^{3}.
\end{align*}
Since
$
|S_1|\le R_1(v,v_*)|v-v_*|^2\theta^2 $
where $R_1(v,v_*)$ is a polynomial of $v,v_*$, it holds that
\begin{align*}
|\Psi_1^{1}|&\lesssim \iint |v-v_*|^{\gamma+2}\Big[\int_{\SS^2} b(\cos\theta)\theta^2d\sigma\Big]
(\mu_*)^{1/2}(\mu_*)^{1/2}(\mu)^{1/2}|R_1(v,v_*)|\ |h|dvdv_*
\\&
\lesssim \int\la v\ra^{\gamma+2}\mu^{1/4}|h|dv\lesssim\|h\|_{L^2_m(\RR^3)},
\end{align*}
for any $m\in \RR$.

On the other hand,  the Taylor expansion of the second order gives
\[
q(v_*')-q(v_*)= (\nabla q)(v_*)\cdot (v_*'-v_*)+
\frac 12\int_0^1\nabla^2q(v_*+\tau(v_*'-v_*))d\tau (v_*'-v_*)^2\,.
\]
Since
\begin{align*}
\Big|\int_0^1\nabla^2q(v_*+\tau(v_*'-v_*))d\tau (v_*'-v_*)^2\Big|
\lesssim |R_2(v,v_*)||v-v_*|^2\theta^2,
\end{align*}
where $R_2$ is a polynomial of $v,v_*$, by symmetry, we have
\begin{align*}
|\Psi_1^2|&\lesssim \iint |v-v_*|^{\gamma+1}
(\mu_*)^{1/2}(\mu_*)^{1/2}(\mu)^{1/2}\Big(|\nabla q(v_*)|+
|R_2(v,v_*)||v-v_*|\Big)\ |r(v)h|dvdv_*
\\&
\lesssim \int(\la v\ra^{\gamma+1}+\la v\ra^{\gamma+2})\mu^{1/4}|r(v)h|dv\lesssim \|h\|_{L^2_m(\RR^3)},
\end{align*}
for any $m\in\RR$.

The estimation on  $\Psi_1^3$ can be carried out exactly in the same way to have
 \begin{align*}
 |\Psi_1^3|
\lesssim \int(\la v\ra^{\gamma+1}+\la v\ra^{\gamma+2})\mu^{1/4}
|h|dv\lesssim\|h\|_{L^2_m(\RR^3)},
\end{align*}
for any $m\in\RR$. This completes the proof of \eqref{part2-Psi1}.

Take $m=s+\gamma/2$ in \eqref{part2-Psi1} and use \eqref{part2-sgamma}
to obtain
$
|\Psi_1|\lesssim ||| h|||
$. Set $h=\p^\alpha g_2$.
Now by the Sobolev embedding theorem, for $\alpha_1+\alpha_2=\alpha, 1\leq|\alpha|\leq N$, we have
\begin{align*}
|J^{11}|&
\lesssim
\int_{\RR^3_x}|\p_x^{\alpha_1}\cA|\ |\p_x^{\alpha_2}\cA|\
\ |||\p_x^{\alpha} g_2|||_{\Phi_\gamma}  dx
\\&
\lesssim \left\{
\begin{array}{ll}
\|\p_x^{\alpha_1}\cA\|_{L^\infty_x}\|\p_x^{\alpha_2}\cA\|_{L^2_x}
||\p_x^{\alpha} g_2||_{\cX^0(\RR^6)}
\lesssim \|\cA\|_{H^N_x}\|\nabla_x\cA\|_{H^{N-1}_x}\|g_2\|_{\cX^N(\RR^6)},
&{ |\alpha_1|\le N-2,}
\\[0.3cm]
\|\p_x^{\alpha_1}\cA\|_{L^2_x}\|\p_x^{\alpha_2}\cA\|_{L^\infty_x}
||\p_x^{\alpha} g_2||_{\cX^0(\RR^6)}\lesssim \|\nabla_x\cA\|_{H^{N-1}_x}\|\cA\|_{H^N_x}
\|g_2\|_{\cX^N(\RR^6)}, &{ |\alpha_2|< 1,}
\end{array}
\right.
\\&
\lesssim \|\cA\|_{H^N_x}(\|\nabla_x\cA\|_{H^{N-1}_x}^2+\|g_2\|_{\cX^N(\RR^6)}
^2)=\cE_N^{1/2}\cD_N,
\end{align*}
and for $|\alpha_1+\alpha_2|=0$, we have
\begin{align*}
\int_{\RR^3_x}|\cA|^2
\ ||| g_2|||  dx
&\lesssim
\|\cA\|_{L^6_x}\|\cA\|_{L^3_x}\|g_2\|_{\cX^N(\RR^6)}
\\
&\lesssim \|\nabla_x \cA\|_{L^2_x}
\|\cA\|_{H^1_x}\|g_2\|_{\cX^N(\RR^6)}\lesssim\cE_N^{1/2}\cD_N.
\end{align*}
\noindent
{\bf Estimation of $J^{12}$:}
First, notice
\begin{align*}
 J^{12}&\sim \int_{\RR^3_x}(\p_x^{\alpha_1}\cA)
(\Gamma(\varphi_k, \p_x^{\alpha_2}g_2), \p_x^{\alpha} g_2)_{L^2(\RR^3_v)}dx.
\end{align*}
For some $\phi\in\cN$, set
\begin{align*}
\Psi_2&=(\Gamma(\phi, g), h)_{L^2(\RR^3)}.
\end{align*}

By virtue of Theorem \ref{theorem-1.1-b}, we get
\begin{align*}
|\Psi_2|&\lesssim
\Big(\|\phi\|_{L^2_{s+\gamma/2}(\RR^3)}\ |||g|||_{\Phi_\gamma}+||| \phi|||_{\Phi_\gamma}\
\|g\|_{L^2_{s+\gamma/2}(\RR^3)}
\\&
\hspace{1cm}+
\min\{\|\phi\|_{L^2(\RR^3)}\|g\|_{L^2_{s+\gamma/2}(\RR^3)},\
\|g\|_{L^2}\|\phi\|_{L^2_{s+\gamma/2}(\RR^3)}\}
\Big)|||h|||_{\Phi_\gamma}
\\&\lesssim
\Big(\|\phi\|_{L^2_{s+\gamma/2}(\RR^3)}\ |||g|||_{\Phi_\gamma}+||| \phi|||_{\Phi_\gamma}\
\|g\|_{L^2_{s+\gamma/2}(\RR^3)}+
\|\phi\|_{L^2(\RR^3)}\|g\|_{L^2_{s+\gamma/2}(\RR^3)}\Big)|||h|||_{\Phi_\gamma}
\\&\lesssim
\Big( |||g|||_{\Phi_\gamma}+\|g\|_{L^2_{s+\gamma/2}(\RR^3)}\Big)\ |||h|||_{\Phi_\gamma}\lesssim |||g|||_{\Phi_\gamma}\ |||h|||_{\Phi_\gamma},
\end{align*}
where we have chosen the first factor in the $\min$ term.

Now we have
  \begin{align*}
| J^{12}|&
 \lesssim \int_{\RR^3_x}\|\p_x^{\alpha_1}\cA|\
|||\p_x^{\alpha_2} g_2|||_{\Phi_\gamma}
 \ |||\p_x^{\alpha} g_2|||_{\Phi_\gamma}dx
\\&\lesssim
\left\{
\begin{array}{ll}
\|\p_x^{\alpha_1}\cA\|_{L^\infty_x}\int_{\RR^3}
|||\p_x^{\alpha_2} g_2|||_{\Phi_\gamma}\
|||\p_x^{\alpha} g_2|||_{\Phi_\gamma}dx\lesssim \|\cA\|_{H^N(\RR^3)}\|g_2\|_{\cX^N(\RR^6)}^2,  & {|\alpha_1|\le N-2,}
\\[0.3cm]
\| \ |||\p_x^{\alpha_2} g_2|||_{\Phi_\gamma}\ \|_{L^\infty_x}
\int_{\RR^3}|\p_x^{\alpha_1}\cA|
\
|||\p_x^{\alpha} g_2|||_{\Phi_\gamma}dx \lesssim \|\cA\|_{H^N(\RR^3)}\|g_2\|_{\cX^N(\RR^6)}^2,  &  { |\alpha_2|\le 1.}
\end{array}
\right.
\end{align*}

\noindent{\bf Estimation of $J^{21}$:}
A similar argument  applies to
\begin{align*}%\label{part2-Phidef21}
 J^{21}&\sim\int_{\RR^3_x}(\p_x^{\alpha_2}\cA)
(\Gamma(\p_x^{\alpha_1}g_2, \varphi_k), \p_x^{\alpha} g_2)_{L^2(\RR^3_v)}dx.
\end{align*}
 In fact, for $\phi\in\cN$, set
\[
\Psi_3=(\Gamma(f,\phi),h) .
\]
Again by virtue of Theorem \ref{theorem-1.1-b} and taking the second factor in the $\min$ term,
  \eqref{part2-sgamma} gives
\begin{align*}
|\Psi_3|\lesssim
 |||f|||_{\Phi_\gamma}\ |||h|||_{\Phi_\gamma}.
\end{align*}

Thus, proceeding as for $J^{12}$ yields
\[
|J^{21}| \lesssim \|\cA\|_{H^N(\RR^3)}\|g_2\|_{\cX^N(\RR^6)}^2.
\]

\noindent{\bf Estimation on $J^{22}$:}
It follows from the Leibniz rule that
\begin{align*}
 |J^{22}|\lesssim
\int |(\Gamma(\p_x^{\alpha_1}g_2, \p_x^{\alpha_2}g_2), \p_x^{\alpha} g_2)_{L^2(\RR^3)}|dx.
\end{align*}
Different from the above, we now use Theorem \ref{theorem-1.1-b} and
\eqref{part2-sgamma} in the following way.
\begin{align*}
%\label{part2-upperGamma}
|(\Gamma(f,& g), h)_{L^2(\RR^3)}|
\lesssim
\Big(\|f\|_{L^2_{s+\gamma/2}(\RR^3)}\ |||g|||_{\Phi_\gamma}+||| f|||_{\Phi_\gamma}\
\|g\|_{L^2_{s+\gamma/2}(\RR^3)}
\\&\notag
\hspace{1cm}+
\min\Big\{\|f\|_{L^2(\RR^3)}\|g\|_{L^2_{s+\gamma/2}(\RR^3)},\
\|g\|_{L^2}\|f\|_{L^2_{s+\gamma/2}(\RR^3)}\Big\}
\Big)|||h|||_{\Phi_\gamma}
\\&\lesssim\notag
\Big( \|f\|_{L^2(\RR^3)}||| g|||_{\Phi_\gamma}+|||f|||_{\Phi_\gamma}\
\|g\|_{L^2(\RR^3)}\Big)|||h|||_{\Phi_\gamma}.
\end{align*}
This is valid for the assumptions imposed
 on $\gamma$ and $s$ from Theorem \ref{part2-theo1.1}. Then,
\begin{align*}
 |J^{22}|&\lesssim \int
\Big( \|\p^{\alpha_1}g_2\|_{L^2(\RR^3)}||| \p^{\alpha_2}g_2|||_{\Phi_\gamma}+|||\p^{\alpha_1}g_2|||_{\Phi_\gamma}\
\|\p^{\alpha_2}g_2\|_{L^2(\RR^3)}\Big)|||\p^{\alpha} g_2|||_{\Phi_\gamma}dx.
\end{align*}

Suppose $|\alpha_1|\le N-2$. Then, by the Sobolev embedding theorem,
\begin{align*}
|J^{22}|&
\lesssim
\|\p_x^{\alpha_1}g_2\|_{L^\infty(\RR^3_x; L^2(\RR^3_v))}\int_{\RR^3}
|||\p_x^{\alpha_2}g_2|||_{\Phi_\gamma}\
||| \p_x^{\alpha} g_2|||_{\Phi_\gamma}dx
\\&\qquad
+\|\ |||\p_x^{\alpha_1}g_2|||_{\Phi_\gamma} \ \|_{L^\infty(\RR^3_x)}
\int_{\RR^3}||\p_x^{\alpha_2}g_2||_{L^2(\RR^3_v)}\
||| \p_x^{\alpha} g_2|||_{\Phi_\gamma}dx
\\&\lesssim
\|g_2\|_{H^N(\RR^3_x; L^2(\RR^3_v))}\|g_2\|_{\cX^N(\RR^3)}^2\lesssim \cE_N^{1/2}\cD_N.
\end{align*}
Similarly, when $|\alpha_1|> N-2$ then $|\alpha_2|\le 1$, we get
\begin{align*}
|J^{22}|&
\lesssim
\|\p_x^{\alpha_2}g_2\|_{L^\infty_x(L^2_v)}\int_{\RR^3}|||\p_x^{\alpha_1}g_2|||_{\Phi_\gamma}\
||| \p_x^{\alpha} g_2|||_{\Phi_\gamma}dx
\\&\qquad +\|\ |||\p_x^{\alpha_2}g_2|||_{\Phi_\gamma} \ \|_{L^\infty_x}
\int_{\RR^3}||\p_x^{\alpha_1}g_2||_{L^2_v}\
||| \p_x^{\alpha} g_2|||_{\Phi_\gamma}dx
\\&\lesssim\cE_N^{1/2}\cD_N.
\end{align*}

Now, combining the above estimates yields the estimate \eqref{part2-nonlinear5} and this completes the proof of the
Lemma \ref{part2-microenergy}.
\end{proof}

Taking a suitable linear combination of  \eqref{part2-pabc} and \eqref{part2-micro} gives the
\begin{prop} {\rm \bf (Global Energy Estimate without Weight) \ }
%\label{part2-energyineq}
Under the assumptions of Theorem \ref{part2-theo1.1}, for $N\geq3$, there exists a constant $C>0$ such that
\[
\frac{d}{dt}\cE_N+\cD_N\le C\cE^{1/2}_N\cD_N
\]
holds as far as $g$ exists.
\end{prop}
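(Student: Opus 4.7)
The strategy is the standard macro--micro energy combination: Lemma \ref{part2-microenergy} produces the microscopic dissipation $\|g_2\|_{\cX^N(\RR^6)}^2$ but no control on $\|\nabla_x\cA\|$, while Lemma \ref{part2-pcA} produces the macroscopic dissipation $\|\nabla_x\cA\|^2_{H^{N-1}}$ only modulo the time derivative of an interaction functional. We must therefore form a suitable linear combination in which these two dissipations add up to the full $\cD_N$, while the auxiliary term remains equivalent to $\cE_N$.

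First I would sum the estimate of Lemma \ref{part2-pcA} over all $|\alpha|\le N-1$, defining the interaction functional
\[
\mathcal{I}_N(t)=\sum_{|\alpha|\le N-1}\Big\{(\p^\alpha r,\nabla_x\p^\alpha(a,-b,c))_{L^2(\RR^3_x)}+(\p^\alpha b,\nabla_x\p^\alpha a)_{L^2(\RR^3_x)}\Big\},
\]
which by Cauchy--Schwarz and the pointwise bound $|r|\lesssim\|g_2\|_{L^2_v}$ satisfies $|\mathcal{I}_N(t)|\le C_0\,\cE_N(t)$ for some absolute constant $C_0$. Summation then yields
\[
\|\nabla_x\cA\|^2_{H^{N-1}(\RR^3_x)}+\frac{d}{dt}\mathcal{I}_N\le C\|g_2\|_{\cX^N(\RR^6)}^2+C\cE_N^{1/2}\cD_N.
\]

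Next I would introduce the modified energy $\widetilde\cE_N(t)=\cE_N(t)+\kappa\,\mathcal{I}_N(t)$ for a small parameter $\kappa>0$ to be fixed. Choosing $\kappa<1/(2C_0)$ guarantees $\widetilde\cE_N\sim\cE_N$. Adding $\kappa$ times the above inequality to the estimate \eqref{part2-micro} of Lemma \ref{part2-microenergy} gives
\[
\frac{d}{dt}\widetilde\cE_N+\kappa\|\nabla_x\cA\|^2_{H^{N-1}}+\big(1-C\kappa\big)\|g_2\|_{\cX^N(\RR^6)}^2\le C(1+\kappa)\cE_N^{1/2}\cD_N.
\]
Fixing $\kappa$ additionally so small that $1-C\kappa\ge 1/2$, the left hand side dominates a positive multiple of $\cD_N=\|\nabla_x\cA\|^2_{H^{N-1}}+\|g_2\|^2_{\cX^N(\RR^6)}$. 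Since $\widetilde\cE_N\sim\cE_N$, replacing $\widetilde\cE_N$ by $\cE_N$ on the left (up to a change in the constant) yields the announced estimate $\frac{d}{dt}\cE_N+\cD_N\le C\cE_N^{1/2}\cD_N$.

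The main technical point is the bookkeeping in the linear combination: one must verify that the interaction functional $\mathcal{I}_N$ is genuinely controlled by $\cE_N$ (so that $\widetilde\cE_N$ is equivalent to $\cE_N$ and the final inequality can be stated in terms of $\cE_N$), and that the constants appearing in Lemma \ref{part2-pcA} permit the choice of $\kappa$ small enough to absorb the spurious $\kappa\|g_2\|^2_{\cX^N}$ into the microscopic dissipation coming from \eqref{part2-micro}. Once this ordering of constants is done, everything else is a direct combination of results already established. I do not expect any serious obstacle beyond this arithmetic; the heavy analytic work has already been packaged into Lemmas \ref{part2-pcA} and \ref{part2-microenergy}.
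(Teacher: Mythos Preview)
Your proposal is correct and follows exactly the approach the paper intends: the paper's own proof is the single sentence ``Taking a suitable linear combination of \eqref{part2-pabc} and \eqref{part2-micro} gives the [proposition],'' and you have simply spelled out what that linear combination is, including the introduction of the interaction functional $\mathcal{I}_N$, the modified energy $\widetilde\cE_N=\cE_N+\kappa\mathcal{I}_N$, and the choice of small $\kappa$ to absorb the spurious $\kappa\|g_2\|_{\cX^N}^2$ term. The only cosmetic point is that replacing $\frac{d}{dt}\widetilde\cE_N$ by $\frac{d}{dt}\cE_N$ is not literally justified by the equivalence $\widetilde\cE_N\sim\cE_N$; the standard convention (implicit in the paper and in the Guo-type literature it cites) is that the symbol $\cE_N$ in the final inequality denotes the modified energy itself.
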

This proposition assures that a usual continuation argument of local solutions can be carried out
under the smallness assumption of
initial data.
Thus, we established the existence of global solutions in the space
$H^N(\RR^3_x; L^2(\RR^3_v))$.

%%%%%%%%%%%%%%%%%%%%%%%%%%%%%%%%%%%%%%%%%%%%%%%%%%%%%%%%%%%%%%
%%%%%%%%%%%%%%%%%%%%%%%%%%%%%%%%%%%%%%%%%%%%%%%%%%%%%%%%%%%%%%
\subsection{Classical solutions}
\smallskip

We now turn to the energy estimates involving also $v$ derivatives of solutions.
To close this type of energy estimate,
we then need to use the weighted norms in the $v$ variable, cf. also Guo \cite{guo},
with the weight function $ \tilde W$.
Recall that we  assume $s+\gamma/2\le 0$.
Set
%W_{\ell-|\beta|}
\begin{align*}
\cE_{N,\ell}&= \cE_N+\|g\|_{\widetilde{\cH}^N_\ell(\RR^6)}^2
\sim \|g_1\|_{\cH^N(\RR^6)}^2+\|g\|_{\widetilde{\cH}^N_\ell(\RR^6)}^2
\sim\|\cA\|_{H^{N}(\RR^3)}^2
+\|g_2\|^2_{{\tilde \cH}^N_\ell(\RR^6)},
\\ \cD_{N,\ell}&= \cD_N+\|g_2\|_{\widetilde{\cB}^N_\ell(\RR^6)}^2\sim\|\nabla g_1\|_{\cH^{N-1}(\RR^3)}+\|g_2\|_{\widetilde{\cB}^N_\ell(\RR^6)}^2\sim
\|\nabla_x\cA\|_{H^{N-1}(\RR^3)}+\|g_2\|_{\widetilde{\cB}^N_\ell(\RR^6)}^2.
\end{align*}
Recall
\[
\p^\alpha_{\beta}=\p_x^\alpha\p^\beta_v,\quad |\alpha|+|\beta|\le N, \quad \beta \ne 0,  \quad N\ge {6 },
\]
and apply $\tilde W_{\ell-|\beta|}\p^\alpha_{\beta}$ to \eqref{part2-1.1} to get
\begin{align*}
 \p_t (\tilde W_{\ell-|\beta|}\p^\alpha_{\beta} &g_2)+ v\cdot\nabla_x
(\tilde W_{\ell-|\beta|}\p^\alpha_{\beta} g_2) + \cL _1(\tilde W_{\ell-|\beta|}\p^\alpha_{\beta} g_2)
\\=&\notag
\tilde W_{\ell-|\beta|}\p^\alpha_{\beta} \Gamma(g,\,g)+
[v\cdot\nabla_x, \tilde W_{\ell-|\beta|}\p^\alpha_{\beta}]g_2
- \tilde W_{\ell-|\beta|} \p^\alpha_{\beta} [\pP,\,\, v\cdot \nabla]g \notag\\
&+\notag
[\mathcal{L}_1,\, \tilde W_{\ell-|\beta|}\p^\alpha_{\beta}]g_2
-\tilde W_{\ell-|\beta|}\p^\alpha_{\beta}\cL_2(g_2).
\end{align*}
Then take the $L^2(\RR^6)$ inner product with $\tilde W_{\ell-|\beta|}\p^\alpha_{\beta} g_2$ to
 get
\begin{equation}\label{part2-energy1+}
\frac 12 \frac{d}{dt}\|\tilde W_{\ell-|\beta|}\p^\alpha_{\beta} g_2\|^2_{L^2(\RR^6)}+D\le K.
\end{equation}
Here, $D$ is a dissipation rate given by
\begin{align*}
 D&=(\cL _1(\tilde W_{\ell-|\beta|}\p^\alpha_{\beta} g_2),
\tilde W_{\ell-|\beta|}\p^\alpha_{\beta} g_2)_{L^2(\RR^6)}.
 \end{align*}
 Due to the coercivity inequality from Section \ref{part1-section2},
which holds true for $\gamma >-3$, we get
 \begin{align*}
D
&
\ge\eta_0\int_{\RR^3}||| (\iI-\pP)\tilde W_{\ell-|\beta|}\p^\alpha_{\beta} g_2 |||^2dx
\\&\ge \eta_0\| \
||| \tilde W_{\ell-|\beta|}\p^\alpha_{\beta} g_2 ||| \ \|_{L^2(\RR^3_x)}^2-C
\ || \p^\alpha g_2 ||^2_{ L^2_{s+\gamma/2}(\RR^6)},
\end{align*}
 where we have used, with $\psi\in \mathcal{N}$ and  $m\in\NN$,
\begin{align*}
|||\pP \tilde W_{\ell-|\beta|}\p^\alpha_{\beta} g_2|||^2 &=|||\p^\alpha_x
(\psi, \tilde W_{\ell-|\beta|}\p^\beta_v g_2)_{L^2(\RR^3_v)}\psi|||^2
\\&
=|(\tilde{\psi},\tilde W^{-m}\p^\alpha_x g_2)_{L^2_{v}}|\ |||\psi|||^2
\lesssim ||\p^\alpha_x g_2||_{L^2_{-m}(\RR^6_{x, v})}^2.
\end{align*}
Note that we will use the above estimate later by choosing $m=-|s+\gamma/2|$.
On the other hand, $K$ is given by
\begin{align*}
K=& (\tilde W_{\ell-|\beta|}\p^\alpha_{\beta} \Gamma(g,\,g), \tilde W_{\ell-|\beta|}\p^\alpha_{\beta} g_2)_{L^2(\RR^6)}
+
([v\cdot\nabla_x,\, \tilde W_{\ell-|\beta|}\p^\alpha_{\beta}]g_2, \tilde W_{\ell-|\beta|}\p^\alpha_{\beta} g_2)_{L^2(\RR^6)}
\\&
- (\tilde W_{\ell-|\beta|} \p^\alpha_{\beta} [\pP,\, v\cdot \nabla]g, \tilde W_{\ell-|\beta|}\p^\alpha_{\beta} g_2)_{L^2(\RR^6)}
+
([\mathcal{L}_1,\, \tilde W_{\ell-|\beta|}\p^\alpha_{\beta}]g_2, \tilde W_{\ell-|\beta|}\p^\alpha_{\beta} g_2)_{L^2{(\RR^6)}}
\\&-(\tilde W_{\ell-|\beta|}\p^\alpha_{\beta}\cL_2(g_2),\tilde W_{\ell-|\beta|}\p^\alpha_{\beta} g_2)_{L^2(\RR^6)}
\\=&
K_1+K_2+K_3+K_4+ K_5.
\end{align*}

\noindent{\bf (I) Estimation of $K_1$:}
First, we show that
\begin{equation}\label{part2-K1}
|K_1|\lesssim \cE_{N,\ell}^{1/2}\cD_{N,\ell}.
\end{equation}
For the proof, write
 \begin{align*}
 K_1&=\sum_{i,j=1}^2(\tilde W_{\ell-|\beta|}\p^\alpha_{\beta} \Gamma(g_i,\,g_j),
\tilde W_{\ell-|\beta|}\p^\alpha_{\beta} g_2)_{L^2(\RR^6)}
  \\&=K_{111}+K_{112}+K_{121}+K_{122}.
 \end{align*}

\noindent
{\bf (1) Estimation on $K_{111}$:}
Proceeding as in the computation for $\Psi_1$ in \eqref{part2-Psi1}, we get for $\gamma >-3$,
\[
|(\tilde W_{\ell-|\beta|}\p_{\beta}\Gamma(\varphi_k,\varphi_m), \tilde W_{\ell-|\beta|}\p_{\beta} g_2)_{L^2(\RR^3_v)}|\lesssim \|\tilde W_{\ell-|\beta|}\p_{\beta} g_2\|_{L^2(\RR^3_v)},
\]
which leads to
 \begin{align*}
K_{111} \sim&
\sum_{\alpha_1+\alpha_2=\alpha}\int_{\RR^3_x}|(\p_x^{\alpha_1}\cA)(\p_x^{\alpha_2}\cA)\|\tilde W_{\ell-|\beta|}\p^\alpha_{\beta} g_2\|_{L^2(\RR^3_v)}dx
\\&\lesssim\sum_{\alpha_1+\alpha_2=\alpha}
\|(\p_x^{\alpha_1}\cA)(\p_x^{\alpha_2}\cA)\|_{L^2(\RR^3_x)}\|g_2\|_{\tilde{\cB}_\ell^N(\RR^6)}
\\&\lesssim \|\cA\|_{H^N_x}(\|\nabla_x\cA\|_{H^{N-1}_x}^2+\|g_2\|_{\tilde{\cB}_\ell^N(\RR^6)}^2)
\lesssim
\cE_{N,\ell}^{1/2}\cD_{N,\ell}.
\end{align*}
Here, we used that for $\alpha_1=\alpha_2=0$,
\[
\|(\cA)^2\|_{L^2(\RR^3_x)}\lesssim\|\cA\|_{L^3(\RR^3_x)}\|\cA\|_{L^6(\RR^3_x)}
\lesssim \|\cA\|_{H^1(\RR^3_x)}\|\nabla \cA\|_{L^2(\RR^3_x)}.
\]
{\bf (2) Estimation on $K_{112}$:}
Since $g_1\sim\cA \varphi$, Proposition \ref{part2-upperGG} implies
\begin{align*}
 |K_{112}|\lesssim &
\sum_{\begin{subarray}{l}
\alpha_1+\alpha_2=\alpha
\\
\beta_1+\beta_2\le\beta
\end{subarray}}
\int_{\RR^3}
%\int_{\RR^3}
 |||\tilde W_{\ell-|\beta|} \p^{\alpha_1}_{\beta_1}g_1|||_{\Phi_\gamma}\
||| \tilde W_{\ell-|\beta|}  \p^{\alpha_2}_{\beta_2}g_2|||_{\Phi_\gamma} \
  ||| \tilde W_{\ell-|\beta|} \p^\alpha_{\beta}g_2|||_{\Phi_\gamma} dx
 \\
\lesssim& \sum_{
\begin{subarray}{l}
\alpha_1+\alpha_2=\alpha
\\
\beta_1+\beta_2\le\beta
\end{subarray}
}
\int_{\RR^3}
 |\p^{\alpha_1}\cA|\
||| \tilde W_{\ell-|\beta|}  \p^{\alpha_2}_{\beta_2}g_2|||_{\Phi_\gamma} \
 ||| \tilde W_{\ell-|\beta|} \p^{\alpha}_{\beta}g_2|||_{\Phi_\gamma} dx
 =\sum_{\begin{subarray}{l}
\alpha_1+\alpha_2=\alpha
\\
\beta_1+\beta_2\le\beta
\end{subarray}}
L^{\alpha_1,\alpha_2}_{\beta_1,\beta_2}.
\end{align*}
We have, for $|\alpha_1|<N/2$
\begin{align*}
L^{\alpha_1,\alpha_2}_{\beta_1,\beta_2}\lesssim&
 \|\p^{\alpha_1}\cA\|_{L^\infty(\RR^3)} \
\int_{\RR^3}||| \tilde W^{\ell-|\beta_2|}  \p^{\alpha_2}_{\beta_2}g_2|||_{\Phi_\gamma} \
 ||| \tilde W_{\ell-|\beta|} \p^\alpha_{\beta}g_2|||_{\Phi_\gamma} dx
 \\ &
 \lesssim
 \|\p^{\alpha_1}\cA\|_{H^{3/2+\epsilon}(\RR^3)}\ \|g_2\|_{\widetilde{\cB}^N_\ell(\RR^6)}^2
 \lesssim
\|\cA\|_{H^N(\RR^3)}\ \|g_2\|_{\widetilde{\cB}^N_\ell(\RR^6)}^2;
\end{align*}
while  for $|\alpha_2|\le N/2$
\begin{align*}
L^{\alpha_1,\alpha_2}_{\beta_1,\beta_2}\lesssim&
 \| \ ||| \tilde W^{\ell-|\beta_2|}  \p^{\alpha_2}_{\beta_2}g_2|||_{\Phi_\gamma} \ \|_{L^\infty(\RR^3)} \
\int_{\RR^3}
|\p^{\alpha_1}\cA| \ \
 ||| \tilde W_{\ell-|\beta|} \p^\alpha_{\beta}g_2|||_{\Phi_\gamma} dx
 \\
 \lesssim &
 \| \ ||| \tilde W^{\ell-|\beta_2|}  \p^{\alpha_2}_{\beta_2}g_2|||_{\Phi_\gamma} \ \|_{H^{3/2+\epsilon}(\RR^3)}
\|\p^{\alpha_1}\cA\|_{L^2(\RR^3)}\ \|g_2\|_{\widetilde{\cB}^N_\ell(\RR^6)}
\\
\lesssim &
\|\cA\|_{H^N(\RR^3)}\ \|g_2\|_{\widetilde{\cB}^N_\ell(\RR^6)}^2.
\end{align*}
Consequently,
\[
|K_{112}|\lesssim \cE_{N,l}^{1/2}\cD_{N.\ell}.
\]

\noindent{\bf (3) Estimation on $K_{121}$:}
As for $K_{112}$, we get
\begin{align*}
 |K_{121}|\le&
\sum_{\begin{subarray}{l}
\alpha_1+\alpha_2=\alpha
\\
\beta_1+\beta_2\le\beta
\end{subarray}}
\int_{\RR^3}
 |\p^{\alpha_2}\cA| \
||| \tilde W_{\ell-|\beta|}  \p^{\alpha_1}_{\beta_1}g_2|||_{\Phi_\gamma} \
 ||| \tilde W_{\ell-|\beta|} \p^\alpha_{\beta}g_2|||_{\Phi_\gamma} dx
 \\
\le &
 \cE_{N,\ell}^{1/2}\cD_{N,\ell}.
\end{align*}

\noindent{\bf (4) Estimation on $K_{122}$:}
We shall re-use \eqref{part2-WpGamma} in the form,
\begin{align*}
|K_{122}|=&
|(\tilde W_{\ell-|\beta|}\p^{\alpha}_{\beta} \Gamma(g_2,\,g_2),
\tilde W_{\ell-|\beta|}\p^\alpha_{\beta} g_2)_{L^2(\RR^6)}|
\\&
\lesssim
\sum_{\alpha_1+\alpha_2=\alpha}
|(\tilde W_{\ell-|\beta|}\p_{\beta} \Gamma(\p^{\alpha_1}g_2,\,\p^{\alpha_2}g_2),
\tilde W_{\ell-|\beta|}\p^\alpha_{\beta} g_2)_{L^2(\RR^6)}|
\\&
\lesssim
\sum_{\begin{subarray}{l}\alpha_1+\alpha_2=\alpha
\\
\beta_1+\beta_2\le \beta
\end{subarray}
}
\int_{\RR^3}\Big(
 \| \tilde W_{\ell-|\beta|} \p^{\alpha_1}_{\beta_1}g_2\|_{L^2_{s +\gamma /2}(\RR^3)}
||| \tilde W_{\ell-|\beta|} \p^{\alpha_2}_{\beta_2}g_2|||_{\Phi_\gamma}
\\&\hspace{2cm}
+||| \tilde W_{\ell-|\beta|} \p^{\alpha_1}_{\beta_1}g_2|||_{\Phi_\gamma}
\ \| \tilde W_{\ell-|\beta|} \p^{\alpha_2}_{\beta_2}g_2\|_{L^2_{s +\gamma /2}(\RR^3)}
 \Big) ||| \tilde W_{\ell-|\beta|} \p_{\beta}g_2|||_{\Phi_\gamma}dx
\\&
\lesssim
\sum_{\begin{subarray}{l}\alpha_1+\alpha_2=\alpha
\\
\beta_1+\beta_2\le \beta
\end{subarray}
}
\int_{\RR^3}\Big(
 \| \tilde W^{\ell-|\beta_1|} \p^{\alpha_1}_{\beta_1}g_2\|_{L^2(\RR^3)}
||| \tilde W^{\ell-|\beta_2|} \p^{\alpha_2}_{\beta_2}g_2|||_{\Phi_\gamma}
\\&\hspace{2cm}
+||| \tilde W^{\ell-|\beta_1|} \p^{\alpha_1}_{\beta_1}g_2|||_{\Phi_\gamma} \
\| \tilde W^{\ell-|\beta_2|} \p^{\alpha_2}_{\beta_2}g_2\|_{L^2(\RR^3)}
 \Big) ||| \tilde W_{\ell-|\beta|} \p_{\beta}g_2|||_{\Phi_\gamma}dx.
 \end{align*}
 Suppose $|\alpha_1|\le N-2$. Then,
\begin{align*}
\int_{\RR^3}&
 \| \tilde W^{\ell-|\beta_1|} \p^{\alpha_1}_{\beta_1}g_2\|_{L^2(\RR^3)}
||| \tilde W^{\ell-|\beta_2|} \p^{\alpha_2}_{\beta_2}g_2|||_{\Phi_\gamma}
\ ||| \tilde W_{\ell-|\beta|} \p_{\beta}g_2|||_{\Phi_\gamma}dx
\\&\lesssim
\| \tilde W^{\ell-|\beta_1|} \p^{\alpha_1}_{\beta_1}g_2\|_{L^\infty(\RR^3;
L^2(\RR^3))}\int_{\RR^3}||| \tilde W^{\ell-|\beta_2|} \p^{\alpha_2}_{\beta_2}g_2|||_{\Phi_\gamma}
\ ||| \tilde W_{\ell-|\beta|} \p_{\beta}g_2|||_{\Phi_\gamma}dx
\\&\lesssim
\|g_2\|_{\tilde{\cH}^N(\RR^6)}\|g_2\|_{\tilde{\cB}^N(\RR^6)}^2 .
\end{align*}
The case $|\alpha_2|\le 1$ can be computed similarly,
we finally conclude
\begin{align*}
|K_{122}|
 \lesssim \|g_2\|_{\widetilde{\cH}^N_\ell(\RR^6)}\|g_2\|_{\widetilde{\cB}^N_\ell(\RR^6)}^2\lesssim
 \cE_{N,\ell}^{1/2}\cD_{N,\ell},
\end{align*}
and therefore the estimate \eqref{part2-K1} holds.

{\bf (II) Estimation of $K_2$:}
On the other hand, we have, for
$|\alpha|+|\beta|\le N, \beta\not=0$,
 \begin{align*}
 |K_2|=&\big|([v\cdot\nabla_x, \tilde W_{\ell-|\beta|}\p^\alpha_{\beta}]g_2, \tilde W_{\ell-|\beta|}\p^\alpha_{\beta} g_2)_{L^2(\RR^6)}
 \big|
 \\&
  \lesssim \|\tilde W^{\ell-(|\beta|-1)-1/2}\p_x^{\alpha+1}\p_v^{\beta-1}g_2\|_{L^2(\RR^6)}
\|\tilde W^{\ell-|\beta|-1/2}\p^\alpha_{\beta} g_2\|_{L^2(\RR^6)}
\\
&\lesssim
C_\delta\|\tilde W^{\ell-(|\beta|-1)-1/2}\p_x^{\alpha+1}\p_v^{\beta-1}g_2\|_{L^2(\RR^6)}^2+\delta
\|\tilde W^{\ell-|\beta|-1/2}\p^\alpha_{\beta} g_2\|_{L^2(\RR^6)}^2.
\\
&=
C_\delta\|\tilde W^{\ell-(|\beta|-1)}\p_x^{\alpha+1}\p_v^{\beta-1}g_2\|_{L^2_{s+\gamma/2}(\RR^6)}^2
+\delta
\|\tilde W_{\ell-|\beta|}\p^\alpha_{\beta} g_2\|_{L^2_{s+\gamma/2}(\RR^6)}^2
 \\
  &\lesssim
  C_\delta\| \ |||\tilde W^{\ell-(|\beta|-1)}\p_x^{\alpha+1}\p_v^{\beta-1}g_2|||_{\Phi_\gamma} \ \|_{L^2(\RR^3_x)}^2
 +\delta
\|g_2\|_{\widetilde{\cB}^N_\ell(\RR^6)}^2.
 \end{align*}

{\bf (III) Estimation of $K_3$:}
Again we assume $\beta\not=0$ or $|\alpha|\le N-1$.
\begin{align*}
|K_3|=&|(\tilde W_{\ell-|\beta|} \p^\alpha_{\beta} [\pP,\, v\cdot \nabla]g,\,
\tilde W_{\ell-|\beta|}\p^\alpha_{\beta} g_2)_{L^2(\RR^6)}
\\&
\lesssim |(\p_\beta \Big\{\tilde W^{2(\ell-|\beta|)} \p^\alpha_{\beta} [\pP,\, v\cdot \nabla]g\Big\},
\p^\alpha g_2)_{L^2(\RR^6)}|
\\&
\lesssim
\|\nabla_x\p^\alpha g_1\|_{L^2(\RR^3_x)}^2+
 \delta
\|\tilde W^{-1}\nabla_x\p^\alpha g_2\|_{L^2(\RR^6_{x, v})}^2+\delta
\|\tilde W^{-1}\p^\alpha  g_2\|_{L^2(\RR^6_{x, v})}^2
\\&\lesssim
\|\nabla_x \p^\alpha\cA\|_{L^2(\RR^3_x)}^2+\delta ||\nabla_x\p^\alpha g_2||_{\cB^0_0}
+\delta
||\p^\alpha g_2||_{\cB^0_0}
\\&\lesssim
\|\nabla_x \cA\|_{H^{N-1}(\RR^3_x)}^2
+\delta\|g_2\|_{\cB^N(\RR^6)}^2,
\end{align*}
where $\cD_N$ is the dissipation integral with only $x$ derivatives.

{\bf (IV) Estimation of $K_4$: }
The main ingredients of the estimation are the commutator estimates  $I$ and $II$
established  in  the proof of Proposition \ref{part2-weight-radja-proposition-8.1}
that  are valid for $\gamma >-3$. We re-produce them here.
\begin{align*}
|I|&=|([\tilde W_l,  \cL_1] g , \tilde W_l g)_{L^2(\RR^3)}|
\lesssim
\|\tilde W_l g \|_{L^2_{\gamma/2}(\RR^3)}^2.
\\
|II|&=  |(\tilde W_l [\partial_\beta, \cL_1 ]g, \tilde W_l \partial_\beta g)_{L^2(\RR^3)}
\\&
\lesssim \sum_{\beta_1 +\beta_2 +\beta_3=\beta, \ \beta_2 \neq 0}
| (\tilde W_l\cT(\p_{\beta_1}\mu^{1/2}, \partial_{\beta_2} g, \p_{\beta_3}\mu^{1/2}) ; \tilde W_l \partial_\beta g)_{L^2(\RR^3)}|
\\&\lesssim
\Big( \sum_{\beta_1 +\beta_2 =\beta, \ \beta_2 \neq 0} ||| \tilde W_l \partial_{\beta_1} g|||_{\Phi_\gamma} \Big) ||| \tilde W_l \partial_\beta g |||_{\Phi_\gamma} .
\end{align*}
We also need the interpolation inequality
\begin{equation}\label{part2-interpolation}
\|\tilde W^l\p_\beta h\|_{L^2_{\gamma/2}}\lesssim
C_\delta\|\tilde W^l\p_{|\beta|-1} h\|_{L^2_{\gamma/2}}+\delta
\|\tilde W^l\p_\beta h\|_{H^s_{\gamma/2}}
\lesssim
C_\delta\|\tilde W^l\p_{\beta-1} h\|_{L^2}+\delta
|||\tilde W^l\p_\beta h|||_{\Phi_\gamma}.
\end{equation}

We shall prove
\begin{equation}
\label{part2-K4}
|K_4|\lesssim
C_\delta\|g_2\|_{\widetilde{\cB}^{N-1}_\ell(\RR^6)}^2+\delta
\|g_2\|_{\widetilde{\cB}^{N}_\ell(\RR^6)}^2.
\end{equation}
To this end, first, notice that
\begin{align*}
K_4&=([\tilde W_{\ell-|\beta|}\partial_\beta, \, \cL_1]\p^\alpha g_2, \tilde W_{\ell-|\beta|}\partial^\alpha_\beta g_2)_{L^2(\RR^6)}
\\&=([\tilde W_{\ell-|\beta|} , \, \cL_1]\partial^\alpha_\beta g_2, \tilde W_{\ell-|\beta|}\partial^\alpha_\beta g_2)_{L^2(\RR^6)}
\\&\hspace{1cm}+(\tilde W_{\ell-|\beta|}[\partial_\beta, \, \cL_1]\p^\alpha g_2, \tilde W_{\ell-|\beta|}
\partial^\alpha_\beta g_2)_{L^2(\RR^6)}=K_{41}+K_{42}.
\end{align*}
Then, by virtue of the estimate for $|I|$ and the interpolation inequality
\eqref{part2-interpolation},
\begin{align*}
|K_{41}|&\lesssim
\|\tilde W_{\ell-|\beta|}\p^\alpha_\beta g_2 \|_{L^2_{\gamma/2}(\RR^6)}^2
\\&\lesssim
C_\delta
\|\tilde W_{\ell-|\beta|}\p^\alpha_{\beta-1} g_2 \|_{L^2(\RR^3_x;L^2_{\gamma/2}(\RR^3_v))}^2
+\delta
\|\tilde W_{\ell-|\beta|}\p^\alpha_{\beta} g_2 \|_{L^2(\RR^3_x; H^s_{\gamma/2}(\RR^3_v))}^2
\\&
\lesssim
C_\delta
\|\tilde W^{\ell-(|\beta|-1)}\p^\alpha_{\beta-1} g_2 \|
_{L^2(\RR^3_x;L^2_{s+\gamma/2}(\RR^3))}^2
+\delta \|\
|||\tilde W_{\ell-|\beta|}\p^\alpha_{\beta} g_2|||_{\Phi_\gamma} \ \|_{L^2(\RR^3_x)}^2
\\&\lesssim
C_\delta\|g_2\|_{\widetilde{\cB}^{N-1}_\ell(\RR^6)}^2+\delta
\|g_2\|_{\widetilde{\cB}^{N}_\ell(\RR^6)}^2.
\end{align*}
On the other hand, the estimate for $|II|$ leads to
\begin{align*}
|K_{42}|\lesssim&
\sum_{\beta_1 +\beta_2 =\beta, \ \beta_2 \neq 0}\int_{\RR^3}
 ||| \tilde W^{\ell-|\beta_1|} \partial^\alpha_{\beta_1} g_2|||_{\Phi_\gamma}  \
||| \tilde W_{\ell-|\beta|} \partial^\alpha_\beta g_2 ||| dx
\\&
\lesssim \|g_2\|_{\widetilde{\cB}^{N-1}_\ell(\RR^6)}\|g_2\|_{\widetilde{\cB}^{N}_\ell(\RR^6)}
\\&\lesssim
C_\delta\|g_2\|_{\widetilde{\cB}^{N-1}_\ell(\RR^6)}^2+\delta
\|g_2\|_{\widetilde{\cB}^{N}_\ell(\RR^6)}^2.
\end{align*}
This completes the proof of \eqref{part2-K4}.

{\bf (V) Estimation of $K_5$:}
 Further,
by Proposition \ref{part2-prop2.10}  that holds  for $\gamma >-3$, we can
proceed as in the computation for $K_{41}$ to obtain
\begin{align*}
|K_{5}|&\lesssim \|\mu^{\min(a,b)}\tilde W_{\ell-|\beta|}\p^\alpha_{\beta} g_2\|_{L^2(\RR^6)}^2
\lesssim \|\tilde W_{\ell-|\beta|}\p^\alpha_\beta g_2 \|_{L^2_{\gamma/2}(\RR^6)}^2
\\&
\lesssim
C_\delta
\|\tilde W^{\ell-(|\beta|-1)}\p^\alpha_{\beta-1} g_2 \|_{L^2(\RR^6)}^2
+\delta \|\
|||\tilde W_{\ell-|\beta|}\p^\alpha_{\beta} g_2|||_{\Phi_\gamma} \ \|_{L^2(\RR^3_x)}^2
\\&\lesssim
C_\delta\|g_2\|_{\widetilde{\cB}^{N-1}_\ell(\RR^6)}^2+\delta
\|g_2\|_{\widetilde{\cB}^{N}_\ell(\RR^6)}^2.
\end{align*}

\noindent
{\bf Conclusion:} Plug the above estimates into \eqref{part2-energy1+} to deduce, for $|\alpha+\beta|\le N, |\beta|\ge 1$,
\begin{align}\label{part2-weighted}
\frac{d}{dt}\Big(
\|\p^\alpha_{\beta}&g_2\|_{L^2_{\ell-|\beta|}(\RR^6)}+
(\p^\alpha r,\nabla_x \p^\alpha (a, -b,c))_{L^2(\RR^3_x)}
+(\p^\alpha b, \nabla_x \p^\alpha a)_{L^2(\RR^3_x)}
\Big)
\\&\notag\hspace{1cm}
+\frac 12\| \
||| \tilde W_{\ell-|\beta|}\p^\alpha_{\beta} g_2 |||_{\Phi_\gamma} \ \|_{L^2(\RR^3_x)}^2
\\&\lesssim\notag
 || \p^\alpha g_2 ||^2_{ L^2_{s+\gamma/2}(\RR^6)}+
\cE_{N,\ell}^{1/2}\cD_{N,\ell}
\\&\notag+\| \ |||\tilde W^{\ell-(|\beta|-1)}\p_x^{\alpha+1}\p_v^{\beta-1}g_2|||_{\Phi_\gamma} \ \|_{L^2(\RR^3_x)}^2
+\delta
\|g_2\|_{\widetilde{\cB}^N_\ell(\RR^6)}^2
+\cD_N
\\&\notag
+\|g_2\|_{\widetilde{\cB}^{N-1}_\ell(\RR^6)}^2+\delta
\|g_2\|_{\widetilde{\cB}^{N}_\ell(\RR^6)}^2.
\end{align}

We can then make a suitable linear combination of \eqref{part2-pcA},\eqref{part2-micro}, and \eqref{part2-weighted} to
deduce the following energy estimate.
\begin{prop} {\rm \bf (Global Energy Estimate with Weight) \ }
%\label{part2-energyineqweight}
Under the assumptions of Theorem \ref{part2-theo1.2}, for $N\geq 6, \ell\ge N$,
\begin{align*}%\label{part2-energyineqweight1}
\frac{d}{dt}\cE_{N,\ell}+\cD_{N,\ell}\lesssim
\cE_{N,\ell}^{1/2}\cD_{N,\ell}
\end{align*}
holds as far as $g$ exists.
\end{prop}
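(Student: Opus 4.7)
The plan is to combine the three main building blocks already established: the weighted micro-level bound \eqref{part2-weighted} (which controls $v$-derivatives with velocity weight), the unweighted micro-level bound \eqref{part2-micro} (which serves as base case with $\beta=0$), and the macroscopic estimate \eqref{part2-pabc} coming from the equations for $\cA=(a,b,c)$. Since $\cE_{N,\ell}$ and $\cD_{N,\ell}$ decompose along the lines
\[
\cE_{N,\ell} \sim \|\cA\|_{H^N(\RR^3)}^2+\|g_2\|_{\widetilde{\cH}^N_\ell(\RR^6)}^2, \qquad
\cD_{N,\ell} \sim \|\nabla_x\cA\|_{H^{N-1}(\RR^3)}^2+\|g_2\|_{\widetilde{\cB}^N_\ell(\RR^6)}^2,
\]
it suffices to produce a Lyapunov functional $\mathcal F_{N,\ell}$ equivalent to $\cE_{N,\ell}$ whose derivative dominates $\cD_{N,\ell}$ up to terms of the form $\cE_{N,\ell}^{1/2}\cD_{N,\ell}$.

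First, I would run an induction on $|\beta|$ from $0$ to $N$. For $|\beta|=0$, estimate \eqref{part2-micro} provides $\frac{d}{dt}\cE_N+\|g_2\|_{\cX^N(\RR^6)}^2 \lesssim \cE_N^{1/2}\cD_N$, which gives the pure-$x$ part of the dissipation without weights. For $1\le |\beta| \le N$, I sum \eqref{part2-weighted} over all admissible $(\alpha,\beta)$ with $|\alpha|+|\beta|\le N$. The dangerous right-hand terms are of two sorts: the transport commutator producing $\|\tilde W^{\ell-(|\beta|-1)}\p_x^{\alpha+1}\p_v^{\beta-1}g_2\|_{\Phi_\gamma}$ (one fewer $v$-derivative but one extra $x$-derivative, total order still $\le N$), and the lower-order weighted norm $\|g_2\|_{\widetilde{\cB}^{N-1}_\ell(\RR^6)}^2$ appearing from $K_{41}, K_{42}, K_5$. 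Both are strictly of lower $|\beta|$ order, hence are controlled by the inductive hypothesis once one inserts the $|\beta|-1$ weighted-dissipation term into the Lyapunov functional with a sufficiently large constant $C_{|\beta|-1}\gg C_{|\beta|}$. The choice $C_0\gg C_1\gg\dots\gg C_N$ then causes all lower-order weighted dissipations to absorb their appearances in the derivatives of higher-order terms.

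Next, to recover dissipation on the macroscopic part $\|\nabla_x\cA\|_{H^{N-1}(\RR^3)}$, I would add the time derivative of the cross term
\[
\sum_{|\alpha|\le N-1}\Big\{(\p^\alpha r,\nabla_x\p^\alpha(a,-b,c))_{L^2(\RR^3)}+(\p^\alpha b,\nabla_x\p^\alpha a)_{L^2(\RR^3)}\Big\},
\]
multiplied by a small parameter $\kappa>0$, into the Lyapunov functional. By Lemma \ref{part2-pcA}, this produces $\kappa\|\nabla_x\cA\|_{H^{N-1}(\RR^3_x)}^2$ on the left, while the by-products $\kappa\|g_2\|_{\cX^N(\RR^6)}^2$ and $\kappa \cE_{N,\ell}^{1/2}\cD_{N,\ell}$ on the right are absorbed into the already-controlled microscopic dissipation (for $\kappa$ small) and into the nonlinear term, respectively. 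Since $r, b, a$ are controlled by $\cE_{N,\ell}^{1/2}$, the cross-term is bounded by $\eta\cE_{N,\ell}$ for $\kappa$ small, so the whole functional $\mathcal F_{N,\ell}$ remains equivalent to $\cE_{N,\ell}$.

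The hard part is bookkeeping the weight losses and parameter choices. The delicate points are: (i) the commutator term $K_2$ strictly lowers the $v$-weight by $1/2$, which is exactly what produces the $L^2_{s+\gamma/2}$ norm; since $s+\gamma/2\le 0$, this loss is innocuous and can be absorbed into the coercive part $\||{\tilde W_{\ell-|\beta|}\p^\alpha_\beta g_2}|||_{\Phi_\gamma}^2$ via the coercivity inequality of Section \ref{part1-section2}; (ii) the $\delta$-terms must be sent to zero via the absorption hierarchy, which forces the final constants $C_{|\beta|}$ to be chosen in decreasing order of $|\beta|$ only after fixing the $\delta$'s small enough; (iii) the nonlinear contribution $K_1$ is already bounded by $\cE_{N,\ell}^{1/2}\cD_{N,\ell}$ by the analysis following \eqref{part2-K1}, which matches the form of the target estimate. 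Once these choices are made, summation gives
\[
\frac{d}{dt}\mathcal F_{N,\ell}+c\,\cD_{N,\ell}\lesssim \cE_{N,\ell}^{1/2}\cD_{N,\ell},
\]
with $\mathcal F_{N,\ell}\sim\cE_{N,\ell}$, which is the desired inequality.
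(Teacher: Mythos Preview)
Your proposal is correct and follows essentially the same approach as the paper, which compresses the entire argument into the single sentence ``make a suitable linear combination of \eqref{part2-pabc}, \eqref{part2-micro}, and \eqref{part2-weighted}.'' You have correctly unpacked what this combination requires: the $|\beta|$-hierarchy of constants $C_0\gg C_1\gg\cdots\gg C_N$ to absorb the commutator terms $K_2,K_4,K_5$ (which indeed all land at strictly lower $|\beta|$-level, as you observe), and the insertion of the macroscopic cross term with a small coefficient $\kappa$ to recover $\|\nabla_x\cA\|_{H^{N-1}}^2$ in the dissipation while keeping the Lyapunov functional equivalent to $\cE_{N,\ell}$.
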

We can now conclude in a standard way that the global classical
solutions exist for small initial data in the weighted space
$\widetilde{\cH}^N_\ell$, and this completes
 the proof of Theorem \ref{part2-theo1.2}.

%%%%%%%%%%%%%%%%%%%%%%%%%%%%%%%%%%%%%%%%%%%%%%%%%%%%%%%%%%%%%%%%%%%%%%%%%%
%%%%%%%%%%%%%%%%%%%%%%%%%%%%%%%%%%%%%%%%%%%%%%%%%%%%%%%%%%%%%%%%%%%%%%%%%%

\section{Appendix}

Let us recall that $\Phi_\gamma =|v|^\gamma$.
Let $\phi$ be a smooth, positive radial function that takes
value $1$ for small value and $0$ for large value of $| v|$.
Set $\Phi_c (v) = | v|^\gamma \phi (v)$.
We shall show the following

\begin{lemm} Assume $ \gamma >-3$. Then, for all integer $k$, one has
$$
 | D^k \hat \Phi_c (\xi ) | \lesssim <\xi>^{-3-\gamma -k}, \mbox{ for all } \xi \in \RR^3 .
 $$
\end{lemm}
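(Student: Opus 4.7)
First, differentiating under the integral sign gives $D^\alpha_\xi \hat{\Phi}_c(\xi) = \cF[(-iv)^\alpha |v|^\gamma\phi(v)](\xi)$ for every multi-index $\alpha$, so the task reduces to showing, for each $\alpha$ with $|\alpha|=k$,
\[
\bigl|\cF[F_\alpha](\xi)\bigr|\lesssim \la \xi\ra^{-3-\gamma-k}, \qquad F_\alpha(v):=v^\alpha|v|^\gamma\phi(v).
\]
Each $F_\alpha$ is compactly supported with an integrable algebraic singularity at the origin of order $|v|^{\gamma+k}$, the integrability being guaranteed by $\gamma+k>-3$. The bound for $|\xi|\le 1$ is then immediate from $\|F_\alpha\|_{L^1(\RR^3)}<\infty$, and the real content lies in the regime $|\xi|\ge 1$.

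For $|\xi|\ge 1$, the plan is a dyadic decomposition of $F_\alpha$ around the singularity combined with integration by parts. Fix a smooth partition of unity $\sum_{j\ge j_0}\chi_j\equiv 1$ on the support of $\phi$, with $\chi_j$ supported in the annulus $\{|v|\sim 2^{-j}\}$, and set $F_{\alpha,j}:=\chi_j F_\alpha$. Counting the pointwise size ($\sim 2^{-j(\gamma+k)}$), the support volume ($\sim 2^{-3j}$), and the cost of each derivative ($\sim 2^{j}$, since both $\chi_j$ and $|v|^\gamma$ produce such a factor on the support), one obtains the uniform estimate $\|D^N F_{\alpha,j}\|_{L^1}\lesssim 2^{-j(3+\gamma+k-N)}$ for every $N\in\NN$. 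Standard integration by parts in the Fourier integral then yields
\[
\bigl|\cF[F_{\alpha,j}](\xi)\bigr|\lesssim \min\bigl(2^{-j(3+\gamma+k)},\ |\xi|^{-N}\,2^{-j(3+\gamma+k-N)}\bigr),
\]
for any $N\in\NN$.

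It remains to sum in $j$, and this is where the hypothesis $\gamma>-3$ enters crucially. Splitting at the transition scale $2^j\sim|\xi|$, one chooses $N$ strictly larger than $3+\gamma+k$ on the low-$j$ side $\{2^j\le |\xi|\}$, so the sum is a geometric series in $2^{j(N-3-\gamma-k)}$ dominated by its largest term at $2^j\sim|\xi|$, contributing exactly $|\xi|^{-3-\gamma-k}$; on the complementary side $\{2^j>|\xi|\}$, one takes $N=0$ and sums the geometric series $\sum 2^{-j(3+\gamma+k)}$, convergent precisely because $3+\gamma+k>0$, again with dominant term $|\xi|^{-3-\gamma-k}$ at $2^j\sim|\xi|$. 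Adding these two contributions and combining with the $|\xi|\le 1$ case yields the claim. No single step is a genuine obstacle; the one point of care is matching $N$ to the regime so that both partial sums converge, and one sees that $\gamma>-3$ is exactly the hypothesis needed for the near-origin part of the $j$-sum.
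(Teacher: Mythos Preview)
Your proof is correct and follows a genuinely different route from the paper's. The paper invokes the explicit formula $\cF(|v|^\gamma)=c_\gamma|\xi|^{-3-\gamma}$ for $-3<\gamma<0$, writes $\hat\Phi_c$ as the convolution $|\cdot|^{-3-\gamma}\ast\hat\phi$, and splits the convolution integral according to whether $|\xi-\eta|$ is small or large; the case $\gamma\ge 0$ is then reduced to the negative case by writing $|v|^\gamma\phi$ in terms of $|v|^{\gamma-2}\phi$ via a Laplacian identity. Your argument instead works directly in physical space: a dyadic decomposition of $F_\alpha(v)=v^\alpha|v|^\gamma\phi(v)$ near the origin, the homogeneity bound $|D^m(v^\alpha|v|^\gamma)|\lesssim |v|^{\gamma+k-m}$ to control $\|D^N F_{\alpha,j}\|_{L^1}$, and then the standard $|\xi|^{-N}$ gain from integration by parts, summed at the critical scale $2^j\sim|\xi|$. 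Your approach is more self-contained (it does not rely on knowing the Fourier transform of the homogeneous distribution $|v|^\gamma$) and handles all $\gamma>-3$ uniformly without a case split on $\operatorname{sign}\gamma$; the paper's approach has the advantage of identifying the leading behavior of $\hat\Phi_c$ explicitly as coming from $|\xi|^{-3-\gamma}$, which is conceptually transparent but requires that extra input. One small point worth making explicit in your write-up: the finitely many pieces with $j_0\le j<0$ (where $2^{-j}$ is of order one) are smooth and compactly supported, so they contribute Schwartz-class tails and can simply be absorbed into the constant.
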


\begin{proof}
Since $\Phi_c$ is bounded and compactly supported, clearly, for any integer $k$, $|D^k \hat \Phi_c(\xi ) | \lesssim 1$ so we can only consider
the case when  $| \xi | >>1$.

We first consider the case : $-3 < \gamma <0$.
We use the fact that the Fourier transform of $| v|^\gamma $ is (up to constant) $|\xi|^{-3-\gamma}$, see Page 243 of \cite{taylor}.

Let $\psi =\psi (\xi )$ a smooth positive function supported  on $| \xi| \le 1$, and is equal to $1$ for  on $| \xi| \le 1/2$.
Write
$$
\hat \Phi_c  (\xi ) = \int_\eta {1\over{| \xi -\eta |^{3+\gamma}}} \hat \phi (\eta ) d\eta = J_1 +J_2,
$$
where
$$
J_1=  \int_\eta {1\over{| \xi -\eta |^{3+\gamma}}} \psi (\xi -\eta )\hat \phi (\eta ) d\eta,\quad \mbox{ and } \quad J_2= \int_\eta {1\over{| \xi -\eta |^{3+\gamma}}}[1- \psi (\xi -\eta )] \hat \phi (\eta ) d\eta .
$$
For $J_1$, the support is on $| \xi -\eta| \le 1$. This means that $| \eta | \ge | \xi | -1 \ge c| \xi|$, for some constant $c$ and because we have assumed $| \xi| >>1$. Then, we can use the decay of $\hat \phi$ to get, for any $m$ positive
$$
J_1 \lesssim \int_{\eta , | \xi -\eta | \le 1} {1\over{| \xi -\eta |^{3+\gamma}}} <\eta>^{-m} d\eta \lesssim <\xi>^{-m} .
$$
For $J_2$, the integration is over $| \xi -\eta | \ge 1/2$. So we can replace $| \xi -\eta|$ by $<\xi-\eta>$ to get
$$
J_2= \int_{\eta , | \xi -\eta | \ge 1/2} {1\over{< \xi -\eta >^{3+\gamma}}} <\eta>^{-m} d\eta .
$$
Choose $m= M + 3 + \gamma$ with $M$ large enough. Then
$$
J_2= \int_{\eta , | \xi -\eta | \ge 1/2} {1\over{< \xi -\eta >^{3+\gamma}}} <\eta>^{-M} <\eta >^{-3-\gamma} d\eta
$$
$$
\lesssim  <\xi >^{-3-\gamma} \int_{\eta , | \xi -\eta | \ge 1/2}  <\eta>^{-M}  d\eta .
$$
Thus, we have shown that $| \hat \Phi_c (\xi ) | \lesssim < \xi>^{-3-\gamma}$, which proves the Lemma in the case when $k=0$.

 This proof works well for derivatives.
For example, consider the case when $k=1$. First note that
$$
\nabla \hat \Phi_c  (\xi ) = \int_\eta {1\over{| \xi -\eta |^{3+\gamma}}} \nabla\hat \phi (\eta ) d\eta = K_1 +K_2,
$$
where
$$
K_1=  \int_\eta {1\over{| \xi -\eta |^{3+\gamma}}} \psi (\xi -\eta )\nabla \hat \phi (\eta ) d\eta \mbox{ and } K_2= \int_\eta {1\over{| \xi -\eta |^{3+\gamma}}}[1- \psi (\xi -\eta )] \nabla \hat \phi (\eta ) d\eta .
$$
$K_1$ is estimated directly as for $J_1$, with all the decay.

For $K_2$, integration by parts gives
$$
K_2= -\int_\eta \nabla [{1\over{| \xi -\eta |^{3+\gamma}}}][1- \psi (\xi -\eta )]  \hat \phi (\eta ) d\eta
$$
$$
+ \int_\eta {1\over{| \xi -\eta |^{3+\gamma}}}\nabla \psi (\xi -\eta ) \nabla \hat \phi (\eta ) d\eta.
$$
Here,  the first term has the good decay in ${-3-\gamma -1}$,
 while the second one has all the decay.

We now consider the case $\gamma \geq 0$. Of course, for $\gamma =0$, the result is clear, because then $\hat \Phi_c$ is in $\cS$.

For $2>\gamma >0$ we have
\begin{align*}
|v|^\gamma \varphi(|v|) &=
\int \Big( - \Delta_{\xi} e^{iv\cdot \xi} \Big ) {\cF}_{v \rightarrow \xi}
\Big( |v|^{\gamma-2} \varphi(v)\Big)(\xi) d\xi/(2\pi^3)\\
&= -\int e^{iv\cdot \xi}         \Delta_{\xi}  {\cF}_{v \rightarrow \xi}
\Big( |v|^{\gamma-2} \varphi(v)\Big)(\xi) d\xi/(2\pi^3)\,,
\end{align*}
which gives
\[
\Big|\partial_\xi^\alpha \cF\Big(|v|^\gamma \varphi(|v|)\Big)(\xi)\Big|
= \Big|\partial_\xi^\alpha \Delta_\xi  \cF\Big(|v|^{\gamma-2} \varphi(|v|)\Big)(\xi)\Big|
\leq C_\alpha \la \xi \ra^{-3-\gamma-|\alpha|}\,,
\]
by using the previous negative case since $\gamma -2 <0$. The remaining cases are similar and this completes the proof of the lemma.
\end{proof}

\noindent
{\bf Acknowledgements :}
The research of the first author was supported in part by the Zhiyuan foundation and Shanghai Jiao Tong University. The research of the second author was
supported by  Grant-in-Aid for Scientific Research No.22540187,
Japan Society of the Promotion of Science.
The last author's research was supported by the General Research
Fund of Hong Kong,
 CityU No.103109, and the Lou Jia Shan Scholarship programme of
Wuhan University. The authors would like to
thank the financial supports from City University of Hong Kong, Kyoto
University, Rouen University and Wuhan University for their visits.

\smallskip

\end{document}